\theoremstyle{definition}
\newtheorem{thm}{Theorem}[section]
\newtheorem{defi}[thm]{Definition}
\newtheorem{lemm}[thm]{Lemma}
\newtheorem{prop}[thm]{Proposition}
\newtheorem{remark}[thm]{Remark}
\newtheorem{coro}[thm]{Corollary}
\newtheorem{problem}[thm]{Problem}
\begin{document}

\title{Sasaki-Einstein orbits in compact Hermitian symmetric spaces}
\author{Yuuki Sasaki}
\date{}
\maketitle

\begin{abstract}

\rm{
The aim of the present papar is to study the orbits of the isotropy gourp action on an irreducible Hermitian symmetric space of compact type.
Specifically, we examine the properties of these orbits as {\it CR} submanifolds of a K\"{a}hler manifold.
Our focus is on the leaves of the totally real distribution, and we investigate the properties of leaves as a Riemannian submanifold.
In particular, we prove that any leaf is a totally geodesic submanifold of the orbit.
Additionally, we explore the conditions under which each leaf becomes a totally geodesic submanifold of the ambient space.
The integrability of the complex distribution is also studied.
Moreover, we analyze a contact structure of orbits where the rank of the totally real distribution is 1.
We obtain a classification of the orbits that possess either a contact structure or a Sasakian structure compatible with the complex structure on the ambient space.
Furthermore, we classify those Sasaki orbits that are Einstein with respect to the induced metric.
Specifically, we completely detemine Sasaki-Einstein orbits.
}

\end{abstract}


\section{Introduction}\label{sec1}

Complex submanifolds and totally real submanifolds are important submanifolds in K\"{a}hler geometry.
A complex submanifold is a submanifold such that the tangent space at any point is invariant under the complex structure.
In contrast, a totally real submanifold is characterized by the property that the image of the tangent space under the complex structure is contained in the normal space.
In this sense, complex and totally real submanifolds are opposite submanifolds.
In particular, a totally real submanifold whose dimension is half that of the ambient space is called a Lagrangian submanifold.
Both complex and totally real submanifolds have been extensively studied by many mathematicians.
In the present paper, we consider $CR$ submanifolds, which are an analogue of complex and totally real submanifolds.
A submanifold of a K\"{a}hler manifold is called a $CR$ submanifold if its tangent bundle is decomposed into a complex subbundle and a totally real subbundle.
By definition, complex submanifolds and totally real submanifolds are special cases of $CR$ submanifolds.
We call a $CR$ submanifold that is neither complex nor totally real a proper $CR$ submanifold.
$CR$ submanifolds were introduced by Bejancu \cite{Bejancu}, and the fundamental theory has been developed by Bejancu, Chen, Kon, Yano, and several other geometers \cite{Bejancu2}, \cite{Bejancu-Kon-Yano}, \cite{Chen}, \cite{Chen1}.
One of the most interesting properties of $CR$ submanifolds is the integrability of the totally real distribution.
Due to this property, $CR$ submanifolds have a foliation, and understanding the natures of the leaves as a Riemannian submanifold is an interesting question.
An example of a $CR$ submanifold is a real hypersurface of a K\"{a}hler manifold.
In a real hypersurface, the image of the normal bundle under the complex structure becomes a totally real distribution.
Obviously, the rank of this totally real distribution is $1$.
A real hypersurface is called a Hopf real hypersurface if any integral curve of the totally real distribution is a geodesic.
Hopf real hypersurfaces are fundamental objects in submanifold theory, and have been actively studied by many mathematicians \cite{Borisenko}, \cite{Cecil-Ryan},  \cite{Kimura2} .
For example, it is known that any homogeneous real hypersurface is a Hopf real hypersurface in a complex projective space  \cite{Kimura}, \cite{Takagi}.

The second example of $CR$ submanifolds is given by polar and coisotropic actions.
An action of a compact Lie group of isometries on a Riemannian manifold $M$ is called polar if there exists a submanifold $N$ of $M$ such that each $G$-orbit intersects $N$ and $N$ is orthogonal to each $G$-orbit at all common points \cite{Palais-Terng}.
The submanifold $N$ is called a section.
If the section is flat, the polar action is called a hyperpolar action.
For example, the isotropy group action and Hermann actions on a symmetric space are hyperpolar actions (note that the isotropy group action is a special case of Hermann actions).
An action on a compact K\"{a}hler manifold is called coisotropic if any principal orbit is a coisotropic submanifold, that is, the K\"{a}hler form vanishes on the normal space of the tangent space at any point of any principal orbit.
By definition, any principal orbit of a coisotropic action is a $CR$ submanifold.
It was proved by Podest\`{a} and Thorbergsson that any polar action on an irreducible compact homogeneous K\"{a}hler manifold is coisotropic \cite{Podesta-Thorbergsson}.
Hence, any polar action produces many $CR$ submanifolds.
Recently, D\'{i}az-Ramos, Dom\'{i}nguez-V\'{a}zquez, and P\'{e}rez-Barral classified homogeneous $CR$ submanifolds arising as orbits of a subgroup of the solvable part of the Iwasawa decomposition in the complex hyperbolic space \cite{Diaz}.

Sasakian geometry is an analogue of K\"{a}hler geometry.
A Sasakian manifold was introduced by Sasaki in the 1960s \cite{Sasaki} and is characterized by having a K\"{a}hler structure on the metric cone.
In a Sasakian manifold, there exists a one-dimensional foliation, called the Reeb foliation, which has a transverse K\"{a}hler structure.
Hence, a Sasakian manifold is related to these two K\"{a}hler manifolds.
For example, the unit sphere $S^{2n-1}$ of $\mathbb{C}^{n}$ with the standard inner product is a typical example of a Sasakian manifold.
In this case, the metric cone is $\mathbb{C}^{n} - \{ 0 \}$, and the Reeb foliation is given by the Hopf fibration.
Moreover, the leaf space of the Reeb foliation is the complex projective space $\mathbb{C}P^{n-1}$.
The unit sphere $S^{2n-1}$ is Einstein with the induced metric and is an example of a Sasaki-Einstein manifold.
Influenced by the connection between Sasaki-Einstein manifolds and the AdS/CFT correspondence, Sasaki-Einstein manifolds have been actively studied.
Additionally, the metric cone of a Sasaki-Einstein manifold is a Ricci-flat K\"{a}hler manifold and an example of a Calabi-Yau manifold.
Special Lagrangian submanifolds of Calabi-Yau manifolds play an important role in string theory and have been extensively studied by many mathematicians.

In the complex projective space, a principal orbit of the isotropy group action is also called a geodesic sphere, and every geodesic sphere is a real hypersurface.
It is known that a geodesic sphere is a Hopf real hypersurface.
Moreover, Berndt and Adachi, Kameda, and Maeda studied the condition under which a homogeneous real hypersurface has a Sasakian structure with the Reeb vector field lying in the totally real foliation \cite{Adachi-Kameda-Maeda}, \cite{Berndt}.
According to their results, in the complex projective space, the only real hypersurface satisfying this condition is a geodesic sphere.
Thus, we naturally consider the following problem.

\begin{problem} \label{1}
Let $N$ be any orbit of the isotropy group action on an irreducible Hermitian symmetric space $M$ of compact type.

\begin{itemize}

\item[(i)]
Is $N$ a $CR$ submanifold ? (see Theorem \ref{2-4-8})
Are there $CR$ orbits such that any leaf of a totally real foliation is a totally geodesic submanifold of the orbit $N$ or the ambient space $M$ ? (see Theorem \ref{3-3-7})

\item[(ii)]
For $CR$ orbits where the rank of the totally real distribution is $1$, are there a contact form that is identically zero on the complex distribution ? (see Theorem \ref{4-1-1})

\item[(iii)]
For such contact orbits, are there a Sasakian structure by the induced metric such that the Reeb vector field lies in the totally real distribution ? (see Theorem \ref{4-3-2})

\item[(iv)]
For such Sasakian orbits, are there orbits that are Einstein with respect to the induced metric ? (see Theorem \ref{5-3-1})

\end{itemize}
\end{problem}

In the present paper, we consider Problem \ref{1}.
We review the concept of a contact manifold.
Let $M$ be a $2n+1 \ (n \geq 1)$ dimensional manifold.
If there exists a 1-form $\eta$ on $M$ such that $\eta \wedge (d\eta)^{n} \not= 0$ at every point of $M$, we call $(M, \eta)$ a contact manifold and $\eta$ a contact form on $M$.

The present paper is organized as follows.
In Section \ref{s2}, we provide the preliminaries used in this paper.
In Subsection \ref{s2-1}, we recall the definition and some fundamental properties of $CR$ submanifolds and Sasakian manifolds.
In Subsection \ref{s2-2}, we discuss the construction and properties of irreducible Hermitian symmetric spaces of compact type, including their classification.
In Subsection \ref{s2-3}, we summarize the fundamental theory of the restricted root system.
Much of these properties is referred to \cite{Helgason}, \cite{Takeuchi}, and \cite{Takeuchi2}.
In Subsection \ref{s2-4}, we address the restricted root system of an irreducible Hermitian symmetric space and show that any orbit other than the principal orbits of the isotropy group action is a $CR$ submanifold.
In Section \ref{s3}, we consider the remaining part of Problem \ref{1} (i).
We show that for any $CR$ orbit, each leaf of the totally real foliation is a totally geodesic submanifold of the orbit.
Moreover, we provide the condition under which such leaves are also totally geodesic submanifolds in the ambient space $M$.
We also study the integrability of the complex distribution.
In Section \ref{s4}, we address Problem \ref{1} (ii) and (iii), and demonstrate that for any irreducible Hermitian symmetric space, there exists only one orbit that is a Sasaki $CR$ submanifold for any invariant metric.
In Section \ref{s5}, we study Problem \ref{1}(v) and prove that, for any irreducible Hermitian symmetric space, except for the complex projective space and some complex Grassmannians, there exist only one invariant metric for which the Sasaki $CR$ orbit is Einstein with respect to the induced metric.

We introduce some notations used in the present paper.
Let $G_{k}(\mathbb{C}^{n})$ denote the set of all $k$-dimensional complex subspaces of $\mathbb{C}^{n} \ (k \leq n)$.
Hence, $\mathbb{C}P^{n} = G_{1}(\mathbb{C}^{n+1})$.
Moreover, let $\tilde{G}_{2}(\mathbb{R}^{n})$ be the set of all $2$-dimensional oriented real subspaces of $\mathbb{R}^{n}\ (n \geq 2)$.
Then, 
\[
G_{k}(\mathbb{C}^{n}) = SU(n)/S(U(k) \times U(n-k)), \quad \tilde{G}_{2}(\mathbb{R}^{n}) = SO(n)/SO(2) \times SO(n-2).
\]
Moreover, we set
\[
EIII = E_{6}/\big( U(1) \times Spin(10)/\mathbb{Z}_{4} \big), \quad EVII = E_{7}/\big( U(1) \times E_{6}/ \mathbb{Z}_{3} \big).
\]


\section{Preliminaries} \label{s2}

\subsection{$CR$ submanifolds} \label{s2-1}

In this section, we recall fundamental properties of $CR$ submanifolds in a K\"{a}hler manifold.
Let $(M, J, g)$ be a K\"{a}hler manifold, where $J$ is a complex structure and $g$ is a Riemannian metric.

\begin{defi} \cite{Bejancu}
Let $N$ be a submanifold of $M$.
We say that $N$ is a {\it $CR$ submanifold} if there exists a smooth distribution $\mathcal{D}$ on $N$ satisfying the following conditions:

(i) $\mathcal{D}$ is $J$-invariant, that is, $J(\mathcal{D}_{x}) \subset \mathcal{D}_{x}$ for any $x \in N$.

(ii) Let $\mathcal{D}^{\perp}$ be the orthogonal complement distribution of $\mathcal{D}$. 
Then, $J(\mathcal{D}^{\perp}_{x}) \subset T_{x}^{\perp}N$ for any $x \in N$, where $T_{x}^{\perp}N$ is the orthogonal complement of $T_{x}N$ in $T_{x}M$.
\end{defi}

In this setting, a $CR$ submanifold $N$ is a complex submanifold if and only if $\dim \mathcal{D}^{\perp} = 0$, and $N$ is a totally real submanifold if and only if $\dim \mathcal{D} = 0$.
If $\dim \mathcal{D} = 0$ and $2\dim N = \dim M$, then $N$ is a Lagrangian submanifold.
We say that a $CR$ submanifold $N$ is {\it proper} if $N$ is neither a complex submanifold nor a totally real submanifold.
For a $CR$ submanifold, $\mathcal{D}$ (resp.\ $\mathcal{D}^{\perp}$) is called a {\it complex distribution} (resp.\ a {\it totally real distribution}).
Let $h$ be the second fundamental form of $N$.

\begin{lemm}\cite{Chen} \label{2-1}
For any $CR$ submanifold, the totally real distribution $\mathcal{D}^{\perp}$ is integrable.
Moreover, the complex distribution $\mathcal{D}$ is integrable if and only if 
\[
g \big( h(X, JY), JZ \big) = g \big( h(JX, Y), JZ \big)
\]
for any $x \in N, X,Y \in \mathcal{D}, Z \in \mathcal{D}^{\perp}$.
\end{lemm}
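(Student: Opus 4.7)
\medskip
\noindent\textbf{Proof plan for Lemma \ref{2-1}.}

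The plan is to work directly with the Gauss/Weingarten decomposition together with the K\"{a}hler condition $\nabla J = 0$, and to translate Lie-bracket statements into inner-product identities involving the second fundamental form. Throughout, let $\nabla$ denote the Levi-Civita connection on $M$ and $\tilde{\nabla}$ the induced connection on $N$, with the Gauss formula $\nabla_{U}V = \tilde{\nabla}_{U}V + h(U,V)$ for tangent $U,V$ and the Weingarten formula $\nabla_{U}\xi = -A_{\xi}U + D_{U}\xi$ for $\xi$ normal. I will also repeatedly use that $J\mathcal{D}=\mathcal{D}$ and $J\mathcal{D}^{\perp}\subset T^{\perp}N$.

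For the first assertion, I take $X,Y\in\Gamma(\mathcal{D}^{\perp})$ and a tangent test vector of the form $JZ$ with $Z\in\Gamma(\mathcal{D})$; since $\mathcal{D}$ is $J$-invariant, vanishing of $g([X,Y],JZ)$ for all such $Z$ is equivalent to $[X,Y]\in\Gamma(\mathcal{D}^{\perp})$. Expanding $g([X,Y],JZ) = g(\nabla_{X}Y,JZ) - g(\nabla_{Y}X,JZ)$ and using $\nabla J = 0$ together with the fact that $JY$ and $JX$ are normal, a short computation yields
\[
g([X,Y],JZ) = g\bigl(h(Z,X),JY\bigr) - g\bigl(h(Z,Y),JX\bigr).
\]
The key step is then to prove the identity $g(h(Z,X),JY) = g(h(Z,Y),JX)$ for $X,Y\in\mathcal{D}^{\perp}$ and $Z\in\mathcal{D}$. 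This I would obtain by differentiating the orthogonality relation $g(X,JY)=0$ along $Z$: the metric property of $\nabla$ gives $0 = g(\nabla_{Z}X,JY) + g(X,\nabla_{Z}JY)$, and expanding via Gauss and $\nabla J = 0$ (noting that all terms involving a tangent vector paired with a normal vector drop out) isolates exactly the desired symmetry. This will close the first part.

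For the equivalence in the second assertion, I take $X,Y\in\Gamma(\mathcal{D})$ and $Z\in\Gamma(\mathcal{D}^{\perp})$, so that $JY\in\mathcal{D}$ is tangent and $JZ$ is normal. A computation completely parallel to the one above, where I rewrite $g(\nabla_{X}Y,Z) = g(\nabla_{X}JY,JZ)$ via the K\"{a}hler identity and then keep only the normal-normal pairing coming from the Gauss formula, gives
\[
g([X,Y],Z) = g\bigl(h(X,JY),JZ\bigr) - g\bigl(h(JX,Y),JZ\bigr).
\]
Since $[X,Y]$ is automatically tangent to $N$ and $\mathcal{D}$ is $J$-invariant, the distribution $\mathcal{D}$ is integrable if and only if this bracket is orthogonal to every $Z\in\mathcal{D}^{\perp}$, which is precisely the stated identity.

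I expect the only genuinely delicate step to be the derivation of the symmetry $g(h(Z,X),JY)=g(h(Z,Y),JX)$ used for the integrability of $\mathcal{D}^{\perp}$; everything else is bookkeeping with the Gauss/Weingarten formulas and the fact that $J$ swaps $\mathcal{D}^{\perp}$ with part of the normal bundle. Keeping careful track of which vectors are tangent and which are normal at each line is the main practical hazard, but once that bookkeeping is done the K\"{a}hler identity $\nabla J = 0$ supplies both statements in essentially the same breath.
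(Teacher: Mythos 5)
Your proof is correct: both computations (the derivation of $g([X,Y],JZ)=g(h(X,Z),JY)-g(h(Y,Z),JX)$ together with the symmetry $g(h(Z,X),JY)=g(h(Z,Y),JX)$ obtained by differentiating $g(X,JY)=0$, and the identity $g([X,Y],Z)=g(h(X,JY),JZ)-g(h(JX,Y),JZ)$ for the second part) check out, and they constitute the standard argument. The paper itself gives no proof of this lemma --- it is quoted from \cite{Chen} --- and your argument is essentially the one found in that reference, so there is nothing to add beyond noting that the tensoriality of the displayed expressions justifies passing from local sections to the pointwise statement.
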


By Proposition \ref{2-1}, we can define the foliation $\mathcal{F}$ of $N$ by the totally real distribution $\mathcal{D}^{\perp}$.
The foliation $\mathcal{F}$ is called the {\it totally real foliation} of $N$.
Lemma \ref{2-2} easily follows from Proposition \ref{2-1}.

\begin{lemm} \label{2-2}
For any proper $CR$ submanifold $N$, if there exist $X \in \mathcal{D}$ and $Z \in \mathcal{D}^{\perp}$ such that 
\[
g \big( h(X, X) + h(JX, JX), JZ \big) \not= 0,
\]
then the complex distribution $\mathcal{D}$ is not integrable.
\end{lemm}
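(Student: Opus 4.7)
The plan is to derive the claim as a direct contrapositive of the integrability criterion recorded in Lemma \ref{2-1}. More precisely, I will assume that the complex distribution $\mathcal{D}$ \emph{is} integrable, and then show that the quantity $g(h(X,X) + h(JX, JX), JZ)$ is forced to vanish for every $X \in \mathcal{D}$ and every $Z \in \mathcal{D}^{\perp}$. This immediately contradicts the hypothesis of the lemma, so $\mathcal{D}$ cannot have been integrable.

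The key manoeuvre is a judicious specialization of the variables in the identity furnished by Lemma \ref{2-1}. Assuming integrability of $\mathcal{D}$, that lemma gives
\[
g\bigl( h(X, JY), JZ \bigr) = g\bigl( h(JX, Y), JZ \bigr)
\]
for arbitrary $X, Y \in \mathcal{D}$ and $Z \in \mathcal{D}^{\perp}$. Since $\mathcal{D}$ is $J$-invariant (by definition of a $CR$ submanifold), I may substitute $Y := JX \in \mathcal{D}$. Using $J^{2} = -\mathrm{id}$ on the left-hand side and the symmetry and bilinearity of the second fundamental form $h$, this substitution converts the identity into
\[
-\,g\bigl( h(X, X), JZ \bigr) = g\bigl( h(JX, JX), JZ \bigr),
\]
which is exactly the statement $g\bigl( h(X,X) + h(JX, JX), JZ \bigr) = 0$ for all $X \in \mathcal{D}$ and $Z \in \mathcal{D}^{\perp}$. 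The contrapositive of the hypothesis then finishes the proof.

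There is essentially no obstacle in this argument: once Lemma \ref{2-1} is in hand, the whole proof reduces to one algebraic substitution together with the symmetry of $h$ and the $J$-invariance of $\mathcal{D}$. The only point that deserves a sentence of care is making sure $Y = JX$ is an admissible choice, but this is guaranteed by condition (i) in the definition of a $CR$ submanifold. Accordingly, the write-up can be kept very short.
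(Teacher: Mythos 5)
Your argument is correct and is precisely the derivation the paper has in mind: the paper gives no separate proof, remarking only that the lemma "easily follows" from Lemma \ref{2-1}, and your substitution $Y = JX$ (legitimate by the $J$-invariance of $\mathcal{D}$) together with $J^{2} = -\mathrm{id}$ and the symmetry of $h$ is exactly that easy deduction, phrased as a contrapositive.
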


One example of $CR$ submanifolds is a real hypersurface $H$ of a K\"{a}hler manifold as mentioned above.
If we denote the normal bundle of $H$ by $T^{\perp}H$, then $J(T^{\perp}H)$ becomes a totally real distribution of $N$.
Consider a Hopf real hypersurface of a K\"{a}hler manifold.
One of the most typical examples of a Hopf real hypersurface is the round sphere $S^{2n-1}$ in $\mathbb{C}^{n}$.
In this case, each leaf of the totally real foliation is a great circle defined by the Hopf fibration $S^{2n-1} \rightarrow \mathbb{C}P^{n-1}$.
Thus, the leaves are totally geodesic and $S^{2n-1} \subset \mathbb{C}^{n}$ is a Hopf real hypersurface.
Let $N$ be a $CR$ submanifold that is not necessarily a real hypersurface.
We say that $N$ is a {\it Hopf $CR$ submanifold} if the leaves of the totally real foliation are totally geodesic submanifolds of $N$.
Totally real submanifolds and complex submanifolds are special cases of Hopf $CR$ submanifolds.
Moreover, we say that $N$ is a {\it ruled $CR$ submanifold} if the leaves of the totally real foliation are totally geodesic submanifolds of $M$.
Any ruled $CR$ submanifold is a Hopf $CR$ submanifold.
For example, a hyperplane of $\mathbb{C}^{n}$ is a ruled $CR$ submanifold.
By definition, totally geodesic totally real submanifolds and complex submanifolds are special cases of ruled $CR$ submanifolds.

In the present paper, we consider a $CR$ submanifold that is also a contact manifold.
Let $N$ be a $CR$ submanifold with $\dim \mathcal{D}^{\perp} = 1$.
If there exists a 1-form $\eta$ on $N$ such that $\eta|_{\mathcal{D}} \equiv 0$ and $\eta \wedge (d \eta)^{n} \not= 0$ at every point $x \in N$, then $N$ is called a {\it contact $CR$ submanifold}.
Next, we consider Sasakian manifolds.

\begin{defi}
Let $(S,h)$ be a $2n+1$-dimensional compact Riemannian manifold.
The Levi-Civita connection and the curvature tensor are denoted by $\nabla$ and $R$, respectively. 
If $(S,h)$ satisfies one of the following conditions, we call $(S,h)$ a {\it Sasakian manifold}.

\begin{itemize}

\item[(i)]
There exists a Killing vector field $\xi$ of unit length on $S$ such that the $(1,1)$-tensor $\Phi_{x}(A) := \nabla_{X}\xi \ (x \in S, A \in T_{x}S)$ satisfies 
\[
(\nabla_{X}\Phi)(Y) = g(\xi, Y)X - g(X,Y)\xi
\]
for any vector field $X$ and $Y$ on $S$.

\item[(ii)]
There exists a Killing vector field $\xi$ of unit length on $S$ such that 
\[
R(X,\xi)Y = g(\xi, Y)X - g(X,Y)\xi
\]
for any vector field $X$ and $Y$ on $S$.

\item[(iii)]
The metric cone $(M \times \mathbb{R}_{+}, r^{2}g + dr^{2})$ is a K\"{a}hler manifold.

\end{itemize}
\end{defi}

It is known that any Sasakian manifold is a contact manifold with a contact form $\eta (\cdot ) = h(\xi, \cdot)$.
If a contact $CR$ submanifold $N$ is a Sasakian manifold with respect to the induced metric on $M$, then we say that $N$ is a Sasaki $CR$ submanifold.
For example, the unit sphere $S^{2n-1}$ in $\mathbb{C}^{n}$ with the standard inner product is a Sasaki $CR$ submanifold.
We recall fundamental properties of Sasakian manifolds from \cite{Boyer-Galicki}.

For a Sasakian manifold $(S,h)$, the Killing vector field $\xi$ is called the {\it Reeb vector field}.
The $(1,1)$-tensor $\Phi$ is called the {\it characteristic tensor} and the $1$-form $\eta(\cdot) = h(\xi, \cdot)$ is called the {\it characteristic 1-form}.
For $\xi, \eta, \Phi$, 
\[
\begin{array}{llll}
\Phi \circ \Phi(X) = -X + \eta (X)\xi, 
& \Phi(\xi) = 0, \quad \eta(\Phi X) = 0, 
\\ 
g(\Phi(X), Y) + g(X,. \Phi(Y)) = 0, 
& g(\Phi(X), \Phi(Y)) = g(X,Y) - \eta(Y)\eta(X), 
\\ 
d\eta(X,Y) = 2g(\Phi(X), Y).
& 
\end{array}
\]
for any $x \in S$ and $X,Y \in T_{x}S$.
Since $\xi$ is a Killing vector field of unit length, the foliation of $S$ is defined by $\xi$.
This foliation is called the {\it Reeb foliation} of $S$.
Each leaf of the Reeb foliation is a geodesic of $S$.
Therefore, a Sasaki $CR$ submanifold is a Hopf $CR$ submanifold.
If each leaf of the Reeb foliation is a closed geodesic and all leaves have the same length, we say that the Sasakian manifold is regular.
In the present paper, we assume that a Sasakian manifold is regular.
Let $T$ be the space of leaves of the Reeb foliation.
Then, $T$ is a manifold by the assumption.
Denote the natural projection by $\pi : S \rightarrow T$.
A $2$-form $\omega^{T}$ is defined by $\pi^{*}\omega^{T} = (1/2)d \eta$ and $(T, \omega^{T})$ is a symplectic manifold.
Moreover, a $(1,1)$-tensor $J^{T}$ on $T$ is defined by $\pi^{*}J^{T} = \Phi$.
Since $\Phi^{2} = - \mathrm{id} + \eta \otimes \xi$, the $(1,1)$-tensor $J^{T}$ is an almost complex structure on $T$.
Since $\xi$ is a Killing vector field on $S$, a Riemannian metric $h^{T}$ on $T$ can be defined such that $\pi:(S,h) \rightarrow (T,h^{T})$ is a Riemannian submersion.
Then, $(T, J^{T}, h^{T})$ is a compact K\"{a}hler manifold, and $\omega^{T}$ is the K\"{a}hler form.
This K\"{a}hler structure is called a {\it transverse K\"{a}hler structure} of $S$.
Let $r$ (resp.\ $r^{T}$) be the Ricci tensor of $(S, h)$ (resp.\ $(L, h^{T})$).

\begin{defi}
A Sasakian manifold $(S,h)$ is said to be $\eta$-Einstein if there exist constants $\lambda$ and $\mu$ such that
\[
r = \lambda g + \mu \ \eta \otimes \eta.
\]
\end{defi}

In this case, $\lambda + \mu = 2n$ since $r(\xi, \xi) = 2n$.
In particular, if $(S,h)$ is an Einstein manifold, then $\lambda = 2n$ and $\mu = 0$.
Thus, the Einstein constant of $(S,h)$ is $2n$.

\begin{defi}
A Sasakian manifold $(S,h)$ is said to be transversely K\"{a}hler Einstein if $(T,h^{T})$ is a K\"{a}hler Einstein manifold, that is, there exists a constant $\tau$ such that $r^{T} = \tau h^{T}$.
\end{defi}

It is well known that a Sasakian manifold $(S,h)$ is $\eta$-Einstein if and only if $(S,h)$ is transversely K\"{a}hler Einstein.
In fact, if $r^{T} = \tau g^{T}$, then
\[
r = (\tau - 2) g + (2n+2 - \tau)\eta \otimes \eta.
\]
Conversely, if $r = \lambda g + \nu \eta \otimes \eta$, then
\[
r^{T} = (\lambda + 2)g^{T}.
\]
Thus, $(S,h)$ is a Sasaki-Einstein manifold if and only if $(S,h)$ is transversely K\"{a}hler Einstein and $\tau = 2n+2$.


\subsection{Compact Hermitian symmetric spaces} \label{s2-2}

In this subsection, we recall the construction and properties of irreducible Hermitian symmetric spaces of compact type.
Let $\frak{g}$ be a compact simple Lie algebra and $G$ be the identity component of the automorphism group of $\frak{g}$.
We denote the complexification of $\frak{g}$ by $\tilde{\frak{g}}$.
Let $(\ ,\ )$ be an invariant non-degenerate symmetric bilinear form on $\tilde{\frak{g}}$.
Define $\langle \ , \ \rangle = -(\ ,\ )|_{\frak{g} \times \frak{g}}$.
Then, $\langle \ ,\ \rangle$ is an invariant inner product on $\frak{g}$.
We define $J \in \frak{g}\ (J \not= 0)$ such that $(\mathrm{ad}J)^{3} = -\mathrm{ad}J$.
The $\mathrm{Ad}(G)$-orbit through $J$ is denoted by $M$.
Then, $g(J)\ (g \in G)$ implies $\mathrm{Ad}(g)(J)$.
The induced metric on $M$ by $\langle \ ,\ \rangle$ is denoted by the same symbol $\langle \ , \ \rangle$.
Define $K = \{ g \in G\ ;\ \mathrm{Ad}(g)J = J \}$.
Then, $M = G/K$ and the Lie algebra of $K$ is $\frak{k} := \{ X \in \frak{g}\ ;\ [J, X] = 0 \}$.
In particular, $J \in \frak{k}$ and the center of $\frak{k}$ is $\mathbb{R}J$.
It is known that $K$ is connected \cite{Besse}.
Set $\theta = e^{\pi \mathrm{ad}J}$.
The inner automorphism by $\theta$ is an involutive automorphism of $G$ since $(\mathrm{ad}J)^{3} = -\mathrm{ad}J$.
This involutive automorphism of $G$ is denoted by the same symbol $\theta$.
Moreover, we denote by the same symbol the induced automorphism of $\frak{g}$ by $\theta$.
Then, $K$ is the identity component of $F(\theta, G) := \{ g \in G \ ;\ \theta(g) = g \}$, and $\frak{k} = \{ X \in \frak{g} \ ;\ \theta(X) = X \}$.
Set $\frak{m} = \{ X \in \frak{g} \ ;\ \theta(X) = -X \}$.
When consider $J$ as a point of $M$, we denote $J$ by $o$.
Then, $T_{o}M = \frak{m}$.
The $K$-invariant inner product on $\frak{m}$ is induced by $\langle \ ,\ \rangle|_{\frak{m} \times \frak{m}}$.
This inner product on $\frak{m}$ is denoted by the same symbol $\langle \ ,\ \rangle$.
Obviously, $\mathrm{ad}J(\frak{m}) \subset \frak{m}$ and $(\mathrm{ad}J|_{\frak{m}})^{2} = -\mathrm{id}_{\frak{m}}$.
Since $J$ is an element of the center of $\frak{k}$, we can define an almost complex structure on $M$ by $J$.
This almost complex structure is denoted by the same symbol $J$.
It is known that the almost complex structure $J$ is integrable, so $J$ is a complex structure on $M$, and $M$ is a complex manifold.
Moreover, $(M, J, \langle \ ,\ \rangle)$ is an irreducible Hermitian symmetric space of compact type.
Conversely, any irreducible Hermitian symmetric space of compact type can be constructed by this method.
Irreducible Hermitian symmetric spaces of compact type are classified as shown in Table 1.
The following diffeomorphism is known:
\[
\begin{array}{llllllll}
\tilde{G}_{2}(\mathbb{R}^{3}) \cong \mathbb{C}P^{1} \cong SO(4)/U(2) \cong Sp(1)/U(1) \\ 
\tilde{G}_{2}(\mathbb{R}^{4}) \cong \mathbb{C}P^{1} \times \mathbb{C}P^{1}, \quad \tilde{G}_{2}(\mathbb{R}^{5}) \cong Sp(2)/U(2), \\
\tilde{G}_{2}(\mathbb{R}^{6}) \cong G_{2}(\mathbb{C}^{4}), \quad  \mathbb{C}P^{2} \cong SO(6)/U(3).
\end{array}
\]
In the present paper, we consider irreducible Hermitian symmetric spaces of compact type excluding the $2$-dimensional sphere.
Thus, we consider the following six classes
\[
\begin{array}{llll}  
& G_{k}(\mathbb{C}^{n}) \ (n \geq 3, k < n), & \tilde{G}_{2}(\mathbb{R}^{n})\ (n \geq 5), & SO(2n)/U(n) \ (n \geq 3), \\
& Sp(n)/U(n) \ (n \geq 2), & EIII, & EVII.
\end{array}
\]
We note that the restricted root system of these Hermitian symmetric spaces is either type $BC_{n} \ (n \geq 1)$ or $C_{n} \ (n \geq 2)$ \cite{Helgason}.

\begin{table}[h]
\caption{Irreducible Hermitian symmetric spaces of compact type}
\begin{tabular}{c|ccccccccccccccccc} \hline
$M$ & $G_{k}(\mathbb{C}^{n})$ & $\tilde{G}_{2}(\mathbb{R}^{n}) \ (n \not= 4)$ & $SO(2n)/U(n)$ & $Sp(n)/U(n)$ & $EIII$ & $EVII$ \\ \hline
$\dim M$ & $2(n-k)$ & $2(n-2)$ & $n(n-1)$ & $n(n+1)$ & 32 & 54 \\ \hline
$\mathrm{rank} M$ & $\mathrm{min}(k,n-k)$ & 2 & $[(1/2)n]$ & $n$ & 2 & 3 \\ \hline
restricted & $BC\ (n \not= 2k)$ & $C$ & $C$\ ($n$ is even) & $C$ & $BC$ & $C$ \\
root system  & $C \ (n = 2k)$ & & $BC$\ ($n$ is odd) & & & & \\ \hline
\end{tabular}
\end{table}

We recall the fundamental properties of the Levi-Civita connection of symmetric spaces.
For any $X \in \frak{g}$, the fundamental vector field corresponding to $X$ is denoted by $X^{*}$, that is, for any $p \in M$,
\[
X^{*}_{p} = \left. \frac{d}{dt}\mathrm{exp}tX(p) \right|_{t=0}.
\]
Let $V$ be any vector space and $W$ be a subspace of $V$.
Then, $(v)_{W}$ denotes the $W$-part of $v$ for any $v \in V$.

\begin{lemm} \cite{Helgason} \label{2-2-1}
Let $\nabla^{M}$ be the Levi-Civita connection of $M$.

\begin{itemize}

\item[(i)]
For any $g \in G$ and $X,Y \in \frak{g}$, 
\[
g_{*} \Big( \nabla^{M}_{X^{*}}Y^{*} \Big) = \nabla^{M}_{(\mathrm{Ad}(g)X)^{*}} \big( \mathrm{Ad}(g)Y \big)^{*}.
\]

\item[(ii)]
For any $X,Y \in \frak{g}$,
\[
(\nabla^{M}_{X^{*}}Y^{*})_{o} =
\left\{
\begin{array}{ll}
-[X,Y]_{\frak{m}} & (X \in \frak{m}), \\ 
0 & (X \in \frak{k}).
\end{array}
\right.
\]

\item[(iii)]
For any $p = g(o)\ (g \in G)$ and $X,Y \in \frak{g}$,
\[
(\nabla^{M}_{X^{*}}Y^{*})_{p} = -g_{*} \Big[ \big( \mathrm{Ad}(g^{-1})X \big)_{\frak{m}}, \mathrm{Ad}(g^{-1})Y \Big]_{\frak{m}}.
\]

\end{itemize}
\end{lemm}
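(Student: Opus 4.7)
The plan is to establish (i) from $G$-invariance of the Levi-Civita connection, then obtain (ii) via the Koszul formula combined with the Killing property of fundamental vector fields, and finally deduce (iii) by transporting an arbitrary point to the origin using (i) and invoking (ii).

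For (i), I would begin by verifying the naturality relation $g_{*}X^{*} = (\mathrm{Ad}(g)X)^{*}$, which follows by differentiating the identity $g \cdot \exp(tX) \cdot p = \exp\bigl(t\,\mathrm{Ad}(g)X\bigr) \cdot g(p)$ at $t=0$. Since each $g \in G$ acts as an isometry and isometries preserve the Levi-Civita connection,
\[
g_{*}\bigl(\nabla^{M}_{X^{*}} Y^{*}\bigr) = \nabla^{M}_{g_{*}X^{*}} g_{*}Y^{*} = \nabla^{M}_{(\mathrm{Ad}(g)X)^{*}} (\mathrm{Ad}(g)Y)^{*},
\]
which is exactly (i).

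For (ii), the starting point is that fundamental vector fields are Killing, so $\mathcal{L}_{X^{*}}\langle\,,\,\rangle = 0$, and for the left $G$-action the bracket satisfies $[X^{*}, Y^{*}] = -[X,Y]^{*}$. Substituting these two facts into the Koszul formula collapses every term into a pairing of the form $\langle [\cdot, \cdot]^{*}, \cdot^{*}\rangle$, and after cancellation I expect to arrive at
\[
2\bigl\langle \nabla^{M}_{X^{*}}Y^{*}, Z^{*}\bigr\rangle = -\langle [X,Y]^{*}, Z^{*}\rangle - \langle [X,Z]^{*}, Y^{*}\rangle - \langle [Y,Z]^{*}, X^{*}\rangle.
\]
Evaluating at $o$ sends $W^{*}_{o}$ to $W_{\frak{m}}$, and by non-degeneracy of the metric on $T_{o}M \cong \frak{m}$ it suffices to test against $Z \in \frak{m}$. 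A short case analysis on whether $X$ and $Y$ lie in $\frak{k}$ or $\frak{m}$, using the reductive bracket inclusions $[\frak{k},\frak{k}] \subset \frak{k}$, $[\frak{k},\frak{m}] \subset \frak{m}$, $[\frak{m},\frak{m}] \subset \frak{k}$ together with the $\mathrm{ad}$-invariance of $\langle\,,\,\rangle$, then recovers the two cases of (ii): the terms either cancel in pairs (when $X \in \frak{k}$) or collapse to $-\langle [X,Y]_{\frak{m}}, Z\rangle$ (when $X \in \frak{m}$).

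Finally, (iii) follows by applying (i) with $g^{-1}$ in place of $g$, evaluating the resulting vector-field identity at $o$, and then invoking (ii) for the transported pair $X' = \mathrm{Ad}(g^{-1})X$, $Y' = \mathrm{Ad}(g^{-1})Y$, after splitting $X'$ into its $\frak{k}$ and $\frak{m}$ parts by linearity of $\nabla$ in its lower argument; pushing the resulting vector forward by $g_{*}$ yields the stated formula. The step I expect to be the main obstacle is the sign bookkeeping — in particular, the minus sign in $[X^{*}, Y^{*}] = -[X,Y]^{*}$ produced by the left $G$-action, which propagates through the Koszul manipulations and is ultimately responsible for the overall sign in the formulas of (ii) and (iii).
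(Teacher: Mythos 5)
The paper offers no proof of this lemma --- it is quoted directly from Helgason --- so there is no in-paper argument to compare against; your proposal is the standard derivation and it checks out. The three checkpoints all hold: $g_{*}X^{*}=(\mathrm{Ad}(g)X)^{*}$ for the left action together with isometry-invariance of $\nabla^{M}$ gives (i); the Killing-field form of the Koszul formula, with the sign from $[X^{*},Y^{*}]=-[X,Y]^{*}$, evaluated at $o$ against $Z\in\frak{m}$ and combined with $[\frak{m},\frak{m}]\subset\frak{k}$, $[\frak{k},\frak{m}]\subset\frak{m}$ and $\mathrm{ad}$-invariance of $\langle\ ,\ \rangle$, yields exactly the two cases of (ii) (for $X\in\frak{m}$ two of the three terms each contribute $-\langle[X,Y]_{\frak{m}},Z\rangle$, for $X\in\frak{k}$ they cancel); and (iii) follows as you say by applying (i) with $g^{-1}$, splitting $\mathrm{Ad}(g^{-1})X$ into its $\frak{k}$- and $\frak{m}$-parts, and using linearity of $X\mapsto X^{*}$ before pushing forward by $g_{*}$.
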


We will use the symbol $R^{M}$ to denote the curvature tensor of $M$.

\begin{lemm} \cite{Helgason} \label{2-2-2}
For any $X,Y,Z \in \frak{m}$, 
\[
R^{M}(X,Y)Z = -\big[ [X,Y], Z \big].
\]
\end{lemm}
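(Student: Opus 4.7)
The plan is to evaluate the curvature tensor at the base point $o = J$ via the defining formula
\[
R^{M}(X^{*}, Y^{*}) Z^{*} = \nabla^{M}_{X^{*}} \nabla^{M}_{Y^{*}} Z^{*} - \nabla^{M}_{Y^{*}} \nabla^{M}_{X^{*}} Z^{*} - \nabla^{M}_{[X^{*}, Y^{*}]} Z^{*}
\]
for $X, Y, Z \in \frak{m}$, and then to invoke the $G$-equivariance in Lemma \ref{2-2-1}(i) to extend the identity to every point of $M$. Under the identification $T_{o}M = \frak{m}$ via $X \mapsto X^{*}_{o}$, this will reduce the claim to a computation at $o$.

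First I would dispatch the bracket term. For the left action of $G$ on $M$, fundamental vector fields satisfy $[X^{*}, Y^{*}] = -[X,Y]^{*}$, and $X, Y \in \frak{m}$ forces $[X,Y] \in \frak{k}$, so Lemma \ref{2-2-1}(ii) yields $(\nabla^{M}_{[X^{*}, Y^{*}]} Z^{*})_{o} = 0$ immediately.

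The heart of the proof is evaluating $(\nabla^{M}_{X^{*}} \nabla^{M}_{Y^{*}} Z^{*})_{o}$. I would set $V := \nabla^{M}_{Y^{*}} Z^{*}$, note that $V_{o} = 0$ since $[Y,Z] \in \frak{k}$ makes the $\frak{m}$-part in Lemma \ref{2-2-1}(ii) vanish, and then apply Lemma \ref{2-2-1}(iii) along the geodesic $p(t) = \exp(tX)(o)$, writing $V_{p(t)}$ in terms of $\mathrm{Ad}(\exp(-tX)) = e^{-t\,\mathrm{ad}X}$. Expanding to first order in $t$ and using $[X,Y], [X,Z] \in \frak{k}$ to simplify which pieces survive projection onto $\frak{m}$, the expression collapses to $V_{p(t)} = t\, \exp(tX)_{*}[Y,[X,Z]] + O(t^{2})$. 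Because parallel transport along $p(t)$ in a symmetric space is realized by the group action $\exp(tX)_{*}$, dividing by $t$ and letting $t \to 0$ yields $(\nabla^{M}_{X^{*}} V)_{o} = [Y, [X, Z]]$, and the same computation with $X$ and $Y$ swapped gives $(\nabla^{M}_{Y^{*}} \nabla^{M}_{X^{*}} Z^{*})_{o} = [X, [Y, Z]]$.

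Assembling the three contributions, $R^{M}(X, Y) Z|_{o} = [Y, [X, Z]] - [X, [Y, Z]]$, and the Jacobi identity in $\frak{g}$ rewrites the right-hand side as $-[[X, Y], Z]$. The main technical obstacle is in the third step: correctly identifying parallel transport along the geodesic with the differential of the one-parameter subgroup (a standard feature of symmetric spaces, but worth recording), and carefully tracking which iterated brackets land in $\frak{k}$ versus $\frak{m}$ when taking the $\frak{m}$-projection in the expansion. Once that bookkeeping is settled, the Jacobi identity finishes the argument in one line.
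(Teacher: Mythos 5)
Your argument is correct, and it checks out against the paper's conventions: with the paper's definition of fundamental vector fields one indeed has $[X^{*},Y^{*}]=-[X,Y]^{*}$, which together with $[X,Y]\in\frak{k}$ kills the $\nabla_{[X^{*},Y^{*}]}Z^{*}$ term by Lemma \ref{2-2-1}(ii); your first-order expansion of Lemma \ref{2-2-1}(iii) along $p(t)=\mathrm{exp}(tX)(o)$ is right (the order-zero term dies because $[Y,Z]\in\frak{k}$, the order-one term survives because $[Y,[X,Z]]\in\frak{m}$), and the Jacobi identity turns $[Y,[X,Z]]-[X,[Y,Z]]$ into $-[[X,Y],Z]$. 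Note, though, that the paper does not prove this lemma at all — it is quoted from Helgason — so your contribution is a self-contained verification rather than a reproduction of an argument in the text; Helgason's own treatment obtains the same formula from the structure theory of the symmetric pair (via the geodesic symmetry), whereas you bootstrap it from the connection formula in Lemma \ref{2-2-1}. The one ingredient you import beyond that lemma is that parallel transport along the geodesic $\mathrm{exp}(tX)(o)$ is $\mathrm{exp}(tX)_{*}$; as you say this is standard, but it should be justified independently of the curvature formula to avoid circularity — the usual argument writes the transvection as $s_{p(t/2)}\circ s_{o}$ and uses only that geodesic symmetries are isometries, so there is no circularity. With that fact recorded (or, alternatively, by computing $\lim_{t\to 0}t^{-1}\big((\mathrm{exp}(tX)_{*})^{-1}V_{p(t)}-V_{o}\big)$ exactly as you do), the proof is complete.
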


We will close this subsection by recalling the Gauss equation.

\begin{lemm}[The Gauss equation]
Let $(M,g)$ be a Riemannian manifold and $N$ be a submanifold of $M$.
Consider the induced metric on $N$ by $g$.
Denote by $R^{M}$ (resp.\ $R^{N}$) the curvature tensor of $M$ (resp.\ $N$).
Let $h$ be the second fundamental form of $N$.
Then, for any $x \in N$ and $X,Y,Z,W \in T_{x}M$,
\[
\begin{split}
g \Big( R^{N}(X,Y)Z,W \Big) = g \Big( & R^{M}(X,Y)Z,W \Big) \\ 
&- g \Big( h(X,Z), h(Y,W) \Big) + g \Big( h(X, W), h(Y,Z) \Big).
\end{split}
\]
\end{lemm}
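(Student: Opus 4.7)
The plan is to carry out the standard derivation of the Gauss equation from the Gauss formula $\nabla^{M}_{X}Y = \nabla^{N}_{X}Y + h(X,Y)$ for vector fields $X,Y$ tangent to $N$. I would first extend the tangent vectors $X,Y,Z,W$ at $x$ to local vector fields on $N$ (this is routine and the final identity is tensorial, so the choice of extension will not matter). Throughout the argument I would use both the Gauss formula above and the Weingarten formula $\nabla^{M}_{X}\xi = -A_{\xi}X + \nabla^{\perp}_{X}\xi$ for vector fields $\xi$ normal to $N$, together with the fundamental identity $g(A_{\xi}X,Y) = g(h(X,Y),\xi)$ relating the shape operator and the second fundamental form.

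The core computation is to expand $R^{M}(X,Y)Z = \nabla^{M}_{X}\nabla^{M}_{Y}Z - \nabla^{M}_{Y}\nabla^{M}_{X}Z - \nabla^{M}_{[X,Y]}Z$ by substituting the Gauss formula. For the first term,
\[
\nabla^{M}_{X}\nabla^{M}_{Y}Z = \nabla^{M}_{X}\bigl( \nabla^{N}_{Y}Z + h(Y,Z) \bigr) = \nabla^{N}_{X}\nabla^{N}_{Y}Z + h(X,\nabla^{N}_{Y}Z) - A_{h(Y,Z)}X + \nabla^{\perp}_{X} h(Y,Z),
\]
where I used the Gauss formula on the tangential piece and the Weingarten formula on the normal piece $h(Y,Z)$. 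A symmetric expansion handles $\nabla^{M}_{Y}\nabla^{M}_{X}Z$, and $\nabla^{M}_{[X,Y]}Z$ splits into $\nabla^{N}_{[X,Y]}Z + h([X,Y],Z)$. Assembling these three expressions and taking the $g$-inner product with the tangent vector $W$ kills all the normal components (the two $\nabla^{\perp}$ terms and the $h([X,Y],Z)$ term), leaving
\[
g\bigl( R^{M}(X,Y)Z, W \bigr) = g\bigl( R^{N}(X,Y)Z, W \bigr) - g\bigl( A_{h(Y,Z)}X, W \bigr) + g\bigl( A_{h(X,Z)}Y, W \bigr),
\]
after cancelling the $h(X,\nabla^{N}_{Y}Z)$ and $h(Y,\nabla^{N}_{X}Z)$ contributions against $W$.

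The last step is to convert the shape-operator terms back into the second fundamental form via $g(A_{\xi}X, W) = g(h(X,W), \xi)$, which gives $g(A_{h(Y,Z)}X,W) = g(h(X,W), h(Y,Z))$ and analogously for the other term. Rearranging yields exactly the stated identity.

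There is no real obstacle here: the proof is a formal manipulation, and the only thing to be careful about is the bookkeeping of tangential versus normal components and the sign from the Weingarten formula. (Strictly speaking, the statement as written has $X,Y,Z,W \in T_{x}M$, but the Gauss equation as displayed only makes sense for vectors tangent to $N$, which I would note in passing as an apparent typo in the hypothesis.)
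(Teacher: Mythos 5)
Your derivation is correct: it is the standard proof of the Gauss equation via the Gauss and Weingarten formulas, with the right sign bookkeeping, and the paper itself states this lemma as a classical fact without giving any proof, so there is nothing to diverge from. Your remark that the hypothesis should read $X,Y,Z,W \in T_{x}N$ rather than $T_{x}M$ is also correct; that is indeed a typo in the statement.
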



\subsection{The restricted root system} \label{s2-3}

In this subsection, we recall fundamental properties of the restricted root system of $M$ from \cite{Helgason}, \cite{Takeuchi}, and \cite{Takeuchi2}.
Let $\tau$ be the complex conjugation corresponding to $\frak{g}$.
We extend $\theta$ to $\tilde{\frak{g}}$ and denote it by the same symbol.
Set $\sigma = \theta \circ \tau = \tau \circ \theta$.
The real form corresponding to $\sigma$ is denoted by $\frak{g}_{\sigma}$, that is, $\frak{g}_{\sigma} = \{ X \in \tilde{\frak{g}} \ ;\ \sigma(X) = X \}$.
Then, $\frak{g}_{\sigma}$ is a real simple Lie algebra, and $\frak{g}_{\sigma} = \frak{k} + i\frak{m}$.
Let $\frak{a}$ be a maximal abelian subspace of $\frak{m}$ and $\frak{h}$ be a maximal abelian subspace of $\frak{g}$ containing $\frak{a}$ such that $\theta(\frak{h}) \subset \frak{h}$.
Set $\frak{b} = \frak{h} \cap \frak{k}$.
Then, $\frak{h} = \frak{b} + \frak{a}$.
Let $\tilde{\frak{h}}$ and $\tilde{\frak{a}}$ be the complexification of $\frak{h}$ and $\frak{a}$, respectively.
Then, $\tilde{\frak{h}}$ is a Cartan subalgebra of $\tilde{\frak{g}}$.
Define $\frak{h}_{0} = i\frak{h}$ and $\frak{a}_{0} = i\frak{a}$.
For any vector space $V$, we denote the dual of $V$ by $V^{*}$.
For each $\alpha \in \tilde{\frak{h}}^{*}$, set
\[
\tilde{\frak{g}}_{\alpha} = \{ X \in \tilde{\frak{g}}\ ;\ [H, X] = \alpha(H)X\ (H \in \tilde{\frak{h}}) \}.
\]
Thus, we define the root system $\tilde{\Sigma}$ of $\tilde{\frak{g}}$ with respect to $\tilde{\frak{h}}$ by $\tilde{\Sigma} = \{ \alpha \in \frak{h}_{0}^{*} - \{ 0 \} \ ;\ \dim \tilde{\frak{g}_{\alpha}} \not= 0 \}$.
Then, the following root space decomposition follows:
\[
\tilde{\frak{g}} = \tilde{\frak{h}} + \sum_{\alpha \in \tilde{\Sigma}}\tilde{\frak{g}}_{\alpha}.
\]
Note that each $\alpha \in \tilde{\Sigma}$ is $\mathbb{R}$-valued on $\frak{h}_{0}$.
Because the restriction $(\ ,\ )|_{\frak{h}_{0} \times \frak{h}_{0}}$ of $(\ ,\ )$ to $\frak{h}_{0} \times \frak{h}_{0}$ is an inner product on $\frak{h}_{0}$, we set $H_{\alpha} \in \frak{h}_{0}$ for any $\alpha \in \tilde{\Sigma}$ such that $(H_{\alpha}, H) = \alpha(H)$ for any $H \in \frak{h}_{0}$.
We denote $(H_{\alpha}, H_{\beta})$ by $(\alpha, \beta)$ for any $\alpha, \beta \in \tilde{\Sigma}$, 
The coroot of $\alpha$ is denoted by $\alpha^{*}$ and the element of $\frak{h}_{0}$ corresponding to $\alpha^{*}$ is denoted by $A_{\alpha}$, that is, $\alpha^{*} = (2/(\alpha, \alpha))\alpha$ and $A_{\alpha} = (2/(\alpha, \alpha))H_{\alpha}$.
For any $\alpha \in \tilde{\Sigma}, H \in \tilde{\frak{h}},$ and $X \in \tilde{\frak{g}}_{\alpha}$,
\[
\begin{split}
[H, \tau(X)] = \tau\Big( [\tau(H), X] \Big) = - \tau \big( \alpha \big( \tau(H) \big)X \big) = -\overline{\alpha \big(\tau(H) \big)}\tau(X).
\end{split}
\]
Thus, $\tau(\alpha) = \overline{\alpha \circ \tau (\cdot)} \in \tilde{\Sigma}$.
Similarly, $\sigma(\alpha) = \overline{\alpha \circ \sigma (\cdot)} \in \tilde{\Sigma}$.
Then, $H_{\tau(\alpha)} = \tau(H_{\alpha}) =  -H_{\alpha}$ and $H_{\sigma(\alpha)} = \sigma(H_{\alpha})$.
Note that $\tau|_{\frak{h}_{0}} = -\mathrm{id}$ and $\sigma|_{\frak{h}_{0}} = -\mathrm{id}_{i\frak{b}} + \mathrm{id}_{\frak{a}_{0}}$.

For any $\lambda \in \tilde{\frak{a}}^{*}$, set
\[
\frak{g}_{\lambda}^{\mathbb{C}} = \{ X \in \tilde{\frak{g}}\ ;\ [H, X] = \alpha(H)X \ (H \in \frak{a})\}
\]
and define the restricted root system of $(\frak{g}, \frak{k})$ with respect to $\frak{a}$ by $\Sigma = \{ \lambda \in \frak{a}_{0}^{*} - \{ 0 \}\ ;\ \dim \frak{g}_{\lambda}^{\mathbb{C}} \not= 0 \}$.
Note that any restricted root is $\mathbb{R}$-valued on $\frak{a}_{0}$. 
We denote by $\overline{(\, \cdot \, )}$ the orthogonal projection from $\frak{h}_{0}$ onto $\frak{a}_{0}$ as well as the restriction of $\tilde{\frak{h}}$ to $\tilde{\frak{a}}$.
Then, 
\[
\Sigma = \{ \bar{\alpha} \ ;\ \alpha \in \tilde{\Sigma} \}.
\]
For each $\lambda \in \Sigma$, we define $H_{\lambda} \in \frak{a}_{0}$ such that $(H_{\lambda}, H) = \lambda(H)$ for any $H \in \frak{a}_{0}$.
If $\alpha \in \tilde{\Sigma}$ satisfies $\bar{\alpha} = \lambda$, then $H_{\lambda} = \overline{H}_{\alpha}$.
Moreover, $\lambda = (1/2)(\alpha + \sigma(\alpha))$ and $H_{\lambda} = (1/2)(H_{\alpha} + \sigma(H_{\alpha}))$.
Denote $(H_{\lambda}, H_{\mu})$ by $(\lambda, \mu)$ for any $\lambda, \mu \in \Sigma$ and set $A_{\lambda} = (2/(\lambda, \lambda))H_{\lambda}$.
For any $\lambda \in \Sigma$,
\[
\frak{g}^{\mathbb{C}}_{\lambda} = \sum_{\alpha \in \tilde{\Sigma}, \bar{\alpha} = \lambda}\tilde{\frak{g}}_{\alpha}
\]
Let $m(\lambda)$ be the multiplicity of $\lambda \in \Sigma$, that is, $m(\lambda) = \#\{ \alpha \in \tilde{\Sigma} \ ;\ \bar{\alpha} = \lambda \} = \dim_{\mathbb{C}}\frak{g}_{\lambda}^{\mathbb{C}}$.
If the centralizer of $\tilde{\frak{a}}$ in $\tilde{\frak{g}}$ is denoted by $\frak{g}_{0}^{\mathbb{C}}$, then
\[
\tilde{\frak{g}} = \frak{g}_{0}^{\mathbb{C}} + \sum_{\lambda \in \Sigma}\frak{g}_{\lambda}^{\mathbb{C}}.
\]
Take a linear order on $\frak{h}_{0}$ such that $\overline{H} > 0$ if $H > 0$ for any $H \in \frak{h}_{0}$.
If $\alpha \in \tilde{\Sigma}$ satisfies $\alpha > 0$, then $\sigma(\alpha) > 0$.
The set of all positive roots (resp.\ restricted roots) is denoted by $\tilde{\Sigma}^{+}$ (resp.\ $\Sigma^{+}$).
Moreover, the fundamental roots (resp.\ restricted roots) is denoted by $\tilde{F}$ (resp.\ $F$).
Then,
\[
\Sigma^{+} = \{ \bar{\alpha} \ ;\ \alpha \in \tilde{\Sigma}^{+} \}, \quad F = \{ \bar{\alpha} \ ;\ \alpha \in \tilde{F} \}.
\]

\begin{lemm} \cite{Takeuchi} \label{2-3-1}
For each $\alpha \in \tilde{\Sigma}$, there exists $X_{\alpha} \in \tilde{\frak{g}_{\alpha}}$ satisfying the following properties.

\begin{itemize}

\item[(i)]
$[X_{\alpha}, X_{-\alpha}] = -A_{\alpha}$.

\item[(ii)]
$\tau(X_{\alpha}) = X_{-\alpha}$.

\item[(iii)]
Set $U_{\alpha} = X_{\alpha} + X_{-\alpha}$ and $W_{\alpha} = i(X_{\alpha} - X_{-\alpha})$.
Then, $U_{-\alpha} = U_{\alpha}$ and $W_{-\alpha} = -W_{\alpha}$.
Moreover,
\[
\frak{g} = \frak{h} + \sum_{\alpha \in \tilde{\Sigma}^{+}}(\mathbb{R}U_{\alpha} + \mathbb{R}W_{\alpha}).
\]

\item[(iv)]
$\sigma(X_{\alpha}) = X_{\sigma(\alpha)}$.

\item[(v)]
If $\alpha, \beta \in \tilde{\Sigma}$ satisfy $\alpha + \beta \in \tilde{\Sigma}$, we set $N_{\alpha, \beta} \in \mathbb{C}$ such that $[X_{\alpha}, X_{\beta}] = N_{\alpha, \beta}X_{\alpha + \beta}$.
Then, $N_{-\alpha, -\beta} = \epsilon_{\alpha, \beta}N_{\alpha, \beta}$, where $\epsilon_{\alpha, \beta} = \pm 1$ and $N_{\alpha, \beta}^{2} = \epsilon_{\alpha, \beta}(p+1)^{2}$, where $p$ is the largest integer such that $\alpha - p\beta \in \tilde{\Sigma}$.

\end{itemize}
\end{lemm}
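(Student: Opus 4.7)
The plan is to build the root vectors $X_\alpha$ by successive normalization: first fix the bracket $[X_\alpha,X_{-\alpha}]=-A_\alpha$, then impose $\tau$-compatibility, then adjust for $\sigma$, preserving earlier normalizations at each step. Since each root space $\tilde{\frak{g}}_\alpha$ is one-dimensional and $(\cdot,\cdot)$ pairs $\tilde{\frak{g}}_\alpha$ with $\tilde{\frak{g}}_{-\alpha}$ non-degenerately, with $[Y,Z]=(Y,Z)H_\alpha$ for $Y\in\tilde{\frak{g}}_\alpha$ and $Z\in\tilde{\frak{g}}_{-\alpha}$, I would begin by rescaling arbitrary generators $Y_\alpha\in\tilde{\frak{g}}_\alpha$ so that $[Y_\alpha,Y_{-\alpha}]=-A_\alpha$, giving (i). The remaining gauge freedom is $Y_\alpha\mapsto t_\alpha Y_\alpha$ with $t_\alpha t_{-\alpha}=1$.

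For (ii), $\tau$ is complex conjugation relative to the compact real form $\frak{g}$ and restricts to an antilinear isomorphism $\tilde{\frak{g}}_\alpha\to\tilde{\frak{g}}_{-\alpha}$, so $\tau(Y_\alpha)=c_\alpha Y_{-\alpha}$ with $c_{-\alpha}=\bar c_\alpha$; that $|c_\alpha|=1$ follows from the compatibility of $\tau$ with $(\cdot,\cdot)$ and the normalization (i). Choosing unimodular $t_\alpha$ on positive roots with $t_\alpha^{2}=c_\alpha$, and extending by $t_{-\alpha}=\bar t_\alpha$, produces $X_\alpha=t_\alpha Y_\alpha$ satisfying $\tau(X_\alpha)=X_{-\alpha}$ and preserving (i). Property (iii) is then immediate: $U_\alpha$ and $W_\alpha$ are $\tau$-fixed, hence lie in $\frak{g}$, and intersecting the complex root-space decomposition $\tilde{\frak{g}}=\tilde{\frak{h}}+\sum_{\alpha}\tilde{\frak{g}}_\alpha$ with $\frak{g}$ yields the asserted real direct sum.

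For (iv), $\sigma=\theta\circ\tau=\tau\circ\theta$ is an antilinear involution preserving $(\cdot,\cdot)$ and sending $\tilde{\frak{g}}_\alpha$ to $\tilde{\frak{g}}_{\sigma(\alpha)}$, hence $\sigma(X_\alpha)=\eta_\alpha X_{\sigma(\alpha)}$ with $\eta_\alpha\in U(1)$. The identities $\sigma^{2}=\mathrm{id}$ and $\sigma\tau=\tau\sigma=\theta$ translate into $\eta_{\sigma(\alpha)}=\eta_\alpha$ and $\eta_{-\alpha}=\overline{\eta_\alpha}$. A further unimodular rescaling $s_\alpha$ with $s_{-\alpha}=\bar s_\alpha$ satisfying $s_{\sigma(\alpha)}=\bar s_\alpha\eta_\alpha$, constructed orbit-by-orbit under $\sigma$ on $\tilde\Sigma$, then establishes (iv) without spoiling (i), (ii), (iii). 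For (v), applying $\tau$ to $[X_\alpha,X_\beta]=N_{\alpha,\beta}X_{\alpha+\beta}$ and using (ii) gives $N_{-\alpha,-\beta}=\overline{N_{\alpha,\beta}}$, while the classical $\beta$-string formula combined with the normalization in (i) forces $|N_{\alpha,\beta}|^{2}=(p+1)^{2}$ and $N_{\alpha,\beta}^{2}\in\mathbb R$; writing $N_{\alpha,\beta}^{2}=\epsilon_{\alpha,\beta}(p+1)^{2}$ with $\epsilon_{\alpha,\beta}=\pm 1$ and combining with $N_{-\alpha,-\beta}=\overline{N_{\alpha,\beta}}$ yields the required $N_{-\alpha,-\beta}=\epsilon_{\alpha,\beta}N_{\alpha,\beta}$.

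The principal obstacle is the simultaneous resolution of (ii) and (iv) on $\sigma$-orbits containing a root with $\sigma(\alpha)=-\alpha$, where the two conditions for $s_\alpha$ collapse to $\eta_\alpha=1$, whereas the general identities only yield $\eta_\alpha\in\{\pm 1\}$. This is overcome by observing that $\sigma(\alpha)=-\alpha$ forces $\alpha$ to vanish on $\frak a_{0}$, so that $\tilde{\frak g}_\alpha$ lies in the $+1$-eigenspace of $\theta$ as a consequence of the spectral decomposition of $\mathrm{ad}J$ under $(\mathrm{ad}J)^{3}=-\mathrm{ad}J$; then $\sigma(X_\alpha)=\tau\theta(X_\alpha)=\tau(X_\alpha)=X_{-\alpha}$, giving $\eta_\alpha=1$ automatically and completing the construction.
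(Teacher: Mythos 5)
The paper itself offers no proof of this lemma (it is quoted from the reference [Takeuchi]), so your construction must stand on its own. Steps (i)--(iv) follow the standard successive-normalization scheme and are essentially workable, but the details of (ii) are off: applying $\tau$ to the normalization (i) together with $\tau^{2}=\mathrm{id}$ gives $c_{\alpha}c_{-\alpha}=\bar{c}_{\alpha}c_{-\alpha}=1$, so $c_{\alpha}$ is \emph{real}, and positive-definiteness of $\langle\ ,\ \rangle=-(\ ,\ )$ on $\frak{g}$ forces $c_{\alpha}>0$; it is not unimodular in general. The needed rescaling is therefore one with $|t_{\alpha}|^{2}=c_{\alpha}^{-1}$, $t_{-\alpha}=t_{\alpha}^{-1}$; your prescription ``unimodular $t_{\alpha}$ with $t_{\alpha}^{2}=c_{\alpha}$, $t_{-\alpha}=\bar{t}_{\alpha}$'' is consistent only when $c_{\alpha}=1$ and otherwise achieves nothing. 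This is a repairable slip. Your resolution of the case $\sigma(\alpha)=-\alpha$ is the right idea: $\bar{\alpha}=0$ places $\tilde{\frak{g}}_{\alpha}$ in the centralizer of $\frak{a}$, hence (its Cartan component being excluded) in $\tilde{\frak{k}}=\ker(\mathrm{ad}J)$, so $\theta$ acts by $+1$ and $\eta_{\alpha}=1$, although your appeal to ``the spectral decomposition of $\mathrm{ad}J$'' elides the centralizer step.

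The genuine gap is in (v). From $\tau$-compatibility you get $N_{-\alpha,-\beta}=\overline{N_{\alpha,\beta}}$, and the $\mathfrak{sl}_{2}$/root-string identity determines the product $N_{\alpha,\beta}N_{-\alpha,-\beta}$, i.e.\ the modulus $|N_{\alpha,\beta}|$; neither gives $N_{\alpha,\beta}^{2}\in\mathbb{R}$. The assertion $\epsilon_{\alpha,\beta}=\pm 1$ says precisely that every $N_{\alpha,\beta}$ is real or purely imaginary, and this is an extra property of a carefully constructed basis, not a formal consequence of (i)--(iv): a basis satisfying (i)--(iv) is determined only up to unimodular rescalings $s_{\alpha}$ (subject to $s_{-\alpha}=\bar{s}_{\alpha}$, $s_{\sigma(\alpha)}=\bar{s}_{\alpha}\eta_{\alpha}$), and such rescalings rotate the phases of the $N_{\alpha,\beta}$ arbitrarily. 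In particular your step (iv), which picks square roots of $\eta_{\alpha}$ orbit-by-orbit with no further constraint, will in general destroy the reality of the structure constants even if one starts from a Chevalley basis in which all $N_{\alpha,\beta}=\pm(p+1)$. The substance of Takeuchi's lemma is exactly that the $\sigma$-adjustment can be carried out so that each $N_{\alpha,\beta}$ remains real or purely imaginary of modulus $p+1$ (this uses finer information, e.g.\ $\alpha-\sigma(\alpha)\notin\tilde{\Sigma}$, to control the admissible phases on each $\sigma$-orbit); your proposal does not supply this argument, so property (v), and with it the sign system $\epsilon_{\alpha,\beta}$ on which Lemmas \ref{2-3-2} and \ref{2-3-3} rest, is not established.
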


\begin{lemm} \label{2-3-2}
For any $\alpha, \beta, \gamma \in \tilde{\Sigma}$,

\begin{itemize}

\item[(i)]
$N_{\beta, \alpha} = - N_{\alpha, \beta}$ and $\epsilon_{\beta, \alpha} = \epsilon_{\alpha, \beta}$.

\item[(ii)]
$N_{-\alpha, -\beta} = \overline{N_{\alpha, \beta}}$ and $N_{\sigma(\alpha), \sigma(\beta)} =\overline{ N_{\alpha, \beta} }$.
Moreover, $\epsilon_{-\alpha, -\beta} = \epsilon_{\alpha, \beta}$ and $\epsilon_{\sigma(\alpha), \sigma(\beta)} = \epsilon_{\alpha, \beta}$.

\item[(iii)]
$N_{\alpha, \beta} = (1/k)N_{\beta, \gamma} = N_{\gamma, \alpha}$ and $\epsilon_{\alpha, \beta} = \epsilon_{\beta, \gamma} = \epsilon_{\gamma, \alpha}$ if $\alpha + \beta + \gamma = 0$ and $|\alpha| = \sqrt{k}|\beta| = \sqrt{k}|\gamma|\ (k \in \mathbb{Z})$.

\end{itemize}
\end{lemm}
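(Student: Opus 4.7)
The plan is to systematically manipulate the defining identity $[X_\alpha, X_\beta] = N_{\alpha,\beta} X_{\alpha+\beta}$ using the properties collected in Lemma~\ref{2-3-1}. In every case the $\epsilon$-identity will be extracted from the corresponding $N$-identity by invoking the relation $N_{-\alpha,-\beta} = \epsilon_{\alpha,\beta} N_{\alpha,\beta}$ of Lemma~\ref{2-3-1}(v), together with $\epsilon^2 = 1$. For (i), antisymmetry of the Lie bracket immediately yields $N_{\beta,\alpha} = -N_{\alpha,\beta}$. Applying the same antisymmetry to the pair $(-\alpha,-\beta)$ and chaining through Lemma~\ref{2-3-1}(v) gives $\epsilon_{\beta,\alpha} N_{\beta,\alpha} = N_{-\beta,-\alpha} = -N_{-\alpha,-\beta} = -\epsilon_{\alpha,\beta} N_{\alpha,\beta} = \epsilon_{\alpha,\beta} N_{\beta,\alpha}$, whence $\epsilon_{\beta,\alpha} = \epsilon_{\alpha,\beta}$.

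For (ii), I would apply the conjugate-linear involution $\tau$ to both sides of the defining identity. By Lemma~\ref{2-3-1}(ii), $\tau(X_\alpha) = X_{-\alpha}$, so the left side becomes $[X_{-\alpha}, X_{-\beta}] = N_{-\alpha,-\beta} X_{-\alpha-\beta}$ while the right side becomes $\overline{N_{\alpha,\beta}}\, X_{-\alpha-\beta}$, forcing $N_{-\alpha,-\beta} = \overline{N_{\alpha,\beta}}$. The same argument with $\sigma$ (conjugate-linear on $\tilde{\frak{g}}$) and Lemma~\ref{2-3-1}(iv) produces $N_{\sigma(\alpha), \sigma(\beta)} = \overline{N_{\alpha,\beta}}$. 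The two $\epsilon$-equalities then drop out by pairing Lemma~\ref{2-3-1}(v) with these conjugation identities: for $\epsilon_{-\alpha,-\beta}$ iterate (v) once; for $\epsilon_{\sigma(\alpha),\sigma(\beta)}$ combine $N_{-\sigma(\alpha),-\sigma(\beta)} = \overline{N_{\sigma(\alpha),\sigma(\beta)}} = N_{\alpha,\beta}$ with $\overline{N_{\alpha,\beta}} = \epsilon_{\alpha,\beta} N_{\alpha,\beta}$.

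The substantive part is (iii). I would apply the Jacobi identity to $X_\alpha, X_\beta, X_\gamma$ with $\alpha+\beta+\gamma = 0$. Since $\beta+\gamma = -\alpha$, Lemma~\ref{2-3-1}(i) gives $[X_\alpha,[X_\beta,X_\gamma]] = N_{\beta,\gamma}[X_\alpha,X_{-\alpha}] = -N_{\beta,\gamma} A_\alpha$, and the cyclic terms yield
\[
N_{\beta,\gamma}\, A_\alpha + N_{\gamma,\alpha}\, A_\beta + N_{\alpha,\beta}\, A_\gamma = 0.
\]
On the other hand, $H_\alpha + H_\beta + H_\gamma = 0$ together with $A_\rho = (2/|\rho|^2) H_\rho$ and $|\alpha|^2 = k|\beta|^2 = k|\gamma|^2$ gives the linear dependency $kA_\alpha + A_\beta + A_\gamma = 0$. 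Substituting $A_\alpha = -(A_\beta + A_\gamma)/k$ into the Jacobi relation and using the linear independence of $A_\beta$ and $A_\gamma$ extracts $N_{\alpha,\beta} = N_{\gamma,\alpha} = N_{\beta,\gamma}/k$. Applying the same identity to $(-\alpha,-\beta,-\gamma)$ gives $N_{-\alpha,-\beta} = N_{-\gamma,-\alpha} = N_{-\beta,-\gamma}/k$, and then Lemma~\ref{2-3-1}(v) yields for example $\epsilon_{\alpha,\beta} N_{\alpha,\beta} = N_{-\alpha,-\beta} = N_{-\gamma,-\alpha} = \epsilon_{\gamma,\alpha} N_{\gamma,\alpha} = \epsilon_{\gamma,\alpha} N_{\alpha,\beta}$, so $\epsilon_{\alpha,\beta} = \epsilon_{\gamma,\alpha}$; the remaining equality is analogous. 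The only non-routine aspect of the whole argument is tracking the asymmetric length factor $k$ cleanly through the Jacobi relation in (iii); once that is pinned down, the $\epsilon$-identities are purely bookkeeping via Lemma~\ref{2-3-1}(v).
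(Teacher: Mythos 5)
Your proposal is correct and follows essentially the same route as the paper, which proves (i) by antisymmetry of the bracket, (ii) by applying the conjugate-linear involutions $\tau$ and $\sigma$ to $[X_{\alpha},X_{\beta}]=N_{\alpha,\beta}X_{\alpha+\beta}$, and (iii) by the Jacobi identity; your write-up simply supplies the details (notably the length-factor bookkeeping $kA_{\alpha}+A_{\beta}+A_{\gamma}=0$ and the reduction of the $\epsilon$-identities to Lemma \ref{2-3-1}(v)) that the paper leaves implicit.
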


\begin{proof}
Since $[X_{\alpha}, X_{\beta}] = - [ X_{\beta}, X_{\alpha} ] \ (X,Y \in \tilde{\frak{g}})$, we see (i) follows.
Moreover, $\tau([X_{\alpha}. X_{\beta}]) = [\tau(X), \tau(Y)]$ and $\sigma([X_{\alpha}. X_{\beta}]) = [\sigma(X), \sigma(Y)]$.
Thus, (ii) follows.
The Jacobi identity implies that (iii) follows.
\end{proof}

By simple computations, we have Lemma \ref{2-3-3}.

\begin{lemm} \label{2-3-3}
For $\alpha, \beta \in \tilde{\Sigma}^{+}$, set $U_{\alpha \pm \beta} = W_{\alpha \pm \beta} = 0$ if $\alpha \pm \beta \not\in \tilde{\Sigma}$.
If $(\epsilon_{\alpha, \beta}, \epsilon_{\alpha, -\beta}) = (1,1)$,
\[
\begin{array}{lllllllllllllllllllll}
\ [U_{\alpha}, U_{\beta}]  = N_{\alpha, \beta}U_{\alpha + \beta} + N_{\alpha, -\beta}U_{\alpha - \beta}, 
& [W_{\alpha}, W_{\beta}] = - N_{\alpha, \beta}U_{\alpha + \beta} + N_{\alpha, -\beta}U_{\alpha - \beta}, \\
\ [U_{\alpha}, W_{\beta}] = N_{\alpha, \beta}W_{\alpha + \beta} - N_{\alpha, -\beta}W_{\alpha - \beta}, 
& [W_{\alpha}, U_{\beta}] =N_{\alpha, \beta}W_{\alpha + \beta} + N_{\alpha, -\beta}W_{\alpha - \beta}. \\
\end{array}
\]
If $(\epsilon_{\alpha, \beta}, \epsilon_{\alpha, -\beta}) = (1,-1)$,
\[
\begin{array}{lllllllllllllllllll}
\ [U_{\alpha}, U_{\beta}] = N_{\alpha, \beta}U_{\alpha + \beta} - iN_{\alpha, -\beta}W_{\alpha - \beta}, 
& [W_{\alpha}, W_{\beta}] = -N_{\alpha, \beta}U_{\alpha + \beta} - iN_{\alpha, -\beta}W_{\alpha - \beta}, \\
\ [U_{\alpha}, W_{\beta}] = N_{\alpha, \beta}W_{\alpha + \beta} - iN_{\alpha, -\beta}U_{\alpha - \beta}, 
& [W_{\alpha}, U_{\beta}] = N_{\alpha, \beta}W_{\alpha + \beta} + iN_{\alpha, -\beta}U_{\alpha - \beta}. 
\end{array}
\]
If $(\epsilon_{\alpha, \beta}, \epsilon_{\alpha, -\beta}) = (-1,1)$,
\[
\begin{array}{lllllllllllllllllll}
\ [U_{\alpha}, U_{\beta}] = - iN_{\alpha, \beta}W_{\alpha + \beta} + N_{\alpha, -\beta}U_{\alpha - \beta}, 
& [W_{\alpha}, W_{\beta}] = iN_{\alpha, \beta}W_{\alpha + \beta} + N_{\alpha, -\beta}U_{\alpha - \beta}, \\
\ [U_{\alpha}, W_{\beta}] = iN_{\alpha, \beta}U_{\alpha + \beta} - N_{\alpha, -\beta}W_{\alpha - \beta}, 
& [W_{\alpha}, U_{\beta}] = iN_{\alpha, \beta}U_{\alpha + \beta} + N_{\alpha, -\beta}W_{\alpha - \beta}.
\end{array}
\]
If $(\epsilon_{\alpha, \beta}, \epsilon_{\alpha, -\beta}) = (-1,-1)$,
\[
\begin{array}{llllllllllllllll}
\ [U_{\alpha}, U_{\beta}] = -iN_{\alpha, \beta}W_{\alpha + \beta} - iN_{\alpha, -\beta}W_{\alpha - \beta}, 
& [W_{\alpha}, W_{\beta}] = iN_{\alpha, \beta}W_{\alpha + \beta} - iN_{\alpha, -\beta}W_{\alpha - \beta}, \\
\ [U_{\alpha}, W_{\beta}] = iN_{\alpha, \beta}U_{\alpha + \beta} - iN_{\alpha, -\beta}U_{\alpha - \beta}, 
& [W_{\alpha}, U_{\beta}] = iN_{\alpha, \beta}U_{\alpha + \beta} + iN_{\alpha, -\beta}U_{\alpha - \beta}. 
\end{array}
\]
\end{lemm}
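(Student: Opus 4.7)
The proof is a direct calculation. I would begin by inverting the defining relations to get $X_{\alpha}=\tfrac12(U_{\alpha}-iW_{\alpha})$ and $X_{-\alpha}=\tfrac12(U_{\alpha}+iW_{\alpha})$, so that the brackets $[U_\alpha,U_\beta]$, $[W_\alpha,W_\beta]$, $[U_\alpha,W_\beta]$, $[W_\alpha,U_\beta]$ each expand into four root-vector brackets. For example,
\[
[U_{\alpha},U_{\beta}] = N_{\alpha,\beta}X_{\alpha+\beta}+N_{\alpha,-\beta}X_{\alpha-\beta}+N_{-\alpha,\beta}X_{-\alpha+\beta}+N_{-\alpha,-\beta}X_{-\alpha-\beta},
\]
with analogous expressions (with appropriate factors $\pm i$ coming from the definition of $W$) for the other three brackets.

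The next step is to reduce the four structure constants appearing above to just two. By Lemma \ref{2-3-1}(v), $N_{-\gamma,-\delta}=\epsilon_{\gamma,\delta}N_{\gamma,\delta}$; applying this with $(\gamma,\delta)=(\alpha,\beta)$ and with $(\gamma,\delta)=(\alpha,-\beta)$ yields
\[
N_{-\alpha,-\beta}=\epsilon_{\alpha,\beta}N_{\alpha,\beta}, \qquad N_{-\alpha,\beta}=\epsilon_{\alpha,-\beta}N_{\alpha,-\beta}.
\]
Substituting these, each bracket becomes a combination of $N_{\alpha,\beta}(X_{\alpha+\beta}\pm\epsilon_{\alpha,\beta}X_{-\alpha-\beta})$ and $N_{\alpha,-\beta}(X_{\alpha-\beta}\pm\epsilon_{\alpha,-\beta}X_{\beta-\alpha})$, where the internal signs are determined by how $U_\alpha$ vs.\ $W_\alpha$ enter each expansion.

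The final step is recombination. Using $X_{\gamma}+X_{-\gamma}=U_{\gamma}$ and $X_{\gamma}-X_{-\gamma}=-iW_{\gamma}$ applied to $\gamma=\alpha+\beta$ and $\gamma=\alpha-\beta$ (with $X_{\beta-\alpha}=X_{-(\alpha-\beta)}$), one sees that the $(\alpha+\beta)$-part collapses to $N_{\alpha,\beta}U_{\alpha+\beta}$ when $\epsilon_{\alpha,\beta}=+1$ and to $-iN_{\alpha,\beta}W_{\alpha+\beta}$ when $\epsilon_{\alpha,\beta}=-1$ (with analogous statements, possibly swapping $U\leftrightarrow W$, for the other three brackets), and similarly for the $(\alpha-\beta)$-part governed by $\epsilon_{\alpha,-\beta}$. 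Splitting into the four sign cases $(\epsilon_{\alpha,\beta},\epsilon_{\alpha,-\beta})\in\{\pm1\}^{2}$ and performing this recombination for each of the four brackets reproduces the sixteen formulas stated.

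The calculation is conceptually routine; the only real obstacle is bookkeeping — tracking the interplay between the factors of $i$ arising from $W_\gamma=i(X_\gamma-X_{-\gamma})$ and the signs $\epsilon_{\alpha,\pm\beta}$ that dictate whether the symmetric combination $X_\gamma+X_{-\gamma}$ or the antisymmetric one $X_\gamma-X_{-\gamma}$ appears. The conventions $U_{\alpha\pm\beta}=W_{\alpha\pm\beta}=0$ when $\alpha\pm\beta\notin\tilde\Sigma$ handle the cases where $N_{\alpha,\pm\beta}$ is not defined, so no separate treatment of those subcases is needed.
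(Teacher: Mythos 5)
Your proposal is correct and is precisely the computation the paper has in mind: the paper offers no details beyond ``by simple computations,'' and your route --- expanding $U_{\gamma}=X_{\gamma}+X_{-\gamma}$, $W_{\gamma}=i(X_{\gamma}-X_{-\gamma})$, reducing the four structure constants via $N_{-\gamma,-\delta}=\epsilon_{\gamma,\delta}N_{\gamma,\delta}$ from Lemma \ref{2-3-1}(v), and recombining the $\pm(\alpha\pm\beta)$ root vectors into $U$ or $W$ according to the signs $\epsilon_{\alpha,\beta},\epsilon_{\alpha,-\beta}$ --- reproduces all sixteen formulas. (Implicitly, as in the paper, one takes $\alpha\neq\beta$ so that $\alpha-\beta\neq 0$ and the convention $U_{\alpha\pm\beta}=W_{\alpha\pm\beta}=0$ covers all remaining subcases.)
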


Let $\lambda \in \Sigma$.
If $\alpha \in \tilde{\Sigma}$ and $\bar{\alpha} = \lambda$, then $\overline{\sigma(\alpha)} = \lambda$.
Thus, $\frak{g}_{\lambda}^{\mathbb{C}} + \frak{g}_{-\lambda}^{\mathbb{C}}$ is invariant under both $\tau$ and $\sigma$.
Define 
\[
\frak{k}_{\lambda} = (\frak{g}_{\lambda}^{\mathbb{C}} + \frak{g}_{-\lambda}^{\mathbb{C}}) \cap \frak{k}, \quad \frak{m}_{\lambda} = (\frak{g}_{\lambda}^{\mathbb{C}} + \frak{g}_{-\lambda}^{\mathbb{C}}) \cap \frak{m}.
\]
By definition, $\frak{k}_{-\lambda} = \frak{k}_{\lambda}$ and $\frak{m}_{-\lambda} = \frak{m}_{\lambda}$.
Let $\frak{k}_{0} = \frak{k} \cap \frak{g}_{0}^{\mathbb{C}}$.
Then, $\frak{k}_{0}$ is the centralizer of $\frak{a}$ in $\frak{k}$ and we have
\[
\frak{k} = \frak{k}_{0} + \sum_{\lambda \in \Sigma^{+}}\frak{k}_{\lambda}, \quad\quad \frak{m} = \frak{a} + \sum_{\lambda \in \Sigma^{+}}\frak{m}_{\lambda}.
\]

\begin{lemm} \cite{Takeuchi2} \label{2-3-4}
For $\lambda, \mu \in \Sigma^{+}\ (\lambda \not= \mu)$, set $\frak{k}_{\lambda \pm \mu} = \frak{m}_{\lambda \pm \mu} = \{ 0 \}$ if $\lambda \pm \mu \not\in \Sigma$.
Then, 
\[
\begin{array}{llllllll}
& [\frak{k}_{\lambda}, \frak{k}_{\mu}] \subset \frak{k}_{\lambda + \mu} + \frak{k}_{\lambda - \mu}, 
& [\frak{m}_{\lambda}, \frak{m}_{\mu}] \subset \frak{k}_{\lambda + \mu} + \frak{k}_{\lambda - \mu}, 
& [\frak{k}_{\lambda}, \frak{m}_{\mu}] \subset \frak{m}_{\lambda + \mu} + \frak{m}_{\lambda - \mu}, \\
& [\frak{k}_{\lambda}, \frak{k}_{\lambda}] \subset \frak{k}_{0} + \frak{k}_{2\lambda}, 
& [\frak{m}_{\lambda}, \frak{m}_{\lambda}] \subset \frak{k}_{0} + \frak{k}_{2\lambda}, 
& [\frak{k}_{\lambda}, \frak{m}_{\lambda}] \subset \frak{a} + \frak{m}_{2\lambda}, \\
& [\frak{k}_{0}, \frak{k}_{\lambda}] \subset \frak{k}_{\lambda},
& [\frak{k}_{0}, \frak{m}_{\lambda}] \subset \frak{m}_{\lambda}. \\
\end{array}
\]
\end{lemm}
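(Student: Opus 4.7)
The plan is to combine two decompositions on the same bracket: the complex root space decomposition of $\tilde{\mathfrak{g}}$ with respect to $\tilde{\mathfrak{h}}$, and the Cartan decomposition $\mathfrak{g} = \mathfrak{k} + \mathfrak{m}$ coming from $\theta$. Every statement in the lemma has the shape ``$[\,V_1,V_2\,]\subset W$'' where $V_1,V_2$ are contained in specific $\pm\lambda,\pm\mu$ restricted root components of $\mathfrak{k}$ or $\mathfrak{m}$, and $W$ is the intersection of the natural target root components with $\mathfrak{k}$ or $\mathfrak{m}$.

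First I would establish the purely restricted-root fact
\[
[\,\mathfrak{g}_\lambda^{\mathbb{C}}+\mathfrak{g}_{-\lambda}^{\mathbb{C}},\ \mathfrak{g}_\mu^{\mathbb{C}}+\mathfrak{g}_{-\mu}^{\mathbb{C}}\,]\subset \mathfrak{g}_{\lambda+\mu}^{\mathbb{C}}+\mathfrak{g}_{\lambda-\mu}^{\mathbb{C}}+\mathfrak{g}_{-\lambda+\mu}^{\mathbb{C}}+\mathfrak{g}_{-\lambda-\mu}^{\mathbb{C}},
\]
which follows directly by expanding $\mathfrak{g}_{\pm\lambda}^{\mathbb{C}}=\sum_{\bar{\alpha}=\pm\lambda}\tilde{\mathfrak{g}}_\alpha$ and using $[\tilde{\mathfrak{g}}_\alpha,\tilde{\mathfrak{g}}_\beta]\subset\tilde{\mathfrak{g}}_{\alpha+\beta}$; the four possible sums of restrictions $\overline{\alpha+\beta}$ are exactly $\pm\lambda\pm\mu$. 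When $\lambda=\mu$, the same computation gives containment in $\mathfrak{g}_0^{\mathbb{C}}+\mathfrak{g}_{2\lambda}^{\mathbb{C}}+\mathfrak{g}_{-2\lambda}^{\mathbb{C}}$.

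Next I would overlay the $\theta$-grading. Since $\theta$ acts as $+1$ on $\mathfrak{k}$ and $-1$ on $\mathfrak{m}$, the usual rules $[\mathfrak{k},\mathfrak{k}]\subset\mathfrak{k}$, $[\mathfrak{m},\mathfrak{m}]\subset\mathfrak{k}$, $[\mathfrak{k},\mathfrak{m}]\subset\mathfrak{m}$ apply. Thus, for instance, if $X\in\mathfrak{k}_\lambda\subset\mathfrak{g}_\lambda^{\mathbb{C}}+\mathfrak{g}_{-\lambda}^{\mathbb{C}}$ and $Y\in\mathfrak{k}_\mu$ with $\lambda\neq\mu$, the bracket lies in $\mathfrak{k}$ and in the four-summand space above; intersecting each summand with $\mathfrak{k}$ and recalling the definition $\mathfrak{k}_\nu=(\mathfrak{g}_\nu^{\mathbb{C}}+\mathfrak{g}_{-\nu}^{\mathbb{C}})\cap\mathfrak{k}$ (together with the convention $\mathfrak{k}_{\lambda\pm\mu}=0$ when $\lambda\pm\mu\notin\Sigma$, and $\mathfrak{k}_{-\nu}=\mathfrak{k}_\nu$) collapses the four summands to $\mathfrak{k}_{\lambda+\mu}+\mathfrak{k}_{\lambda-\mu}$. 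The other two mixed-type inclusions follow identically, swapping $\mathfrak{k}$ for $\mathfrak{m}$ in the relevant slot. The $\lambda=\mu$ cases for $[\mathfrak{k}_\lambda,\mathfrak{k}_\lambda]$ and $[\mathfrak{m}_\lambda,\mathfrak{m}_\lambda]$ likewise follow by intersecting $\mathfrak{g}_0^{\mathbb{C}}+\mathfrak{g}_{\pm2\lambda}^{\mathbb{C}}$ with $\mathfrak{k}$ and using $\mathfrak{k}_0=\mathfrak{k}\cap\mathfrak{g}_0^{\mathbb{C}}$.

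The only subtle step is $[\mathfrak{k}_\lambda,\mathfrak{m}_\lambda]\subset\mathfrak{a}+\mathfrak{m}_{2\lambda}$: the bracket lies in $\mathfrak{m}$ and in $\mathfrak{g}_0^{\mathbb{C}}+\mathfrak{g}_{\pm 2\lambda}^{\mathbb{C}}$, so one must identify $\mathfrak{m}\cap\mathfrak{g}_0^{\mathbb{C}}$ with $\mathfrak{a}$. For this I would invoke the standard fact that $\mathfrak{g}_0^{\mathbb{C}}\cap\mathfrak{g}$ is the centralizer of $\mathfrak{a}$ in $\mathfrak{g}$, hence $\mathfrak{g}_0^{\mathbb{C}}\cap\mathfrak{m}$ is the centralizer of $\mathfrak{a}$ in $\mathfrak{m}$, which coincides with $\mathfrak{a}$ because $\mathfrak{a}$ is maximal abelian in $\mathfrak{m}$; this is the main conceptual ingredient, and once granted the inclusion is immediate. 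Finally, the last two lines $[\mathfrak{k}_0,\mathfrak{k}_\lambda]\subset\mathfrak{k}_\lambda$ and $[\mathfrak{k}_0,\mathfrak{m}_\lambda]\subset\mathfrak{m}_\lambda$ are seen the same way: $\mathfrak{k}_0\subset\mathfrak{g}_0^{\mathbb{C}}$ forces the bracket to stay in $\mathfrak{g}_{\pm\lambda}^{\mathbb{C}}$, while the $\theta$-grading forces it into the correct $\pm 1$ eigenspace. The computation requires no case analysis on the signs $\epsilon_{\alpha,\beta}$ from Lemma~\ref{2-3-3}; they are only needed for explicit formulas, not for the containments asserted here.
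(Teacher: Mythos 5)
Your argument is correct, and it is the standard proof of this fact; the paper itself does not prove Lemma \ref{2-3-4} but cites it to Takeuchi, and your route (bracket the finer root spaces $\tilde{\frak{g}}_{\alpha}$, observe that the restrictions of $\alpha+\beta$ can only be $\pm\lambda\pm\mu$ (or $0,\pm 2\lambda$ when $\lambda=\mu$), then overlay the $\theta$-grading and intersect) is exactly the expected one. The only step you gloss is that $\frak{k}\cap\bigl(V_{1}+V_{2}\bigr)=(\frak{k}\cap V_{1})+(\frak{k}\cap V_{2})$ for $V_{\nu}=\frak{g}^{\mathbb{C}}_{\nu}+\frak{g}^{\mathbb{C}}_{-\nu}$; this is routine because each $V_{\nu}$ is invariant under both $\tau$ and $\theta$ and the sum is direct, so the components of a $\tau$- and $\theta$-fixed element are themselves fixed. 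You also correctly isolate the one genuinely structural input, namely $\frak{m}\cap\frak{g}^{\mathbb{C}}_{0}=\frak{a}$ by maximality of $\frak{a}$ in $\frak{m}$, so the proposal stands as a complete proof.
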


\

Set subsets of $\tilde{\Sigma}$ as follows:
\[
\begin{array}{llll} 
\tilde{\Sigma}_{(0)} = \{ \alpha \in \Sigma \ ;\ \sigma( \alpha ) = - \alpha \}, 
& \tilde{\Sigma}_{(1)} = \{ \alpha \in \Sigma \ ;\ \sigma(\alpha) = \alpha \}, \\
\tilde{\Sigma}_{(2)} = \{ \alpha \in \Sigma \ ;\ \sigma(\alpha) \not= \pm \alpha, 2\bar{\alpha} \not\in \Sigma \}, 
& \tilde{\Sigma}_{(3)} = \{ \alpha \in \Sigma \ ;\ \sigma(\alpha) \not= \pm \alpha, 2\bar{\alpha} \in \Sigma \}. \\
\end{array}
\]
For each $0 \leq i \leq 3$, set $\tilde{\Sigma}^{+}_{(i)} = \tilde{\Sigma}^{+} \cap \tilde{\Sigma}_{(i)}$.
For $\alpha \in \tilde{\Sigma}_{(0)}$, set 
\[
\begin{array}{llllllllllllll}
S_{\alpha}^{(1)} = U_{\alpha}, \quad S_{\alpha}^{(2)} = W_{\alpha}.
\end{array}
\]
For $\alpha \in \tilde{\Sigma}_{(1)}$, set 
\[
\begin{array}{llllllllllllll}
S_{\alpha}^{(1)} = U_{\alpha}, \quad S_{\alpha}^{(2)} = 0, \quad T_{\alpha}^{(1)} = W_{\alpha}, \quad T_{\alpha}^{(2)} = 0. \\
\end{array}
\]
For $\alpha \in \tilde{\Sigma}_{(2)}$, set 
\[
\begin{split}
S_{\alpha}^{(1)} = U_{\alpha} + U_{\sigma(\alpha)}, \quad & S_{\alpha}^{(2)} = W_{\alpha} - W_{\sigma(\alpha)}, \\ 
T_{\alpha}^{(1)} = W_{\alpha} + W_{\sigma(\alpha)}, \quad & T_{\alpha}^{(2)} = - U_{\alpha} + U_{\sigma(\alpha)}. 
\end{split}
\]
For $\alpha \in \tilde{\Sigma}_{(3)}$, set 
\[
\begin{array}{llll}
S_{\alpha}^{(1)} = \sqrt{2} (U_{\alpha} + U_{\sigma(\alpha)}), & S_{\alpha}^{(2)} = \sqrt{2}(W_{\alpha} - W_{\sigma(\alpha)}), \\
T_{\alpha}^{(1)} = \sqrt{2}(W_{\alpha} + W_{\sigma(\alpha)}), & T_{\alpha}^{(2)} = \sqrt{2}(- U_{\alpha} + U_{\sigma(\alpha)}). 
\end{array}
\]
If $\alpha \in \cup_{i=1}^{3}\tilde{\Sigma}_{(i)}$, then
\[
\begin{array}{lllllllllllllll}
S_{-\alpha}^{(1)} = S_{\alpha}^{(1)}, & S_{\sigma(\alpha)}^{(1)} = S_{\alpha}^{(1)} & S_{-\sigma(\alpha)}^{(1)} = S_{\alpha}^{(1)}, \\
S_{-\alpha}^{(2)} = -S_{\alpha}^{(2)}, & S_{\sigma(\alpha)}^{(2)} = -S_{\alpha}^{(2)}, & S_{-\sigma(\alpha)}^{(2)} = S_{\alpha}^{(2)}, \\
T_{-\alpha}^{(1)} = -T_{\alpha}^{(1)}, & T_{\sigma(\alpha)}^{(1)} = T_{\alpha}^{(1)}, & T_{-\sigma(\alpha)}^{(1)} = -T_{\alpha}^{(1)}, \\
T_{-\alpha}^{(2)} = T_{\alpha}^{(2)}, & T_{\sigma(\alpha)}^{(2)} = -T_{\alpha}^{(2)}, & T_{-\sigma(\alpha)}^{(2)} = -T_{\alpha}^{(2)}.
\end{array}
\]

\begin{lemm} \cite{Takeuchi2} \label{2-3-5}
The above $S_{\alpha}^{(i)}$ and $T_{\alpha}^{(i)} \ (i=1,2, \alpha \in \tilde{\Sigma})$ satisfy the following properties.

(1)\ 
Let $\alpha \in \cup_{k=1}^{3}\tilde{\Sigma}_{(k)}$ and $H \in \frak{a}_{0}$.
Then, for any $i = 1,2$,
\[
\begin{split}
[iH, S_{\alpha}^{(i)}] = \alpha(H)T_{\alpha}^{(i)}, & \quad
\mathrm{Ad}(\mathrm{exp}(iH))S_{\alpha}^{(i)} = \cos \alpha(H) S^{(i)}_{\alpha} + \sin \alpha(H) T_{\alpha}^{(i)}, \\
[iH, T_{\alpha}^{(i)}] = -\alpha(H)S_{\alpha}^{(i)}, & \quad
 \mathrm{Ad}(\mathrm{exp}(iH))T_{\alpha}^{(i)} = \cos \alpha(H) T_{\alpha}^{(i)} - \sin \alpha(H) T_{\alpha}^{(i)}. \\
\end{split}
\]

(2)\ Let $\alpha \in \cup_{k=1}^{3}\tilde{\Sigma}_{(k)}$.
If $S^{(i)}_{\alpha}$ and $T^{(i)}_{\alpha} \ (i=1,2)$ are nonzero, then $[S_{\alpha}^{(i)}, T_{\alpha}^{(i)}] = 2iA_{\bar{\alpha}}$.

(3)\ 
Let $\lambda \in \Sigma^{+}$.
Define $\tilde{\Sigma}_{\lambda} = \{ \alpha \in \tilde{\Sigma} \ ;\ \bar{\alpha} = \lambda \}$ and $\tilde{\Sigma}^{\sigma}_{\lambda} = \{ \alpha \in \tilde{\Sigma}^{+}_{\lambda}\ ;\ \bar{\alpha} = \lambda, \alpha \geq \sigma(\alpha) \}$.
Then,
\[
\begin{split}
& \left\{ S^{(i)}_{\alpha} \ ;\ \alpha \in \tilde{\Sigma}^{\sigma}_{\lambda}, i=1, 2, S^{(i)}_{\alpha} \not= 0 \right\} \\
& \quad\quad\quad\quad = \left\{ S^{(1)}_{\alpha} \ ;\ \alpha \in \tilde{\Sigma}^{\sigma}_{\lambda}, \sigma(\alpha) = \alpha \right\} \cup \left\{ S^{(1)}_{\alpha}, S^{(2)}_{\alpha} \ ;\ \alpha \in \tilde{\Sigma}^{\sigma}_{\lambda}, \sigma(\alpha) > \alpha \right\} 
\end{split}
\]
is an orthogonal basis of $\frak{k}_{\lambda}$, and 
\[
\begin{split}
& \left\{ T^{(i)}_{\alpha} \ ;\ \alpha \in \tilde{\Sigma}^{\sigma}_{\lambda}, i=1, 2, T^{(i)}_{\alpha} \not= 0 \right\} \\
& \quad\quad\quad\quad = \left\{ T^{(1)}_{\alpha} \ ;\ \alpha \in \tilde{\Sigma}^{\sigma}_{\lambda}, \sigma(\alpha) = \alpha \right\} 
\cup \left\{ T^{(1)}_{\alpha}, T^{(2)}_{\alpha} \ ;\ \alpha \in \tilde{\Sigma}^{\sigma}_{\lambda}, \sigma(\alpha) > \alpha \right\}
\end{split}
\]
is an orthogonal basis of $\frak{m}_{\lambda}$.
Moreover,
\[
\frak{k}_{0} = \frak{b} + \sum_{\alpha \in \tilde{\Sigma}^{+}_{(0)}} \Big( \mathbb{R}S_{\alpha}^{(1)} + \mathbb{R}S_{\alpha}^{(2)} \Big).
\]
\end{lemm}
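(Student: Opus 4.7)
The plan is to reduce each of the three parts to direct computations in the normalized root basis $\{X_{\alpha}\}_{\alpha \in \tilde{\Sigma}}$ of $\tilde{\frak{g}}$, exploiting Lemma \ref{2-3-1} together with the actions of $\tau$, $\sigma$, and $\theta = \sigma\tau = \tau\sigma$. Throughout, I would extend everything $\mathbb{C}$-linearly to $\tilde{\frak{g}}$, making systematic use of $\tau(X_{\alpha}) = X_{-\alpha}$, $\sigma(X_{\alpha}) = X_{\sigma(\alpha)}$, and hence $\theta(X_{\alpha}) = X_{-\sigma(\alpha)}$, so that decomposing real combinations into $\frak{k}$- and $\frak{m}$-parts (the $\pm 1$-eigenspaces of $\theta$) becomes routine.

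For part (1), start from $[H, X_{\alpha}] = \alpha(H)X_{\alpha}$ and extend $\mathbb{C}$-linearly to obtain $[iH, X_{\alpha}] = i\alpha(H)X_{\alpha}$ for $H \in \frak{a}_{0}$, noting that $\alpha$ is $\mathbb{R}$-valued on $\frak{h}_{0}$. Expanding the definitions of $U_{\alpha}$ and $W_{\alpha}$ immediately yields $[iH, U_{\alpha}] = \alpha(H)W_{\alpha}$ and $[iH, W_{\alpha}] = -\alpha(H)U_{\alpha}$, and the same identities for the $\sigma(\alpha)$-pieces, since $\sigma(\alpha)(H) = \alpha(H)$ for $H \in \frak{a}_{0}$. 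Assembling these according to the definitions of $S_{\alpha}^{(i)}$ and $T_{\alpha}^{(i)}$ in each of $\tilde{\Sigma}_{(1)}, \tilde{\Sigma}_{(2)}, \tilde{\Sigma}_{(3)}$ gives the bracket formulas. The $\mathrm{Ad}(\exp(iH))$-identities then follow by integration, since these brackets realize $\mathrm{ad}(iH)$ as a skew-symmetric rotation on the plane $\mathbb{R}S_{\alpha}^{(i)} + \mathbb{R}T_{\alpha}^{(i)}$.

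For part (2), compute $[U_{\alpha}, W_{\alpha}] = 2iA_{\alpha}$ directly from $[X_{\alpha}, X_{-\alpha}] = -A_{\alpha}$. When $\alpha \in \tilde{\Sigma}_{(1)}$, $\bar{\alpha} = \alpha$ and this is already $2iA_{\bar{\alpha}}$. In the $\tilde{\Sigma}_{(2)}$ case, the mixed brackets $[U_{\alpha}, W_{\sigma(\alpha)}]$ and $[U_{\sigma(\alpha)}, W_{\alpha}]$ vanish because $\alpha \pm \sigma(\alpha) \notin \tilde{\Sigma}$ (which also forces $(\alpha, \sigma(\alpha)) = 0$), leaving $2i(A_{\alpha} + A_{\sigma(\alpha)})$; combining $H_{\alpha} + H_{\sigma(\alpha)} = 2H_{\bar{\alpha}}$ with $(\bar{\alpha}, \bar{\alpha}) = \tfrac{1}{2}(\alpha, \alpha)$ collapses this to $2iA_{\bar{\alpha}}$. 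For $\tilde{\Sigma}_{(3)}$, the cross-brackets no longer vanish since $\alpha + \sigma(\alpha) \in \tilde{\Sigma}$, and the $\sqrt{2}$ normalization is precisely what is needed so that, using Lemma \ref{2-3-3} together with the length relation proper to this class, the computation again collapses to $2iA_{\bar{\alpha}}$.

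For part (3), use $\frak{g}_{\lambda}^{\mathbb{C}} = \sum_{\bar{\alpha} = \lambda}\tilde{\frak{g}}_{\alpha}$, so that $\frak{g}_{\lambda}^{\mathbb{C}} + \frak{g}_{-\lambda}^{\mathbb{C}}$ has a $\mathbb{C}$-basis indexed by $\alpha$ with $\bar{\alpha} = \pm\lambda$. The subset $\tilde{\Sigma}_{\lambda}^{\sigma}$ selects one representative from each $\sigma$-orbit, and the symmetry identities $S_{-\alpha}^{(i)} = \pm S_{\alpha}^{(i)}$, $S_{\sigma(\alpha)}^{(i)} = \pm S_{\alpha}^{(i)}$ absorb the remaining $\pm$- and $\sigma$-redundancies; the surviving vectors therefore span the real subspace $\frak{k}_{\lambda} = (\frak{g}_{\lambda}^{\mathbb{C}} + \frak{g}_{-\lambda}^{\mathbb{C}}) \cap \frak{k}$, and the parallel argument with the $T_{\alpha}^{(i)}$ handles $\frak{m}_{\lambda}$. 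Orthogonality follows since $\{X_{\alpha}\}$ is orthogonal for $(\ ,\ )$ across distinct root spaces, so two generators arising from genuinely different orbits $\{\pm\alpha, \pm\sigma(\alpha)\}$ are automatically orthogonal. Finally, the $\frak{k}_{0}$ decomposition comes from observing that $\bar{\alpha} = 0$ iff $\alpha \in \tilde{\Sigma}_{(0)}$; both $U_{\alpha}$ and $W_{\alpha}$ are then $\theta$-fixed (since $\sigma(\alpha) = -\alpha$ forces $\theta(X_{\alpha}) = X_{\alpha}$), so together with $\frak{b}$ they exhaust the centralizer of $\frak{a}$ in $\frak{k}$. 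The main obstacle is the bookkeeping across the four classes $\tilde{\Sigma}_{(0)}, \ldots, \tilde{\Sigma}_{(3)}$: determining which $S_{\alpha}^{(i)}, T_{\alpha}^{(i)}$ are nonzero, tracking how the $\pm$- and $\sigma$-symmetries interact, and calibrating the $\sqrt{2}$ in the $\tilde{\Sigma}_{(3)}$ case so that (2) is uniform; everything else reduces to complex-bilinear manipulation already encoded in Lemmas \ref{2-3-1}--\ref{2-3-3}.
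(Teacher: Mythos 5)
The paper does not prove this lemma at all: it is quoted from Takeuchi's book (\cite{Takeuchi2}), so there is no internal argument to compare against. Your self-contained verification is sound and is essentially the standard one. Part (1) is exactly the routine computation you describe, using $\sigma(\alpha)(H)=\alpha(H)$ on $\frak{a}_{0}$ and integrating the rotation on $\mathbb{R}S^{(i)}_{\alpha}+\mathbb{R}T^{(i)}_{\alpha}$. Part (2) is also correct: for $\tilde{\Sigma}_{(2)}$ the vanishing of the mixed brackets follows from $\alpha\pm\sigma(\alpha)\notin\tilde{\Sigma}$ (Araki's fact that $\alpha-\sigma(\alpha)$ is never a root, plus $2\bar{\alpha}\notin\Sigma$), and $(\bar\alpha,\bar\alpha)=\tfrac12(\alpha,\alpha)$ gives $A_{\alpha}+A_{\sigma(\alpha)}=A_{\bar\alpha}$; for $\tilde{\Sigma}_{(3)}$ note that the two cross-brackets actually cancel in pairs by $N_{\sigma(\alpha),\alpha}=-N_{\alpha,\sigma(\alpha)}$, so the only genuinely new input is the length relation $(\alpha,\sigma(\alpha))=-\tfrac12(\alpha,\alpha)$ (forced by the $\sigma(\alpha)$-string through $\alpha$, since $\alpha-\sigma(\alpha)\notin\tilde\Sigma$ and $\alpha+2\sigma(\alpha)\in\tilde\Sigma$ would force $\sigma(\alpha)=-\alpha$), which yields $A_{\bar\alpha}=2(A_{\alpha}+A_{\sigma(\alpha)})$ and shows the $\sqrt{2}$ is exactly the right calibration, as you claim. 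In part (3) your orbit-counting and $\theta$-eigenspace argument gives spanning and the dimension match; the one point you should make explicit is orthogonality \emph{within} a single orbit, i.e.\ $\langle S^{(1)}_{\alpha},S^{(2)}_{\alpha}\rangle=0$ (it follows from $(X_{\alpha},X_{\alpha})=0$ and $(U_{\alpha},W_{\alpha})=0$, not merely from distinctness of root spaces), and likewise for the $T$'s; with that remark, and the bookkeeping of which $S^{(i)}_{\alpha}$ vanish in class $(1)$, your argument for the bases of $\frak{k}_{\lambda}$, $\frak{m}_{\lambda}$ and for $\frak{k}_{0}$ is complete.
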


Let $\lambda \in \Sigma^{+}$ and $\alpha \in \tilde{\Sigma}$ with $\bar{\alpha} = \lambda$.
If $S^{(i)}_{\alpha} \not= 0$, then $\langle S_{\alpha}^{(i)}, S_{\alpha}^{(i)} \rangle = \langle T_{\alpha}^{(i)}, T_{\alpha}^{(i)} \rangle = 4/(\lambda, \lambda)$.
Define
\[
\bar{S}^{(i)}_{\alpha} = \frac{\sqrt{(\lambda, \lambda)}}{2}S^{(i)}_{\alpha}, \quad
\bar{T}^{(i)}_{\alpha} = \frac{\sqrt{(\lambda, \lambda)}}{2}T^{(i)}_{\alpha}.
\]
Then, $\{ \bar{S}^{(i)}_{\alpha} \ ;\ \alpha \in \tilde{\Sigma}^{\sigma}_{\lambda}, i=1, 2, S^{(i)}_{\alpha} \not= 0 \}$ forms an orthonormal basis of $\frak{k}_{\lambda}$, and $\{ \bar{T}^{(i)}_{\alpha} \ ;\ \alpha \in \tilde{\Sigma}^{\sigma}_{\lambda}, i=1, 2, T^{(i)}_{\alpha} \not= 0 \}$ forms an orthonormal basis of $\frak{m}_{\lambda}$.
Moreover, if $S^{(i)}_{\alpha}$ and $T^{(i)}_{\alpha}$ are nonzero, then $[\bar{S}_{\alpha}^{(i)}, \bar{T}_{\alpha}^{(i)}] = i\bar{H}_{\alpha}$.

Let $F = \{ \lambda_{1}, \cdots, \lambda_{n} \}$, and $\delta \in \Sigma^{+}$ be the highest restricted root.
Define $\mathcal{F} = F \cup \{ \delta \}$ and
\[
Q = \{ iH \ ;\ H \in \frak{a}_{0}, \ 0 \leq \lambda(H) \leq \pi \ \text{for any $\lambda \in \mathcal{F}$} \}.
\]
Then, $Q$ is a convex polytope in $\frak{a}$.
For each subset $\Delta \subset \mathcal{F}$, we define
\[
Q_{\Delta} = 
\left\{ iH \in Q\ ;\ H \in \frak{a}_{0}, \ \  
\begin{array}{ll}
\lambda(H) > 0 \ \ (\lambda \in \Delta \cap F), & 0 < \delta(H) < \pi \ (\delta \in \Delta), \\
\mu(H) = 0 \ \  (\mu \in F - \Delta), & \delta(H) = \pi \quad\quad\  (\delta \not \in \Delta).
\end{array}
\right\}.
\]
Then, $Q = \bigsqcup_{\Delta \subset \mathcal{F}}Q_{\Delta}$.
It is evident that $Q_{\Delta} \subset \partial Q$ if and only if $\Delta \not= \mathcal{F}$, and $Q_{\Delta}$ is the interior of $Q$ if $\Delta = \mathcal{F}$.
Since $Q_{\{ \lambda_{i} \}}\ (1 \leq i \leq n)$ is a one-point set, we denote this point by $Q_{i}$.
Note that $Q$ is the convex hull of $\{ 0, Q_{1}, \cdots, Q_{n} \}$.
Then, 
\[
Q = \{ t_{1}Q_{1} + \cdots + t_{n}Q_{n}\ ;\ 0 \leq t_{1} + \cdots + t_{n} \leq 1, \ 0 \leq t_{i} \leq 1 \}.
\]
If $1 \leq i_{1} < \cdots < i_{k} \leq n$, then
\[
\begin{split}
Q_{ \{ \lambda_{i_{1}}, \lambda_{i_{2}}, \cdots, \lambda_{i_{k}} \}} 
& = \left\{ 
t_{i_{1}}Q_{i_{1}} + \cdots + t_{i_{k}}Q_{i_{k}}\ ;\ 
\begin{array}{lll}
t_{i_{1}} + \cdots + t_{i_{k}} = 1, \\
0 < t_{i_{m}} < 1\ (1 \leq m \leq k) 
\end{array}
\right\}, \\
Q_{ \{ \lambda_{i_{1}}, \lambda_{i_{2}}, \cdots, \lambda_{i_{k}}, \delta \}} 
& = \left\{ 
t_{i_{1}}Q_{i_{1}} + \cdots + t_{i_{k}}Q_{i_{k}}\ ;\ 
\begin{array}{lll}
0 < t_{i_{1}} + \cdots + t_{i_{k}} < 1, \\
0 < t_{i_{m}} < 1\ (1 \leq m \leq k) 
\end{array}
\right\}. \\
\end{split}
\]
It is evident that $Q_{\{\lambda_{i_{1}}, \cdots, \lambda_{i_{k}}\}}$ is the interior of the convex hull of $\{ Q_{i_{1}}, \cdots, Q_{i_{k}} \}$ and $Q_{\{ \lambda_{i_{1}}, \cdots, \lambda_{i_{k}}, \delta\}}$ is the interior the convex hull of $\{ 0, Q_{i_{1}}, \cdots, Q_{i_{k}} \}$.
Define $\Sigma^{+}_{iH} = \{ \lambda \in \Sigma^{+} \ ;\ \lambda(H) \not\in \mathbb{Z}\pi \}$ for $iH \in Q$.
Note that $\Sigma^{+}_{iH}$ depends only on $\Delta$ and not on $iH \in Q_{\Delta}$.
It is clear that $\Sigma^{+} - \Sigma^{+}_{iH} = \{ \lambda \in \Sigma^{+} \ ;\ \lambda(H) = 0, \pi \}$.
Since any Hermitian symmetric space is known to be simply connected, Lemma \ref{2-3-6} follows.

\begin{lemm}\cite{Helgason} \label{2-3-6}
Any $K$-orbit in $M$ meets $A = \mathrm{exp}(Q)(o)$ at exactly one point.
\end{lemm}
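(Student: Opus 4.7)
The plan is to prove this in two steps: every $K$-orbit meets $A$, and it meets $A$ at exactly one point. The first step is the standard $KAK$ decomposition; the second uses the characterization of $Q$ as a fundamental alcove for an affine Weyl group, with the simple connectedness of $M$ ensuring that no extra identifications occur.

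For existence, I would argue as follows. Because $\frak{a}$ is a maximal abelian subspace of $\frak{m}$ and the isotropy representation of $K$ on $\frak{m}$ is polar, every $X \in \frak{m}$ can be written as $\mathrm{Ad}(k)H$ with $k \in K$ and $H \in \frak{a}$. Since $M$ is compact and connected, $\mathrm{Exp} : \frak{m} \to M$ is surjective, so every $p \in M$ has the form $\mathrm{Exp}(X)(o) = k \cdot \mathrm{Exp}(H)(o)$, which shows that the $K$-orbit through $p$ meets $\mathrm{Exp}(\frak{a})(o)$. To push the representative into $Q$, the restricted Weyl group $W = N_{K}(\frak{a})/Z_{K}(\frak{a})$ acts on $\frak{a}_{0}$ by reflections across the hyperplanes $\lambda = 0$ ($\lambda \in \Sigma$); together with translations by the kernel lattice of $\mathrm{Exp}|_{\frak{a}} : \frak{a} \to M$, these generate an affine Weyl group $W_{\mathrm{aff}}$ acting on $\frak{a}_{0}$ whose orbits coincide with the sets $\{ iH \in \frak{a}_{0} : \mathrm{Exp}(iH)(o) \text{ is $K$-equivalent to a fixed point}\}$. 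The standard Coxeter-theoretic description of the walls $\lambda(H) = 0$ ($\lambda \in F$) and $\delta(H) = \pi$ as the walls of a fundamental alcove then exhibits $Q$ as a closed fundamental domain for $W_{\mathrm{aff}}$, so every orbit admits a representative in $Q$.

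For uniqueness, I would suppose $\mathrm{Exp}(iH_{1})(o)$ and $\mathrm{Exp}(iH_{2})(o)$ lie in the same $K$-orbit with $iH_{1}, iH_{2} \in Q$, and deduce that they are in the same $W_{\mathrm{aff}}$-orbit by taking the intertwining $k \in K$, pulling it back through the exponential, and analyzing its action on $\frak{a}_{0}$ modulo $Z_{K}(\frak{a})$. Since $Q$ is a closed fundamental alcove, two $W_{\mathrm{aff}}$-equivalent points of $Q$ must coincide, which closes the argument. The main obstacle is the precise identification of $W_{\mathrm{aff}}$: one must verify that the translation lattice of $W_{\mathrm{aff}}$ is generated by the coroots corresponding to the fundamental and highest restricted roots in $\mathcal{F}$, not something strictly larger. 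This is exactly where the simple connectedness of $M$ enters decisively, since it pins down the kernel lattice of $\mathrm{Exp}|_{\frak{a}}$; were the lattice larger, additional identifications of boundary points inside $Q$ could appear and break uniqueness. Once this identification is granted, the result is a formal consequence of the theory of affine Coxeter systems, and I would refer to Helgason for the lattice computation in detail.
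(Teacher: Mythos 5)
Your proposal is correct in outline and follows essentially the same route as the paper, which offers no independent proof of Lemma \ref{2-3-6}: it simply cites Helgason and remarks that the simple connectedness of $M$ is what makes the statement hold. Your sketch (conjugating into $\frak{a}$ via the polar isotropy representation, surjectivity of the exponential map, and the identification of $Q$ as a fundamental alcove for the affine Weyl group, with simple connectedness pinning down the translation lattice) is exactly the standard argument behind that citation, and you correctly single out the lattice identification as the decisive point.
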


Let $iH \in Q$ and $x = \mathrm{exp}(iH)(o) \in M$.
We often denote $\Sigma^{+}_{iH}$ by $\Sigma^{+}_{x}$.
For each $S_{\alpha}^{(i)}\ (\alpha \in \tilde{\Sigma}, \bar{\alpha} = \lambda  \in \Sigma, i=1,2)$,
\[
\begin{split}
(S_{\alpha}^{(i)})^{*}_{x} 
& =
\left. \frac{d}{dt} \ \mathrm{exp}(tS^{(i)}_{\alpha}) \ \mathrm{exp}(iH)(o) \right|_{t=0} 
=
\left. \frac{d}{dt} \mathrm{exp}(iH) \ \mathrm{exp} \Big( \mathrm{Ad}(\mathrm{exp}(-iH)) S^{(i)}_{\alpha} \Big)(o) \right|_{t=0} \\
&=
(\mathrm{exp}(iH))_{*} \Big( \cos \lambda(H) S^{(i)}_{\alpha} - \sin \lambda(H) T^{(i)}_{\alpha} \Big)_{\frak{m}} 
=
- (\sin \lambda(H)) \big( \mathrm{exp}(iH) \big)_{*} T^{(i)}_{\alpha}.  \\
\end{split}
\]
Thus, we set
\[
\frak{m}_{x} = \sum_{\lambda \in \Sigma_{x}^{+}} \frak{m}_{\lambda}.
\]
Then, the orthogonal complement $\frak{m}^{\perp}_{x}$ of $\frak{m}_{x}$ in $\frak{m}$ is given by 
\[
\frak{m}_{x}^{\perp} = \frak{a} + \sum_{\lambda \in \Sigma^{+} - \Sigma^{+}_{x}} \frak{m}_{\lambda}.
\]

\begin{lemm} \label{2-3-7}
Let $iH \in Q$ and $x = \mathrm{exp}(iH)(o)$.
Then, $T_{x}K(x) = \mathrm{exp}(iH)_{*} \frak{m}_{x}$ and $T_{x}^{\perp}K(x) = \mathrm{exp}(iH)_{*} \frak{m}^{\perp}_{x}$.

\end{lemm}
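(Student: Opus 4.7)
The plan is to use the decomposition $\frak{k} = \frak{k}_0 + \sum_{\lambda \in \Sigma^+} \frak{k}_\lambda$ together with the bases of $\frak{k}_\lambda$ and $\frak{m}_\lambda$ given in Lemma \ref{2-3-5}, and to exploit the displayed formula
\[
(S_{\alpha}^{(i)})^{*}_{x} = - \sin \lambda(H)\, \bigl(\mathrm{exp}(iH)\bigr)_{*} T_{\alpha}^{(i)}
\]
appearing just above the statement. Since $T_x K(x) = \{X^{*}_x \ ;\ X \in \frak{k}\}$, it suffices to compute $X^{*}_x$ for $X$ running over a spanning set of $\frak{k}$.

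First I would treat $\frak{k}_0$. For $B \in \frak{b}$, since $\frak{h} = \frak{b} + \frak{a}$ is abelian and $iH \in \frak{a}$, we have $[iH, B] = 0$, hence $\mathrm{Ad}(\mathrm{exp}(-iH))B = B \in \frak{k}$ and Lemma \ref{2-2-1}(iii) gives $B^{*}_{x} = (\mathrm{exp}(iH))_{*}(B)_\frak{m} = 0$. For $\alpha \in \tilde{\Sigma}^{+}_{(0)}$, the identity $\sigma(\alpha) = -\alpha$ combined with $\sigma|_{\frak{a}_0} = \mathrm{id}$ forces $\alpha|_{\frak{a}_0} \equiv 0$, so $[iH, X_{\pm\alpha}] = 0$ and consequently $\mathrm{Ad}(\mathrm{exp}(-iH))S_\alpha^{(i)} = S_\alpha^{(i)} \in \frak{k}$, giving again $(S_\alpha^{(i)})^{*}_x = 0$. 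Therefore $X^{*}_x = 0$ for every $X \in \frak{k}_0$.

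Next I would handle $\frak{k}_\lambda$ for $\lambda \in \Sigma^+$. The displayed formula shows that as $\alpha$ ranges over $\tilde{\Sigma}^{\sigma}_\lambda$ and $i \in \{1,2\}$, the vectors $(S_\alpha^{(i)})^{*}_x$ are the images under $(\mathrm{exp}(iH))_{*}$ of the $T_\alpha^{(i)}$ scaled by $-\sin \lambda(H)$. By the definition of $\Sigma_x^+$, this scalar is nonzero precisely when $\lambda \in \Sigma_x^+$, and vanishes otherwise. Combining this with Lemma \ref{2-3-5}(3), which tells us that the $T_\alpha^{(i)}$ (for $\alpha \in \tilde{\Sigma}^\sigma_\lambda$, $T_\alpha^{(i)} \neq 0$) form a basis of $\frak{m}_\lambda$, we conclude that the fundamental vector fields coming from $\frak{k}_\lambda$ span $(\mathrm{exp}(iH))_{*}\frak{m}_\lambda$ when $\lambda \in \Sigma_x^+$ and vanish otherwise. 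Summing over $\lambda \in \Sigma^+$ yields $T_x K(x) = (\mathrm{exp}(iH))_{*}\frak{m}_x$.

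For the second equality, I would observe that $\mathrm{exp}(iH) \in G$ acts as an isometry of $(M, \langle\ ,\ \rangle)$, so $(\mathrm{exp}(iH))_{*}$ carries $T_oM = \frak{m}$ isometrically onto $T_xM$. Since $\frak{m} = \frak{m}_x \oplus \frak{m}_x^{\perp}$ is an orthogonal decomposition (each $\frak{m}_\lambda$ is orthogonal to $\frak{m}_\mu$ for $\lambda \neq \mu$ by Lemma \ref{2-3-5}(3), and $\frak{a}$ is orthogonal to every $\frak{m}_\lambda$), its image splits as $T_xK(x) \oplus (\mathrm{exp}(iH))_{*}\frak{m}_x^{\perp}$, giving $T^{\perp}_xK(x) = (\mathrm{exp}(iH))_{*}\frak{m}_x^{\perp}$. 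There is no substantive obstacle here; the only point requiring care is the vanishing check on $\frak{k}_0$, since Lemma \ref{2-3-5}(3) only supplies bases of the $\frak{k}_\lambda$ for $\lambda \in \Sigma^+$ and a separate description of $\frak{k}_0 = \frak{b} + \sum_{\alpha \in \tilde{\Sigma}_{(0)}^+}(\mathbb{R}S_\alpha^{(1)} + \mathbb{R}S_\alpha^{(2)})$.
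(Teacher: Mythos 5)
Your proposal is correct and follows essentially the same route as the paper, which states the lemma without further argument on the strength of the displayed computation $(S_{\alpha}^{(i)})^{*}_{x} = -\sin\lambda(H)\,(\mathrm{exp}(iH))_{*}T^{(i)}_{\alpha}$ together with Lemma \ref{2-3-5}(3). Your additional check that the fundamental vector fields coming from $\frak{k}_{0} = \frak{b} + \sum_{\alpha \in \tilde{\Sigma}^{+}_{(0)}}(\mathbb{R}S_{\alpha}^{(1)} + \mathbb{R}S_{\alpha}^{(2)})$ vanish at $x$, and the orthogonality argument for the normal space, simply make explicit what the paper leaves implicit.
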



\subsection{The complex structure and the restricted root system} \label{s2-4}

In this subsection, we study some relations between the complex structure and the restricted root system of $M$.
First, we consider the following root system of $\mathbb{R}^{n}$.
Let $e_{i} \ (1 \leq i \leq n)$ be the $1$-form of $\mathbb{R}^{n}$ such that $e_{i}(x) = x_{i} \ (x = (x_{1}, \cdots, x_{n}) \in \mathbb{R}^{n})$ and,
$v_{i} \in \mathbb{R}^{n} \ (1 \leq i \leq n)$ be the dual vector to $e_{i}$.
Then, $v_{1}, \cdots, v_{n}$ forms the standard basis of $\mathbb{R}^{n}$.
Fix a positive number $d>0$ and define the inner product $g$ on $\mathbb{R}^{n}$ such that $g(e_{i}, e_{j}) = d \delta_{ij}$.
Then,
\[
R_{C} = \Big\{ \pm e_{i} \pm e_{j} \ ;\ 1 \leq i < j \leq n \Big\} \cup \Big\{ \pm 2e_{i} \ ;\ 1 \leq i \leq n \Big\}
\]
is a root system of type $C_{n}$, and
\[
R_{BC} = \Big\{ \pm e_{i} \pm e_{j} \ ;\ 1 \leq i < j \leq n \Big\} \cup \Big\{ \pm e_{i}, \pm 2e_{i} \ ;\ 1 \leq i \leq n \Big\}
\]
is a root system of type $BC_{n}$.
If $H_{e_{i}} \in \mathbb{R}^{n}$ satisfies $g(H_{e_{i}}, H) = e_{i}(H)$ for any $H \in \mathbb{R}^{n}$, then $H_{e_{i}} = (1/d)v_{i}$ and $g(H_{e_{i}}, H_{e_{j}}) = (1/d)\delta_{ij}$.

Note that the restricted root system of a irreducible Hermitian symmetric space is of type $BC$ or $C$.
Fix an isomorphism between $(\frak{a}_{0}, (\ ,\ ), \Sigma)$ and $(\mathbb{R}^{n}, g, R_{C})$ or $(\mathbb{R}^{n}, g, R_{BC})$, and denote each element of $\frak{a}_{0}$ and $\Sigma$ by the corresponding element of $\mathbb{R}^{n}$ and $R_{C}$ or $R_{BC}$.
In this notation, $\frak{a} = \{ i(t_{1}v_{1} + \cdots t_{n}v_{n})\ ;\ t_{1}, \cdots, t_{n} \in \mathbb{R} \}$.
Denote $i(\sum_{i=1}^{n}t_{i}v_{i}) \in \frak{a}$ by $(t_{1}, \cdots, t_{n})$.
We can assume that if $\Sigma$ is of type $C$, then
\[
\Sigma^{+} = \Big\{ e_{i} \pm e_{j} \ ;\ 1 \leq i < j \leq n \Big\} \cup \Big\{ 2e_{i} \ ;\ 1 \leq i \leq n \Big\},
\]
and if of type $BC$, then
\[
\Sigma^{+} = \Big\{ e_{i} \pm e_{j} \ ;\ 1 \leq i < j \leq n \Big\} \cup \Big\{ 2e_{i} \ ;\ 1 \leq i \leq n \Big\} \cup \Big\{ e_{i} \ ;\ 1 \leq i \leq n \Big\}.
\]
In both cases, the fundamental system $F$ is given by
\[
\lambda_{1} = e_{1} - e_{2}, \cdots, \lambda_{n-1} = e_{n-1} - e_{n}, \lambda_{n} = e_{n}.
\]
Moreover, the highest root $\delta$ is $2e_{1}$ in both cases.
In this case,
\[
\begin{split}
& Q_{1}  =  ( \pi/2, 0, \cdots, 0, 0 ), \\
& Q_{2} =  ( \pi/2, \pi/2, 0, \cdots, 0), \\
& \quad\quad \vdots \\
& Q_{n-1} =  ( \pi/2, \pi/2, \cdots, \pi/2, 0), \\
& Q_{n} =  ( \pi/2, \pi/2, \cdots, \pi/2, \pi/2). 
\end{split}
\]
For each $1 \leq k \leq n$, we denote $\Sigma_{Q_{k}}^{+}$ by $\Sigma_{k}^{+}$.
Define $(\Sigma^{+} - \Sigma^{+}_{k})_{a} = \{ \lambda \in \Sigma^{+} \ ;\ (-i)\lambda(Q_{k}) = a \}$ for any $a \in \mathbb{R}$.
Since $\Sigma^{+} - \Sigma^{+}_{k} = \{ \lambda \in \Sigma^{+} \ ;\ (-i)\lambda(Q_{k}) = 0, \pi \}$, if $\Sigma$ is of type $BC$, then
\[
\begin{split}
& \Sigma^{+} - \Sigma_{k}^{+} = \Big\{ e_{i} \pm e_{j}\ ;\ 1 \leq i < j \leq k \Big\} \\
& \quad\quad\quad\quad\quad\quad\quad\quad \cup \Big\{ e_{i}, e_{i} \pm e_{j}\ ;\ k+1 \leq i < j \leq n \Big\} \cup \Big\{ 2e_{i} \ ;\ 1 \leq i \leq n \Big\}, \\
& (\Sigma^{+} - \Sigma_{k}^{+})_{0} = \Big\{ e_{i} - e_{j}\ ;\ 1 \leq i < j \leq k \Big\} \\
& \quad\quad\quad\quad\quad\quad\quad\quad \cup \Big\{ e_{i}, e_{i} \pm e_{j}\ ;\ k+1 \leq i < j \leq n \Big\} \cup \Big\{ 2e_{k+1}, \cdots, 2e_{n} \Big\}, \\
& (\Sigma^{+} - \Sigma_{k}^{+})_{\pi} = \Big\{ e_{i} + e_{j}\ ;\ 1 \leq i < j \leq k \Big\} \cup \Big\{ 2e_{1}, \cdots, 2e_{k} \Big\}. \\
\end{split}
\]
If $\Sigma$ is of type $C$, then remove $e_{i}$ from the above.
Let $1 \leq i_{1} < \cdots < i_{k} \leq n$.
If $H \in Q_{ \{ \lambda_{i_{1}}, \cdots, \lambda_{i_{k}}, \delta \} }$, then
\[
\Sigma^{+} - \Sigma^{+}_{H} = \bigcap_{j = 1}^{k} \Big( \Sigma^{+} - \Sigma^{+}_{i_{j}} \Big)_{0}.
\]
Moreover, if $H \in Q_{ \{ \lambda_{i_{1}}, \cdots, \lambda_{i_{k}} \} }$, then
\[
\Sigma^{+} - \Sigma^{+}_{H} = \left( \bigcap_{j = 1}^{k} \Big( \Sigma^{+} - \Sigma^{+}_{i_{j}} \Big)_{0} \right) \cup \left( \bigcap_{j = 1}^{k} \Big( \Sigma^{+} - \Sigma^{+}_{i_{j}} \Big)_{\pi} \right).
\]
We summarize some properties of $\tilde{\Sigma}$ and $\Sigma$ for each irreducible Hermitian symmetric space.
Let $\alpha \in \tilde{\Sigma}$.

\begin{itemize}

\item{$M = G_{k}(\mathbb{C}^{n})\ (2k < n)$}

$\tilde{\Sigma}$ is of type $A_{n-1}$ and $\Sigma$ is of type $BC_{k}$.
For any $1 \leq i \not= j \leq k$, $m(2e_{i}) = 1, m(e_{i} \pm e_{j}) = 2$, and $m(e_{i}) = 2(n-2k)$.
If $\bar{\alpha} = 2e_{i}$, then $\sigma(\alpha) = \alpha$.
If $\bar{\alpha} = e_{i} \pm e_{j}$ or $e_{i}$, then $\sigma(\alpha) \not= \alpha$.

\item{$M = G_{n}(\mathbb{C}^{2n})\ (n \geq 2)$}

$\tilde{\Sigma}$ is of type $A_{2n-1}$ and $\Sigma$ is of type $C_{n}$.
For any $1 \leq i \not= j \leq n$, $m(2e_{i}) = 1$ and $m(e_{i} \pm e_{j}) = 2$.
If $\bar{\alpha} = 2e_{i}$, then $\sigma(\alpha) = \alpha$.
If $\bar{\alpha} = e_{i} \pm e_{j}$, then $\sigma(\alpha) \not= \alpha$.

\item{$M = \tilde{G}_{2}(\mathbb{R}^{n})\ ( n = 2m, m \geq 3)$}

$\tilde{\Sigma}$ is of type $D_{m}$ and $\Sigma$ is of type $C_{2}$.
Then, $m(2e_{1}) = m(2e_{2}) = 1$ and $m(e_{1} \pm e_{2}) = 2(m-2)$.
If $\bar{\alpha} = 2e_{1}$ or $2e_{2}$, then $\sigma(\alpha) = \alpha$.
If $\bar{\alpha} = e_{1} \pm e_{2}$, then $\sigma(\alpha) \not= \alpha$.

\item{$M = \tilde{G}_{2}(\mathbb{R}^{n})\ ( n = 2m + 1, m \geq 2)$}

$\tilde{\Sigma}$ is of type $B_{m}$ and $\Sigma$ is of type $C_{2}$.
Then, $m(2e_{1}) = m(2e_{2}) = 1$ and $m(e_{1} \pm e_{2}) = 2(m-2) + 1$.
If $\bar{\alpha} = 2e_{1}$ or $2e_{2}$, then $\sigma(\alpha) = \alpha$.
If $\bar{\alpha} = e_{1} \pm e_{2}$ and $\alpha$ is a longest root, then $\sigma(\alpha) \not= \alpha$.
If $\bar{\alpha} = e_{1} \pm e_{2}$ and $\alpha$ is a shortest root, then $\sigma(\alpha) = \alpha$.

\item{$M = SO(2n)/U(n)\ (n = 2m, m \geq 2)$}

$\tilde{\Sigma}$ is of type $D_{n}$ and $\Sigma$ is of type $C_{m}$.
For any $1 \leq i \not= j \leq m$, $m(2e_{i}) = 1$ and $m(e_{i} \pm e_{j}) = 4$.
If $\bar{\alpha} = 2e_{i}$, then $\sigma(\alpha) = \alpha$.
If $\bar{\alpha} = e_{i} \pm e_{j}$, then $\sigma(\alpha) \not= \alpha$.

\item{$M = SO(2n)/U(n)\ (n = 2m+1, m \geq 2)$}

$\tilde{\Sigma}$ is of type $D_{n}$ and $\Sigma$ is of type $BC_{m}$.
For any $1 \leq i \not= j \leq m$, $m(2e_{i}) = 1, m(e_{i} \pm e_{j}) = 4$, and $m(e_{i}) = 4$.
If $\bar{\alpha} = 2e_{i}$, then $\sigma(\alpha) = \alpha$.
If $\bar{\alpha} = e_{i} \pm e_{j}$ or $e_{i}$, then $\sigma(\alpha) \not= \alpha$.

\item{$M = Sp(n)/U(n)\ (n \geq 2)$}

Both of $\tilde{\Sigma}$ and $\Sigma$ are of type $C_{n}$.
For any $1 \leq i \not= j \leq n$, $m(2e_{i}) = m(e_{i} \pm e_{j}) = 1$.
For any $\alpha \in \tilde{\Sigma}$, $\sigma(\alpha) = \alpha$.

\item{$M = EIII$}

$\tilde{\Sigma}$ is of type $E_{6}$ and $\Sigma$ is of type $BC_{2}$.
Then, $m(2e_{1}) = m(2e_{2}) = 1, m(e_{1} \pm e_{2}) = 6$, and $m(e_{1}) = m(e_{2}) = 8$.
If $\bar{\alpha} = 2e_{1}$ or $2e_{2}$, then $\sigma(\alpha) = \alpha$.
If $\bar{\alpha} = e_{1} \pm e_{2}, e_{1}, e_{2}$, then $\sigma(\alpha) \not= \alpha$.

\item{$M = EVII$}

$\tilde{\Sigma}$ is of type $E_{7}$ and $\Sigma$ is of type $C_{3}$.
For any $1 \leq i \not= j \leq 3$, $m(2e_{i}) = 1$ and $m(e_{i} \pm e_{j}) = 1$.
If $\bar{\alpha} = 2e_{i}$, then $\sigma(\alpha) = \alpha$.
If $\bar{\alpha} = e_{i} \pm e_{j}$ or $e_{i}$, then $\sigma(\alpha) \not= \alpha$.

\end{itemize}

We can observe that the multiplicity of the longest restricted root is $1$ for each irreducible Hermitian symmetric space.
Hence, set $\alpha_{1}, \cdots, \alpha_{n} \in \tilde{\Sigma}$ such that $\alpha_{i} = \bar{\alpha_{i}} = \lambda_{i}$.
Note that $\alpha_{i}$ is a longest root in $\tilde{\Sigma}$ since $2e_{1}$ is the highest restricted root.
Moreover, let $\alpha \in \tilde{\Sigma}$ satisfy $\bar{\alpha} \not= 0$ and $\bar{\alpha} \not= 2e_{i}$ for $1 \leq i \leq n$.
Then, $\sigma(\alpha) = \alpha$ if and only if $\alpha$ is a shortest root, and $\sigma(\alpha) \not= \alpha$ if and only if $\alpha$ is a longest root.
In general, $|iA_{\delta}|\pi$ is the length of a shortest closed geodesic of $(M, \langle \ ,\ \rangle)$ \cite{Helgason}.
Specifically, we have
\[
|iA_{2e_{1}}|^{2} = \langle iA_{2e_{1}}, iA_{2e_{1}} \rangle = \frac{4}{(2e_{1}, 2e_{1})^{2}} (2e_{1}, 2e_{1}) = \frac{4}{(2e_{1}, 2e_{1})} = d.
\]
Therefore, the length of any shortest closed geodesic of $(M, \langle\ ,\ \rangle)$ is $\sqrt{d}\pi$.
It is known that $G$-invariant Riemannian metrics on $M$ correspond to $d > 0$ one-to-one \cite{Helgason} since $M$ is irreducible.

Set $p_{k} = \mathrm{exp}\,Q_{k}(o)\ (1 \leq k \leq n)$ and $K_{k} = \{ g \in K\ ;\ g(p_{k}) = p_{k} \}$.
The $K$-orbit through $p_{k}$ is denoted by $M^{+}_{k}$.
It is known that $M^{+}_{k}$ is a connected component of $F(s_{o}, M) = \{ x \in M \ ;\ s_{o}(x) = x \}$, where $s_{0}$ is the geodesic symmetry at $o$, so each $M^{+}_{k}$ is a totally geodesic submanifold of $M$.
In particular, $F(s_{o}, M) = \{ o \} \sqcup M^{+}_{1} \sqcup \cdots \sqcup M^{+}_{n}$.
Each $M^{+}_{k}$ is called a {\it polar} \cite{Chen-Nagano}.
If a polar is a one-point set, then we call the polar a {\it pole}.
Moreover, it is known that each polar is a complex submanifold and a Hermitian symmetric space of compact type.
Since a Hermitian symmetric space is simply connected, $K_{k}$ is connected.
Let $\frak{k}_{k}$ be the Lie algebra of $K_{k}$.
Then,
\[
\frak{k}_{k} = \frak{k}_{0} + \sum_{\lambda \in \Sigma^{+} - \Sigma_{k}^{+}}\frak{k}_{\lambda}.
\]

\begin{lemm} \label{2-4-1}
$J \in \frak{k}_{1} \cap \cdots \cap \frak{k}_{n}$.

\end{lemm}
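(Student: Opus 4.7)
The plan is to establish $J \in \frak{k}_{k}$ by showing that the one-parameter subgroup $\{\exp(tJ)\} \subset K$ fixes $p_{k} = \exp(Q_{k})(o)$ for every $t \in \mathbb{R}$. Conjugating, $\exp(tJ)\exp(Q_{k}) = \exp(Q_{k})\exp(tY)$ with $Y := \mathrm{Ad}(\exp(-Q_{k}))J$, so it suffices to prove $Y \in \frak{k}$, equivalently $\theta(Y) = Y$.

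Two inputs feed into this. First, the polar $M^{+}_{k}$ is a connected component of $F(s_{o}, M)$, hence $s_{o}(p_{k}) = p_{k}$. On $G/K$ the geodesic symmetry $s_{o}$ is induced by the involution $\theta$, and since $Q_{k} \in \frak{a} \subset \frak{m}$ one has $\theta(Q_{k}) = -Q_{k}$; thus $s_{o}(p_{k}) = \exp(-Q_{k})(o)$, and comparing with $p_{k} = \exp(Q_{k})(o)$ forces $\exp(2Q_{k}) \in K$. Second, $J$ spans the one-dimensional center of $\frak{k}$, so $[X, J] = 0$ for every $X \in \frak{k}$; exponentiating and using the connectedness of $K$ yields $\mathrm{Ad}(k)J = J$ for every $k \in K$, and in particular $\mathrm{Ad}(\exp(2Q_{k}))J = J$.

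Combining these via $\theta(J) = J$ and the functoriality $\theta \circ \mathrm{Ad}(g) = \mathrm{Ad}(\theta(g)) \circ \theta$, one computes $\theta(Y) = \mathrm{Ad}(\exp(Q_{k}))J$, so
\[
\theta(Y) - Y = \mathrm{Ad}(\exp(Q_{k}))J - \mathrm{Ad}(\exp(-Q_{k}))J = \mathrm{Ad}(\exp(-Q_{k}))\bigl(\mathrm{Ad}(\exp(2Q_{k}))J - J\bigr) = 0.
\]
Hence $Y \in \frak{k}$, and therefore $\exp(tJ)(p_{k}) = p_{k}$ for all $t$, which is exactly $J \in \frak{k}_{k}$.

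No step here is a genuine obstacle: both hypotheses, $\exp(2Q_{k}) \in K$ and $\mathrm{Ad}(K)J = J$, sit inside the structure already recalled in Section 2.2. The single point worth stating cleanly is that the fixed-point property of $p_{k}$ in the polar decomposition, combined with the centrality of $J$, is exactly what forces the transported element $\mathrm{Ad}(\exp(-Q_{k}))J$ to be $\theta$-fixed, i.e.\ to lie in $\frak{k}$.
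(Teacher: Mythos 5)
Your argument is correct, but it is not the route the paper takes. The paper stays in the adjoint-orbit picture: it quotes Chen--Nagano to say that $\{o, p_{1}, \cdots, p_{n}\}$ is an antipodal set of $M$ and then the Tanaka--Tasaki result that such a set lies in $M \cap \frak{t}$ for some maximal abelian subalgebra $\frak{t}$ of $\frak{g}$ containing $J$, so that $[J, p_{k}] = 0$ for each $k$ and $J \in \frak{k}_{k}$ follows at once. You instead argue intrinsically on $G/K$: from the polar property $s_{o}(p_{k}) = p_{k}$ (already recorded in the paper just before the lemma) you extract $\mathrm{exp}(2Q_{k}) \in K$, and a short $\theta$-equivariance computation then shows that $Y = \mathrm{Ad}(\mathrm{exp}(-Q_{k}))J$ is $\theta$-fixed, hence in $\frak{k}$, so the flow of $J^{*}$ fixes $p_{k}$ and $J \in \frak{k}_{k}$. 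Each step of yours checks out: $\theta(Q_{k}) = -Q_{k}$ since $Q_{k} \in \frak{a} \subset \frak{m}$, $\theta(J) = J$ since $J \in \frak{k}$, and $\mathrm{Ad}(\mathrm{exp}(2Q_{k}))J = J$ is immediate (indeed it is exactly the definition $K = \{g\ ;\ \mathrm{Ad}(g)J = J\}$, so your detour through centrality of $J$ and connectedness of $K$ is not even needed, nor is the connectedness of $K$ relevant there). What your approach buys is self-containedness: it uses only facts the paper has already stated, with no appeal to the antipodal-set theorems. What the paper's approach buys is brevity given those citations, plus the slightly stronger conclusion that $J$ and all the $p_{k}$ lie in a single maximal abelian subalgebra.
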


\begin{proof}
We observe that $\{ o,p_{1}, \cdots, p_{n}\}$ is an antipodal set of $M$ \cite{Chen-Nagano2}.
Consequently, there exists a maximal abelian subspace $\frak{t}$ of $\frak{g}$ containing $J$ such that $\{o,p_{1}, \cdots, p_{n}\} \subset M \cap \frak{t}$ \cite{Tanaka-Tasaki}.
Therefore, $[J, p_{1}] = \cdots = [J, p_{n}] = 0$, and $J \in \frak{k}_{1} \cap \cdots \cap \frak{k}_{n}$.
\end{proof}

Because $\Sigma^{+}_{1} \cap \cdots \cap \Sigma^{+}_{n} = \{ 2e_{1}, \cdots, 2e_{n} \}$,
\[
\frak{k}_{1} \cap \cdots \cap \frak{k}_{n} = \frak{k}_{0} + \frak{k}_{2e_{1}} + \cdots + \frak{k}_{2e_{n}} = \frak{k}_{0} + \sum_{i=1}^{n}\mathbb{R}\bar{S}^{(1)}_{\alpha_{i}}.
\]
By Lemma \ref{2-4-1}, there exist $a_{1}, \cdots, a_{n} \in \mathbb{R}$ and $C \in \frak{k}_{0}$ such that
\[
J = a_{1}\bar{S}^{(1)}_{\alpha_{1}} + \cdots + a_{n}\bar{S}^{(1)}_{\alpha_{n}} +C.
\]

\begin{lemm} \label{2-4-2}
$[J, \frak{a}] \subset \sum_{i=1}^{n}\frak{m}_{2e_{i}}$ and $[C, \bar{T}^{(1)}_{2e_{i}}] = 0$ for any $1 \leq i \leq n$.
\end{lemm}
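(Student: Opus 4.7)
The plan is to exploit the decomposition $J = \sum_{i=1}^{n} a_i \bar{S}^{(1)}_{\alpha_i} + C$ stated just before the lemma, combined with the defining property $(\mathrm{ad}\, J)^{2} = -\mathrm{id}$ on $\frak{m}$. Part (i) will follow by a direct bracket computation, and the vanishing statement in part (ii) will be extracted by separating the $\frak{a}$- and $\frak{m}_{2e_i}$-components of $[J,[J,H]]$ for $H \in \frak{a}$.

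For part (i), I first fix $H \in \frak{a}$ and write $H = iY$ with $Y \in \frak{a}_{0}$. Since $C \in \frak{k}_{0}$ is the centralizer of $\frak{a}$ in $\frak{k}$, one has $[C,H]=0$. Lemma \ref{2-3-5}(1) then gives $[\bar{S}^{(1)}_{\alpha_{i}},H]=-\alpha_{i}(Y)\bar{T}^{(1)}_{\alpha_{i}}$, and since $\bar{\alpha}_{i}=2e_{i}$ this vector lies in $\frak{m}_{2e_{i}}$. Summing yields $[J,H] \in \sum_{i}\frak{m}_{2e_{i}}$, which is the first claim.

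For part (ii), I compute $[J,[J,H]]$ and compare it to $-H$. The key observation is that $[J,\bar{T}^{(1)}_{\alpha_{i}}]$ decomposes as $a_{i}\, i\bar{H}_{\alpha_{i}} + [C,\bar{T}^{(1)}_{\alpha_{i}}]$: for $j \neq i$ the cross term $[\bar{S}^{(1)}_{\alpha_{j}},\bar{T}^{(1)}_{\alpha_{i}}]$ lies in $\frak{m}_{2e_{j}+2e_{i}}+\frak{m}_{2e_{j}-2e_{i}}$ by Lemma \ref{2-3-4}, and neither $2e_{j} \pm 2e_{i}$ is a restricted root; for $j=i$ the remark after Lemma \ref{2-3-5} gives $[\bar{S}^{(1)}_{\alpha_{i}},\bar{T}^{(1)}_{\alpha_{i}}] = i\bar{H}_{\alpha_{i}} = iH_{2e_{i}} \in \frak{a}$; and $[C,\bar{T}^{(1)}_{\alpha_{i}}] \in \frak{m}_{2e_{i}}$ again by Lemma \ref{2-3-4}. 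Since $\frak{a}$ and $\bigoplus_{i}\frak{m}_{2e_{i}}$ are orthogonal subspaces of $\frak{m}$, the identity $[J,[J,H]] = -H$ splits into two independent equations. The $\frak{a}$-equation, evaluated using $H_{2e_{i}}=(2/d)v_{i}$, forces $a_{i}^{2}=d/4$, so every $a_{i}$ is nonzero. The $\frak{m}_{2e_{i}}$-equation becomes $\sum_{i}a_{i}e_{i}(Y)[C,\bar{T}^{(1)}_{\alpha_{i}}]=0$ for all $Y \in \frak{a}_{0}$; since the $e_{i}$ are linearly independent on $\frak{a}_{0}$ and the $\frak{m}_{2e_{i}}$ are mutually orthogonal, each summand must vanish, and combined with $a_{i} \neq 0$ this gives $[C,\bar{T}^{(1)}_{2e_{i}}]=0$.

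The main obstacle is essentially bookkeeping: tracking which $K$-invariant summand of $\frak{m}$ each bracket lives in and eliminating all the $i \neq j$ cross terms via Lemma \ref{2-3-4}. A small but essential point is that the nonvanishing of the $a_{i}$, needed to divide out the scalar factor at the end, does not come for free but is extracted from the $\frak{a}$-component of $(\mathrm{ad}\,J)^{2}=-\mathrm{id}$; this nonvanishing will also presumably be reused in subsequent arguments about the invariant metrics parametrised by $d$.
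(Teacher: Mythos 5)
Your proof is correct, and while part (i) coincides with the paper's argument, your treatment of part (ii) takes a genuinely different route. The paper's own proof is a projection argument: it notes that $[J,\bar{T}^{(1)}_{2e_{i}}]\in\frak{a}$ (which rests on the decomposition of $J$, Lemma \ref{2-3-4}, and the fact that $\frak{m}_{2e_{i}}$ is one-dimensional), expands $[J,\bar{T}^{(1)}_{2e_{i}}]=a_{i}\,iH_{2e_{i}}+[C,\bar{T}^{(1)}_{2e_{i}}]$, and concludes $[C,\bar{T}^{(1)}_{2e_{i}}]=0$ because $[C,\bar{T}^{(1)}_{2e_{i}}]\in\frak{m}_{2e_{i}}$ is orthogonal to $\frak{a}$. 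You instead apply $(\mathrm{ad}J|_{\frak{m}})^{2}=-\mathrm{id}_{\frak{m}}$ to elements of $\frak{a}$ and separate the resulting identity into its $\frak{a}$- and $\frak{m}_{2e_{i}}$-components, after killing the cross terms $[\bar{S}^{(1)}_{\alpha_{j}},\bar{T}^{(1)}_{\alpha_{i}}]$ ($j\neq i$) via Lemma \ref{2-3-4} and using $[\bar{S}^{(1)}_{\alpha_{i}},\bar{T}^{(1)}_{\alpha_{i}}]=i\bar{H}_{\alpha_{i}}$; the $\frak{a}$-part forces $a_{i}^{2}=d/4$ and the $\frak{m}_{2e_{i}}$-part, combined with $a_{i}\neq 0$, yields the vanishing. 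What your version buys is that you never need the separate claim $[J,\bar{T}^{(1)}_{2e_{i}}]\in\frak{a}$, and you obtain $|a_{i}|=\sqrt{d}/2$ as a by-product, which is exactly Lemma \ref{2-4-3}; the paper proves that lemma immediately afterwards by essentially the same $(\mathrm{ad}J)^{2}$ computation, but it needs $[C,\bar{T}^{(1)}_{2e_{i}}]=0$ as an input, whereas you extract both facts simultaneously from the component splitting. The cost is somewhat heavier bookkeeping inside this one lemma, and your appeal to $a_{i}\neq 0$ is legitimate (not circular) since it comes from the $\frak{a}$-component of your own identity rather than from Lemma \ref{2-4-3}.
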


\begin{proof}
For each $1 \leq i \leq n$,
\[
\begin{split}
[J, iH_{2e_{i}}] 
&= [a_{1}\bar{S}^{(1)}_{2e_{1}} + \cdots + a_{n}\bar{S}^{(1)}_{2e_{n}} + C, iH_{2e_{i}}] 
= a_{i}[\bar{S}^{(1)}_{2e_{i}}, iH_{2e_{i}}] \\
&= -a_{i}[iH_{2e_{i}}, \bar{S}^{(1)}_{2e_{i}}] 
= -a_{i} (H_{2e_{i}}, H_{2e_{i}})\bar{T}^{(1)}_{2e_{i}}
= - \left( \frac{4a_{i}}{d} \right) \bar{T}^{(1)}_{2e_{i}}. \\
\end{split}
\]
Therefore, $[J, iH_{2e_{i}}] \in \frak{m}_{2e_{i}}$ and the former part of the statement follows. 
Since $[J, \bar{T}^{(1)}_{2e_{i}}] \in \frak{a}$ and 
\[
[J, \bar{T}^{(1)}_{2e_{i}}] = [a_{i}\bar{S}^{(1)}_{e_{i}} + C, \bar{T}^{(1)}_{2e_{i}}] = a_{i}H_{2e_{i}} + [C, \bar{T}^{(1)}_{2e_{i}}],
\]
the latter part follows by $[\frak{k}_{0}, \frak{m}_{\lambda}] \subset \frak{m}_{\lambda}$ for any $\lambda \in \Sigma^{+}$.
\end{proof}

\begin{lemm} \label{2-4-3}
$|a_{1}| = \cdots = |a_{n}| = \sqrt{d}/2.$
\end{lemm}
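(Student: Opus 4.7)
The plan is to extract $|a_{i}|$ from the identity $(\mathrm{ad}\, J|_{\frak{m}})^{2} = -\mathrm{id}_{\frak{m}}$ applied to the vector $iH_{2e_{i}} \in \frak{a}$, using the expression $J = \sum_{j} a_{j}\bar{S}^{(1)}_{\alpha_{j}} + C$ obtained just before Lemma \ref{2-4-2}.

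First I would compute $[J, iH_{2e_{i}}]$. Since $C \in \frak{k}_{0}$ commutes with $\frak{a}$, only the terms $a_{j}\bar{S}^{(1)}_{\alpha_{j}}$ contribute. By Lemma \ref{2-3-5}(1), each such bracket is a scalar multiple of $\bar{T}^{(1)}_{\alpha_{j}}$ with coefficient proportional to $(2e_{j})(H_{2e_{i}}) = (2e_{i}, 2e_{j})$. The paper's computation $|iA_{2e_{1}}|^{2} = d$ gives $(2e_{i}, 2e_{j}) = (4/d)\delta_{ij}$, so only the $j = i$ summand survives, yielding
\[
[J, iH_{2e_{i}}] = -\frac{4a_{i}}{d}\bar{T}^{(1)}_{\alpha_{i}}.
\]

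Next I would compute $[J, \bar{T}^{(1)}_{\alpha_{i}}]$. The $C$-contribution vanishes by Lemma \ref{2-4-2}. For $j \ne i$, Lemma \ref{2-3-4} gives $[\bar{S}^{(1)}_{\alpha_{j}}, \bar{T}^{(1)}_{\alpha_{i}}] \in \frak{m}_{2e_{j}+2e_{i}} + \frak{m}_{2e_{j}-2e_{i}}$; because $\Sigma$ is of type $C_{n}$ or $BC_{n}$, neither $2e_{i} \pm 2e_{j}$ lies in $\Sigma$, so these brackets vanish. For $j = i$, the normalized relation stated immediately after Lemma \ref{2-3-5} gives $[\bar{S}^{(1)}_{\alpha_{i}}, \bar{T}^{(1)}_{\alpha_{i}}] = iH_{2e_{i}}$. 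Hence $[J, \bar{T}^{(1)}_{\alpha_{i}}] = a_{i}\,iH_{2e_{i}}$. Combining the two computations yields $(\mathrm{ad}\, J)^{2}(iH_{2e_{i}}) = -(4a_{i}^{2}/d)\,iH_{2e_{i}}$, and matching this with $-iH_{2e_{i}}$ forces $a_{i}^{2} = d/4$.

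The main obstacle is carefully tracking normalization factors: the passage between $S^{(k)}_{\alpha}, T^{(k)}_{\alpha}$ and their barred versions involves the scale $\sqrt{(\lambda,\lambda)}/2$, and the constant $d$ enters through $(2e_{i}, 2e_{i}) = 4/d$. Once these scalings are fixed, the argument is a short Jacobi-style computation that relies only on (a) the mutual orthogonality and unit multiplicity of the longest restricted roots $2e_{1},\dots,2e_{n}$ in type $C_{n}/BC_{n}$, and (b) the absence of $2e_{i} \pm 2e_{j}$ from $\Sigma$ for $i \ne j$, both of which are already established in Subsection \ref{s2-4}.
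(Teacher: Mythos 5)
Your proposal is correct and follows essentially the same route as the paper: both apply $(\mathrm{ad}J|_{\frak{m}})^{2}=-\mathrm{id}_{\frak{m}}$ to $iH_{2e_{i}}$, using $[J,iH_{2e_{i}}]=-(4a_{i}/d)\bar{T}^{(1)}_{2e_{i}}$ and $[C,\bar{T}^{(1)}_{2e_{i}}]=0$ from Lemma \ref{2-4-2} to get $4a_{i}^{2}=d$. You merely make explicit the vanishing of the cross terms (via $(2e_{i},2e_{j})=(4/d)\delta_{ij}$ and $2e_{i}\pm 2e_{j}\notin\Sigma$), which the paper leaves implicit.
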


\begin{proof}
For $H_{2e_{i}}\ (1 \leq i \leq n)$, using Lemma \ref{2-4-2} we have 
\[
\begin{split}
-iH_{2e_{i}} & = \mathrm{ad}J^{2}(iH_{2e_{i}}) = - \left( \frac{4a_{i}}{d} \right) [J, \bar{T}^{(1)}_{2e_{i}} ]  = - \left( \frac{4a_{i}}{d} \right) [a_{i}\bar{S}^{(1)}_{2e_{i}} + C, \bar{T}^{(1)}_{2e_{1}}] \\
& = - \left( \frac{4a_{i}^{2}}{d} \right) iH_{2e_{i}} -4a_{1}[C, \bar{T}^{(1)}_{2e_{1}}] = - \left( \frac{4a_{i}^{2}}{d} \right) iH_{2e_{i}}. \\
\end{split}
\]
Hence, $d = 4a^2_{i}$ and the statement follows.
\end{proof}

\begin{lemm} \label{2-4-4}
Let $\Sigma$ be of type $BC_{n}$ or $C_{n} \ (n \geq 2)$.
If $\alpha \in \tilde{\Sigma}$ is a longest root and satisfies $\bar{\alpha} = e_{k} + e_{l} \ (1 \leq k \not= l \leq n)$, then $(\alpha, \sigma(\alpha)) = 0$.
Moreover, $\alpha_{k} - \alpha = \sigma(\alpha) - \alpha_{l}$.
\end{lemm}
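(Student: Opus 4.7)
The plan is to compute $(\alpha+\sigma(\alpha),\,\alpha+\sigma(\alpha))$ in two different ways and compare.

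First I would unwind what $\alpha_{k}$ and $\alpha_{l}$ actually are. The case-by-case data in Subsection \ref{s2-4} gives $m(2e_{k})=1$, so the unique preimage of $2e_{k}$ under the restriction map $\tilde{\Sigma}\to\Sigma$ must be $\sigma$-fixed (any $\sigma$-translate of it is again a preimage). Hence $\alpha_{k}=\bar{\alpha}_{k}=2e_{k}$ as an element of $\tilde{\frak{h}}_{0}^{*}$, and similarly $\alpha_{l}=2e_{l}$. In particular $\alpha_{1}=2e_{1}\in\tilde{\Sigma}$, and by the computation already done in Subsection \ref{s2-4},
\[
(\alpha_{1},\alpha_{1}) = (2e_{1},2e_{1}) = \tfrac{4}{d}.
\]

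For the first assertion, from $\bar{\alpha}=(1/2)(\alpha+\sigma(\alpha))$ I would write $\alpha+\sigma(\alpha)=2(e_{k}+e_{l})$ and, using $(e_{i},e_{j})=(1/d)\delta_{ij}$, get
\[
(\alpha+\sigma(\alpha),\,\alpha+\sigma(\alpha)) \;=\; 4(e_{k}+e_{l},e_{k}+e_{l}) \;=\; \tfrac{8}{d}.
\]
On the other hand, since $\sigma$ is an isometry of $(\,\cdot\,,\,\cdot\,)$,
\[
(\alpha+\sigma(\alpha),\,\alpha+\sigma(\alpha)) \;=\; 2(\alpha,\alpha) + 2(\alpha,\sigma(\alpha)).
\]
Because $\alpha$ is a longest root of $\tilde{\Sigma}$ and all longest roots of an irreducible root system share a common length (equal to $(\alpha_{1},\alpha_{1})$, as the paper already notes when asserting that $\alpha_{1}$ is a longest root), we have $(\alpha,\alpha)=4/d$. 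Equating the two expressions gives $(\alpha,\sigma(\alpha))=0$.

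For the ``moreover'' clause, the same identity already does the job: $\alpha+\sigma(\alpha)=2(e_{k}+e_{l})=2e_{k}+2e_{l}=\alpha_{k}+\alpha_{l}$, which rearranges to $\alpha_{k}-\alpha=\sigma(\alpha)-\alpha_{l}$. The only non-routine input is the identification $(\alpha,\alpha)=4/d$ for an arbitrary longest root; once one grants Weyl-conjugacy of longest roots in an irreducible root system, the rest is a one-line computation with the inner product on $\tilde{\frak{h}}_{0}^{*}$.
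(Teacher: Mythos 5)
Your proof is correct, but it takes a genuinely different route from the paper's. The paper argues combinatorially: since $\overline{\alpha+\sigma(\alpha)}=2(e_{k}+e_{l})\notin\Sigma$, the sum $\alpha+\sigma(\alpha)$ is not a root, while $\alpha-\sigma(\alpha)\notin\tilde{\Sigma}$ is quoted from Araki \cite{Araki}; because $\sigma(\alpha)\neq\pm\alpha$ (from the case list in Subsection \ref{s2-4}), the usual root-string criterion then forces $(\alpha,\sigma(\alpha))=0$, and the ``moreover'' clause is read off from $\alpha+\sigma(\alpha)=\alpha_{k}+\alpha_{l}$, exactly as you do. You instead compute $(\alpha+\sigma(\alpha),\alpha+\sigma(\alpha))$ in two ways: it equals $4(e_{k}+e_{l},e_{k}+e_{l})=8/d$ because $\alpha+\sigma(\alpha)$ is $2\bar{\alpha}$ extended by zero on $i\frak{b}$, and it equals $2(\alpha,\alpha)+2(\alpha,\sigma(\alpha))$ with $(\alpha,\alpha)=(\alpha_{1},\alpha_{1})=(2e_{1},2e_{1})=4/d$, since $\alpha$ and $\alpha_{1}$ are both longest roots and $\sigma|_{\frak{h}_{0}}$ is an isometry. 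This metric argument is sound: your convention $(e_{i},e_{j})=(1/d)\delta_{ij}$ is the one the paper actually uses for the inner product of roots (its own computation $(2e_{1},2e_{1})=4/d$ confirms this), and the identification $\alpha_{k}=2e_{k}$ via $m(2e_{k})=1$ is exactly the paper's. What your route buys is self-containedness—no appeal to Araki's statement that $\alpha-\sigma(\alpha)$ is never a root, no root strings, and the possibility $\sigma(\alpha)=\alpha$ is excluded automatically rather than taken from the case list; what it costs is dependence on the explicit length normalization and on the paper's earlier observation that $\alpha_{1}$ is a longest root. One small simplification: Weyl-conjugacy is not needed anywhere, since longest roots have equal length by definition. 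The derivation of $\alpha_{k}-\alpha=\sigma(\alpha)-\alpha_{l}$ is the same in both treatments.
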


\begin{proof}
Since $\overline{\alpha + \sigma(\alpha)} = 2(e_{1} \pm e_{k}) \not\in \Sigma$, it follows that $\alpha + \sigma(\alpha) \not\in \tilde{\Sigma}$.
Additionally, it is known that $\alpha - \sigma(\alpha) \not\in \tilde{\Sigma}$ \cite{Araki}.
Thus, we conclude that $(\alpha, \sigma(\alpha)) = 0$.
The latter part is obvious.
\end{proof}

Using a similar argument as in the proof of Lemma \ref{2-4-4}, we obtain Lemma \ref{2-4-5}.

\begin{lemm} \label{2-4-5}
Let $\Sigma$ be of type $BC_{n}$ or $C_{n} \ (n \geq 2)$.
If $\alpha \in \tilde{\Sigma}$ is a longest root and satisfies $\bar{\alpha} = e_{k} - e_{l} \ (1 \leq k \not= l \leq n)$, then $(\alpha, \sigma(\alpha)) = 0$ and $a_{k} - \alpha = \sigma(\alpha) + \alpha_{l}$.
\end{lemm}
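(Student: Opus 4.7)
The plan is to mirror exactly the structure of the proof of Lemma \ref{2-4-4}, adapting the restriction computation and the bookkeeping of $\alpha_k, \alpha_l$ to the case $\bar\alpha = e_k - e_l$. The statement splits naturally into two independent parts: the orthogonality $(\alpha, \sigma(\alpha)) = 0$, and the affine identity $\alpha_k - \alpha = \sigma(\alpha) + \alpha_l$.

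For the orthogonality, I would first compute that $\overline{\alpha + \sigma(\alpha)} = 2\bar\alpha = 2(e_k - e_l)$, which is not in $\Sigma$ for type $BC_n$ or $C_n$ (the only type-$C$ restricted roots are of the form $\pm 2 e_i$). Hence $\alpha + \sigma(\alpha) \not\in \tilde\Sigma$. Combined with Araki's fact (cited already in the proof of Lemma \ref{2-4-4}) that $\alpha - \sigma(\alpha) \not\in \tilde\Sigma$, and noting that $\sigma(\alpha) \neq \pm\alpha$ since $\alpha$ is a longest root with $\bar\alpha = e_k - e_l$ (by the case-by-case data tabulated just before Lemma \ref{2-4-4}), the standard criterion on root strings (if $\alpha, \beta$ are non-proportional roots and neither $\alpha \pm \beta$ is a root, then $(\alpha, \beta) = 0$) applied to the pair $\alpha, \sigma(\alpha)$ gives $(\alpha, \sigma(\alpha)) = 0$.

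For the identity, I would work entirely in the dual space $\tilde{\frak{h}}^*$. Since $\sigma|_{\frak{h}_0} = -\mathrm{id}_{i\frak{b}} + \mathrm{id}_{\frak{a}_0}$, we have $\alpha + \sigma(\alpha) = 2\bar\alpha$ for every root. The roots $\alpha_k, \alpha_l$ satisfy $\sigma(\alpha_k) = \alpha_k$ and $\sigma(\alpha_l) = \alpha_l$ (this is the entry ``if $\bar\alpha = 2e_i$ then $\sigma(\alpha) = \alpha$'' that holds in every line of the list preceding Lemma \ref{2-4-1}), so $\alpha_k$ and $\alpha_l$ coincide with $\bar{\alpha_k} = 2e_k$ and $\bar{\alpha_l} = 2e_l$ as functionals on $\frak{h}_0$. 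Therefore
\[
\alpha_k - \alpha_l = 2e_k - 2e_l = 2(e_k - e_l) = 2\bar\alpha = \alpha + \sigma(\alpha),
\]
which rearranges to the desired $\alpha_k - \alpha = \sigma(\alpha) + \alpha_l$. This is a linear identity, not a claim about roots, so no extra work is needed.

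The only nontrivial step is the orthogonality, and within it the one point that requires care is verifying $\sigma(\alpha) \neq \alpha$ (so that $\alpha - \sigma(\alpha) \neq 0$ is a genuine candidate to be a root and the root-string argument applies). Once this is checked against the table of $\tilde\Sigma, \Sigma$ just before Lemma \ref{2-4-1}, the rest of the proof is essentially formal and parallels Lemma \ref{2-4-4} verbatim.
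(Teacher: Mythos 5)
Your proposal is correct and takes essentially the same route as the paper, which proves Lemma \ref{2-4-5} simply by repeating the argument of Lemma \ref{2-4-4}: $\overline{\alpha + \sigma(\alpha)} = 2(e_{k} - e_{l}) \not\in \Sigma$ forces $\alpha + \sigma(\alpha) \not\in \tilde{\Sigma}$, Araki's result excludes $\alpha - \sigma(\alpha)$, hence $(\alpha, \sigma(\alpha)) = 0$, and the identity $\alpha_{k} - \alpha = \sigma(\alpha) + \alpha_{l}$ is just the restriction relation $\alpha + \sigma(\alpha) = 2\bar{\alpha} = \alpha_{k} - \alpha_{l}$. Your explicit verification that $\sigma(\alpha) \not= \pm \alpha$ (needed for the root-string criterion) is a detail the paper leaves implicit, but it does not constitute a different argument.
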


\begin{lemm} \label{2-4-6}
Let $\Sigma$ be of type $BC_{n}$ or $C_{n} \ (n \geq 2)$.
Then, $[J, \frak{m}_{e_{k} \pm e_{l}}] = \frak{m}_{e_{k} \mp e_{l}}$ for any $1 \leq k < l \leq n$.
Let $\alpha \in \tilde{\Sigma}$ satisfy $\bar{\alpha} = e_{k} \pm e_{l}$.
If $\alpha$ is a longest root, then $[J, \mathbb{R}T^{(1)}_{\alpha} + \mathbb{R}T^{(2)}_{\alpha}] = \mathbb{R}T^{(1)}_{\alpha_{k} - \alpha} + \mathbb{R}T^{(2)}_{\alpha_{k} - \alpha}$.
If $\alpha$ is a shortest root, then $[J, \mathbb{R}T^{(1)}_{\alpha}] \subset \mathbb{R}T^{(1)}_{\alpha_{k} - \alpha}$.
\end{lemm}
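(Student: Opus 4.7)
The plan is to expand $J = a_{1} \bar{S}^{(1)}_{\alpha_{1}} + \cdots + a_{n} \bar{S}^{(1)}_{\alpha_{n}} + C$ from the decomposition preceding Lemma~\ref{2-4-2} and evaluate the brackets summand by summand, using Lemmas~\ref{2-3-3}, \ref{2-3-4}, \ref{2-4-4}, and~\ref{2-4-5}.

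First, I fix $X \in \frak{m}_{e_{k}+e_{l}}$ and write $[J, X] = \sum_{i} a_{i} [\bar{S}^{(1)}_{\alpha_{i}}, X] + [C, X]$. By Lemma~\ref{2-3-4} each $[\bar{S}^{(1)}_{\alpha_{i}}, X]$ lies in $\frak{m}_{2e_{i} + e_{k} + e_{l}} + \frak{m}_{2e_{i} - e_{k} - e_{l}}$; since $2e_{i} + e_{k} + e_{l} \notin \Sigma$ for any $i$ and $2e_{i} - e_{k} - e_{l} \in \Sigma$ only for $i \in \{k, l\}$ (giving $\pm(e_{k} - e_{l})$), only the contributions with $i = k, l$ survive, and both lie in $\frak{m}_{e_{k} - e_{l}}$. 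The residual $[C, X] \in \frak{m}_{e_{k} + e_{l}}$ must then be eliminated. I would do this by first observing that $J$ is central in $\frak{k}$, so $[J, Y] = 0$ for every $Y \in \frak{k}_{0}$; projecting this identity onto $\frak{k}_{0}$ yields $[C, \frak{k}_{0}] = 0$, i.e.\ $C$ is central in $\frak{k}_{0}$. Then, on the $\mathrm{ad}J$-invariant block $\frak{m}_{e_{k}+e_{l}} \oplus \frak{m}_{e_{k}-e_{l}}$, write $\mathrm{ad}J$ as a $2\times 2$ matrix of operators whose diagonal blocks are restrictions of $\mathrm{ad}C$ and whose off-diagonal blocks come from $a_{k}\,\mathrm{ad}\bar{S}^{(1)}_{\alpha_{k}} + a_{l}\,\mathrm{ad}\bar{S}^{(1)}_{\alpha_{l}}$; imposing $(\mathrm{ad}J)^{2} = -\mathrm{id}$ together with the commutation $[\mathrm{ad}J, \mathrm{ad}C] = 0$ forces the diagonal blocks to vanish, in direct parallel with the proof of Lemma~\ref{2-4-3}. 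Equality then follows from $m(e_{k}+e_{l}) = m(e_{k}-e_{l})$ and injectivity of $\mathrm{ad}J|_{\frak{m}}$; the $\bar{\alpha} = e_{k} - e_{l}$ case is identical.

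For the second part, I take a longest root $\alpha$ with $\bar{\alpha} = e_{k} + e_{l}$. The classification in Subsection~\ref{s2-4} gives $\alpha \in \tilde{\Sigma}_{(2)}$, so $T^{(1)}_{\alpha} = W_{\alpha} + W_{\sigma(\alpha)}$ and $T^{(2)}_{\alpha} = -U_{\alpha} + U_{\sigma(\alpha)}$. Lemma~\ref{2-4-4} furnishes $(\alpha, \sigma(\alpha)) = 0$ and $\alpha_{k} - \alpha = \sigma(\alpha) - \alpha_{l} \in \tilde{\Sigma}$, while $\alpha_{k} + \alpha$ and $\alpha_{l} + \alpha$ are not roots by the same arithmetic as above. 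Expanding the eight brackets $[U_{\alpha_{k}}, W_{\pm\alpha}]$, $[U_{\alpha_{k}}, W_{\pm\sigma(\alpha)}]$, $[U_{\alpha_{l}}, W_{\pm\alpha}]$, $[U_{\alpha_{l}}, W_{\pm\sigma(\alpha)}]$ via the four-case table of Lemma~\ref{2-3-3} and collecting the result in the basis $\{T^{(1)}_{\alpha_{k} - \alpha}, T^{(2)}_{\alpha_{k} - \alpha}\}$ of $\frak{m}_{e_{k}-e_{l}}$ yields the claimed identity; Lemma~\ref{2-4-5} handles $\bar{\alpha} = e_{k} - e_{l}$ symmetrically. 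For a shortest root $\alpha$, which occurs only in the $\tilde{G}_{2}(\mathbb{R}^{2m+1})$ case, one has $\alpha \in \tilde{\Sigma}_{(1)}$, so $T^{(2)}_{\alpha} = 0$, and the analogous computation places the image into $\mathbb{R}T^{(1)}_{\alpha_{k} - \alpha}$.

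The main technical obstacle is eliminating the $\mathrm{ad}C$-contribution in the first part, which requires carefully combining the centrality of $C$ in $\frak{k}_{0}$ with the constraint $(\mathrm{ad}J)^{2} = -\mathrm{id}$ on each block; the remaining effort, namely tracking the signs $\epsilon_{\cdot,\cdot}$ and structure constants $N_{\cdot,\cdot}$ through Lemma~\ref{2-3-3} in the long-root expansion, is routine but lengthy.
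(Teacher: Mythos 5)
The second half of your plan (expanding $[a_{k}\bar{S}^{(1)}_{\alpha_{k}}+a_{l}\bar{S}^{(1)}_{\alpha_{l}},\,T^{(i)}_{\alpha}]$ through the tables of Lemma \ref{2-3-3}, with Lemmas \ref{2-4-4} and \ref{2-4-5} identifying $\alpha_{k}-\alpha$, and a multiplicity count for shortest roots) is exactly what the paper does. The gap is in the first half, where you must eliminate the $\frak{k}_{0}$-component $C$. Your block argument does not achieve this: writing $\mathrm{ad}J|_{V}=D+F$ on $V=\frak{m}_{e_{k}+e_{l}}\oplus\frak{m}_{e_{k}-e_{l}}$ with $D=\mathrm{ad}C|_{V}$ block-diagonal and $F$ the off-diagonal part coming from $a_{k}\,\mathrm{ad}\bar{S}^{(1)}_{\alpha_{k}}+a_{l}\,\mathrm{ad}\bar{S}^{(1)}_{\alpha_{l}}$, the relations $[\mathrm{ad}J,\mathrm{ad}C]=0$ and $(\mathrm{ad}J)^{2}=-\mathrm{id}$ only give $DF=FD=0$ and $D^{2}+F^{2}=-\mathrm{id}$. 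These are compatible with $D\neq 0$: for instance $F=0$ and $D$ a complex structure on the block satisfies them, and in general they allow $D\neq 0$ on the orthogonal complement of $\mathrm{im}\,F$. So nothing "forces the diagonal blocks to vanish" unless you already know $F$ is injective on each $\frak{m}_{e_{k}\pm e_{l}}$, which is essentially the explicit computation you postpone; moreover that computation pins down $[J,T^{(i)}_{\alpha}]$ only after $[C,T^{(i)}_{\alpha}]=0$ is known, since otherwise a component in $\frak{m}_{e_{k}\pm e_{l}}$ survives. The appeal to Lemma \ref{2-4-3} is not a true parallel: there the diagonal contribution vanished automatically because $C\in\frak{k}_{0}$ centralizes $\frak{a}$, and the needed identity $[C,\bar{T}^{(1)}_{2e_{i}}]=0$ had been proved separately in Lemma \ref{2-4-2}.

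The paper closes this gap on the $\frak{k}$-side rather than on $\frak{m}$: since $J$ is central in $\frak{k}$ and $S^{(i)}_{\alpha}\in\frak{k}_{e_{k}\pm e_{l}}$, one has $0=[J,S^{(i)}_{\alpha}]=[a_{k}S^{(1)}_{\alpha_{k}}+a_{l}S^{(1)}_{\alpha_{l}},S^{(i)}_{\alpha}]+[C,S^{(i)}_{\alpha}]$, and the two summands lie in the distinct root spaces $\frak{k}_{e_{k}\mp e_{l}}$ and $\frak{k}_{e_{k}\pm e_{l}}$, hence each vanishes; bracketing the identity $[C,S^{(i)}_{\alpha}]=0$ with $iH_{e_{k}\pm e_{l}}$ and using $[\frak{a},C]=0$ then yields $[C,T^{(i)}_{\alpha}]=0$, after which your expansion and the injectivity of $\mathrm{ad}J|_{\frak{m}}$ give the stated equalities. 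You should replace your block-matrix step by this argument (or supply an independent proof that $F$ is an isomorphism of the block before invoking $DF=0$). A minor side remark: shortest roots over $e_{k}\pm e_{l}$ occur not only for $\tilde{G}_{2}(\mathbb{R}^{2m+1})$ but also for $Sp(n)/U(n)$, where every root over $e_{k}\pm e_{l}$ is shortest; this does not affect the argument.
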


\begin{proof}
Since $J$ is an element of the center of $\frak{k}$, we have
\[
\begin{split}
0 & = [J, S^{(i)}_{\alpha}] = [a_{k}S^{(1)}_{2e_{k}} + a_{l}S^{(1)}_{2e_{l}}, S^{(i)}_{\alpha}] + [C, S^{(i)}_{\alpha}] \quad ( i = 1,2).
\end{split}
\]
Since $[a_{k}S^{(1)}_{2e_{k}} + a_{l}S^{(1)}_{2e_{l}}, S^{(i)}_{\alpha}] \in \frak{k}_{e_{k} \mp e_{l}}$ and $[C, S^{(i)}_{\alpha}] \in \frak{k}_{e_{k} \pm e_{l}}$, we obtain 
$[S^{(1)}_{2e_{k}} + S^{(1)}_{2e_{l}}, S^{(i)}_{\alpha}] = 0$ and $[C, S^{(i)}_{\alpha}] = 0$.
Hence, 
\[
\begin{split}
0 &= [iH_{e_{k} \pm e_{l}}, [C, S^{(i)}_{\alpha}]] = [[iH_{e_{k} \pm e_{l}}, C], S^{(i)}_{\alpha}] + [C, [iH_{e_{k} \pm e_{l}}, S^{(i)}_{\alpha}] ] \\
& = (H_{e_{k} \pm e_{l}}, H_{e_{k} \pm e_{l}})[C, T^{(i)}_{\alpha}] = \left( \frac{2}{d} \right) [C, T^{(i)}_{\alpha} ].
\end{split}
\]
Thus, $[C, T^{(i)}_{\alpha}] = 0$ and $[J, T^{(i)}_{\alpha} ] = [a_{k}S^{(1)}_{2e_{k}} + a_{l}S^{(1)}_{2e_{l}}, T^{(i)}_{\alpha}] \subset \frak{m}_{e_{k} \mp e_{l}}$.
If $\alpha$ is a longest root and $(\epsilon_{\alpha_{k}, -\alpha}, \epsilon_{\alpha_{l}, -\alpha}) = (1,1)$, then by Lemma \ref{2-4-4} and Lemma \ref{2-4-5},
\[
\begin{split}
[J, T^{(1)}_{\alpha}] 
&=
[a_{k}S^{(1)}_{\alpha_{k}} + a_{l}S^{(1)}_{\alpha_{l}}, W_{\alpha} + W_{\sigma(\alpha)}] \\
&=
a_{k}( [U_{\alpha_{k}}, W_{\alpha}] + [ U_{\alpha_{k}}, W_{\sigma(\alpha)}] ) + a_{l}( [U_{\alpha_{l}}, W_{\alpha}] + [U_{\alpha_{l}}, W_{\sigma(\alpha)}] ) \\
&=
a_{k} \Big(-N_{\alpha_{k}, - \alpha}W_{\alpha_{k} - \alpha} - N_{\alpha_{k}, -\sigma(\alpha)}W_{\alpha_{k} - \sigma(\alpha)} \Big) \\
& \quad\quad\quad\quad\quad\quad\quad\quad + a_{l} \Big( -N_{\alpha_{l}, - \alpha}W_{\alpha_{l} - \alpha} - N_{\alpha_{l}, -\sigma(\alpha)}W_{\alpha_{l} - \sigma(\alpha)} \Big) \\
&=
-a_{k}N_{\alpha_{k}, -\alpha}T^{(1)}_{\alpha_{k} - \alpha}  -a_{l}N_{\alpha_{l}, -\alpha}T^{(1)}_{\alpha_{l} - \alpha} \\
&=
-a_{k}N_{\alpha_{k}, -\alpha}T^{(1)}_{\alpha_{k} - \alpha} + a_{l}N_{\alpha_{l}, -\alpha}T^{(1)}_{\alpha_{k} - \alpha}. \\
\end{split}
\]
Thus, we have $[J, \mathbb{R}T^{(1)}_{\alpha}] = \mathbb{R}T^{(1)}_{\alpha_{k} - \alpha} + \mathbb{R}T^{(2)}_{\alpha_{k} - \alpha}$.
In the cases of $(\epsilon_{\alpha_{k}, -\alpha}, \epsilon_{\alpha_{l}, -\alpha}) = (1,-1), (-1,1), (-1,-1)$, similar calculations yield the same result.
Moreover, we also obtain $[J, \mathbb{R}T^{(2)}_{\alpha}] = \mathbb{R}T^{(1)}_{\alpha_{k} - \alpha} + \mathbb{R}T^{(2)}_{\alpha_{k} - \alpha}$.
If $\alpha$ is a shortest root, it is clear that $[J, \mathbb{R}T^{(1)}_{\alpha}] \subset \mathbb{R}T^{(1)}_{\alpha_{k} - \alpha}$ by considering the multiplicity of the restricted root system.
\end{proof}

\begin{lemm} \label{2-4-7}
Let $\Sigma$ be of type $BC_{n} \ (n \geq 1)$.
Then, $[J, \frak{m}_{e_{k}}] \subset \frak{m}_{e_{k}}$ for any $1 \leq k \leq n$.
Moreover, if $\alpha \in \tilde{\Sigma}$ satisfies $\bar{\alpha} = e_{k}\ ( 1 \leq k \leq n)$, then $[J, \mathbb{R}T^{(1)}_{\alpha} + \mathbb{R}T^{(2)}_{\alpha}] = \mathbb{R}T^{(1)}_{\alpha} + \mathbb{R}T^{(2)}_{\alpha}$.
\end{lemm}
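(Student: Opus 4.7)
The approach mirrors the proof of Lemma \ref{2-4-6}, adapted to the short-root setting $\bar{\alpha} = e_{k}$, and exploits the decomposition $J = \sum_{i=1}^{n} a_{i} \bar{S}^{(1)}_{\alpha_{i}} + C$ together with the centrality of $J$ in $\frak{k}$. For the first assertion, I would expand $[J, T^{(i)}_{\alpha}] = \sum_{j} a_{j}[\bar{S}^{(1)}_{\alpha_{j}}, T^{(i)}_{\alpha}] + [C, T^{(i)}_{\alpha}]$. By Lemma \ref{2-3-4}, each $[\bar{S}^{(1)}_{\alpha_{j}}, T^{(i)}_{\alpha}]$ lies in $\frak{m}_{2e_{j}+e_{k}} + \frak{m}_{2e_{j}-e_{k}}$. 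Since $\Sigma$ is of type $BC_{n}$, the combinations $2e_{j} \pm e_{k}$ are not roots for $j \neq k$, so these brackets vanish; for $j = k$ only the $\frak{m}_{e_{k}}$ piece survives, as $3e_{k} \notin \Sigma$. The term $[C, T^{(i)}_{\alpha}]$ sits in $\frak{m}_{e_{k}}$ by $[\frak{k}_{0}, \frak{m}_{e_{k}}] \subset \frak{m}_{e_{k}}$. Hence $[J, T^{(i)}_{\alpha}] \in \frak{m}_{e_{k}}$.

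For the second assertion, the crucial step is a Jacobi reduction that eliminates $C$. Using Lemma \ref{2-3-5}(1) and $\alpha(H_{e_{k}}) = 1/d$, one has $[iH_{e_{k}}, S^{(i)}_{\alpha}] = (1/d) T^{(i)}_{\alpha}$. Applying Jacobi to $[J, [iH_{e_{k}}, S^{(i)}_{\alpha}]]$ with $[J, S^{(i)}_{\alpha}] = 0$ (centrality of $J$ in $\frak{k}$) gives $(1/d)[J, T^{(i)}_{\alpha}] = [[J, iH_{e_{k}}], S^{(i)}_{\alpha}]$. A computation identical to Lemma \ref{2-4-2} (using $H_{2e_{k}} = 2H_{e_{k}}$, so only the $\alpha_{k}$-summand of $J$ contributes and $C$ drops out by centralizer) yields $[J, iH_{e_{k}}] = -(2a_{k}/d)\bar{T}^{(1)}_{\alpha_{k}}$. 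Therefore
\[
[J, T^{(i)}_{\alpha}] = -2a_{k}\,\bigl[\bar{T}^{(1)}_{\alpha_{k}}, S^{(i)}_{\alpha}\bigr],
\]
reducing the problem to a bracket involving only $\alpha_{k}$.

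To finish, I would first note that $\alpha \in \tilde{\Sigma}_{(3)}$, since $\bar\alpha = e_{k}$, $\sigma(\alpha) \neq \alpha$, and $2\bar\alpha = 2e_{k} \in \Sigma$. The standard fact that $\alpha + \sigma(\alpha) \in \tilde{\Sigma}$ for $\alpha \in \tilde{\Sigma}_{(3)}$, combined with $\overline{\alpha+\sigma(\alpha)} = 2e_{k}$ and $m(2e_{k}) = 1$, forces $\alpha + \sigma(\alpha) = \alpha_{k}$, i.e.\ $\alpha_{k} - \alpha = \sigma(\alpha)$. Expanding $S^{(1)}_{\alpha} = \sqrt{2}(U_{\alpha} + U_{\sigma(\alpha)})$ and $S^{(2)}_{\alpha} = \sqrt{2}(W_{\alpha} - W_{\sigma(\alpha)})$ and applying Lemma \ref{2-3-3} to $[W_{\alpha_{k}}, \cdot\,]$, the $W_{\alpha_{k}+\alpha}$ and $U_{\alpha_{k}+\alpha}$ terms vanish because $\alpha_{k}+\alpha \notin \tilde{\Sigma}$, while $\alpha_{k}-\alpha = \sigma(\alpha)$ and $\alpha_{k}-\sigma(\alpha) = \alpha$ pair up through the symmetry $N_{\alpha_{k},-\sigma(\alpha)} = \overline{N_{\alpha_{k},-\alpha}}$ from Lemma \ref{2-3-2}(ii). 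A case-by-case check over the four possibilities for $(\epsilon_{\alpha_{k},\alpha}, \epsilon_{\alpha_{k},-\alpha})$ shows $[\bar{T}^{(1)}_{\alpha_{k}}, S^{(i)}_{\alpha}]$ is, in every case, a real scalar multiple of either $T^{(1)}_{\alpha}$ or $T^{(2)}_{\alpha}$, so it lies in $\mathbb{R}T^{(1)}_{\alpha} + \mathbb{R}T^{(2)}_{\alpha}$. Thus $[J, T^{(i)}_{\alpha}] \in \mathbb{R}T^{(1)}_{\alpha} + \mathbb{R}T^{(2)}_{\alpha}$ for $i=1,2$, and the equality in the statement follows from invertibility of $\mathrm{ad}(J)$ on $\frak{m}$ (a consequence of $(\mathrm{ad}J)^{2} = -\mathrm{id}$).

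The main obstacle is the $[C, T^{(i)}_{\alpha}]$ contribution: unlike in Lemma \ref{2-4-6}, the two halves $a_{k}[\bar{S}^{(1)}_{\alpha_{k}}, S^{(i)}_{\alpha}]$ and $[C, S^{(i)}_{\alpha}]$ now live in the same subspace $\frak{k}_{e_{k}}$, so the vanishing trick that worked in the long-root case does not apply. The Jacobi reduction in the second paragraph is the workaround: it trades the intractable $C$-term for an expression depending only on $[J, iH_{e_{k}}]$, which is completely determined by Lemma \ref{2-4-2}. Once this reduction is made, the remaining sign analysis is a mechanical application of Lemma \ref{2-3-3}.
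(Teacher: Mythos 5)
Your proof is correct, and it reorganizes the argument in a way that genuinely differs from the paper's proof in its key reduction. The paper also starts from the centrality of $J$, but it uses it to write $[C, S^{(i)}_{\alpha}] = -a_{k}[S^{(1)}_{\alpha_{k}}, S^{(i)}_{\alpha}]$, transfers this to $[C, T^{(i)}_{\alpha}]$ by bracketing with $iH$ and dividing by $\alpha(H)$, and then computes the two families of brackets $[S^{(1)}_{\alpha_{k}}, S^{(i)}_{\alpha}]$ and $[S^{(1)}_{\alpha_{k}}, T^{(i)}_{\alpha}]$ via Lemma \ref{2-3-3} before reassembling $[J, T^{(i)}_{\alpha}]$. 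Your Jacobi reduction through $[J,[iH_{e_{k}}, S^{(i)}_{\alpha}]]$, combined with $[J, iH_{e_{k}}] = -(2a_{k}/d)\bar{T}^{(1)}_{\alpha_{k}}$ from (the computation in) Lemma \ref{2-4-2}, removes $C$ in one stroke and leaves only the single bracket $[\bar{T}^{(1)}_{\alpha_{k}}, S^{(i)}_{\alpha}]$ to analyze, with the final equality obtained from injectivity of $\mathrm{ad}J|_{\frak{m}}$ rather than from explicit coefficients; this is leaner, at the cost of not exhibiting the scalars. Two small remarks: since $\alpha_{k} + \alpha \notin \tilde{\Sigma}$, only the sign $\epsilon_{\alpha_{k}, -\alpha}$ is relevant (the four-case split over $(\epsilon_{\alpha_{k},\alpha}, \epsilon_{\alpha_{k},-\alpha})$ is redundant), and in fact the branch $\epsilon_{\alpha_{k},-\alpha} = 1$ is vacuous, because there $[\bar{T}^{(1)}_{\alpha_{k}}, S^{(i)}_{\alpha}]$ comes out proportional to $T^{(i)}_{\alpha}$ itself, which would contradict $(\mathrm{ad}J|_{\frak{m}})^{2} = -\mathrm{id}$; the paper's proof carries the same harmless redundancy, and in both write-ups the stated containment, hence the conclusion, is unaffected. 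Your identification $\alpha_{k} - \alpha = \sigma(\alpha)$ (used without proof in the paper as well) can be justified quickly: $\alpha$ is a longest root of a simply-laced $\tilde{\Sigma}$ in all $BC$ cases, so $(\alpha, \sigma(\alpha)) = |e_{k}|^{2} - |\mu|^{2} < 0$ for the $i\frak{b}$-part $\mu$, whence $\alpha + \sigma(\alpha) \in \tilde{\Sigma}$, and $m(2e_{k}) = 1$ forces $\alpha + \sigma(\alpha) = \alpha_{k}$.
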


\begin{proof}
By Lemma \ref{2-3-4}, it is evident that $[J, \frak{m}_{e_{k}}] \subset \frak{m}_{e_{k}}$.
Since $J$ is the element of the center of $\frak{k}$, we have $[C, S^{(i)}_{\alpha}] = -a_{k}[S^{(1)}_{\alpha_{k}}, S^{(i)}_{\alpha}] \ (i =1,2)$.
Hence, for any $H \in \frak{a}_{0}$ and $i = 1,2$,
\[
[C, T^{(i)}_{\alpha}] 
= 
\frac{1}{\alpha(H)}[iH, [C, S^{(i)}_{\alpha}]] = -\frac{a_{k}}{\alpha(H)}[iH, [S^{(1)}_{\alpha_{k}}, S^{(i)}_{\alpha}]].
\]
Note that $\alpha_{k} - \alpha = \sigma(\alpha)$.
If $\epsilon_{\alpha_{k}, -\alpha} = 1$, then
\[
\begin{split}
[S^{(1)}_{\alpha_{k}}, S^{(1)}_{\alpha}] 
&= 
\sqrt{2}[U_{\alpha_{k}}, U_{\alpha} + U_{\sigma(\alpha)}] \\
&=
\sqrt{2} \Big( N_{\alpha_{k}, -\alpha}U_{\alpha_{k} - \alpha} + N_{\alpha_{k}, -\sigma(\alpha)}U_{\alpha_{k} - \sigma(\alpha)} \Big) = N_{\alpha_{k}, -\alpha}S^{(1)}_{\alpha} , \\
[S^{(1)}_{\alpha_{k}}, S^{(2)}_{\alpha}] 
&= \sqrt{2}[U_{\alpha_{k}}, W_{\alpha} - W_{\sigma(\alpha)}] \\
& =
\sqrt{2} \Big( -N_{\alpha_{k}, -\alpha}W_{\alpha_{k} - \alpha} + N_{\alpha_{k}, -\sigma(\alpha)}W_{\alpha_{k} - \sigma(\alpha)} \Big) = N_{\alpha_{k}, -\alpha}S^{(2)}_{\alpha} \\
\end{split}
\]
Hence, $[C, T^{(i)}_{\alpha}] \subset \mathbb{R}T^{(1)}_{\alpha} + \mathbb{R}T^{(2)}_{\alpha} \ (i=1,2)$.
Moreover, 
\[
\begin{split}
[S^{(1)}_{\alpha_{k}}, T^{(1)}_{\alpha}] 
&=
[U_{\alpha_{k}}, \sqrt{2}(W_{\alpha} + W_{\sigma(\alpha)})] \\
&=
\sqrt{2} \Big( -N_{\alpha_{k}, -\alpha}W_{\alpha_{k} - \alpha} -N_{\alpha_{k}, -\sigma(\alpha)}W_{\alpha_{k} - \sigma(\alpha)} \Big)
= -N_{\alpha_{k}, -\alpha}T^{(1)}_{\alpha}, \\
[S^{(1)}_{\alpha_{k}}, T^{(2)}_{\alpha}] 
&=
[U_{\alpha_{k}}, \sqrt{2}(-U_{\alpha} + U_{\sigma(\alpha)})] \\
& =
\sqrt{2} \Big( -N_{\alpha_{k}, -\alpha}U_{\alpha_{k} - \alpha} + N_{\alpha_{k}, -\sigma(\alpha)}U_{\alpha_{k} - \sigma(\alpha)} \Big)= -N_{\alpha_{k}, -\alpha}T^{(2)}_{\alpha}. \\
\end{split}
\]
Therefore, we obtain $[J, \mathbb{R}T^{(1)}_{\alpha} + \mathbb{R}T^{(2)}_{\alpha}] = \mathbb{R}T^{(1)}_{\alpha} + \mathbb{R}T^{(2)}_{\alpha}$.
In the case of $\epsilon_{\alpha_{k}, -\alpha} = -1$, by a similar argument, we see that the statement follows.
\end{proof}

Summarizing Lemma \ref{2-4-2}, Lemma \ref{2-4-6}, and Lemma \ref{2-4-7}, if $\Sigma$ is of type $C_{n} \ (n \geq 2)$, then
\[
[J, \frak{m}_{2e_{i}}] \subset \frak{a}, \quad [J, \frak{m}_{e_{i} \pm e_{j}}] \subset \frak{m}_{e_{i} \mp e_{j}}, \quad(1 \leq i < j \leq n)
\]
and if $\Sigma$ is type of $BC_{n} (n \geq 1)$, then
\[
[J, \frak{m}_{2e_{i}}] \subset \frak{a}, \quad [J, \frak{m}_{e_{i} \pm e_{j}}] \subset \frak{m}_{e_{i} \mp e_{j}}, \quad [J, \frak{m}_{e_{i}}] \subset \frak{m}_{e_{i}}. \quad(1 \leq i < j \leq n).
\]
Let $\Sigma$ be of type $C_{n}$.
Then, we can easliy check that the $K$-orbit through $\mathrm{exp}tQ_{n}(o) \ (0 < t < 1)$ is a totally real submanifold.
In particular, these orbits are Lagrangian submanifolds.
Note that there are no totally real orbits, except for these orbits in each $M$.
By these arguments, we obtain Theorem \ref{2-4-8}.

\begin{thm} \label{2-4-8}
Any orbit $N$ of the isotropy group action on $M$ is a $CR$ submanifold.
In particular, $N$ is a complex submanifold if and only if $N$ is a polar.
Moreover, $N$ is a totally real submanifold if and only if $\Sigma$ is of type $C_{n}\ (n \geq 2)$ and $N$ is the orbit through $x = (\mathrm{exp}\,tQ_{n})(o)\ (0 < t < 1)$.
These totally real orbits are Lagrangian submanifolds.
\end{thm}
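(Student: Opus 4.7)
The plan is to reduce the statement to an algebraic assertion about how $\mathrm{ad}(J)$ permutes the restricted root spaces $\frak{m}_{\lambda}$. By Lemma \ref{2-3-6} every $K$-orbit meets $A = \exp(Q)(o)$ at exactly one point, so I may take $N = K(x)$ with $x = \exp(iH)(o)$ for some $iH \in Q$. Via $(\exp(iH))_{*}$, Lemma \ref{2-3-7} identifies $T_{x}N$ with $\frak{m}_{x}$ and $T_{x}^{\perp}N$ with $\frak{m}_{x}^{\perp}$; because the complex structure on $M$ is $G$-invariant, the same map sends $J_{x}$ to $\mathrm{ad}(J)|_{\frak{m}}$. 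The CR condition for $N$ thus amounts to producing an orthogonal decomposition $\frak{m}_{x} = \mathcal{D} \oplus \mathcal{D}^{\perp}$ with $\mathrm{ad}(J)(\mathcal{D}) \subset \mathcal{D}$ and $\mathrm{ad}(J)(\mathcal{D}^{\perp}) \subset \frak{m}_{x}^{\perp}$.

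The summary following Lemma \ref{2-4-7} tells me exactly how $\mathrm{ad}(J)$ acts: it carries $\frak{m}_{2e_{i}}$ into $\frak{a}$, swaps $\frak{m}_{e_{i}+e_{j}}$ with $\frak{m}_{e_{i}-e_{j}}$, and, in the $BC$ case, preserves $\frak{m}_{e_{i}}$. I place a root-space summand $\frak{m}_{\lambda} \subset \frak{m}_{x}$ into $\mathcal{D}$ exactly when its $\mathrm{ad}(J)$-image stays inside $\frak{m}_{x}$: the $2e_{i}$-summands always go into $\mathcal{D}^{\perp}$; each $e_{i} \pm e_{j}$-summand enters $\mathcal{D}$ iff its partner $e_{i} \mp e_{j}$ also lies in $\Sigma_{x}^{+}$; the $e_{i}$-summands (in the $BC$ case) always enter $\mathcal{D}$. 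A direct check from these rules confirms that $\mathcal{D}$ is $J$-invariant and $\mathrm{ad}(J)(\mathcal{D}^{\perp}) \subset \frak{m}_{x}^{\perp}$, establishing that every $K$-orbit is a $CR$ submanifold.

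For the classification I analyze when $\mathcal{D}^{\perp} = 0$ or $\mathcal{D} = 0$. Writing $H = (H_{1}, \dots, H_{n})$ with $\pi/2 \geq H_{1} \geq \cdots \geq H_{n} \geq 0$, a root $\lambda$ lies outside $\Sigma_{x}^{+}$ iff $\lambda(H) \in \mathbb{Z}\pi$. For $N$ complex, no $2e_{i}$ may lie in $\Sigma_{x}^{+}$, forcing each $H_{i} \in \{0, \pi/2\}$; the pair condition on $e_{i} \pm e_{j}$ is then automatic and monotonicity yields $H = Q_{k}$, i.e., $N = M_{k}^{+}$. For $N$ totally real, the $BC$ case forces every $H_{i} = 0$, leaving only the trivial orbit; in the $C_{n}$ case a short analysis of the two modular conditions on $H_{i} \pm H_{j}$ forces all $H_{i}$ equal, so $x = \exp(tQ_{n})(o)$ with $0 < t < 1$. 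For such $x$ one reads off $\frak{m}_{x} = \bigoplus_{i<j} \frak{m}_{e_{i}+e_{j}} \oplus \bigoplus_{i} \frak{m}_{2e_{i}}$ and $\frak{m}_{x}^{\perp} = \frak{a} \oplus \bigoplus_{i<j} \frak{m}_{e_{i}-e_{j}}$, and the equal multiplicities of $e_{i}+e_{j}$ and $e_{i}-e_{j}$ in the $C_{n}$ tables give $2\dim N = \dim M$, so $N$ is Lagrangian.

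The main technical obstacle is the pair analysis for $e_{i} \pm e_{j}$: the two modular constraints on $H_{i} + H_{j}$ and $H_{i} - H_{j}$ can be satisfied independently in $[0, \pi]$, and showing that the only simultaneous solutions compatible with the complex or totally real constraint are $H_{i} \in \{0, \pi/2\}$ or $H_{i}$ all equal requires a careful case-by-case check. Once this is handled, the classification of complex, totally real, and Lagrangian orbits follows immediately from the root-space bookkeeping above.
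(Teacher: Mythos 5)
Your proposal is correct and follows essentially the same route as the paper: it uses the description of $\mathrm{ad}(J)$ on the restricted root spaces (the summary of Lemmas \ref{2-4-2}, \ref{2-4-6}, \ref{2-4-7}) together with Lemmas \ref{2-3-6} and \ref{2-3-7} to define $\mathcal{D}$ and $\mathcal{D}^{\perp}$ at a point of $A$ and extend by $K$-invariance, and then classifies the complex and totally real orbits by the same root-space bookkeeping the paper leaves as an easy check. The extra detail you supply for the pair analysis on $e_{i}\pm e_{j}$ and the dimension count for the Lagrangian case is exactly the verification the paper's "we can easily check" alludes to.
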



\section{Isotropy orbits as a $CR$ submanifold} \label{s3}

In this section, we study some properties of each orbit of the isotropy group action from the perspective of a $CR$ submanifold.
Let $H \in Q$ and $x = \mathrm{exp}(H)(o)$.
Denote by $N$ the $K$-orbit through $x$.
Define the following subsets of $\Sigma^{+}_{x}$:
\[
\begin{split}
(\Sigma^{+}_{x})_{R} &= \{ \lambda \in \Sigma^{+}_{x} \ ;\ J(\frak{m}_{\lambda}) \subset \frak{m}_{x}^{\perp} \}, \quad (\Sigma^{+}_{x})_{C} = \{ \lambda \in \Sigma^{+}_{x} \ ;\ J(\frak{m}_{\lambda}) \subset \frak{m}_{x} \}.
\end{split}
\]
By Theorem \ref{2-4-8}, we have $\Sigma^{+} = (\Sigma^{+}_{x})_{C} \sqcup (\Sigma^{+}_{x})_{R}$.
Define the subspaces $\mathcal{D}_{x}$ and $\mathcal{D}_{x}^{\perp}$ of $T_{x}N$ as follows:
\[
\mathcal{D}_{x} = (\mathrm{exp}H)_{*} \left( \sum_{\lambda \in (\Sigma^{+}_{x})_{C}}\frak{m}_{\lambda} \right), \quad\quad
\mathcal{D}_{x}^{\perp} = (\mathrm{exp}H)_{*} \left( \sum_{\lambda \in (\Sigma^{+}_{x})_{R}}\frak{m}_{\lambda} \right).
\]
Then, $J(\mathcal{D}_{x}) \subset \mathcal{D}_{x}$ and $J(\mathcal{D}_{x}^{\perp}) \subset T_{x}^{\perp}N$.
Since $J$ is invariant under $G$, the subspaces $\mathcal{D}_{x}$ and $\mathcal{D}_{x}^{\perp}$ are invariant under $K_{x}$.
Thus, we can define the $K$-invariant distributions $\mathcal{D}$ and $\mathcal{D}^{\perp}$ by $\mathcal{D}_{x}$ and $\mathcal{D}_{x}^{\perp}$, respectively.
By definition, $\mathcal{D}$ and $\mathcal{D}^{\perp}$ are the complex distribution and the totally real distribution of $N$, respectively.
In the following subsections, we will study the leaves of the totally real foliation $\mathcal{D}^{\perp}$ and the integrability of the complex distribution $\mathcal{D}$.
We will consider two cases separately as follows:  Let $1 \leq i_{1} < \cdots < i_{k} \leq n$. 
Case (i) is $H \in Q_{ \{ \lambda_{i_{1}}, \cdots, \lambda_{i_{k}}, \delta \} }$ and case (ii) is $\ H \in Q_{ \{ \lambda_{i_{1}}, \cdots, \lambda_{i_{k}} \}}$.
In the following subsections, we use the following notations.
For $1 \leq i_{1} < \cdots < i_{k} \leq n$, we define 
\[
\begin{split}
& I_{0} = \{ 1, 2,  \cdots, i_{1} \}, \\
&  I_{l} = \{ i_{l} + 1, i_{1} + 2, \cdots, i_{l+1} -1 \} \quad  (1 \leq l \leq k-1), \\ 
& I_{k} = \{ i_{k}+1, i_{k} + 2,  \cdots, n \}.
\end{split}
\]
Then, we set for $ 0 \leq l \leq k$,
\[
\begin{split}
& \Sigma^{+}(l,+) = \{ e_{i} + e_{j}\ ;\ i,j \in I_{l},\ i < j \},  \\
& \Sigma^{+}(l,-) = \{ e_{i} - e_{j}\ ;\ i,j \in I_{l},\ i < j \}, \\
& \Sigma^{+}(l) = \Sigma^{+}(l,+) \cup \Sigma^{+}(l,-) = \{ e_{i} \pm e_{j}\ ;\ i,j \in I_{l},\ i < j \}. \\
\end{split}
\]


\subsection{Case. 1} \label{s3-1}

In this subsection, we consider case (i).
We asuume that $\Sigma$ is of type $BC_{n} \ (n \geq 1)$.
In the case of type $C_{n} \ (n \geq 2)$, the same results follow using a similar arguments.
Let $H \in Q_{ \{ \lambda_{i_{1}}, \cdots, \lambda_{i_{k}}, \delta \} }$.
There exist $0 <t_{1}, \cdots, t_{k} < 1$ such that $0 < t_{1} + \cdots + t_{k} < 1$ and $H = t_{1}Q_{i_{1}} + \cdots t_{k}Q_{i_{k}}$.
Then,
\[
\begin{split}
& \Sigma^{+}_{x} = \Sigma^{+}(0,+) \cup \cdots \cup \Sigma^{+}(k-1,+) \cup \{ e_{i} \pm e_{j} \ ;\ i \in I_{l}, \ j \in I_{m}, \ 0 \leq l < m \leq k \} \\
& \quad\quad\quad\quad\quad\quad \quad\quad\quad\quad\quad\quad\quad\quad\quad \quad\quad\quad\quad\quad\quad\quad\quad\quad \quad\quad\quad   \cup \{ e_{v}, 2e_{v} \ ;\ 1 \leq v \leq i_{k} \}, \\
& (\Sigma^{+}_{x})_{C} = \{ e_{i} \pm e_{j} \ ;\ i \in I_{l}, \ j \in I_{m}, \ 0 \leq l < m \leq k \} \cup \{ e_{v} \ ;\ 1 \leq v \leq i_{k} \}. \\
& (\Sigma^{+}_{x})_{R} = \Sigma^{+}(0,+) \cup \cdots \cup \Sigma^{+}(k-1,+) \cup \{ 2e_{v}\ ;\ 1 \leq v \leq i_{k} \}, \\
& \Sigma^{+} - \Sigma^{+}_{x} = \Sigma^{+}(0,-) \cup \cdots \cup \Sigma^{+}(k-1,-) \cup \Sigma^{+}(k) \cup \{ e_{u}, 2e_{u} \ ;\ u \in I_{k} \}. \\
\end{split}
\]
Hence,
\[
\begin{split}
\mathcal{D}_{x} &= (\mathrm{exp}H)_{*} \left( \sum_{i \in I_{l}, j \in I_{m}, l < m} \frak{m}_{e_{i} \pm e_{j}} + \sum_{v=1}^{i_{k}} \frak{m}_{e_{v}} \right), \\
\mathcal{D}^{\perp}_{x} & = (\mathrm{exp}H)_{*} \left( \sum_{\lambda \in \Sigma^{+}(0,+) \cup \cdots \cup \Sigma^{+}(k-1,+)} \frak{m}_{\lambda} + \sum_{v=1}^{i_{k}} \frak{m}_{2e_{v}} \right).
\end{split}
\]
We denote the identity component of $K_{i_{1}} \cap \cdots \cap K_{i_{k}}$ by $K_{i_{1}, \cdots, i_{k}}$.
The Lie algebra $\frak{k}_{i_{1}, \cdots, i_{k}}$ of $K_{i_{1}, \cdots, i_{k}}$ is given by
\[
\begin{split}
\frak{k}_{i_{1}, \cdots, i_{k}} &= \frak{k}_{i_{1}} \cap \cdots \cap \frak{k}_{i_{k}} 
= \frak{k}_{0} + \sum_{\lambda \in \Sigma^{+} - ( \Sigma^{+}_{i_{1}} \cup \cdots \cup \Sigma^{+}_{i_{k}})} \frak{k}_{\lambda} \\
& =  \frak{k}_{0} + \sum_{s=1}^{n}\frak{k}_{2e_{s}} + \sum_{u \in I_{k}}\frak{k}_{e_{u}} + \sum_{\lambda \in \Sigma^{+}(0) \cup \cdots \cup \Sigma^{+}(k)}\frak{k}_{\lambda}.
\end{split}
\]
The $K_{i_{1}, \cdots, i_{k}}$-orbit through $x$ is denoted by $L$.

\begin{lemm} \label{3-2-1}
$L$ is the integral submanifold of the totally real distribution $\mathcal{D}^{\perp}$ through $x$.
\end{lemm}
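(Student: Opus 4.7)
The plan is to show that $T_x L = \mathcal{D}^{\perp}_x$ by a direct computation, and then to invoke $K_{i_1,\ldots,i_k}$-equivariance to propagate this equality along $L$. Since $\mathcal{D}^{\perp}$ is integrable by Lemma~\ref{2-1}, and $L$ is connected (being an orbit of the identity component $K_{i_1,\ldots,i_k}$), this identifies $L$ with the integral submanifold of $\mathcal{D}^{\perp}$ through $x$.

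To compute $T_x L$, I would use the formula
\[
(S^{(i)}_\alpha)^*_x = -(\sin \lambda(H))(\exp(iH))_* T^{(i)}_\alpha \quad (\bar\alpha=\lambda)
\]
derived just before Lemma~\ref{2-3-7}, together with the observation that every $X \in \frak{k}_0$ acts trivially at $x$ because $\frak{k}_0$ centralizes $\frak{a}$ and hence fixes $\exp(iH)(o)$. Combined with the fact that $\bar S^{(i)}_\alpha \mapsto (\exp(iH))_* \bar T^{(i)}_\alpha$ is a bijection between the orthonormal bases of $\frak{k}_\lambda$ and $(\exp(iH))_* \frak{m}_\lambda$ (whenever $S^{(i)}_\alpha \neq 0$ we also have $T^{(i)}_\alpha \neq 0$), this yields
\[
T_x L = (\exp(iH))_* \Big( \sum_{\lambda} \frak{m}_\lambda \Big),
\]
where the sum runs over those restricted roots $\lambda \in \Sigma^{+}_x$ that appear in the decomposition of $\frak{k}_{i_1,\ldots,i_k}$ recalled above.

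The technical core of the argument is to match this set of surviving roots with $(\Sigma^{+}_x)_R$. I would evaluate each family appearing in $\frak{k}_{i_1,\ldots,i_k}$ on $H = t_1 Q_{i_1} + \cdots + t_k Q_{i_k}$: the root $2e_s$ gives $\pi \sum_{j:\, i_j \geq s} t_j$, which lies in $(0,\pi)$ precisely when $s \leq i_k$; the root $e_u$ with $u \in I_k$ gives $0$ and therefore drops out; for $i < j$ in a common block $I_l$, one has $(e_i - e_j)(H) = 0$, while $(e_i + e_j)(H) = \pi(t_{l+1} + \cdots + t_k)$ (with the convention that the sum equals $t_1 + \cdots + t_k$ when $l = 0$), which lies in $(0,\pi)$ if and only if $l < k$, using the constraint $0 < t_1 + \cdots + t_k < 1$. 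Comparing with the explicit description of $(\Sigma^{+}_x)_R$ given at the start of the subsection, the surviving roots are exactly those of $(\Sigma^{+}_x)_R$, hence $T_x L = \mathcal{D}^{\perp}_x$. The case where $\Sigma$ is of type $C_n$ is handled uniformly by simply omitting the $e_u$ roots.

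Finally, since both $L$ and $\mathcal{D}^{\perp}$ are $K_{i_1,\ldots,i_k}$-invariant, the equality $T_y L = \mathcal{D}^{\perp}_y$ extends from $x$ to every $y \in L$, completing the proof. The main obstacle is the combinatorial bookkeeping in the case analysis above: one must carefully trace which restricted roots contribute to both $\frak{k}_{i_1,\ldots,i_k}$ and $\Sigma^{+}_x$, and verify by direct inspection that nothing is double-counted and nothing is missed. This is a calculation rather than a conceptual difficulty, and once carried out the identification $T_x L = \mathcal{D}^{\perp}_x$ is transparent.
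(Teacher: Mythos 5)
Your proposal is correct and follows essentially the same route as the paper: both compute $T_{x}L$ from the decomposition of $\frak{k}_{i_{1},\cdots,i_{k}}$ using the fact that $(S^{(i)}_{\alpha})^{*}_{x}$ is a multiple of $(\mathrm{exp}H)_{*}T^{(i)}_{\alpha}$ by $\sin\lambda(H)$, determine which restricted roots survive (you list the surviving ones, the paper lists the vanishing ones), identify the result with $\mathcal{D}^{\perp}_{x}$, and then invoke invariance of $\mathcal{D}^{\perp}$ together with integrability (Lemma \ref{2-1}) to conclude. The root bookkeeping you carry out matches the paper's description of $(\Sigma^{+}_{x})_{R}$, so no gap remains.
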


\begin{proof}
Since $\Sigma^{+} - (\Sigma^{+}_{i_{1}} \cup \cdots \cup \Sigma^{+}_{i_{k}}) = \Sigma^{+}(0) \cup \cdots \Sigma^{+}(k) \cup \{ e_{i} \ ;\ i \in I_{k} \} \cup \{ 2e_{i} \ ;\ 1 \leq i \leq n \}$, we have
\[
\begin{split}
& \Big\{ \lambda \in \Sigma^{+} - (\Sigma^{+}_{i_{1}} \cap \cdots \cap \Sigma^{+}_{i_{k}}) \ ;\ \lambda(H) \in i\pi\mathbb{Z} \Big\} \\
& \quad\quad\quad\quad\quad\quad = \Sigma^{+}(0,-) \cup \cdots \cup \Sigma^{+}(k-1,-) \cup \Sigma^{+}(k) \cup \Big\{ e_{u}, 2e_{u} \ ;\ u \in I_{k} \Big\}.
\end{split}
\]
Thus, $T_{x}L$ is given by
\[
(\mathrm{exp}H)_{*} \left( \sum_{\lambda \in \Sigma^{+}(0,+) \cup \cdots \cup \Sigma^{+}(k-1,+)}\frak{m}_{\lambda} + \sum_{v=1}^{i_{k}}\frak{m}_{2e_{v}}  \right).
\]
In particular, $\mathcal{D}_{x}^{\perp} = T_{x}L$.
Since the totally real distribution $\mathcal{D}^{\perp}$ is invariant under $K$, the integral submanifold of $\mathcal{D}^{\perp}$ through $x$ is $L$.
\end{proof}

Since $\mathcal{D}^{\perp}$ is $K$-invariant, $k(L)$ is the integral submanifold of $\mathcal{D}^{\perp}$ through $k(x)$ for any $k \in K$.
Therefore, the set $\{ k(L) \ ;\ k \in K \}$ is the totally real foliation of $N$.
The second fundamental form of $L \subset N$ is denoted by $h$, and the second fundamental form of  $L \subset M$ is denoted by $h^{L}$.

\begin{prop} \label{3-2-2}
For any $y \in N$, the leaf $L_{y}$ is a totally geodesic submanifold of $N$.
In particular, $N$ is a Hopf $CR$ submanifold.
\end{prop}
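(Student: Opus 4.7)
The plan is to reduce, by homogeneity, the vanishing of the second fundamental form $h$ of $L\subset N$ to a single-point calculation at $x$, and then to carry out that calculation using the root-theoretic framework from Subsections \ref{s2-3} and \ref{s2-4}.

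Because $\mathcal{D}$ and $\mathcal{D}^\perp$ are $K$-invariant, the totally real foliation of $N$ is permuted by $K$: for any $y=k(x)\in N$ with $k\in K$ we have $L_y=k(L)$, and since $k$ acts by isometry it suffices to prove that $L$ itself is totally geodesic in $N$. Moreover, $L=K_{i_1,\ldots,i_k}(x)$ and $K_{i_1,\ldots,i_k}$ acts on $N$ by isometries, so by homogeneity of $L$ the task further reduces to checking that $h$ vanishes at $x$.

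I would then realise tangent vectors at $x$ to $L$ as values of Killing vector fields coming from $\frak{k}_{i_1,\ldots,i_k}$. For $V=S_\alpha^{(a)}\in \frak{k}_\lambda$ with $\lambda=\bar\alpha\in(\Sigma^+_x)_R$, the vector field $V^*$ is tangent to both $L$ and $N$ everywhere, and by Lemma \ref{2-3-5} the vectors $(V^*)_x$ span $\mathcal{D}^\perp_x=T_xL$ as $\alpha,a$ vary (the contributions of $V\in\frak{k}_0$ and of $V\in\frak{k}_\lambda$ with $\lambda\in\Sigma^+-\Sigma^+_x$ vanish at $x$). For two such fields $V^*,W^*$ with $W=S_\beta^{(b)}$, $\bar\beta=\mu\in(\Sigma^+_x)_R$, Lemma \ref{2-2-1}(iii) combined with the cosine--sine formulae of Lemma \ref{2-3-5}(1) and with $[\frak{m},\frak{m}]\subset\frak{k}$ yields
$$
(\nabla^M_{V^*}W^*)_x \;=\; \sin\lambda(H_0)\,\cos\mu(H_0)\,(\exp H)_*\bigl[T_\alpha^{(a)},\,S_\beta^{(b)}\bigr].
$$

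The crux is then a root-combinatorial check. By Lemma \ref{2-3-4}, the bracket $[T_\alpha^{(a)},S_\beta^{(b)}]$ lies in $\frak{m}_{\lambda+\mu}+\frak{m}_{\lambda-\mu}$ (or in $\frak{a}+\frak{m}_{2\lambda}$ when $\lambda=\mu$). Recalling that $(\Sigma^+_x)_R$ consists precisely of the long roots $2e_v$ with $v\leq i_k$ together with the sums $e_i+e_j$ whose indices lie in a common block $I_l$ with $0\leq l\leq k-1$, an elementary case analysis shows that $\lambda+\mu$ and $2\lambda$ can never belong to $\Sigma$, while $\lambda-\mu$ is either zero or --- in the single nontrivial case of two sum-roots sharing exactly one index inside some block $I_l$ --- of the form $\pm(e_j-e_q)$ with $j,q\in I_l$, which belongs to $\Sigma^+(l,-)\subset\Sigma^+-\Sigma^+_x$. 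Consequently the bracket lies in $\frak{a}+\sum_{\nu\in\Sigma^+-\Sigma^+_x}\frak{m}_\nu=\frak{m}_x^\perp$, and Lemma \ref{2-3-7} identifies its image under $(\exp H)_*$ with a vector in $T_x^\perp N$. Thus the $\mathcal{D}_x$-component of $(\nabla^M_{V^*}W^*)_x$, which by definition is $h(X,Y)$, vanishes, completing the proof. The main obstacle is this last combinatorial verification: it is elementary but requires a careful enumeration of all overlap patterns of the index sets to rule out the appearance of any root $e_a\pm e_b$ with $a,b$ in distinct blocks $I_l,I_m$, which would belong to $(\Sigma^+_x)_C$ and obstruct the vanishing.
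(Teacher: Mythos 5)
Your proposal is correct and follows essentially the same route as the paper: reduce by $K$-invariance of the foliation to the single point $x$, compute $\big(\nabla^{M}_{(S^{(i)}_{\alpha})^{*}}(S^{(j)}_{\beta})^{*}\big)_{x}$ via Lemma \ref{2-2-1} and Lemma \ref{2-3-5}, and then use Lemma \ref{2-3-4} together with the explicit form of $(\Sigma^{+}_{x})_{R}$ to conclude that the resulting bracket lies in $\frak{m}_{x}^{\perp}$ (or $\frak{a}$ when $\lambda=\mu$), so its $\mathcal{D}_{x}$-component, i.e.\ the second fundamental form of $L$ in $N$, vanishes. The only small slip is in your enumeration: besides two sum roots sharing one index, a sum root $e_{i}+e_{j}$ paired with $2e_{i}$ or $2e_{j}$ also gives $\lambda-\mu=\pm(e_{i}-e_{j})\in\pm\Sigma^{+}(l,-)\subset\pm(\Sigma^{+}-\Sigma^{+}_{x})$, which leaves your conclusion unchanged.
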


\begin{proof}
It is sufficient to show that $L$ is a totally geodesic submanifold of $N$.
Let $\alpha, \beta \in \tilde{\Sigma}$ be $\bar{\alpha}, \bar{\beta} \in (\Sigma^{+}_{x})_{R}$.
Set $\lambda = \bar{\alpha}$ and $\mu = \bar{\beta}$.
Then, by Lemma \ref{2-2-1}, for any $i, j=1,2$,
\[
\begin{split}
& \Big( \nabla^{M}_{(S^{(i)}_{\alpha})^{*}}(S^{(j)}_{\beta})^{*} \Big)_{x} \\
&=
(\mathrm{exp}H)_{*} \Big[ \Big( (\mathrm{Ad(exp}H)^{-1})S^{(i)}_{\alpha} \Big)_{\frak{m}}, \mathrm{Ad(exp}H)^{-1}S^{(j)}_{\beta} \Big]_{\frak{m}} \\
&= - \Big( \sin ((-i) \lambda(H)) \cos ((-i)\mu(H)) \Big) (\mathrm{exp}H)_{*} \big[ T^{(i)}_{\alpha}, S^{(j)}_{\beta} \big] \in (\mathrm{exp}H)_{*} [\frak{m}_{\lambda}, \frak{k}_{\mu}].
\end{split}
\]
Assume $\lambda \not= \mu$.
Then, $\lambda + \mu \not\in \Sigma$.
If $\lambda - \mu \in \Sigma$, then either $\lambda - \mu$ or $\mu - \lambda$ is an element of $\Sigma^{+} - \Sigma^{+}_{x}$ by the definition of $(\Sigma^{+}_{x})_{R}$.
Assume $\lambda = \mu$.
Since $2\lambda \not\in \Sigma^{+}$ by the definition of $(\Sigma^{+}_{x})_{R}$, we see $[\frak{m}_{\lambda}, \frak{k}_{\mu}] \subset \frak{a}$ by Lemma \ref{2-3-4}.
Thus, $(\mathrm{exp}H)_{*}[\frak{m}_{\lambda}, \frak{k}_{\mu}] \subset T_{x}^{\perp}N$ and
\[
\begin{split}
h \Big( (S^{(i)}_{\alpha})_{x}^{*}, (S^{(j)}_{\beta})_{x}^{*} \Big) & = \Big( \big( \nabla^{M}_{(S^{(i)}_{\alpha})^{*}}(S^{(j)}_{\beta}))^{*} \big)_{h} \Big)_{\mathcal{D}_{x}} = 0. \\
\end{split}
\]
Therefore, $L$ is a totally geodesic submanifold of $N$, and the statement follows.
\end{proof}

By the proof of Lemma \ref{3-2-2}, we obtain Lemma \ref{3-2-3}.

\begin{lemm} \label{3-2-3}
$L$ is a totally geodesic submanifold of $M$ if and only if $\lambda(H) = i\pi/2$ for any $\lambda \in (\Sigma^{+}_{H})_{R}$.
\end{lemm}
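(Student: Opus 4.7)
The plan is to compute the second fundamental form $h^{L}$ of $L$ in $M$ at $x$ explicitly, by reusing the formula for $\nabla^{M}_{(S^{(i)}_{\alpha})^{*}}(S^{(j)}_{\beta})^{*}$ already derived in the proof of Proposition~\ref{3-2-2}. Since $L$ is the $K_{i_{1},\cdots,i_{k}}$-orbit through $x$ and the totally real distribution is $K$-invariant, I only need to test $h^{L} \equiv 0$ at the single point $x$, on the spanning family $\{(S^{(i)}_{\alpha})^{*}_{x} : \bar{\alpha} = \lambda \in (\Sigma^{+}_{x})_{R},\ S^{(i)}_{\alpha} \neq 0\}$ of $T_{x}L = \mathcal{D}^{\perp}_{x}$; these fundamental vector fields are globally tangent to $L$ because $S^{(i)}_{\alpha} \in \frak{k}_{i_{1},\cdots,i_{k}}$ for such $\lambda$.

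Writing $\theta_{\nu} := (-i)\nu(H)$, $\lambda = \bar{\alpha}$, and $\mu = \bar{\beta}$, the computation in the proof of Proposition~\ref{3-2-2} gives
\[
\Big(\nabla^{M}_{(S^{(i)}_{\alpha})^{*}}(S^{(j)}_{\beta})^{*}\Big)_{x} = -\sin\theta_{\lambda}\cos\theta_{\mu}\,(\mathrm{exp}H)_{*}[T^{(i)}_{\alpha}, S^{(j)}_{\beta}].
\]
The next step is to show that, for $\lambda, \mu \in (\Sigma^{+}_{x})_{R}$, the right-hand side already equals $h^{L}$ on the chosen pair, i.e.\ lies in $T^{\perp}_{x}L$. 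The key observation is that in case (i) the set $(\Sigma^{+}_{x})_{R}$ consists only of roots of the forms $2e_{v}$ $(v \leq i_{k})$ and $e_{i} + e_{j}$ (with $i,j$ in a common block $I_{l}$, $l < k$); hence neither $2\lambda$ nor $\lambda + \mu$ is ever a restricted root, and a short case check confirms that whenever $\lambda \neq \mu$ and $\lambda - \mu$ is a restricted root, it belongs to some $\Sigma^{+}(l,-) \subset \Sigma^{+} - \Sigma^{+}_{x}$. Lemma~\ref{2-3-4} then forces the bracket into $\frak{a}$ when $\lambda = \mu$ and into $\frak{m}_{\lambda-\mu}$ with $\lambda-\mu \in \Sigma^{+} - \Sigma^{+}_{x}$ when $\lambda \neq \mu$, so its image under $(\mathrm{exp}H)_{*}$ sits in $T^{\perp}_{x}N \subset T^{\perp}_{x}L$.

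From this the equivalence reads off immediately. For $(\Leftarrow)$, if $\theta_{\lambda} = \pi/2$ holds for every $\lambda \in (\Sigma^{+}_{x})_{R}$, then $\cos\theta_{\mu} = 0$ throughout the spanning family, whence $h^{L} \equiv 0$. For $(\Rightarrow)$, fixing $\lambda \in (\Sigma^{+}_{x})_{R}$ and specializing to $\alpha = \beta$ and $i = j$ with $S^{(i)}_{\alpha} \neq 0$, Lemma~\ref{2-3-5}(2) yields $[T^{(i)}_{\alpha}, S^{(i)}_{\alpha}] = -2iA_{\lambda}$, a nonzero vector in $\frak{a}$; vanishing of $h^{L}$ on this diagonal pair therefore forces $\sin\theta_{\lambda}\cos\theta_{\lambda} = 0$, and $\sin\theta_{\lambda} \neq 0$ for $\lambda \in \Sigma^{+}_{x}$ gives $\theta_{\lambda} = \pi/2$, i.e.\ $\lambda(H) = i\pi/2$. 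The only step genuinely beyond reusing the Proposition~\ref{3-2-2} formula is the brief combinatorial verification that $\lambda - \mu \in \Sigma^{+} - \Sigma^{+}_{x}$ whenever $\lambda \neq \mu \in (\Sigma^{+}_{x})_{R}$ and $\lambda - \mu \in \Sigma$; this is the main, but modest, obstacle in the argument.
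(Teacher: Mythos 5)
Your proposal is correct and takes essentially the same route as the paper: Lemma \ref{3-2-3} is obtained there precisely by rereading the covariant-derivative formula from the proof of Proposition \ref{3-2-2}, observing that $\big(\nabla^{M}_{(S^{(i)}_{\alpha})^{*}}(S^{(j)}_{\beta})^{*}\big)_{x}$ already lies in $T^{\perp}_{x}N \subset T^{\perp}_{x}L$, and using the nonvanishing diagonal bracket $[S^{(i)}_{\alpha}, T^{(i)}_{\alpha}] = 2iA_{\bar{\alpha}}$ together with $\sin((-i)\lambda(H)) \neq 0$ to force $\cos((-i)\lambda(H)) = 0$ for every $\lambda \in (\Sigma^{+}_{x})_{R}$. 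Your elaboration (spanning family of fundamental vector fields, $K_{i_{1},\cdots,i_{k}}$-homogeneity reducing to the point $x$, and the block-wise check that $\lambda - \mu \in \Sigma^{+} - \Sigma^{+}_{x}$) fills in exactly the steps the paper leaves implicit.
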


\begin{lemm} \label{3-2-4}
$\lambda(H) =i \pi/2$ for any $\lambda \in (\Sigma^{+}_{x})_{R}$ if and only if $H = (1/2)Q_{a}$ for some $1 \leq a \leq n$.
\end{lemm}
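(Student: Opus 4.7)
The plan is to evaluate the hypothesis $\lambda(H) = i\pi/2$ on every root in $(\Sigma^+_x)_R$ and extract information about the barycentric coefficients $s_1, \ldots, s_k$ in the expansion $H = s_1 Q_{i_1} + \cdots + s_k Q_{i_k}$ (with $s_j > 0$ and $\sum_j s_j < 1$), which is the parametrization coming from $H \in Q_{\{\lambda_{i_1}, \ldots, \lambda_{i_k}, \delta\}}$.

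Since each restricted root is $\mathbb{R}$-valued on $\frak{a}_0$ and $H = iH_0$ with $H_0 \in \frak{a}_0$, the condition $\lambda(H) = i\pi/2$ is equivalent to $\lambda(H_0) = \pi/2$. First I would compute the $v$-th coordinate of $H_0$: since each $Q_{i_j}$ has $\pi/2$ in the first $i_j$ positions and $0$ elsewhere, one finds $(H_0)_v = (\pi/2)\sum_{j : i_j \geq v} s_j$. For $v$ in the block $I_l$ with $0 \leq l \leq k-1$ this simplifies to $(\pi/2)(s_{l+1} + \cdots + s_k)$, and for $v \in I_k$ it vanishes.

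Next I would apply the hypothesis to the long roots $2e_v \in (\Sigma^+_x)_R$, which occur exactly for $1 \leq v \leq i_k$. Choosing one such $v$ in each non-empty block $I_0, \ldots, I_{k-1}$ yields
\[
s_1 + s_2 + \cdots + s_k \;=\; s_2 + \cdots + s_k \;=\; \cdots \;=\; s_k \;=\; \tfrac{1}{2}.
\]
Consecutive differences give $s_1 = \cdots = s_{k-1} = 0$, contradicting $s_j > 0$ unless $k = 1$. In that case the single equation forces $s_1 = 1/2$, so $H = (1/2)Q_{i_1}$; the remaining roots $e_u + e_v \in \Sigma^+(0,+)$ then satisfy $(e_u + e_v)(H_0) = 2 \cdot (\pi/2)(1/2) = \pi/2$ automatically. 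The converse is immediate: if $H = (1/2)Q_a$, then $k = 1$, $i_1 = a$, $s_1 = 1/2$, and both types of roots in $(\Sigma^+_x)_R$ evaluate to $i\pi/2$ via the same coordinate formula.

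The main bookkeeping obstacle is translating between the barycentric form of $H$ and the coordinates $(H_0)_v$, in particular keeping track of which block $I_l$ each index $v$ belongs to; once this is in place, the entire argument collapses to the telescoping identity above. The same reasoning applies uniformly to both types $BC_n$ and $C_n$ of $\Sigma$, since the description of $(\Sigma^+_x)_R$ as $\bigcup_{l=0}^{k-1}\Sigma^+(l,+) \cup \{2e_v : 1 \leq v \leq i_k\}$ is identical in the two cases.
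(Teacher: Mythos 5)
Your proposal is correct and follows essentially the same route as the paper: both directions reduce to evaluating the long roots $2e_{v}$ ($1 \leq v \leq i_{k}$) on $H$, which forces all coordinates $t_{v}\ (v \leq i_{k})$ to equal $\pi/4$. The only difference is cosmetic — you work with the barycentric coefficients $s_{j}$ and telescope to rule out $k \geq 2$, a step the paper leaves implicit in "the statement follows" — so your write-up is, if anything, slightly more explicit at the final step.
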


\begin{proof}
If $H = (1/2)Q_{a}$, then it is obvious that $\lambda(H) =i \pi/2$ for any $\lambda \in (\Sigma^{+}_{x})_{R}$.
Conversely, assume that $\lambda(H) =i \pi/2$ for any $\lambda \in (\Sigma^{+}_{x})_{R}$.
Let $H = (t_{1}, \cdots, t_{n})$ and $H \in Q_{i_{1}, \cdots, i_{k}}$ for $1 \leq i_{1} < \cdots < i_{k} \leq n$.
Then, since $2e_{j}(H) = i\pi/2$ for any $1 \leq j \leq i_{k}$, we have $t_{j} = \pi/4$ for any $1 \leq j \leq i_{k}$ and the statement follows.
\end{proof}

Summarizing Lemma \ref{3-2-3} and Lemma \ref{3-2-4}, we obtain Lemma \ref{3-2-5}.

\begin{prop} \label{3-2-5}
$N$ is a ruled $CR$ submanifold of $M$ if and only if $H = (1/2)Q_{a}$ for some $1 \leq a \leq n$.
\end{prop}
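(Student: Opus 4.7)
The plan is to obtain this proposition as a direct corollary of Lemma \ref{3-2-3} and Lemma \ref{3-2-4}, so the main work is just to reduce the statement to one about the specific leaf $L$ through the base point $x = \mathrm{exp}(H)(o)$.

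First I would recall the definition: $N$ is a ruled $CR$ submanifold precisely when every leaf of the totally real foliation is a totally geodesic submanifold of the ambient space $M$. From the discussion immediately after Lemma \ref{3-2-1}, the totally real foliation of $N$ is the $K$-orbit family $\{ k(L) \ ;\ k \in K \}$, where $L$ is the $K_{i_{1}, \cdots, i_{k}}$-orbit through $x$. Since $K$ acts on $M$ by isometries, each isometry $k \in K$ maps totally geodesic submanifolds to totally geodesic submanifolds, so every leaf $k(L)$ is totally geodesic in $M$ if and only if the single leaf $L$ is totally geodesic in $M$. This reduces the statement to the question of whether $L$ itself is totally geodesic in $M$.

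Now I would simply apply the chain of two lemmas already established. By Lemma \ref{3-2-3}, $L$ is totally geodesic in $M$ if and only if $\lambda(H) = i\pi/2$ for every $\lambda \in (\Sigma^{+}_{H})_{R}$. By Lemma \ref{3-2-4}, this latter condition holds if and only if $H = (1/2)Q_{a}$ for some $1 \leq a \leq n$. Combining the two equivalences yields the proposition.

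There is essentially no obstacle here beyond recording the reduction via $K$-invariance; the real content sits in Lemma \ref{3-2-3} (the second fundamental form calculation, which was carried out inside the proof of Proposition \ref{3-2-2}) and in Lemma \ref{3-2-4} (the elementary observation on coordinates of $H$). The only point to state carefully is that the definition of ruled refers to all leaves, so one must invoke $K$-invariance of $\mathcal{D}^{\perp}$ and the fact that each $k \in K$ is an isometry of $(M, \langle \ , \ \rangle)$ in order to reduce to the leaf $L$ through $x$; after that, the proof is one line.
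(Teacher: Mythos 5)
Your proposal is correct and follows exactly the route the paper takes: the paper obtains Proposition \ref{3-2-5} by combining Lemma \ref{3-2-3} with Lemma \ref{3-2-4}, and your reduction to the single leaf $L$ via the $K$-invariance of $\mathcal{D}^{\perp}$ and the fact that each $k \in K$ is a holomorphic isometry is precisely the (implicit) step the paper leaves to the reader. Nothing is missing.
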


\begin{remark} \label{3-2-6}
(1) \ Let $H = tQ_{a}$ for any $0 < t < 1$ and $1 \leq a \leq n$, and $x = \mathrm{exp}H (o)$.
In this case, $K_{x}$ is a subgroup of $K_{a}$.
Thus, we can consider the fibration $K_{x}/K_{a} \rightarrow K/K_{x} \rightarrow K/K_{a}$.
Since $K_{x}/K_{a} \cong L_{x}$, the $K$-orbit $N$ through $x$ is a fiber bundle, where the fiber is $k(L_{x})\ (k \in K)$, and the base space is $M^{+}_{a}$.
In particular, the fiber of $N$ is the totally real foliation, and the base space of $N$ is the leaf space of the totally real foliation.

(2) \ If $\Sigma$ is of type $C_{n}$ and $H = (1/2)Q_{n}$, then $N$ is a totally geodesic Lagrangian submanifold.
\end{remark}

Next, we consider the integrability of the complex distribution $\mathcal{D}$.

\begin{lemm} \label{3-2-7}
Let $\lambda \in \Sigma^{+}$ and $\alpha, \beta \in \tilde{\Sigma}$ be $\bar{\alpha} = \bar{\beta} = \lambda$.
Moreover, assume $\alpha \not= \beta, \sigma(\beta)$.
If $2\lambda \not\in \Sigma^{+}$, then $[S^{(i)}_{\alpha}, T^{(j)}_{\beta}] = 0$ for any $i,j = 1,2$.
\end{lemm}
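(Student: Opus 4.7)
The plan is to combine Lemma \ref{2-3-4} with a weight-space bookkeeping argument using the root expansion of $S^{(i)}_{\alpha}$ and $T^{(j)}_{\beta}$. First I would observe that $S^{(i)}_{\alpha} \in \frak{k}_{\lambda}$ and $T^{(j)}_{\beta} \in \frak{m}_{\lambda}$, so Lemma \ref{2-3-4} yields $[S^{(i)}_{\alpha}, T^{(j)}_{\beta}] \in \frak{a} + \frak{m}_{2\lambda}$. The hypothesis $2\lambda \notin \Sigma^{+}$ gives $\frak{m}_{2\lambda} = \{0\}$, so the bracket lies in $\frak{a} \subset \tilde{\frak{h}}$. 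The goal is then to show that the $\tilde{\frak{h}}$-component of the bracket actually vanishes.

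Next I would unpack the definitions: depending on which class among $\tilde{\Sigma}_{(1)}, \tilde{\Sigma}_{(2)}, \tilde{\Sigma}_{(3)}$ the root $\alpha$ belongs to, $S^{(i)}_{\alpha}$ is a linear combination of the $X_{\gamma}$ with $\gamma$ ranging in $\{\pm \alpha, \pm \sigma(\alpha)\}$ (the $\sigma$-pieces dropping out exactly when $\sigma(\alpha) = \alpha$), and similarly $T^{(j)}_{\beta}$ is a combination of $X_{\delta}$ with $\delta \in \{\pm \beta, \pm \sigma(\beta)\}$. Expanding $[S^{(i)}_{\alpha}, T^{(j)}_{\beta}]$ into brackets $[X_{\gamma}, X_{\delta}]$, the root-space decomposition of $\tilde{\frak{g}}$ shows that a contribution to $\tilde{\frak{h}}$ can arise only from terms with $\gamma + \delta = 0$. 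In other words, the $\tilde{\frak{h}}$-part of the bracket is zero unless
\[
\{\pm \alpha, \pm \sigma(\alpha)\} \cap \{\pm \beta, \pm \sigma(\beta)\} \neq \emptyset.
\]
Because $\alpha, \beta \in \tilde{\Sigma}^{+}$ and the paper records that $\sigma$ sends $\tilde{\Sigma}^{+}$ into itself, only the four positive roots $\alpha, \sigma(\alpha), \beta, \sigma(\beta)$ need to be compared; any coincidence among them forces $\alpha = \beta$ or $\alpha = \sigma(\beta)$, both excluded by hypothesis. Hence the $\tilde{\frak{h}}$-component vanishes, and combined with the containment $[S^{(i)}_{\alpha}, T^{(j)}_{\beta}] \in \frak{a} \subset \tilde{\frak{h}}$ from the first step, the bracket itself is zero.

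The only step requiring some care is the positivity bookkeeping that rules out cross-identifications such as $-\alpha = \sigma(\beta)$: such equalities are impossible because both sides would then have opposite signs with respect to the fixed ordering on $\frak{h}_{0}$. Once this is handled, no explicit use of the structure constants $N_{\gamma, \delta}$ or Lemma \ref{2-3-3} is needed, since the vanishing is detected purely at the level of which root spaces appear in the expansion.
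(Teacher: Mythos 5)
Your proposal is correct, and it shares the paper's decisive first step: Lemma \ref{2-3-4} together with $2\lambda \notin \Sigma$ places $[S^{(i)}_{\alpha}, T^{(j)}_{\beta}]$ in $\frak{a}$. Where you diverge is in showing that this $\frak{a}$-valued bracket vanishes. The paper finishes with a one-line duality argument: for any $H \in \frak{a}_{0}$, invariance of the metric gives $\langle iH, [S^{(i)}_{\alpha}, T^{(j)}_{\beta}] \rangle = \langle [iH, S^{(i)}_{\alpha}], T^{(j)}_{\beta} \rangle = \lambda(H) \langle T^{(i)}_{\alpha}, T^{(j)}_{\beta} \rangle = 0$, the last equality by the orthogonality in Lemma \ref{2-3-5} since $\alpha \neq \beta, \sigma(\beta)$; as the bracket lies in $\frak{a}$, it must be zero. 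You instead expand $S^{(i)}_{\alpha}$ and $T^{(j)}_{\beta}$ in the root vectors $X_{\gamma}$ and note that a $\tilde{\frak{h}}$-component can only arise from pairs summing to zero, which is excluded because $\alpha, \sigma(\alpha), \beta, \sigma(\beta)$ are positive and the hypothesis rules out the only possible coincidences. Both finishes are sound: the paper's avoids any case distinction among $\tilde{\Sigma}_{(1)}, \tilde{\Sigma}_{(2)}, \tilde{\Sigma}_{(3)}$ and any positivity bookkeeping, while yours makes the vanishing visible directly at the level of root spaces without invoking the orthogonality statement or structure constants. One small point to make explicit in your version: the hypothesis only says $\bar{\alpha} = \bar{\beta} = \lambda \in \Sigma^{+}$, so you should record that the compatible ordering forces $\alpha, \beta \in \tilde{\Sigma}^{+}$ (otherwise $-\lambda$ would be a positive restricted root) before discarding cross-identifications such as $\alpha = -\sigma(\beta)$.
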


\begin{proof}
Since $2\lambda \not\in \Sigma$, we obtain $[\frak{k}_{\lambda}, \frak{m}_{\lambda}] \subset \frak{a}$ by Lemma \ref{2-3-4}.
For any $H \in \frak{a}_{0}$,
\[
\langle iH, [S^{(i)}_{\alpha}, T^{(j)}_{\beta} ] \rangle = \langle [iH, S^{(i)}_{\alpha}], T^{(j)}_{\beta} \rangle = \lambda(H) \langle T^{(i)}_{\alpha}, T^{(j)}_{\beta} \rangle = 0.
\]
Thus, we conclude that $[S^{(i)}_{\alpha}, T^{(j)}_{\beta} ] = 0$.
\end{proof}

The second fundamental form $N \subset M$ is denoted by $h^{N}$.

\begin{prop} \label{3-2-8}
For any $H = t_{1}Q_{i_{1}} + \cdots + t_{k}Q_{i_{k}}$, where $0 < t_{1} + \cdots + t_{i_{k}} < 1$ and $0 < t_{l} < 1$ for $1 \leq l \leq k$, the complex distribution of $N$ is not integrable.
\end{prop}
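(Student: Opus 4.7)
The plan is to verify the criterion of Lemma \ref{2-2} by producing $X \in \mathcal{D}_{x}$ and $Z \in \mathcal{D}^{\perp}_{x}$ with $\langle h^{N}(X,X)+h^{N}(JX,JX),JZ\rangle \neq 0$. The natural candidates are $X = (\exp H)_{*}T^{(1)}_{\alpha}$ for some $\alpha \in \tilde{\Sigma}$ with $\bar{\alpha}\in (\Sigma^{+}_{x})_{C}$, and $Z = (\exp H)_{*}T^{(1)}_{\alpha_{v}}$ with $v\le i_{k}$; the second-fundamental-form computation is then an application of Lemma \ref{2-2-1}(iii) to the rescaled fundamental vector fields $Y^{(i)}_{\alpha} = -\sin((-i)\lambda(H))^{-1}(S^{(i)}_{\alpha})^{*}$, exactly as in the proof of Proposition \ref{3-2-2}.

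In the generic situation, where at least one block $I_{b}$ with $b\ge 1$ is nonempty, I would fix $i\in I_{0}$ and $j\in I_{b}$, and take $\alpha$ a longest root with $\bar{\alpha}=\lambda:=e_{i}+e_{j}\in (\Sigma^{+}_{x})_{C}$. Lemma \ref{2-4-6} then identifies $JX$ with a nonzero element of $(\exp H)_{*}(\mathbb{R}T^{(1)}_{\gamma}+\mathbb{R}T^{(2)}_{\gamma})$ for some $\gamma$ with $\bar{\gamma}=\mu:=e_{i}-e_{j}\in (\Sigma^{+}_{x})_{C}$, whose coefficients $c_{1},c_{2}$ satisfy $c_{1}^{2}+c_{2}^{2}>0$ since $J^{2}=-\mathrm{id}$. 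The key structural observation is that the normal components of the Levi-Civita brackets collapse to the $\mathfrak{a}$-directions: the diagonal terms $[S^{(i)}_{\alpha},T^{(i)}_{\alpha}]=2iA_{\bar{\alpha}}$ lie in $\mathfrak{a}$ by Lemma \ref{2-3-5}(2), whereas the cross terms $[S^{(i)}_{\alpha},T^{(j)}_{\alpha}]$ with $i\neq j$ have no $\mathfrak{a}$-component (by $\mathrm{ad}$-invariance together with the orthogonality $\langle T^{(1)}_{\alpha},T^{(2)}_{\alpha}\rangle=0$ from Lemma \ref{2-3-5}(3)), and their remaining component lies in $\mathfrak{m}_{2\bar{\alpha}}$, which is either zero (when $2\bar{\alpha}\notin\Sigma$) or tangent to $N$ (when $2\bar{\alpha}\in(\Sigma^{+}_{x})_{R}$) and hence invisible to the normal projection. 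It follows that
\[
h^{N}(X,X)+h^{N}(JX,JX) \;=\; \frac{4\cot((-i)\lambda(H))}{(\lambda,\lambda)}(\exp H)_{*}(iH_{\lambda}) + \frac{4(c_{1}^{2}+c_{2}^{2})\cot((-i)\mu(H))}{(\mu,\mu)}(\exp H)_{*}(iH_{\mu}).
\]

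For the pairing I would test $Z$ corresponding to both $\bar{\beta}=2e_{i}$ and $\bar{\beta}=2e_{j}$; Lemma \ref{2-4-2} makes $JZ$ a nonzero multiple of $(\exp H)_{*}(iH_{2e_{i}})$ or $(\exp H)_{*}(iH_{2e_{j}})$ respectively, and the identities $(e_{i}\pm e_{j},e_{i})=(e_{i},e_{i})$, $(e_{i}\pm e_{j},e_{j})=\pm(e_{j},e_{j})$ reduce the two pairings to positive multiples of $\cot((-i)\lambda(H))\pm (c_{1}^{2}+c_{2}^{2})\cot((-i)\mu(H))$; these can both vanish only if both cotangents are zero simultaneously, which is ruled out by the hypothesis $0<\sum_{l}t_{l}<1$. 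The sole configuration not covered is $k=1$, $i_{1}=n$: in type $C_{n}$ this is the totally real Lagrangian orbit (with $\mathcal{D}=0$, vacuous), and in type $BC_{n}$ forces $(\Sigma^{+}_{x})_{C}=\{e_{v}:1\le v\le n\}$, handled by taking $\bar{\alpha}=e_{v}$, $\bar{\beta}=2e_{v}$ and invoking Lemma \ref{2-4-7} in place of Lemma \ref{2-4-6}. The main obstacle will be the combinatorial verification that, across all admissible $H$ and placements of the blocks $I_{0},\ldots,I_{k}$, at least one choice of $(\alpha,\beta)$ produces a nonzero pairing; this is a finite bookkeeping exercise driven by the explicit description of $(\Sigma^{+}_{x})_{C}$ and $(\Sigma^{+}_{x})_{R}$ given at the start of the subsection.
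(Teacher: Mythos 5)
Your plan is correct and is essentially the paper's own proof: the paper likewise verifies the criterion of Lemma \ref{2-2} with a root vector over $e_{i}+e_{j}$, uses Lemma \ref{2-4-6} to place its $J$-image over $e_{i}-e_{j}$, computes the two second-fundamental-form terms as multiples of $(\mathrm{exp}H)_{*}(iH_{e_{i}+e_{j}})$ and $(\mathrm{exp}H)_{*}(iH_{e_{i}-e_{j}})$, pairs them against the $J(\mathcal{D}^{\perp}_{x})$-directions spanned by $(\mathrm{exp}H)_{*}(iH_{2e_{i}})$ and $(\mathrm{exp}H)_{*}(iH_{2e_{j}})$, and rules out simultaneous vanishing because it would force $e_{i}(H)=\pi/2$ and $e_{j}(H)=0$, which is impossible in case (i); the residual configuration with only the roots $e_{v}$, $2e_{v}$ is treated exactly as you propose, via Lemma \ref{2-4-7}. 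The one small wrinkle in your (and the paper's) wording --- insisting on a \emph{longest} root over $e_{i}+e_{j}$, which does not exist for $Sp(n)/U(n)$ where those roots are short --- is harmless, since the shortest-root clause of Lemma \ref{2-4-6} and injectivity of $J$ give the same conclusion with $c_{2}=0$.
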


\begin{proof}
We use Lemma \ref{2-2}.
First, let $\Sigma$ is of type $BC_{n}$ or $C_{n} \ (n \geq 2)$.
Let $\alpha \in \tilde{\Sigma}$ be a longest root and $\bar{\alpha} = e_{i} + e_{j} \ (i \in I_{l}, \, j \in I_{m}, \, 1 \leq l < m \leq k-1)$.
By Lemma \ref{3-2-7}, for any $a = 1,2$,
\[
\begin{split}
& h^{N} \Big( (\bar{S}^{(a)}_{\alpha})_{x}^{*}, (\bar{S}^{(a)}_{\alpha})_{x}^{*} \Big) \\ 
&= 
\Big( - \frac{1}{2}\sin \big( 2(-i)(e_{i} + e_{j})(H) \big)(\mathrm{exp}H)_{*} \Big[ \bar{T}^{(a)}_{\alpha}, \bar{S}^{(a)}_{\alpha} \Big] \Big)_{T_{x}^{\perp}N} \\
&= 
\frac{1}{2}\sin \big( 2(-i)(e_{i} + e_{j})(H) \big)(\mathrm{exp}H)_{*} iH_{e_{i} + e_{j}}, \\
& h^{N} \Big( (\bar{S}^{(1)}_{\alpha_{i} - \alpha})_{x}^{*}, (\bar{S}^{(2)}_{\alpha_{i} - \alpha})_{h}^{*} \Big) \\
&=
\Big( - \frac{1}{2}(\mathrm{exp}H)_{*} \sin \big( 2(-i)(e_{i} - e_{j})(H) \big) [ \bar{T}^{(1)}_{\alpha_{i} - \alpha}, \bar{S}^{(2)}_{\alpha_{i} - \alpha} ] \Big)_{T_{x}^{\perp}N} = 0. \\
\end{split}
\]
By Lemma \ref{2-4-6}, there exists $(x_{1}, x_{2}) \in \mathbb{R}^{2} \ (x_{1}^{2} + x_{2}^{2} =1)$ such that $J(\bar{T}^{(1)}_{\alpha}) = x_{1}\bar{T}^{(1)}_{\alpha_{i} - \alpha} + x_{2}\bar{T}^{(2)}_{\alpha_{i} - \alpha}$.
Then,
\[
\begin{split}
J(\bar{S}^{(1)}_{\alpha})^{*}_{x} 
&= - (\mathrm{exp}H)_{*} J \Big( \sin \big( (-i) (e_{i} + e_{j})(H) \big) \bar{T}^{(1)}_{\alpha} \Big) \\
&= - \sin \big( (-i) (e_{i} + e_{j})(H) \big) (\mathrm{exp}H)_{*} \left( x_{1}\bar{T}^{(1)}_{\alpha_{i} - \alpha} + x_{2}\bar{T}^{(2)}_{\alpha_{i} - \alpha} \right) \\
&= \frac{ \sin \big( (-i) (e_{i} + e_{j})(H) \big) }{ \sin \big( (-i) (e_{i} - e_{j})(H) \big) }\left( x_{1}(\bar{S}^{(1)}_{\alpha_{i} - \alpha})^{*}_{x} + x_{2}(\bar{S}^{(2)}_{\alpha_{i} - \alpha})^{*}_{x} \right). \\
\end{split}
\]
Therefore,
\[
\begin{split}
& h^{N}( J(\bar{S}_{\alpha}^{(1)})_{x}^{*} ,J(\bar{S}_{\alpha}^{(1)})_{x}^{*} ) \\
&= 
(\mathrm{exp}H)_{*} 
\left(
\frac{ \sin \big( (-i) (e_{i} + e_{j})(H) \big) }{ \sin \big( (-i) (e_{i} - e_{j})(H) \big) }
\right)^{2}
\Big( x_{1}^{2} h^{N} \Big( (\bar{S}^{(1)}_{\alpha_{i} -\alpha})_{x}^{*}, (\bar{S}^{(1)}_{\alpha_{i} -\alpha})_{x}^{*} \Big) \\
& \quad\quad\quad\quad\quad\quad\quad\quad\quad\quad\quad\quad\quad\quad\quad\quad\quad\quad\quad\quad + x_{2}^{2} h^{N} \Big( (\bar{S}^{(2)}_{\alpha_{i} -\alpha})_{x}^{*}, (\bar{S}^{(2)}_{\alpha_{i} -\alpha})_{x}^{*} \Big)
\\
&=
(\mathrm{exp}H)_{*} 
\left(
\frac{ \sin \big( (-i) (e_{i} + e_{j})(H) \big) }{ \sin \big( (-i) (e_{i} - e_{j})(H) \big) } \right)^{2}\frac{1}{2} \sin \big( 2(-i)(e_{i} - e_{j})(H) \big) \big)iH_{e_{i} - e_{j}}. \\
\end{split}
\]
Since $(\mathrm{exp}H)_{*}T^{(1)}_{\alpha_{i}}, (\mathrm{exp}H)_{*}T^{(1)}_{\alpha_{j}} \in \mathcal{D}_{x}^{\perp}$, we obtain $(\mathrm{exp}H)_{*}(iH_{e_{i} \pm e_{j}}) \in J(\mathcal{D}_{x}^{\perp})$.
Assume that 
\[
h^{N} \Big( (\bar{S}^{(1)}_{\alpha})_{x}^{*}, (\bar{S}^{(1)}_{\alpha})_{x}^{*} \Big)  + h^{N} \Big( J(\bar{S}^{(1)}_{\alpha})_{x}^{*} ,J(\bar{S}^{(1)}_{\alpha})_{x}^{*} \Big)
\]
is orthogonal to $(\mathrm{exp}H)_{*}(iH_{e_{i} \pm e_{j}})$.
This implies that $(e_{i} \pm e_{j})(H) = i(\pi/2)$, so $e_{i}(H) = i\pi/2$ and $e_{j}(H) = 0$.
Thus, if we denote $H = (t_{1}, \cdots, t_{n})$, then $t_{i} = 0$ or $\pi/2$ for any $1 \leq i \leq n$.
However, there does not exist such $H$ in case (i) and this is a contradiction.
Hence, $h^{N} ((\bar{S}_{\alpha})_{h}^{*}, (\bar{S}_{\alpha})_{h}^{*}) + h^{N}( I((\bar{S}_{\alpha})^{*}_{h}) ,I((\bar{S}_{\alpha})^{*}_{h}) )$ is not orthogonal to at least one of  $(\mathrm{exp}H)_{*}(iH_{e_{i} \pm e_{j}})$.
Therefore, by Lemma \ref{2-2}, the complex distribution $\mathcal{D}$ of $N$ is not integrable.
In the case where $\Sigma$ is of type $BC_{1}$, by replacing $e_{i} - e_{j}$ with $e_{i}$ and using a similar arguments, we can verify that the statement is true.
\end{proof}


\subsection{Case. 2} \label{s3-2}

In this subsection, we consider case (ii).
We assume that $\Sigma$ is of type $BC_{n}\ (n \geq 2)$, as in the previous subsection.
Let $H \in Q_{\{ \lambda_{i_{1}}, \cdots, \lambda_{i_{k}}\}}$.
There exist $0 < t_{i} < 1 \ (1 \leq i \leq k)$ such that $ t_{1} + \cdots + t_{k} = 1$ and $H = t_{1}Q_{i_{1}} + \cdots + t_{k}Q_{i_{k}}$.
Then,
\[
\begin{split}
& \Sigma^{+}_{x} = \Sigma^{+}(1,+) \cup \cdots \cup \Sigma^{+}(k-1,+) \cup \{ 2e_{q}\ ;\ i_{1} + 1 \leq q \leq i_{k} \} \cup \{ e_{p}\ ;\ 1 \leq p \leq i_{k} \} \\
& \quad\quad\quad\quad\quad\quad\quad\quad\quad\quad\quad\quad\quad\quad\quad\quad\quad\quad  \cup \{ e_{i} \pm e_{j} \ ;\ i \in I_{l}, j \in I_{m}, 0 \leq l < m \leq k \}, \\
& (\Sigma^{+}_{x})_{C} =  \{ e_{i} \pm e_{j} \ ;\ i \in I_{l}, j \in I_{m}, 0 \leq l < m \leq k \} \cup \{ e_{p}\ ;\ 1 \leq p \leq i_{k} \}, \\
& (\Sigma^{+}_{x})_{R} = \Sigma^{+}(1,+) \cup \cdots \cup \Sigma^{+}(k-1,+) \cup \{ 2e_{q}\ ;\ i_{1} + 1 \leq q \leq i_{k} \}, \\
& \Sigma^{+} - \Sigma^{+}_{x} = \Sigma^{+}(0) \cup \Sigma^{+}(1,-) \cup \cdots \cup \Sigma^{+}(k-1,-) \cup \Sigma^{+}(k) \\
& \quad\quad\quad\quad\quad\quad\quad\quad\quad\quad\quad\quad\quad\quad\quad\quad\quad\quad\quad\quad \cup \{ 2e_{v} \ ;\ v \in I_{0} \} \cup \{ 2e_{s}, e_{s}\ ;\ s \in I_{k} \}.  \\
\end{split}
\]
Therefore,
\[
\begin{split}
\mathcal{D}_{x} &= (\mathrm{exp}H)_{*} \left( \sum_{i \in I_{l}, j \in I_{m}, 0 \leq l < m \leq k} \frak{m}_{e_{i} \pm e_{j}} + \sum_{p=1}^{i_{k}} \frak{m}_{e_{p}} \right), \\
\mathcal{D}^{\perp}_{x} &= (\mathrm{exp}H)_{*} \left( \sum_{\lambda \in \Sigma^{+}(1,+) \cup \cdots \cup \Sigma^{+}(k-1,+)} \frak{m}_{\lambda} + \sum_{q = i_{1} + 1}^{i_{k}} \frak{m}_{2e_{q}} \right).
\end{split}
\]
Define $K_{i_{1}, \cdots, i_{k}}, \frak{k}_{i_{1}, \cdots, i_{k}}$, and $L$ as in the previous subsection.

\begin{lemm} \label{3-3-1}
$L$ is the integral submanifold of the totally real distribution $\mathcal{D}^{\perp}$ through $x$.
\end{lemm}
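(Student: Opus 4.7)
The plan is to mimic the proof of Lemma \ref{3-2-1} almost verbatim, since the structural argument only depends on intersecting two explicitly given subsets of $\Sigma^{+}$; the single genuine change in Case 2 is the constraint $t_{1}+\cdots+t_{k}=1$, which shifts certain roots from $\Sigma^{+}_{x}$ into $\Sigma^{+}-\Sigma^{+}_{x}$.

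First I would use the formula $(S^{(a)}_{\alpha})^{*}_{x}=-(\sin\bar\alpha(H))(\mathrm{exp}\,H)_{*}T^{(a)}_{\alpha}$ derived at the end of Subsection \ref{s2-3}, together with the root-space decomposition
\[
\frak{k}_{i_{1},\ldots,i_{k}}=\frak{k}_{0}+\sum_{\lambda\in\Sigma^{+}-(\Sigma^{+}_{i_{1}}\cup\cdots\cup\Sigma^{+}_{i_{k}})}\frak{k}_{\lambda},
\]
to express
\[
T_{x}L=(\mathrm{exp}\,H)_{*}\,\mathrm{span}\Big\{T^{(a)}_{\alpha}\,:\,\bar\alpha\in\bigl[\Sigma^{+}-(\Sigma^{+}_{i_{1}}\cup\cdots\cup\Sigma^{+}_{i_{k}})\bigr]\cap\Sigma^{+}_{x}\Big\}.
\]

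Next I would intersect the two sets using the explicit descriptions that open this subsection. The set $\Sigma^{+}-(\Sigma^{+}_{i_{1}}\cup\cdots\cup\Sigma^{+}_{i_{k}})$ is the same as in Case 1, namely $\Sigma^{+}(0)\cup\cdots\cup\Sigma^{+}(k)\cup\{e_{u}\,:\,u\in I_{k}\}\cup\{2e_{j}\,:\,1\le j\le n\}$. Intersecting piece by piece with $\Sigma^{+}_{x}$: the $\Sigma^{+}(l,-)$ parts all drop out; $\Sigma^{+}(0,+)$ drops out (this is the main Case 2 difference, forced by $t_{1}+\cdots+t_{k}=1$, which gives $(e_{i}+e_{j})(H)=i\pi$ for $i,j\in I_{0}$); $\Sigma^{+}(k)$ drops out; the $e_{u}$ with $u\in I_{k}$ drop out; and the $\{2e_{j}\}$ piece reduces to $\{2e_{q}\,:\,i_{1}+1\le q\le i_{k}\}$, again because $t_{1}+\cdots+t_{k}=1$ forces $2e_{v}(H)=i\pi$ for $v\in I_{0}$ and $2e_{s}(H)=0$ for $s\in I_{k}$. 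What survives is precisely
\[
\Sigma^{+}(1,+)\cup\cdots\cup\Sigma^{+}(k-1,+)\cup\{2e_{q}\,:\,i_{1}+1\le q\le i_{k}\}=(\Sigma^{+}_{x})_{R},
\]
so $T_{x}L=\mathcal{D}^{\perp}_{x}$.

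Finally, I would invoke the integrability of $\mathcal{D}^{\perp}$ from Lemma \ref{2-1} together with the $K$-invariance of $\mathcal{D}^{\perp}$: since $L$ is a connected submanifold through $x$ whose tangent space at $x$ coincides with $\mathcal{D}^{\perp}_{x}$, and since $K_{i_{1},\ldots,i_{k}}$ acts by isometries preserving $\mathcal{D}^{\perp}$, $L$ must be the (unique maximal connected) integral submanifold of $\mathcal{D}^{\perp}$ through $x$. I do not anticipate any serious obstacle — the argument is essentially a careful bookkeeping on roots — but the delicate point worth isolating is precisely how the equation $t_{1}+\cdots+t_{k}=1$ of Case 2 pushes $\Sigma^{+}(0,+)$ and $\{2e_{v}\,:\,v\in I_{0}\}$ into $\Sigma^{+}-\Sigma^{+}_{x}$, thereby removing contributions that were present in Case 1.
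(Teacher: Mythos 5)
Your proposal is correct and follows essentially the same route as the paper: you compute $T_{x}L$ from the vanishing of the fundamental vector fields $(S^{(a)}_{\alpha})^{*}_{x}$ according to whether $\bar{\alpha}(H)\in i\pi\mathbb{Z}$, and your intersection bookkeeping (with $\Sigma^{+}(0,+)$ and $\{2e_{v}\,;\,v\in I_{0}\}$ now falling into $\Sigma^{+}-\Sigma^{+}_{x}$ because $t_{1}+\cdots+t_{k}=1$) is exactly the complement of the set the paper computes, yielding $T_{x}L=\mathcal{D}^{\perp}_{x}$. The closing appeal to $K$-invariance of $\mathcal{D}^{\perp}$ is the same argument the paper gives in Lemma \ref{3-2-1} and implicitly reuses here.
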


\begin{proof}
Since
\[
\begin{split}
& \Big\{ \lambda \in \Sigma^{+}(0) \cup \cdots \cup \Sigma^{+}(k) \cup \{ e_{i} \ ;\ i \in I_{k} \} \cup \{ 2e_{j} \ ;\ 1 \leq j \leq n \} \ ;\ \lambda(H) \in i\pi \mathbb{Z} \Big\} \\
& =  
\Sigma^{+}(0) \cup \Sigma^{+}(1,-) \cup \cdots \cup \Sigma^{+}(k-1,-) \cup \Sigma^{+}(k) \cup \{ 2e_{v} \ ;\ v \in I_{0} \} \cup \{ e_{i}, 2e_{i} \ ;\ i \in I_{k} \},
\end{split}
\]
we see $T_{x}L = \mathcal{D}^{\perp}_{x}$.
\end{proof}

We use the notations introduced in the previous subsection.

\begin{prop} \label{3-3-2}
$L$ is a totally geodesic submanifold of $N$.
In particular, $N$ is a Hopf $CR$ submanifold.
\end{prop}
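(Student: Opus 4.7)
The plan is to mirror the proof of Proposition \ref{3-2-2} (the Case 1 analogue) and reduce the claim to two combinatorial checks against the explicit lists of $(\Sigma^{+}_{x})_{R}$ and $\Sigma^{+} - \Sigma^{+}_{x}$ given at the start of this subsection. By Lemma \ref{3-3-1}, $L$ is the leaf of $\mathcal{D}^{\perp}$ through $x$ with $T_{x}L = \mathcal{D}_{x}^{\perp}$, and by the $K$-invariance of $\mathcal{D}^{\perp}$ it suffices to prove that $L$ is totally geodesic in $N$ at the single point $x$; every other leaf $k(L)$ then inherits the same property by translation, and the Hopf conclusion follows automatically.

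To compute the second fundamental form $h$ of $L \subset N$ at $x$, I would take $\alpha, \beta \in \tilde{\Sigma}$ with $\bar{\alpha} = \lambda$ and $\bar{\beta} = \mu$ both in $(\Sigma^{+}_{x})_{R}$ and apply Lemma \ref{2-2-1} verbatim as in the proof of Proposition \ref{3-2-2} to obtain
\[
\Big( \nabla^{M}_{(S^{(i)}_{\alpha})^{*}}(S^{(j)}_{\beta})^{*} \Big)_{x} \in (\mathrm{exp}H)_{*} \big[ \frak{m}_{\lambda}, \frak{k}_{\mu} \big].
\]
Lemma \ref{2-3-4} places this bracket in $\frak{m}_{\lambda + \mu} + \frak{m}_{\lambda - \mu}$ when $\lambda \ne \mu$ and in $\frak{a} + \frak{m}_{2\lambda}$ when $\lambda = \mu$. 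The proposition therefore reduces to two combinatorial statements: (a) $2\lambda \notin \Sigma$ for every $\lambda \in (\Sigma^{+}_{x})_{R}$; and (b) for distinct $\lambda, \mu \in (\Sigma^{+}_{x})_{R}$, one has $\lambda + \mu \notin \Sigma$, and whenever $\lambda - \mu \in \Sigma$ it lies in $\Sigma^{+} - \Sigma^{+}_{x}$. Together, (a) and (b) force $[\frak{m}_{\lambda}, \frak{k}_{\mu}] \subset \frak{a} + \sum_{\nu \in \Sigma^{+} - \Sigma^{+}_{x}} \frak{m}_{\nu} = \frak{m}_{x}^{\perp}$, so $\nabla^{M}_{(S^{(i)}_{\alpha})^{*}}(S^{(j)}_{\beta})^{*}$ is normal to $N$ at $x$ and in particular has zero $\mathcal{D}_{x}$-component, which is exactly the vanishing of $h$ needed.

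Both (a) and (b) are direct checks from the explicit formula
\[
(\Sigma^{+}_{x})_{R} = \Sigma^{+}(1,+) \cup \cdots \cup \Sigma^{+}(k-1,+) \cup \{ 2e_{q} \ ;\ i_{1}+1 \le q \le i_{k} \}.
\]
Claim (a) is immediate, since neither $2(e_{i} + e_{j})$ nor $4e_{q}$ is a restricted root. Claim (b) splits into three subcases depending on the types of $\lambda, \mu$ (sum-sum, sum-double, double-double); in each one the sum $\lambda + \mu$ visibly has too many nonzero coefficients to be a restricted root, while $\lambda - \mu$, whenever it is a root at all, is forced by the block structure of the $I_{l}$'s to take the form $\pm(e_{i} - e_{j})$ with $i, j$ in a common $I_{l}$, and every such root already appears inside the explicit decomposition of $\Sigma^{+} - \Sigma^{+}_{x}$ recorded at the start of this subsection. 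Once these checks are in hand, the proof concludes verbatim as in Case 1. The only real care needed is the combinatorial bookkeeping in (b); this is the most error-prone step but presents no conceptual obstacle beyond what was already handled in Proposition \ref{3-2-2}.
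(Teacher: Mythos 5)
Your proposal is correct and follows essentially the same route as the paper: the paper's own proof simply records the same three combinatorial facts (no $2\lambda$, no $\lambda+\mu$, and $\lambda-\mu$ landing in $\Sigma^{+}-\Sigma^{+}_{x}$ when it is a root) and then invokes the argument of Proposition \ref{3-2-2} via Lemmas \ref{2-2-1} and \ref{2-3-4}, exactly as you do. Your block-structure verification of these facts from the explicit description of $(\Sigma^{+}_{x})_{R}$ is accurate, so nothing is missing.
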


\begin{proof}
For any $\lambda, \mu \in (\Sigma^{+}_{x})_{R} \ (\lambda \not= \mu)$, we have $\lambda + \mu \not\in \Sigma$, and if $\lambda - \mu \in \Sigma$, then either $\lambda - \mu$ or $\mu - \lambda$ is an element of $\Sigma^{+} - \Sigma^{+}_{x}$.
Moreover, $2\lambda \not\in \Sigma$.
Thus, by a similar argument to the Proof of Lemma \ref{3-2-2}, the statement follows.
\end{proof}

By calculating the second fundamental form $h^{N}$ in a manner similar to the proof of Lemma \ref{3-2-4}, we obtain Lemma \ref{3-3-3}

\begin{lemm} \label{3-3-3}
$L$ is a totally geodesic submanifold of $M$ if and only if $\lambda(H) = i\pi/2$ for any $\lambda \in (\Sigma^{+}_{H})_{R}$.
\end{lemm}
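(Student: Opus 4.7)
The plan is to promote the bracket computation of Proposition 3.3.2 to detect total geodesy in the full ambient space $M$ rather than just in $N$. The orthogonal complement of $T_xL = \mathcal{D}^\perp_x$ in $T_xM$ decomposes as $\mathcal{D}_x \oplus T^\perp_x N$, so $h^L$ vanishes iff $(\nabla^M_X Y)_x$ lies in $T_xL$ for all $X, Y \in T_xL$. Proposition 3.3.2 already supplies that this derivative has no $\mathcal{D}^\perp_x$-free component inside $T_xN$, so the remaining task is to control the $T^\perp_x N$- and $\mathcal{D}_x$-parts.

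Concretely, applied to the basis vectors $(S^{(i)}_\alpha)^*_x, (S^{(j)}_\beta)^*_x$ of $\mathcal{D}^\perp_x$ (indexed by $\bar\alpha = \lambda, \bar\beta = \mu \in (\Sigma^+_x)_R$), Lemma 2.2.1(iii) together with Lemma 2.3.5 yields, exactly as in the proof of Proposition 3.2.2,
\[
\bigl(\nabla^M_{(S^{(i)}_\alpha)^*}(S^{(j)}_\beta)^*\bigr)_x
= \sin\lambda(H)\cos\mu(H)\,(\exp H)_*[T^{(i)}_\alpha, S^{(j)}_\beta].
\]
When $\lambda = \mu$, the explicit description
$(\Sigma^+_x)_R = \Sigma^+(1,+) \cup \cdots \cup \Sigma^+(k-1,+) \cup \{2e_q\;;\;i_1+1\le q\le i_k\}$
shows that $2\lambda \notin \Sigma$, so Lemma 2.3.4 gives $[\frak{m}_\lambda, \frak{k}_\lambda] \subset \frak{a}$; Lemma 3.2.7 in addition eliminates every diagonal bracket except for the pairs $(\alpha,\alpha)$ and $(\alpha,\sigma(\alpha))$, each of which contributes a nonzero multiple of $iH_\lambda \in \frak{a}$. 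Thus the $\lambda=\mu$ contribution is an $\frak{a}$-valued normal vector with coefficient $\tfrac12\sin(2\lambda(H))$, and the $\frak{a}$-direction is genuinely normal to $L$ in $M$.

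Vanishing of this $\frak{a}$-component for every $\lambda \in (\Sigma^+_x)_R$ therefore forces $\sin(2\lambda(H)) = 0$; since $\lambda(H) \notin i\pi\mathbb{Z}$ by the very definition of $\Sigma^+_x$, this pins $\lambda(H) = i\pi/2$. Conversely, under this condition the factor $\cos\mu(H) = 0$ kills all off-diagonal ($\lambda \neq \mu$) contributions, while the diagonal $\frak{a}$-contribution vanishes as well, giving $h^L \equiv 0$.

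The main obstacle is the bookkeeping in the off-diagonal case: one must verify, using the Case 2 description of $\Sigma^+ - \Sigma^+_x$, that whenever $\lambda \pm \mu$ is a genuine restricted root the resulting bracket lies in $\frak{m}^\perp_x \subset T^\perp_x N$ (a normal direction for $L$ in $M$) rather than back in $T_xL$, so that the $\cos\mu(H)$ factor really does govern its vanishing. This reduces to a root-system enumeration that runs in parallel with the one implicit in the proofs of Proposition 3.3.2 and Lemma 3.2.7, and is the only step that does not transcribe verbatim from the Case 1 analysis of Lemma 3.2.3.
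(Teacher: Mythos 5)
Your argument is essentially the paper's own: Lemma \ref{3-3-3} is obtained precisely by re-running the covariant-derivative computation from the proofs of Propositions \ref{3-2-2}/\ref{3-3-2}, where the diagonal brackets $[S^{(i)}_{\alpha}, T^{(i)}_{\alpha}] = 2iA_{\bar{\alpha}} \in \frak{a}$ appear with coefficient $\tfrac{1}{2}\sin\big(2(-i)\lambda(H)\big)$ and force $\lambda(H) = i\pi/2$, while conversely $\cos\big((-i)\mu(H)\big) = 0$ kills every term. The ``main obstacle'' you flag is superfluous: in the converse direction the scalar factor annihilates the off-diagonal brackets wherever they lie, and in the forward direction the diagonal $\frak{a}$-components alone already pin down $\lambda(H)$, so no further root-system enumeration is required.
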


\begin{lemm} \label{3-3-4}
$\lambda(H) =i \pi/2$ for any $\lambda \in (\Sigma^{+}_{H})_{R}$ if and only if $H = Q_{a}$ or $H = (1/2)(Q_{a} + Q_{b})$ for some $1 \leq a < b \leq n$.
\end{lemm}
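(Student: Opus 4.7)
The plan is to prove both implications by directly evaluating $\lambda(H)$ on the elements of $(\Sigma^+_H)_R$ described explicitly in Subsection \ref{s3-2}. Writing $H = t_1 Q_{i_1} + \cdots + t_k Q_{i_k}$ with $0 < t_l < 1$ and $t_1 + \cdots + t_k = 1$, the coordinate of $H$ at position $j$ is $h_j = (\pi/2) \sum_{l : i_l \geq j} t_l$; in particular $h_{i_l} = (\pi/2)(t_l + t_{l+1} + \cdots + t_k)$ for each $l$.

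For the easy direction, if $H = Q_a$ we are in the case $k=1$, and by the formulas of Subsection \ref{s3-2} both $\Sigma^+(l,+)$ (for $1 \leq l \leq k-1$) and $\{2e_q : i_1 + 1 \leq q \leq i_k\}$ are empty, so $(\Sigma^+_H)_R = \emptyset$ and the condition is vacuous. If $H = (1/2)(Q_a + Q_b)$ with $a<b$, the coordinates are $h_j = \pi/2$ for $j \leq a$, $h_j = \pi/4$ for $a < j \leq b$, and $h_j = 0$ for $j > b$; then each $\lambda = e_i + e_j \in \Sigma^+(1,+)$ (with $a < i < j \leq b$) and each $\lambda = 2e_q$ with $a < q \leq b$ gives $\lambda(H) = i\pi/2$, so the condition holds.

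For the converse, assume the condition. If $k = 1$, then $H = Q_{i_1}$ and we are done. If $k \geq 2$, the explicit description shows $2e_q \in (\Sigma^+_H)_R$ for every $q \in \{i_1 + 1, \ldots, i_k\}$; applying the condition to $q = i_l$ for $l = 2, \ldots, k$ yields
\[
2 h_{i_l} = \pi \Big( t_l + t_{l+1} + \cdots + t_k \Big) = \frac{\pi}{2},
\quad\text{i.e.,}\quad t_l + t_{l+1} + \cdots + t_k = \frac{1}{2}.
\]
If $k \geq 3$, subtracting the equations for consecutive values of $l$ gives $t_l = 0$ for $2 \leq l \leq k-1$, contradicting $0 < t_l < 1$. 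Hence $k = 2$, and the single relation $t_2 = 1/2$ together with $t_1 + t_2 = 1$ forces $t_1 = t_2 = 1/2$, so $H = (1/2)(Q_{i_1} + Q_{i_2})$.

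There is no real obstacle here beyond bookkeeping: everything reduces to reading off the coordinates of $H$ and using that the ``diagonal'' roots $2e_{i_l}$ always belong to $(\Sigma^+_H)_R$ for $l \geq 2$. The same argument works verbatim when $\Sigma$ is of type $C_n$, since the $e_q$-roots play no role in $(\Sigma^+_H)_R$. Lemma \ref{3-3-3} then combines with this lemma to yield the analogue of Proposition \ref{3-2-5} in Case (ii).
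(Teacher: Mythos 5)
Your proposal is correct and follows essentially the same route as the paper: both directions come down to evaluating the roots $2e_{q}$ ($i_{1}+1 \leq q \leq i_{k}$) of $(\Sigma^{+}_{H})_{R}$ on $H$, which rules out $k \geq 3$ (the paper phrases the contradiction as equal coordinate values on distinct blocks, you as a vanishing barycentric coefficient $t_{l}=0$ — the same fact) and pins $k=2$ down to $H=(1/2)(Q_{a}+Q_{b})$, with $k=1$ giving $H=Q_{a}$. No gaps; the bookkeeping with $h_{i_{l}} = (\pi/2)(t_{l}+\cdots+t_{k})$ matches the paper's computation.
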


\begin{proof}
If $H = Q_{a}$ or $H = (1/2)(Q_{a} + Q_{b})$ for some $1 \leq a < b \leq n$, then it is obvious that $\lambda(H) =i \pi/2$ for any $\lambda \in (\Sigma^{+}_{H})_{R}$.
Conversely, assume that $\lambda(H) =i \pi/2$ for any $\lambda \in (\Sigma^{+}_{H})_{R}$.
Let $H = (t_{1}, \cdots, t_{n})$ and assume $H \in Q_{ \{ \lambda_{i_{1}}, \cdots, \lambda_{i_{k}} \}}$ for $1 \leq i_{1} < \cdots < i_{k} \leq n$.
Suppose $k \geq 3$.
By the assumption, $2e_{i}(H) = i \pi/2 \ (i_{1} + 1 \leq i \leq i_{k})$, so $t_{i} = \pi/4$.
However, this contradicts to the condition that $t_{a} \not= t_{b} \ (a \in I_{l}, b \in I_{m}, 1 \leq l < m \leq n)$.
Thus, we conclude $k=1$ or $k = 2$.
If $k = 1$, the statement is trivial.
Assume $k=2$.
In this case, we obtain $t_{i} = \pi/4 \ (i_{1} + 1 \leq i \leq i_{2})$.
Since  $H \in Q_{ \{ \lambda_{i_{1}}, \lambda_{i_{2}} \}}$, it follows that $H = (1/2)(Q_{i_{1}} + Q_{i_{2}})$.
\end{proof}

Summarizing Lemma \ref{3-3-3} and Lemma \ref{3-3-4}, we obtain Proposition \ref{3-3-5}.

\begin{prop} \label{3-3-5}
$N$ is a proper ruled $CR$ submanifold if and only if $H = (1/2)(Q_{a} + Q_{b})$ for some $1 \leq a < b \leq n$.
\end{prop}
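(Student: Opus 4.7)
The plan is to deduce Proposition~\ref{3-3-5} essentially for free from Lemma~\ref{3-3-3} and Lemma~\ref{3-3-4}, together with the characterization of complex isotropy orbits in Theorem~\ref{2-4-8}. Lemma~\ref{3-3-3} converts the geometric condition that every leaf of the totally real foliation be totally geodesic in $M$ into the combinatorial condition $\lambda(H) = i\pi/2$ on $(\Sigma^{+}_{H})_{R}$, and Lemma~\ref{3-3-4} solves that combinatorial condition. All the substantive work has therefore been done; the only new ingredient is the word \emph{proper}, which is needed to discard the polar case appearing in Lemma~\ref{3-3-4}.

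For the sufficiency direction I would fix $H = (1/2)(Q_{a} + Q_{b})$ with $1 \leq a < b \leq n$, observe that $H$ lies in the open face $Q_{\{\lambda_{a}, \lambda_{b}\}}$ so that Case~(ii) with $k=2$ applies, apply Lemma~\ref{3-3-4} to obtain $\lambda(H) = i\pi/2$ on $(\Sigma^{+}_{H})_{R}$, and then apply Lemma~\ref{3-3-3} to conclude that $L$ is totally geodesic in $M$. Since $\mathcal{D}^{\perp}$ is $K$-invariant, every leaf is a $K$-translate of $L$ and is therefore also totally geodesic in $M$, so $N$ is a ruled $CR$ submanifold. To confirm properness I would read off $(\Sigma^{+}_{H})_{R}$ and $(\Sigma^{+}_{H})_{C}$ from the displays at the start of this subsection and verify that $2e_{a+1} \in (\Sigma^{+}_{H})_{R}$ (hence $\mathcal{D}^{\perp} \neq 0$, so $N$ is not complex) and that $e_{1} - e_{a+1} \in (\Sigma^{+}_{H})_{C}$ (hence $\mathcal{D} \neq 0$, so $N$ is not totally real).

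For the necessity direction I would assume $N$ to be a proper ruled $CR$ submanifold. The leaf $L$ through $x$ is then totally geodesic in $M$, Lemma~\ref{3-3-3} gives $\lambda(H) = i\pi/2$ on $(\Sigma^{+}_{H})_{R}$, and Lemma~\ref{3-3-4} forces $H$ to equal either $Q_{a}$ or $(1/2)(Q_{a} + Q_{b})$. In the first case $N$ coincides with the polar $M^{+}_{a}$, which by Theorem~\ref{2-4-8} is a complex submanifold and hence fails to be a proper $CR$ submanifold; this contradicts the hypothesis, leaving only $H = (1/2)(Q_{a} + Q_{b})$.

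I do not anticipate a serious obstacle since Lemmas~\ref{3-3-3} and \ref{3-3-4} already carry the bulk of the argument. The subtlest step is the properness check in the sufficiency direction: when $b = a+1$, or when $b = n$ and $\Sigma$ is of type $C_{n}$, some of the index sets $I_{l}$ collapse, and one must verify by inspection that the witnesses $2e_{a+1}$ and $e_{1} - e_{a+1}$ still lie in the relevant subsets of $\Sigma^{+}_{H}$. A short case analysis using the list of restricted root systems compiled at the end of Subsection~\ref{s2-4} handles these boundary cases and confirms that $(\Sigma^{+}_{H})_{R}$ and $(\Sigma^{+}_{H})_{C}$ are both nonempty in every instance.
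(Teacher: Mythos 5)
Your proposal is correct and follows essentially the same route as the paper, which obtains Proposition \ref{3-3-5} precisely by combining Lemma \ref{3-3-3} and Lemma \ref{3-3-4} (with $K$-invariance of $\mathcal{D}^{\perp}$ spreading the totally geodesic property of $L$ to all leaves). Your explicit properness check, exhibiting $2e_{a+1} \in (\Sigma^{+}_{H})_{R}$ and $e_{1}-e_{a+1} \in (\Sigma^{+}_{H})_{C}$ to rule out the polar case $H=Q_{a}$, is a sound and welcome elaboration of what the paper leaves implicit.
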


Next, we consider the integrability of the complex distribution.
If $H = Q_{k}$ for $1 \leq k \leq n$, then $N$ is a polar and a complex submanifold.
Thus, the integrability of the complex distribution is trivial.
In the case of $k \geq 2$, we can obtain Proposition \ref{3-3-6} using a similar argument to the proof of Proposition \ref{3-2-8}.

\begin{prop} \label{3-3-6}
Let $H \in Q_{\{ \lambda_{i_{1}}, \cdots, \lambda_{i_{k}} \}}$ with $k \geq 2$.
Then, the complex distribution $\mathcal{D}$ of $N$ is not integrable.
\end{prop}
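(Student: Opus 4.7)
The plan is to follow the strategy of the proof of Proposition \ref{3-2-8}: apply Lemma \ref{2-2} by exhibiting explicit $X \in \mathcal{D}$ and $Z \in \mathcal{D}^{\perp}$ for which $g(h^{N}(X,X) + h^{N}(JX, JX), JZ) \neq 0$. The hypothesis $k \geq 2$ is used precisely to guarantee that the block decomposition of $H$ provides enough flexibility to produce such a witness.

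Concretely, I would set $i = 1$ (so that $i \in I_{0}$ and $t_{i} = \pi/2$) and $j = i_{2}$, which exists because $k \geq 2$. From $H = t_{1}Q_{i_{1}} + \cdots + t_{k}Q_{i_{k}}$ with $\sum t_{l} = 1$ and each $t_{l} \in (0,1)$, a direct computation of the coordinates gives $t_{j} = (\pi/2)(t_{2} + \cdots + t_{k}) \in (0, \pi/2)$. Then $e_{i} \pm e_{j} \in \Sigma_{x}^{+}$, so $e_{i} + e_{j} \in (\Sigma_{x}^{+})_{C}$ by Lemma \ref{2-4-6}, while $2e_{j} \in (\Sigma_{x}^{+})_{R}$ since $t_{j} \in (0, \pi/2)$. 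Choose a longest $\alpha \in \tilde{\Sigma}$ with $\bar{\alpha} = e_{i} + e_{j}$ and let $\alpha_{j} \in \tilde{\Sigma}$ be the longest root with $\bar{\alpha}_{j} = 2e_{j}$; set $X = (\bar{S}_{\alpha}^{(1)})^{*}_{x} \in \mathcal{D}_{x}$ and $Z = (\bar{S}_{\alpha_{j}}^{(1)})^{*}_{x} \in \mathcal{D}^{\perp}_{x}$.

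The same computations as in the proof of Proposition \ref{3-2-8} (relying on Lemma \ref{2-2-1}, on Lemma \ref{2-4-6} to write $J\bar{T}^{(1)}_{\alpha}$ as a unit combination $x_{1}\bar{T}^{(1)}_{\alpha_{i}-\alpha} + x_{2}\bar{T}^{(2)}_{\alpha_{i}-\alpha}$, and on Lemma \ref{3-2-7} to annihilate the cross terms) yield
\[
h^{N}(X,X) = \tfrac{1}{2}\sin(2(t_{i}+t_{j}))(\exp H)_{*} iH_{e_{i}+e_{j}}, \qquad h^{N}(JX,JX) = \tfrac{\sin^{2}(t_{i}+t_{j})\cos(t_{i}-t_{j})}{\sin(t_{i}-t_{j})}(\exp H)_{*} iH_{e_{i}-e_{j}}.
\]
Substituting $t_{i} = \pi/2$, the identities $\sin(\pi/2 + t_{j}) = \sin(\pi/2 - t_{j}) = \cos t_{j}$ and $\cos(\pi/2 - t_{j}) = \sin t_{j}$ collapse the sum to $-\sin(2t_{j})(\exp H)_{*} iH_{e_{j}}$, which is nonzero because $t_{j} \in (0, \pi/2)$. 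By Lemma \ref{2-4-2}, $JZ$ is a nonzero scalar multiple of $(\exp H)_{*} iH_{2e_{j}}$, hence collinear with $(\exp H)_{*} iH_{e_{j}}$. Therefore $g(h^{N}(X,X) + h^{N}(JX,JX), JZ) \neq 0$, and Lemma \ref{2-2} gives the non-integrability of $\mathcal{D}$.

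The main obstacle is essentially bookkeeping: the index $j = i_{2}$ sits at a boundary of the block decomposition, which is treated slightly differently in each of the six classes of irreducible Hermitian symmetric spaces (and across the two root-system types $BC$ and $C$). One has to confirm that a longest $\alpha$ with $\bar{\alpha} = e_{i}+e_{j}$ and a longest $\alpha_{j}$ with $\bar{\alpha}_{j} = 2e_{j}$ exist in each case, and that the root-space identifications underlying the displayed formulas survive at this boundary; both reduce to inspection of the tabulation in Subsection \ref{s2-4}.
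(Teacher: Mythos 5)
Your proposal is correct and follows essentially the same route as the paper, which establishes Proposition \ref{3-3-6} simply by rerunning the second-fundamental-form computation of Proposition \ref{3-2-8} and invoking Lemma \ref{2-2}; your specific choice $i=1\in I_{0}$, $j=i_{2}$ (so $e_{i}(H)=\pi/2$, $e_{j}(H)\in(0,\pi/2)$, $e_{i}+e_{j}\in(\Sigma^{+}_{x})_{C}$, $2e_{j}\in(\Sigma^{+}_{x})_{R}$) is a valid adaptation of the index bookkeeping to case (ii), and the collapse of $h^{N}(X,X)+h^{N}(JX,JX)$ onto the nonzero multiple $-\sin(2t_{j})(\mathrm{exp}H)_{*}iH_{e_{j}}$, paired against $JZ\in\mathbb{R}(\mathrm{exp}H)_{*}iH_{2e_{j}}$, checks out. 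The only caveat is the word ``longest'': for $Sp(n)/U(n)$ (and $\tilde{G}_{2}(\mathbb{R}^{5})\cong Sp(2)/U(2)$) every $\alpha\in\tilde{\Sigma}$ with $\bar{\alpha}=e_{1}+e_{i_{2}}$ satisfies $\sigma(\alpha)=\alpha$ and is a shortest root, so no longest such root exists, but the argument is unaffected because the shortest-root clause of Lemma \ref{2-4-6} gives $J\bar{T}^{(1)}_{\alpha}=\pm\bar{T}^{(1)}_{\alpha_{1}-\alpha}$ (i.e.\ $x_{2}=0$) and the same computation goes through --- a wording issue already present in the paper's own proof of Proposition \ref{3-2-8}.
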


Summarizing Lemma \ref{3-2-2}, Proposition \ref{3-2-5}, Proposition \ref{3-2-8}, Lemma \ref{3-3-2}, Proposition \ref{3-3-5}, and Proposition \ref{3-3-6}, we obtain the main result of this section

\begin{thm} \label{3-3-7}
Any orbit of the isotropy group action on an irreducible Hermitian symmetric space $M$ is a Hopf $CR$ submanifold.
Set $n = \mathrm{rank}M$.
Let $H \in Q$ and $x = \mathrm{exp}H(o)$.
Denote by $N$ the orbit through $x$.
Then, $N$ is a ruled $CR$ submanifold if and only if $H = Q_{a}, (1/2)Q_{a}, (1/2)(Q_{a} + Q_{b})$ for some $1 \leq a < b \leq n$.
Moreover, $N$ is a totally geodesic totally real submanifold if and only if $\Sigma$ is of type $C_{n}$ and $H = (1/2)Q_{n}$, and $N$ is a complex submanifold if and only if $H = Q_{a}$ for some $1 \leq a \leq n$, that is, $N$ is a polar.
The complex distribution of $N$ is integrable if and only if $N$ is a polar.
\end{thm}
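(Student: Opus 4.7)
The plan is to invoke the case analysis developed in Subsections 3.1 and 3.2. By Lemma \ref{2-3-6}, every $K$-orbit is represented by a unique $H \in Q$, and by the cellular decomposition $Q = \bigsqcup_{\Delta \subset \mathcal{F}} Q_\Delta$ this $H$ lies either in a face of the form $Q_{\{\lambda_{i_1},\ldots,\lambda_{i_k},\delta\}}$ (Case 1 from Subsection \ref{s3-1}) or of the form $Q_{\{\lambda_{i_1},\ldots,\lambda_{i_k}\}}$ (Case 2 from Subsection \ref{s3-2}). Since each assertion of the theorem is a statement about a single orbit, it then suffices to check it in each of the two cases.

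The Hopf $CR$ claim is immediate from Propositions \ref{3-2-2} and \ref{3-3-2}. For the ruled $CR$ classification, Proposition \ref{3-2-5} covers Case 1 and yields $H = (1/2)Q_a$, while Proposition \ref{3-3-5} covers Case 2 with $k\geq 2$ and yields $H = (1/2)(Q_a + Q_b)$. The remaining entry $H = Q_a$ corresponds to the one-vertex face $\Delta = \{\lambda_a\}$ in Case 2 (with $k=1$), where $N = M_a^{+}$ is a polar; here $\mathcal{D}^{\perp} = \{0\}$, so the ruled condition holds vacuously. The claim characterizing complex submanifolds amounts to $(\Sigma_x^+)_R = \emptyset$, and by directly inspecting the formulas for $(\Sigma_x^+)_R$ written out in both subsections, one sees this forces $H = Q_a$ for some $a$. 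The totally real (hence Lagrangian) totally geodesic case follows from Theorem \ref{2-4-8} and Remark \ref{3-2-6}(2), where $\Sigma$ must be of type $C_n$ and $H = (1/2)Q_n$.

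For the last assertion on integrability of $\mathcal{D}$, the polar case is immediate because $\mathcal{D}^{\perp} = \{0\}$ makes the integrability criterion of Lemma \ref{2-1} vacuous. The converse --- that whenever $N$ is not a polar the complex distribution fails to be integrable --- is exactly the content of Propositions \ref{3-2-8} and \ref{3-3-6}. The main obstacle, and the one that requires genuine care rather than citation, is the combinatorial bookkeeping: one must match each face $Q_\Delta$ to its correct case and verify that the polar vertices $Q_a$ are precisely the $H \in Q$ with $(\Sigma_x^+)_R = \emptyset$ and that $(1/2)Q_n$ in type $C_n$ is precisely the $H$ with $(\Sigma_x^+)_C = \emptyset$. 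Once this is done, the theorem is essentially a repackaging of the propositions already proved in the section and no further argument is needed.
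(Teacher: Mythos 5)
Your proposal is correct and follows essentially the same route as the paper, whose proof of Theorem \ref{3-3-7} is precisely a summary of Proposition \ref{3-2-2}, Proposition \ref{3-2-5}, Proposition \ref{3-2-8}, Proposition \ref{3-3-2}, Proposition \ref{3-3-5}, Proposition \ref{3-3-6}, together with Theorem \ref{2-4-8} and Remark \ref{3-2-6}. Your explicit handling of the vertex case $H = Q_{a}$ (ruled and integrable vacuously since $\mathcal{D}^{\perp} = \{0\}$) is a harmless elaboration of what the paper leaves implicit.
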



\section{Contact $CR$ orbits} \label{s4}

In this section, we consider orbits of the isotropy group action such that the rank of the totally distribution $\mathcal{D}^{\perp}$ is $1$ and study whether such orbits are contact $CR$ submanifolds or Sasaki $CR$ submanifold.
Let $H \in Q$ and $x = \mathrm{exp}H(o)$.
Denote the $K$-orbit through $x$ by $N$.
Let $\Sigma$ be of type $BC_{n}\ (n \geq 1)$ or $C_{n}\ (n \geq 2)$.
Then, the rank of $\mathcal{D}^{\perp}$ is $1$ if and only if 
\[
\begin{split}
H = C_{i}(t) = (1-t)\ Q_{i} + t\ Q_{i+1} \quad (0 < t < 1, \ i= 0,1, \cdots, n-1),
\end{split}
\]
where $Q_{0} = 0$.
Then, $\mathcal{D}_{x}^{\perp}$ is obtained by $(\mathrm{exp}H)_{*} \frak{m}_{2e_{i+1}}$.


\subsection{Contact $CR$ orbits} \label{s4-1}

In this subsection, we study the condition under which $N$ is a contact $CR$ submanifold of $M$.

\begin{lemm}
Let $H = C_{i}(t)$ for $0 \leq i \leq n-1$ and $0 < t < 1$.
Define $\xi$ as follows:
\[
\xi = \frac{-2}{\sqrt{d}\sin t\pi}J^{*}.
\]
Then, $\xi$ is a $K$-invariant Killing vector filed with unit length on $N$ such that $\xi_{p} \in \mathcal{D}^{\perp}_{p}$ for any $p \in N$.
\end{lemm}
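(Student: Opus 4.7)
The plan is to verify in turn that $\xi$ is well defined on $N$, lies in $\mathcal{D}^\perp$, has unit length, is $K$-invariant, and is Killing. Everything reduces to a single explicit evaluation of $J^\ast_x$ at the base point, after which the centrality of $J$ in $\frak{k}$ and the $K$-invariance of $\mathcal{D}^\perp$ propagate the conclusion to all of $N$.

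First I would compute $J^\ast_x$. Writing
\[
J = \sum_{k=1}^n a_k \bar S^{(1)}_{\alpha_k} + C, \qquad C \in \frak{k}_0,
\]
as in the paragraph preceding Lemma \ref{2-4-2}, the component $C$ contributes nothing to $J^\ast_x$: since $[\frak{a}, \frak{k}_0] = 0$ one has $\mathrm{Ad}(\exp(-H))C = C \in \frak{k}$, whose $\frak{m}$-part is zero. For the remaining terms, the computation just before Lemma \ref{2-3-7} specializes (using the paper's convention that for $H \in Q \subset \frak{a}$ the real quantity is $(-i)\lambda(H)$) to
\[
(\bar S^{(1)}_{\alpha_k})^\ast_x = -\sin\!\big((-i)\,2e_k(H)\big)\,(\exp H)_\ast \bar T^{(1)}_{\alpha_k}.
\]
For $H = (1-t)Q_i + tQ_{i+1}$ the scalar $(-i)\,2e_k(H)$ equals $\pi$ when $k \le i$, equals $t\pi$ when $k = i+1$, and equals $0$ when $k \ge i+2$. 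Only the $k = i+1$ term survives, so
\[
J^\ast_x = -a_{i+1}\sin(t\pi)\,(\exp H)_\ast \bar T^{(1)}_{\alpha_{i+1}} \;\in\; (\exp H)_\ast \frak{m}_{2e_{i+1}} = \mathcal{D}^\perp_x.
\]
Since $\exp H$ is an isometry and $\bar T^{(1)}_{\alpha_{i+1}}$ is a unit vector by the normalization introduced right after Lemma \ref{2-3-5}, Lemma \ref{2-4-3} yields $\|J^\ast_x\| = (\sqrt d /2)\sin(t\pi)$, so $\xi_x$ is a unit vector lying in $\mathcal{D}^\perp_x$.

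To transport these properties to every point of $N$, I would use that $J$ is central in $\frak{k}$, so $\mathrm{Ad}(k)J = J$ for every $k \in K$. The naturality identity $k_\ast X^\ast_p = (\mathrm{Ad}(k)X)^\ast_{k(p)}$ for fundamental vector fields then gives $k_\ast J^\ast_p = J^\ast_{k(p)}$; in particular $J^\ast$ is a $K$-invariant vector field on $M$. Combined with the $K$-invariance of $\mathcal{D}^\perp$ and the fact that each $k \in K$ acts by isometries, this shows that for every $p = k(x) \in N$ one has $J^\ast_p \in \mathcal{D}^\perp_p$ and $\|J^\ast_p\| = (\sqrt d /2)\sin(t\pi)$. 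Finally, the one-parameter group $\{\exp(tJ)\} \subset G$ acts on $M$ by isometries, hence $J^\ast$ is a Killing field on $(M, \langle\ ,\ \rangle)$; its restriction to the submanifold $N$, on which it is everywhere tangent, is a Killing field of the induced metric, and multiplying by the constant $-2/(\sqrt d \sin(t\pi))$ preserves all of these properties. I do not anticipate a genuine obstacle: the only delicate point is the term-by-term evaluation of $J^\ast_x$, which is a direct application of Lemma \ref{2-3-5} once the decomposition of $J$ from Lemma \ref{2-4-1} is in hand.
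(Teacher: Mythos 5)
Your proposal is correct and follows essentially the same route as the paper: expand $J = \sum_k a_k \bar S^{(1)}_{\alpha_k} + C$, push it through $\mathrm{Ad}(\exp H)^{-1}$ so that only the $k = i+1$ term survives with coefficient $-a_{i+1}\sin t\pi$, conclude unit length from Lemma \ref{2-4-3} and membership in $\mathcal{D}^{\perp}_x = (\exp H)_*\frak{m}_{2e_{i+1}}$, and propagate to all of $N$ by the centrality of $J$ (equivalently $\mathrm{Ad}(k)J = J$ for $k \in K$). Your extra remarks on why the $\frak{k}_0$-component drops out and why the restriction of $J^*$ to $N$ is Killing (the flow $\exp(tJ)$ lies in $K$, hence preserves $N$ and acts by isometries) only make explicit what the paper leaves implicit.
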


\begin{proof}
First,
\[
\begin{split}
J^{*}_{x} 
&=
(\mathrm{exp}H)_{*} \Big( \mathrm{Ad}(\mathrm{exp}H)^{-1}J \Big)_{\frak{m}} \\
&=
(\mathrm{exp}H)_{*} \Big( \mathrm{Ad}(\mathrm{exp}H)^{-1} \big( a_{1}\bar{S}^{(1)}_{\alpha_{1}} + \cdots + a_{n}\bar{S}^{(1)}_{\alpha_{n}} + C \big) \Big)_{\frak{m}} \\
&=
(\mathrm{exp}H)_{*} \Big(  \sum_{i=1}^{n} a_{i} \Big( \cos (2(-i)e_{1}(H)) \bar{S}^{(1)}_{\alpha_{i}} - \sin (2(-i)e_{i}(H)) \bar{T}^{(1)}_{\alpha_{i}} \Big) \Big)_{\frak{m}} \\
&=
(\mathrm{exp}H)_{*} \Big( - a_{i+1} \sin (2(-i)e_{i+1}(H)) \bar{T}^{(1)}_{\alpha_{i+1}}  \Big) \\
&=
(\mathrm{exp}H)_{*} \Big( - a_{i+1} \sin t\pi \bar{T}^{(1)}_{\alpha_{i+1}}  \Big). \\
\end{split}
\]
By Lemma \ref{2-4-3}, we see that $|\xi_{x}| = 1$.
Moreover, $\xi_{x} \in \mathcal{D}^{\perp}_{x}$.
Since $J$ is an element of the center of $K$, the vector field $\xi$ is invariant under $K$, that is, $\xi_{k(p)} = k_{*}\xi_{p}$ for any $k \in K$ and $p \in N$.
Therefore, the statement follows.
\end{proof}

Define a 1-form on $N$ as
\[
\eta(X) = \langle \xi_{p}, X \rangle \quad\quad(X \in T_{p}N, \ p \in N).
\]
Then, $\eta_{p} \not = 0$ for any $p \in N$.
Since $\xi$ is invariant under $K$, the 1-form $\eta$ is also invariant under $K$.
Next, we calculate $d \eta$.

\begin{lemm}
Let $X,Y \in \frak{k}$.
Then,
\[
(d \eta(X^{*}, Y^{*}))_{x} = \frac{3a_{i+1}\sin t \pi }{\sqrt{d}} \Big\langle [\bar{S}^{(1)}_{\alpha_{i+1}}, X], Y \Big\rangle. 
\]
\end{lemm}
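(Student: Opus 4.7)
The plan is to apply the standard Cartan formula
\[
d\eta(X^*, Y^*) = X^* \eta(Y^*) - Y^* \eta(X^*) - \eta([X^*, Y^*])
\]
and reduce every term on the right to the single quantity $\langle \xi_x, [X, Y]^*_x \rangle$; once that is done I evaluate $\langle \xi_x, [X, Y]^*_x \rangle$ using the explicit description of $\xi_x$ established in the preceding lemma.

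First I would handle the two directional-derivative terms. The vector field $\xi$ is $K$-invariant (since $\xi$ is a constant multiple of $J^*$ and $J$ lies in the center of $\mathfrak{k}$), and the metric $\langle\cdot,\cdot\rangle$ is $K$-invariant by construction. Combined with the transformation rule $Y^*|_{\exp(tX)\cdot p} = (\exp(tX))_*\,(\mathrm{Ad}(\exp(-tX))Y)^*|_p$ for fundamental vector fields under a left action, these two invariances give
\[
\eta(Y^*)|_{\exp(tX)\cdot x} = \langle \xi_x,\, (\mathrm{Ad}(\exp(-tX))Y)^*_x \rangle,
\]
so differentiating at $t=0$ yields $X^*\eta(Y^*)|_x = -\langle \xi_x, [X,Y]^*_x\rangle$ and, by swapping $X$ and $Y$, $Y^*\eta(X^*)|_x = +\langle \xi_x, [X,Y]^*_x\rangle$. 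The bracket term is handled by the antihomomorphism property $[X^*,Y^*]=-[X,Y]^*$, which gives $\eta([X^*,Y^*])|_x = -\langle \xi_x,[X,Y]^*_x\rangle$. Combining the three pieces expresses $d\eta(X^*,Y^*)|_x$ as an explicit scalar multiple of $\langle \xi_x,[X,Y]^*_x\rangle$.

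Next I would evaluate that pairing. The computation of $J^*_x$ just completed gives $\xi_x = (2a_{i+1}/\sqrt{d})\,(\exp H)_*\,\bar T^{(1)}_{\alpha_{i+1}}$, and Lemma \ref{2-2-1}(iii) (together with $[X,Y]\in\mathfrak{k}$) gives $[X,Y]^*_x = (\exp H)_*\,(\mathrm{Ad}(\exp(-H))[X,Y])_{\mathfrak{m}}$. By $G$-invariance of $\langle\cdot,\cdot\rangle$ the $(\exp H)_*$ factors cancel; since $\bar T^{(1)}_{\alpha_{i+1}}\in\mathfrak{m}$ is orthogonal to $\mathfrak{k}$, the projection $(\,\cdot\,)_{\mathfrak{m}}$ may be dropped; and ad-invariance then allows me to shift $\mathrm{Ad}(\exp(-H))$ across the inner product, turning it into $\langle \mathrm{Ad}(\exp H)\bar T^{(1)}_{\alpha_{i+1}},\,[X,Y]\rangle$. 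Applying Lemma \ref{2-3-5} yields
\[
\mathrm{Ad}(\exp H)\bar T^{(1)}_{\alpha_{i+1}} = \cos(t\pi)\,\bar T^{(1)}_{\alpha_{i+1}} - \sin(t\pi)\,\bar S^{(1)}_{\alpha_{i+1}};
\]
the $\bar T^{(1)}_{\alpha_{i+1}}$ contribution is killed because $[X,Y]\in\mathfrak{k}$ is orthogonal to $\mathfrak{m}$, so only the $-\sin(t\pi)\,\bar S^{(1)}_{\alpha_{i+1}}$ term survives, and one last application of ad-invariance $\langle \bar S^{(1)}_{\alpha_{i+1}},[X,Y]\rangle = \langle [\bar S^{(1)}_{\alpha_{i+1}},X],Y\rangle$ puts the result into the form stated in the lemma.

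The computation itself is mechanical; the main technical hazard is careful bookkeeping of scalar factors and signs, in particular the sign in $[X^*,Y^*]=-[X,Y]^*$, the trigonometric identity in Lemma \ref{2-3-5} for $\mathrm{Ad}(\exp(\pm H))$, and the translation between $H\in\mathfrak{a}$ and the element $H_0\in\mathfrak{a}_0$ with $H=iH_0$ that makes $\alpha_{i+1}(H_0)=t\pi$ the correct argument of the trigonometric functions.
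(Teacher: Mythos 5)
Your route coincides with the paper's: Cartan's formula for $d\eta$, $K$-invariance of $\xi$ and of the metric to evaluate the two derivative terms (these indeed give $X^{*}\eta(Y^{*})_{x}=-\langle\xi_{x},[X,Y]^{*}_{x}\rangle$ and $Y^{*}\eta(X^{*})_{x}=+\langle\xi_{x},[X,Y]^{*}_{x}\rangle$, exactly as in the paper), and then the evaluation of the pairing via $\mathrm{Ad}(\mathrm{exp}H)$, Lemma \ref{2-3-5}, the orthogonality of $\frak{k}$ and $\frak{m}$, and ad-invariance of $\langle\ ,\ \rangle$; that last evaluation is correct and yields $\langle\xi_{x},[X,Y]^{*}_{x}\rangle=-\dfrac{2a_{i+1}\sin t\pi}{\sqrt{d}}\big\langle[\bar{S}^{(1)}_{\alpha_{i+1}},X],Y\big\rangle$.

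The gap is the ``explicit scalar multiple'' that you never compute, and it is exactly where the constant $3$ in the statement has to be produced. With the conventions you declare --- Cartan's formula without the factor $1/2$, and $[X^{*},Y^{*}]=-[X,Y]^{*}$ --- the three terms combine as $(-1-1+1)\langle\xi_{x},[X,Y]^{*}_{x}\rangle$, so your argument terminates with the coefficient $2a_{i+1}\sin t\pi/\sqrt{d}$ (and with the paper's normalization $d\eta(X,Y)=\tfrac12\big(X\eta(Y)-Y\eta(X)-\eta([X,Y])\big)$ it would be $a_{i+1}\sin t\pi/\sqrt{d}$), not $3a_{i+1}\sin t\pi/\sqrt{d}$. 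The paper reaches $3$ because it uses the $\tfrac12$-normalization together with $\eta([X^{*},Y^{*}])_{x}=+\langle\xi_{x},[X,Y]^{*}_{x}\rangle$, i.e.\ it treats $X\mapsto X^{*}$ as a homomorphism, so that all three Cartan terms enter with the same sign. Thus the sign you yourself flagged as the ``main technical hazard'' is precisely the point of divergence, and as written your proposal does not reproduce the stated formula: you must either adopt the paper's conventions for $d\eta$ and for $[X^{*},Y^{*}]$, or end with a different overall constant and say so explicitly (a discrepancy that is harmless for the later applications, since only the nondegeneracy of $d\eta$ on $\mathcal{D}$ is used, but it means the lemma as stated is not what your computation delivers).
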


\begin{proof}
Set $\bar{J} = (-2/\sqrt{d} \sin t \pi)J$. 
Then, $\xi = \bar{J}^{*}$ and $( \mathrm{Ad}(\mathrm{exp}H)^{-1}J )_{\frak{m}} = -a_{i+1}\sin t\pi \bar{T}^{(1)}_{\alpha_{i+1}}$ since $H = C_{i}(t) \ (0 < t < 1, \ 0 \leq i \leq n-1)$.
First,
\[
\begin{split}
& X^{*} \eta(Y^{*}) \\
&=
\frac{d}{dt} \langle \xi, Y^{*}_{\mathrm{exp}tX(x)} \rangle \\
&=
\frac{d}{dt} \Big\langle (\mathrm{exp}tX\mathrm{exp}H)_{*} \Big(\mathrm{Ad}(\mathrm{exp}tX\mathrm{exp}H)^{-1}\bar{J} \Big)_{\frak{m}}, (\mathrm{exp}tX\mathrm{exp}H)_{*} \Big(\mathrm{Ad}(\mathrm{exp}tX\mathrm{exp}H)^{-1}Y \Big)_{\frak{m}}  \Big\rangle \\
&=
\frac{d}{dt} \Big\langle \Big( \mathrm{Ad}(\mathrm{exp}H)^{-1} \mathrm{Ad}(\mathrm{exp}tX)^{-1} \bar{J} \Big)_{\frak{m}},  \Big( \mathrm{Ad}(\mathrm{exp}H)^{-1} \mathrm{Ad}(\mathrm{exp}tX)^{-1} Y \Big)_{\frak{m}} \Big\rangle \\
&=
- \Big\langle \Big( \mathrm{Ad}(\mathrm{exp}H)^{-1} [X, \bar{J}] \Big)_{\frak{m}}, \Big( \mathrm{Ad}(\mathrm{exp}H)^{-1}Y \Big)_{\frak{m}} \Big\rangle \\
& \hspace{60mm} - \Big\langle \Big( \mathrm{Ad}(\mathrm{exp}H)^{-1}\bar{J} \Big)_{\frak{m}}, \Big( \mathrm{Ad}(\mathrm{exp}H)^{-1}[X, Y] \Big)_{\frak{m}} \Big\rangle \\
&=
- \Big\langle \Big( \mathrm{Ad}(\mathrm{exp}H)^{-1}\bar{J} \Big)_{\frak{m}}, \Big( \mathrm{Ad}(\mathrm{exp}H)^{-1}[X, Y] \Big)_{\frak{m}} \Big\rangle \\
&=
 \frac{2}{\sqrt{d} \sin t \pi} \Big\langle \Big( \mathrm{Ad}(\mathrm{exp}H)^{-1}J \Big)_{\frak{m}}, \Big( \mathrm{Ad}(\mathrm{exp}H)^{-1}[X,Y] \Big)_{\frak{m}} \Big\rangle,
\end{split}
\]
where we use $[X,J] = 0$ since $X \in \frak{k}$ and $J$ is an element of the center of $\frak{k}$.
Moreover, 
\[
\begin{split}
\eta([X^{*}, Y^{*}]) 
&= 
\Big\langle \xi, [X^{*}, Y^{*}] \Big\rangle 
=
\Big\langle \xi, [X, Y]^{*} \Big\rangle \\
&=
\Big\langle (\mathrm{exp}H)_{*} \Big( \mathrm{Ad}(\mathrm{exp}H)^{-1}\bar{J} \Big)_{\frak{m}}, (\mathrm{exp}H)_{*} \Big( \mathrm{Ad}(\mathrm{exp}H)^{-1}[X,Y] \Big)_{\frak{m}} \Big\rangle \\
&=
 -\frac{2}{\sqrt{d} \sin t \pi} \Big\langle \Big( \mathrm{Ad}(\mathrm{exp}H)^{-1}J \Big)_{\frak{m}}, \Big( \mathrm{Ad}(\mathrm{exp}H)^{-1}[X,Y] \Big)_{\frak{m}} \Big\rangle.
\end{split}
\]
Therefore, 
\[
\begin{split}
d \eta (X^{*}, Y^{*}) 
&=
\frac{1}{2} \Big( X^{*} \eta(Y^{*}) - Y^{*} \eta(X^{*}) - \eta([X^{*}, Y^{*}]) \Big) \\
&=
\frac{3}{\sqrt{d} \sin t \pi} \Big\langle \Big( \mathrm{Ad}(\mathrm{exp}H)^{-1}J \Big)_{\frak{m}}, \Big( \mathrm{Ad}(\mathrm{exp}H)^{-1}[X,Y] \Big)_{\frak{m}} \Big\rangle \\
&=
\frac{3}{\sqrt{d} \sin t \pi} \Big\langle -a_{i+1} \sin t \pi \bar{T}^{(1)}_{\alpha_{i+1}}, \mathrm{Ad}(\mathrm{exp}H)^{-1}[X,Y] \Big\rangle \\
&=
\frac{-3a_{i+1}}{\sqrt{d}} \Big\langle \mathrm{Ad}(\mathrm{exp}H)\bar{T}^{(1)}_{\alpha_{i+1}}, [X,Y] \Big\rangle \\
&=
\frac{-3a_{i+1}}{\sqrt{d}} \Big\langle \cos t \pi \bar{T}^{(1)}_{\alpha_{i+1}} - \sin t \pi \bar{S}^{(1)}_{\alpha_{i+1}}, [X,Y] \Big\rangle \\
&=
\frac{3a_{i+1}\sin t \pi}{\sqrt{d}} \Big\langle [\bar{S}^{(1)}_{\alpha_{i+1}}, X], Y \Big\rangle, \\
\end{split}
\]
where we use $\bar{T}^{(1)}_{\alpha_{i+1}} \in \frak{m}$ and $[X,Y] \in \frak{k}$.
\end{proof}

If $\Sigma$ is of type $BC_{n}\ (n \geq 2)$ and $H = C_{i}(t) \ (1 \leq i \leq n-1, 0 < t < 1)$, then $d \eta$ is degenerate on $\mathcal{D}$.
For example, let $\alpha \in \tilde{\Sigma}$ be $\bar{\alpha} = e_{1}$.
Note that $e_{1} \in (\Sigma^{+}_{x})_{C}$.
Then, $(S^{(1)}_{\alpha})^{*}_{x} \not= 0$, but $[S^{(1)}_{\alpha_{i+1}}, S^{(1)}_{\alpha}] = 0$ because $\overline{\alpha_{i+1} \pm \alpha} = e_{1} \pm 2e_{i+1} \not\in \Sigma$.
Hence, for any $X \in \frak{k}$,
\[
d\eta( (S^{(1)}_{\alpha})^{*}_{x}, X^{*}) = \frac{3a_{i+1}\sin \pi t}{\sqrt{d}} \Big\langle [\bar{S}^{(1)}_{\alpha_{i+1}}, S^{(1)}_{\alpha}], X \Big\rangle = 0. \\
\]
If $\Sigma$ is of type $C_{n} \ (n \geq 2)$ and $H = C_{i}(t) \ (1 \leq i \leq n-2, 0 < t < 1)$, then $d \eta$ is also degenerate on $\mathcal{D}$.
For example, let $\beta \in \tilde{\Sigma}$ be $\bar{\beta} = e_{a} + e_{b} \ (1 \leq a \leq i , i+2 \leq b \leq n)$.
Note $e_{a} + e_{b} \in (\Sigma^{+}_{x})_{C}$.
Then, $[S^{(1)}_{\alpha_{i+1}}, S^{(1)}_{\beta}] = 0$.
In both cases, $\eta \wedge (d\eta)^{(1/2)(\dim N - 1)} = 0$ for any $p \in N$.
Thus, $N$ is not a contact $CR$ submanifold because, for any $1$-form $\omega$ on $N$ such that $\omega|_{\mathcal{D}} = 0$, there exists some function $f$ of $N$ such that $\omega = f\eta$, since the rank of $\mathcal{D}^{\perp}$ is 1.

Let $\Sigma$ be of type $BC_{n} (n \geq 1)$, and $H = C_{0}(t)\ (0 < t < 1)$.
Then,
\[
\begin{split}
\Sigma^{+}_{x} &= \{ 2e_{1} \} \cup \{ e_{1} \pm e_{k} \ ;\ 2 \leq k \leq n \} \cup \{ e_{1} \}, \\
(\Sigma^{+}_{x})_{C} &= \{ e_{1} \pm e_{k} \ ;\ 2 \leq k \leq n \} \cup \{ e_{1} \}, \\
(\Sigma^{+}_{x})_{R} &= \{ 2e_{1} \}.
\end{split}
\]
If $\alpha \in \tilde{\Sigma}$ satisfies $\bar{\alpha} \in (\Sigma^{+}_{x})_{C}$, then $\alpha_{1} - \alpha, \alpha_{1} - \sigma(\alpha) \in \tilde{\Sigma}$.
Therefore, $[S^{(1)}_{\alpha_{1}}, S^{(i)}_{\alpha}] = 0$ if and only if $S^{(i)}_{\alpha} = 0$.
Moreover, 
\[
[S^{(1)}_{\alpha_{1}}, S^{(i)}_{\alpha}] \in \sum_{\lambda \in (\Sigma^{+}_{x})_{C}}\frak{k}_{\lambda}.
\]
Hence, $d \eta$ is non-degenerate on $\mathcal{D}$, and $\eta \wedge (d \eta)^{(1/2)(\dim N -1)} \not= 0$ for any point of $N$.
Thus, $\eta$ is a contact form on $N$, and $N$ is a contact $CR$ submanifold of $M$.

Let $\Sigma$ be of type $C_{n}\ (n \geq 2)$, and $H = C_{0}(t)\ (0 < t < 1)$.
Then,
\[
\begin{split}
\Sigma^{+}_{x} &= \{ 2e_{1} \} \cup \{ e_{1} \pm e_{k} \ ;\ 2 \leq k \leq n \}, \\
(\Sigma^{+}_{x})_{C} &= \{ e_{1} \pm e_{k} \ ;\ 2 \leq k \leq n \}, \\
(\Sigma^{+}_{x})_{R} &= \{ 2e_{1} \}.
\end{split}
\]
and we can easily check that $\eta$ is a contact form on $N$ and that $N$ is a contact $CR$ submanifold of $M$.
If $H = C_{n-1}(t)$, then $N$ is also a contact $CR$ submanifold with a contact form $\eta$, because it is known that there exists some holomorphic isometry $g$ of $M$ such that $g(K(\mathrm{exp}C_{0}(t))(o)) = K(\mathrm{exp}C_{n-1}(1-t))(o)$ \cite{Chen2}.
Summarizing the above arguments, we obtain Theorem \ref{4-1-1}.

\begin{thm} \label{4-1-1}
Let $H \in Q$ and $x = \mathrm{exp}H(o)$.
The $K$-orbit through $x$ is denoted by $N$.
Then, $N$ is a contact $CR$ submanifold of $M$ if and only if either $\Sigma$ is of type $BC_{n}\ (n \geq 1)$ and $H = C_{0}\ (0 < t < 1)$, or $\Sigma$ is of type $C_{n}\ (n \geq 2)$ and $H = C_{0}(t)$ or $H = C_{n}(1-t) \ ( 0 < t < 1)$.
\end{thm}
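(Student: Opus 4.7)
The plan is to identify $\eta = \langle \xi, \cdot\rangle$ as the essentially unique candidate contact form and reduce the theorem to checking when it fails to be nondegenerate. Since $\eta|_{\mathcal{D}} = 0$ forces $\dim\mathcal{D}^{\perp} \geq 1$ and a contact form requires $\dim N$ odd, the necessary condition $\dim\mathcal{D}^{\perp} = 1$ restricts $H$ to $H = C_i(t)$ for some $0 \leq i \leq n-1$ and $0 < t < 1$. In this setting, every $1$-form vanishing on $\mathcal{D}$ is a function multiple of the $K$-invariant $\eta$ built above, so $N$ is a contact $CR$ submanifold if and only if $d\eta$ is nondegenerate on $\mathcal{D}_{x}$ at one (hence every) point.

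Using the formula
\[
d\eta(X^{*}, Y^{*})_{x} = \frac{3 a_{i+1}\sin t\pi}{\sqrt{d}}\,\langle [\bar{S}^{(1)}_{\alpha_{i+1}}, X], Y\rangle
\]
derived above, nondegeneracy on $\mathcal{D}_{x}$ is equivalent to injectivity of $\mathrm{ad}(\bar{S}^{(1)}_{\alpha_{i+1}})$ on $\sum_{\lambda \in (\Sigma^{+}_{x})_{C}}\frak{k}_{\lambda}$. Because $\bar{\alpha}_{i+1} = 2e_{i+1}$, Lemma \ref{2-3-4} shows this bracket lands in $\frak{k}_{\lambda+2e_{i+1}} + \frak{k}_{\lambda-2e_{i+1}}$, so kernel elements correspond exactly to $\lambda \in (\Sigma^{+}_{x})_{C}$ with both $\lambda \pm 2e_{i+1} \notin \Sigma$. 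For $i \geq 1$ such $\lambda$ always exist: in type $BC_{n}$ the short root $\lambda = e_{1}$ works, while in type $C_{n}$ with $i \leq n-2$ the root $\lambda = e_{a} + e_{b}$ with $a \leq i$, $b \geq i+2$ does. In each of these cases one exhibits a nonzero kernel vector $(S^{(j)}_{\alpha})^{*}_{x} \in \mathcal{D}_{x}$, so $N$ cannot carry the desired contact structure. The lone remaining non-$C_{0}$ case, $C_{n}$ with $i = n-1$, I would handle by invoking the holomorphic isometry of $M$ from \cite{Chen2} carrying $K(\mathrm{exp}\,C_{n-1}(t))(o)$ onto $K(\mathrm{exp}\,C_{0}(1-t))(o)$, reducing it to the $i=0$ analysis.

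What remains is to verify $d\eta$ is actually nondegenerate on $\mathcal{D}_{x}$ when $i = 0$, for both restricted root system types. Here each $\lambda \in (\Sigma^{+}_{x})_{C}$ has the form $e_{1} \pm e_{k}$ (or $e_{1}$ in type $BC_{n}$), so for any lift $\alpha$ with $\bar{\alpha} = \lambda$ both $\alpha_{1} - \alpha$ and $\alpha_{1} - \sigma(\alpha)$ again belong to $\tilde{\Sigma}$. I would expand $[\bar{S}^{(1)}_{\alpha_{1}}, S^{(j)}_{\alpha}]$ using Lemma \ref{2-3-3} case by case on the sign pair $(\epsilon_{\alpha_{1},\alpha}, \epsilon_{\alpha_{1},-\alpha})$ and verify the bracket is a nonzero element of $\sum_{\mu \in (\Sigma^{+}_{x})_{C}}\frak{k}_{\mu}$ on each basis vector supplied by Lemma \ref{2-3-5}, which shows the ad-map is an isomorphism. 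The main obstacle here is the sign bookkeeping for short roots in type $BC_{n}$: one must rule out that the cancellations intrinsic to the $\sigma$-symmetric expressions $U_{\alpha} \pm U_{\sigma(\alpha)}$ and $W_{\alpha} \pm W_{\sigma(\alpha)}$ wipe out the image. Once that check is completed, $K$-invariance of $\eta$ propagates the nondegeneracy from $x$ throughout $N$ and the theorem follows.
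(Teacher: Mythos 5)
Your proposal is correct and follows essentially the same route as the paper: restrict to $H=C_i(t)$, use the $K$-invariant form $\eta=\langle\xi,\cdot\rangle$ and the formula $d\eta(X^*,Y^*)_x=\tfrac{3a_{i+1}\sin t\pi}{\sqrt d}\langle[\bar S^{(1)}_{\alpha_{i+1}},X],Y\rangle$, exhibit the same degenerating roots ($e_1$ in type $BC_n$ for $i\ge 1$, $e_a+e_b$ with $a\le i<i+2\le b$ in type $C_n$), prove nondegeneracy for $i=0$ from $\alpha_1-\alpha,\ \alpha_1-\sigma(\alpha)\in\tilde\Sigma$, and settle $C_{n-1}(1-t)$ by the holomorphic isometry from \cite{Chen2}. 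The only difference is cosmetic (phrasing nondegeneracy as injectivity of $\mathrm{ad}(\bar S^{(1)}_{\alpha_{i+1}})$ on $\sum_{\lambda\in(\Sigma^+_x)_C}\frak{k}_\lambda$, which is justified since the bracket stays in that subspace when $i=0$), so the argument matches the paper's proof.
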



\subsection{Some preliminaries I} \label{s4-2}

In this subsection, we provide some preliminaries that will be used in the later subsections.
Let $H = C_{0}(t)\ (0 < t < 1)$ and $x = \mathrm{exp}H(o)$.
The $K$-orbit through $x$ is denoted by $N$.
Assume that $\Sigma$ is of type $BC_{n} \ (n \geq 1)$ or $C_{n}\ (n \geq 2)$.
First, Lemma \ref{5-2-1} is immediately obtained by the general theory of root systems.

\begin{lemm} \label{5-2-1}
Let $\alpha \in \tilde{\Sigma}$ be a longest root and $\beta \in \tilde{\Sigma}$ satisfy $2(\alpha, \beta)/(\alpha, \alpha) = \pm 1$.
Then,
\[
\begin{split}
\Big[ U_{\alpha}, \big[ U_{\alpha}, U_{\beta} \big] \Big] = - U_{\beta}, \quad & \Big[ U_{\alpha}, \big[ U_{\alpha}, W_{\beta} \big] \Big] = -W_{\beta}, \\
\Big[ W_{\alpha}, \big[ W_{\alpha}, U_{\beta} \big] \Big] = - U_{\beta}, \quad & \Big[ W_{\alpha}, \big[ W_{\alpha}, W_{\beta} \big] \Big] = -W_{\beta}.
\end{split}
\]
\end{lemm}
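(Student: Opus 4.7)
\emph{Plan.} The approach is a direct computation in the Chevalley basis of Lemma~\ref{2-3-1}, reducing everything to two scalar identities extracted from the Jacobi identity.

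First I reduce to a single case. Because $U_{-\alpha}=U_{\alpha}$ and $(\mathrm{ad}\,W_{-\alpha})^{2}=(\mathrm{ad}\,W_{\alpha})^{2}$, each of the four identities is invariant under $\alpha\mapsto-\alpha$, so I may assume $2(\alpha,\beta)/(\alpha,\alpha)=-1$. The ambient root systems $\tilde{\Sigma}$ arising in Table~1 are of types $A$, $B$, $C$, $D$, $E_{6}$, $E_{7}$ only; in particular, $G_{2}$ does not appear, and the $\alpha$-string through $\beta$ for a longest root $\alpha$ has length at most $2$. Combined with $\beta(A_{\alpha})=2(\alpha,\beta)/(\alpha,\alpha)=-1$, this forces $\tilde{\Sigma}\cap(\beta+\mathbb{Z}\alpha)=\{\beta,\alpha+\beta\}$; in particular, none of $\pm\alpha\mp\beta$ or $\pm 2\alpha\pm\beta$ is a root.

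Next I extract two scalar identities. Expanding $[X_{-\alpha},[X_{\alpha},X_{\beta}]]$ by the Jacobi identity and using $[X_{\alpha},X_{-\alpha}]=-A_{\alpha}$, $\beta(A_{\alpha})=-1$, together with the vanishing $[X_{-\alpha},X_{\beta}]=0$ (from the previous paragraph), gives
\[
N_{\alpha,\beta}\,N_{-\alpha,\alpha+\beta}=-1.
\]
The parallel computation applied to $[X_{\alpha},[X_{-\alpha},X_{-\beta}]]$ yields $N_{-\alpha,-\beta}\,N_{\alpha,-\alpha-\beta}=-1$.

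Finally I compute each of the four double brackets. Expanding $U_{\alpha}=X_{\alpha}+X_{-\alpha}$ and $U_{\beta}=X_{\beta}+X_{-\beta}$ and discarding the cross terms that vanish (because $\pm\alpha\mp\beta\notin\tilde{\Sigma}$) gives
\[
[U_{\alpha},U_{\beta}]=N_{\alpha,\beta}X_{\alpha+\beta}+N_{-\alpha,-\beta}X_{-\alpha-\beta},
\]
and a second bracket with $U_{\alpha}$, using that $\pm 2\alpha\pm\beta$ are not roots, produces
\[
[U_{\alpha},[U_{\alpha},U_{\beta}]]=N_{\alpha,\beta}N_{-\alpha,\alpha+\beta}X_{\beta}+N_{-\alpha,-\beta}N_{\alpha,-\alpha-\beta}X_{-\beta}=-U_{\beta}
\]
by the two identities above. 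The three remaining brackets are obtained by the same substitution; the factors of $\pm i$ that $W_{\gamma}=i(X_{\gamma}-X_{-\gamma})$ introduces square to $-1$ and combine with the sign flip $X_{-\alpha}\to-X_{-\alpha}$ inside each $W_{\alpha}$ exactly so as to deliver $-W_{\beta}$ or $-U_{\beta}$ in each line. The main obstacle is simply sign bookkeeping across the four variations; conceptually, the uniform result reflects the fact that $X_{\beta}$ and $X_{\alpha+\beta}$ span the standard $2$-dimensional representation of the $\mathfrak{sl}_{2}$-triple $\{X_{\alpha},-X_{-\alpha},A_{\alpha}\}$, on which $(\mathrm{ad}\,U_{\alpha})^{2}$ and $(\mathrm{ad}\,W_{\alpha})^{2}$ both act as $-\mathrm{id}$.
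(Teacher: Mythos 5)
Your argument is correct: the root-string fact ($\beta\pm\alpha$, $\beta\pm 2\alpha$ as you list them), the two structure-constant identities $N_{\alpha,\beta}N_{-\alpha,\alpha+\beta}=N_{-\alpha,-\beta}N_{\alpha,-\alpha-\beta}=-1$ extracted from the Jacobi identity with $[X_{\alpha},X_{-\alpha}]=-A_{\alpha}$, and the final expansion all check out, and this is exactly the standard $\mathfrak{sl}_{2}$/root-string computation the paper invokes when it states that the lemma ``is immediately obtained by the general theory of root systems'' without giving details. The only cosmetic point is that excluding $G_{2}$ is unnecessary: for a longest root $\alpha$ one always has $|2(\alpha,\beta)/(\alpha,\alpha)|\leq 1$ for $\beta\neq\pm\alpha$, so the $\alpha$-string through $\beta$ has length at most $2$ in any root system.
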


By Lemma \ref{5-2-1}, we immediately obtain Lemma \ref{4-2-1}.

\begin{lemm} \label{4-2-1}
Let $\gamma \in \tilde{\Sigma}$ satisfy $\bar{\gamma} \in (\Sigma^{+}_{x})_{C}$.
Then,
\[
\Big[ [ T^{(i)}_{\gamma}, \ T^{(1)}_{\alpha_{1}}], \ T^{(1)}_{\alpha_{1}} \Big] = -T^{(i)}_{\gamma}.
\]
\end{lemm}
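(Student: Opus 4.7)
My plan is to reduce the claim directly to Lemma \ref{5-2-1}. First I would note that the longest root $\alpha_{1}\in\tilde{\Sigma}$ projecting to $2e_{1}$ satisfies $\sigma(\alpha_{1})=\alpha_{1}$ (by the case-by-case list at the end of Subsection \ref{s2-4}), so $\alpha_{1}\in\tilde{\Sigma}_{(1)}$ and hence $T^{(1)}_{\alpha_{1}}=W_{\alpha_{1}}$. Using antisymmetry of the bracket twice, the identity to prove becomes
\[
\bigl[W_{\alpha_{1}},[W_{\alpha_{1}},T^{(i)}_{\gamma}]\bigr]=-T^{(i)}_{\gamma}.
\]
Thus it suffices to verify this when $T^{(i)}_{\gamma}$ is replaced by each $U_{\mu}$ or $W_{\mu}$ appearing in its definition, where $\mu\in\{\gamma,\sigma(\gamma)\}$.

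The key verification is the root-pairing condition required by Lemma \ref{5-2-1}, namely $2(\alpha_{1},\mu)/(\alpha_{1},\alpha_{1})=\pm 1$ for every such $\mu$. Since $\sigma(\alpha_{1})=\alpha_{1}$, we have $H_{\alpha_{1}}\in\frak{a}_{0}$, and therefore
\[
(\alpha_{1},\mu)=(H_{\alpha_{1}},H_{\mu})=(H_{\alpha_{1}},\overline{H}_{\mu})=(\alpha_{1},\bar{\mu}).
\]
Because $\bar{\sigma(\gamma)}=\bar{\gamma}$, this pairing is the same for $\mu=\gamma$ and $\mu=\sigma(\gamma)$. Now from the description of $(\Sigma^{+}_{x})_{C}$ given at the start of Subsection \ref{s4-2}, $\bar{\gamma}\in\{e_{1}\pm e_{k}:2\leq k\leq n\}\cup\{e_{1}\}$, each of which satisfies $(2e_{1},\bar{\gamma})=2d=\tfrac{1}{2}(2e_{1},2e_{1})$. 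Hence $2(\alpha_{1},\mu)/(\alpha_{1},\alpha_{1})=1$ for each relevant $\mu$, as required.

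Applying Lemma \ref{5-2-1} with $\alpha=\alpha_{1}$ and $\beta=\mu$ then gives $[W_{\alpha_{1}},[W_{\alpha_{1}},U_{\mu}]]=-U_{\mu}$ and $[W_{\alpha_{1}},[W_{\alpha_{1}},W_{\mu}]]=-W_{\mu}$ for $\mu\in\{\gamma,\sigma(\gamma)\}$. Expanding $T^{(i)}_{\gamma}$ case-by-case according to whether $\gamma\in\tilde{\Sigma}_{(1)},\tilde{\Sigma}_{(2)},\tilde{\Sigma}_{(3)}$ (the three possibilities compatible with $\bar{\gamma}\in(\Sigma^{+}_{x})_{C}$ by the case list in Subsection \ref{s2-4}) and invoking this identity term-by-term with linearity, each summand $U_{\mu}$ or $W_{\mu}$ is sent to its negative, and the overall sign structure defining $T^{(i)}_{\gamma}$ is preserved. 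Recombining gives $-T^{(i)}_{\gamma}$.

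The calculation is essentially mechanical once the pairing $2(\alpha_{1},\mu)/(\alpha_{1},\alpha_{1})=1$ is secured; there is no serious obstacle, merely the bookkeeping across the three types $\tilde{\Sigma}_{(1)},\tilde{\Sigma}_{(2)},\tilde{\Sigma}_{(3)}$. The slight subtlety, and arguably the only point that needs care, is transferring the inner product between $\tilde{\frak{h}}^{*}$ and $\frak{a}_{0}^{*}$ to justify $(\alpha_{1},\sigma(\gamma))=(\alpha_{1},\gamma)$; this is where using $H_{\alpha_{1}}\in\frak{a}_{0}$ together with $\sigma|_{\frak{a}_{0}}=\mathrm{id}$ is essential.
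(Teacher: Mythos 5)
Your proposal is correct and follows the paper's own route: the paper derives Lemma \ref{4-2-1} immediately from Lemma \ref{5-2-1}, and your argument simply makes explicit the details that the paper leaves implicit (that $T^{(1)}_{\alpha_{1}}=W_{\alpha_{1}}$ since $\sigma(\alpha_{1})=\alpha_{1}$, that $2(\alpha_{1},\gamma)/(\alpha_{1},\alpha_{1})=2(\alpha_{1},\sigma(\gamma))/(\alpha_{1},\alpha_{1})=1$ via projection to $\frak{a}_{0}$, and the term-by-term expansion of $T^{(i)}_{\gamma}$). No gaps.
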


\begin{lemm} \label{4-2-2}
Let $\gamma \in \tilde{\Sigma}$ satisfy $\bar{\gamma} \in (\Sigma^{+}_{x})_{C}$.
Then,
\[
\begin{array}{ll}
\Big[ [ T^{(1)}_{\gamma}, \ T^{(1)}_{\alpha_{1}}], \ T^{(2)}_{\gamma} \Big]_{\frak{m}_{x}} = 0, \quad & \Big[ [ T^{(2)}_{\gamma}, \ T^{(1)}_{\alpha_{1}}], \ T^{(1)}_{\gamma} \Big]_{\frak{m}_{x}} = 0.
\end{array}
\]
Let $c \in \mathbb{R}$ such that $c|\gamma|^{2} = |\alpha_{1}|^{2}$.
If $\alpha \in \tilde{\Sigma}_{(1)}$, then
\[
\begin{array}{ll}
\Big[ [ T^{(1)}_{\gamma}, \ T^{(1)}_{\alpha_{1}}], \ T^{(1)}_{\gamma} \Big]_{\frak{m}_{x}} = c\:T^{(1)}_{\alpha_{1}}.
\end{array}
\]
If $\gamma \in \tilde{\Sigma}_{(2)}$, then for any $i = 1,2$,
\[
\begin{array}{ll}
\Big[ [ T^{(i)}_{\gamma}, \ T^{(1)}_{\alpha_{1}}], \ T^{(i)}_{\gamma} \Big]_{\frak{m}_{x}} = 2c\:T^{(1)}_{\alpha_{1}}.
\end{array}
\]
If $\gamma \in \tilde{\Sigma}_{(3)}$, then for any $i= 1,2$,
\[
\begin{array}{ll}
\Big[ [ T^{(i)}_{\gamma}, \ T^{(1)}_{\alpha_{1}}], \ T^{(i)}_{\gamma} \Big]_{\frak{m}_{x}} = 4c\:T^{(1)}_{\alpha_{1}} \quad (i =1,2).
\end{array}
\]
\end{lemm}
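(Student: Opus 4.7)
The plan is to reduce the triple-bracket identity to an inner-product computation via two applications of $\mathrm{ad}$-invariance, invoking Lemma \ref{4-2-1} to close the loop.

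First I would show that $[[T_\gamma^{(i)}, T_{\alpha_1}^{(1)}], T_\gamma^{(j)}]_{\frak{m}_x}$ must be a scalar multiple of $T_{\alpha_1}^{(1)}$. By Lemma \ref{2-3-4} the inner bracket lies in $\frak{k}_{\bar\gamma + 2e_1} + \frak{k}_{\bar\gamma - 2e_1}$; since $\bar\gamma \in (\Sigma_x^+)_C = \{ e_1 \pm e_k : 2 \leq k \leq n\} \cup \{ e_1 \}$, the combination $\bar\gamma + 2e_1$ is not in $\Sigma$, so only the $\frak{k}_{\bar\gamma - 2e_1}$-summand contributes. A second application gives $[[T_\gamma^{(i)}, T_{\alpha_1}^{(1)}], T_\gamma^{(j)}] \in \frak{m}_{2\bar\gamma - 2e_1} + \frak{m}_{2e_1}$, and $2\bar\gamma - 2e_1 \in \{ \pm 2 e_k, 0 \}$ never lies in $\Sigma_x^+$, so the $\frak{m}_{2\bar\gamma - 2e_1}$-part is killed by the $\frak{m}_x$-projection. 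Since $m(2e_1) = 1$ and $\alpha_1 \in \tilde\Sigma_{(1)}$, $\frak{m}_{2e_1} = \mathbb{R}\, T_{\alpha_1}^{(1)}$.

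Next I would determine the coefficient $\mu_{ij}$ in $[[T_\gamma^{(i)}, T_{\alpha_1}^{(1)}], T_\gamma^{(j)}]_{\frak{m}_x} = \mu_{ij}\, T_{\alpha_1}^{(1)}$ by pairing against $T_{\alpha_1}^{(1)}$ under $\langle\,,\,\rangle$. Since $T_{\alpha_1}^{(1)}\in \frak{m}_x$ the pairing is insensitive to the projection, so
\[
\mu_{ij}\,|T_{\alpha_1}^{(1)}|^{2} = \bigl\langle [[T_\gamma^{(i)}, T_{\alpha_1}^{(1)}], T_\gamma^{(j)}],\, T_{\alpha_1}^{(1)} \bigr\rangle.
\]
Two applications of the invariance $\langle [X, Y], Z\rangle = \langle X, [Y, Z]\rangle$ together with the identity $[T_{\alpha_1}^{(1)}, [T_\gamma^{(j)}, T_{\alpha_1}^{(1)}]] = T_\gamma^{(j)}$, which is Lemma \ref{4-2-1} after one sign flip, rewrite the right-hand side as $\langle T_\gamma^{(i)}, T_\gamma^{(j)}\rangle$. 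Orthogonality of the Takeuchi basis then yields $\mu_{12} = \mu_{21} = 0$, which are the first two identities; and for $i = j$ with $T_\gamma^{(i)}\neq 0$ the norms $|T_\gamma^{(i)}|^{2} = 4/|\bar\gamma|^{2}$ and $|T_{\alpha_1}^{(1)}|^{2} = 4/|\alpha_1|^{2}$ recorded after Lemma \ref{2-3-5} give $\mu_{ii} = |\alpha_1|^{2}/|\bar\gamma|^{2}$.

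It remains to identify $|\alpha_1|^{2}/|\bar\gamma|^{2}$ with $c$, $2c$, $4c$ in the three cases, where $c = |\alpha_1|^{2}/|\gamma|^{2}$. For $\gamma \in \tilde\Sigma_{(1)}$, $\sigma(\gamma) = \gamma$ forces $\gamma = \bar\gamma$, giving $\mu_{ii} = c$. For $\gamma \in \tilde\Sigma_{(2)}$, the classification in Subsection \ref{s2-4} shows $\gamma$ is a longest root of $\tilde\Sigma$, so $|\gamma|^{2} = |\alpha_1|^{2}$; together with $|\bar\gamma|^{2} = (e_1 \pm e_k,\, e_1 \pm e_k) = 2/d = |\gamma|^{2}/2$ this gives $\mu_{ii} = 2c$. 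For $\gamma \in \tilde\Sigma_{(3)}$ the only possibility under the hypothesis $\bar\gamma \in (\Sigma_x^+)_C$ is $\bar\gamma = e_1$ in a space whose root system $\tilde\Sigma$ is simply-laced (type $A$, $D$, or $E_6$); hence $|\gamma|^{2} = |\alpha_1|^{2}$ and $|\bar\gamma|^{2} = 1/d = |\gamma|^{2}/4$, giving $\mu_{ii} = 4c$. The main obstacle is the restricted-root bookkeeping of the first step, but once that is in hand the remainder is purely formal, and in particular the argument sidesteps a direct expansion via Lemma \ref{2-3-3} and the four-way case analysis on $(\epsilon_{\gamma, \alpha_1}, \epsilon_{\gamma, -\alpha_1})$.
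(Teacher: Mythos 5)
Your argument is correct, and it follows a genuinely different route from the paper's. The paper proves Lemma \ref{4-2-2} by brute force: it expands the triple brackets in terms of $U_{\alpha}, W_{\alpha}$ via Lemma \ref{2-3-3}, tracks the structure constants $N_{\alpha,\beta}$ through a four-way case analysis on the signs $\epsilon$, and uses root-string facts such as $\gamma - \alpha_{1} - \sigma(\gamma) \notin \tilde{\Sigma}$. You instead (i) use Lemma \ref{2-3-4} to show that, since $\bar{\gamma} + 2e_{1} \notin \Sigma$ and $2\bar{\gamma} - 2e_{1}$ contributes only $\frak{m}_{\pm 2e_{k}}$ or $\frak{a}$, the $\frak{m}_{x}$-projection of the triple bracket lands in $\frak{m}_{2e_{1}} = \mathbb{R}T^{(1)}_{\alpha_{1}}$ (here $m(2e_{1})=1$ is essential); (ii) extract the coefficient by pairing with $T^{(1)}_{\alpha_{1}}$, using associativity of $\langle\ ,\ \rangle$ and Lemma \ref{4-2-1} to reduce it to $\langle T^{(i)}_{\gamma}, T^{(j)}_{\gamma}\rangle$ -- which is exactly the trick the paper itself deploys in the proof of Lemma \ref{4-2-3}, so the off-diagonal vanishing and the diagonal value $|T^{(i)}_{\gamma}|^{2}/|T^{(1)}_{\alpha_{1}}|^{2} = (\alpha_{1},\alpha_{1})/(\bar{\gamma},\bar{\gamma})$ come out of orthogonality and the norm formula $4/(\lambda,\lambda)$; and (iii) convert this ratio into $c$, $2c$, $4c$ via the Subsection \ref{s2-4} classification ($\gamma = \bar{\gamma}$ in case $(1)$; $\gamma$ a longest root with $\bar{\gamma} = e_{1}\pm e_{k}$ in case $(2)$; $\bar{\gamma}=e_{1}$ with $\tilde{\Sigma}$ simply laced in case $(3)$). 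I verified the length bookkeeping: with $(e_{i},e_{j}) = \delta_{ij}/d$ one gets $\mu = c, 2c, 4c$ respectively, matching the paper, and your constants are also consistent with how Lemma \ref{4-2-2} feeds into Lemma \ref{4-2-5}. What your route buys is the elimination of the $\epsilon$-case analysis and a transparent interpretation of the constants as ratios of root lengths; what the paper's computation yields in addition is the explicit form of the brackets themselves, which is not needed elsewhere. Two presentational points: in the case $\bar{\gamma}=e_{1}$ you need the line $[\frak{k}_{\lambda},\frak{m}_{\lambda}]\subset\frak{a}+\frak{m}_{2\lambda}$ of Lemma \ref{2-3-4} rather than the $\lambda\neq\mu$ line, and the ``$0$'' in your list $\{\pm 2e_{k},0\}$ should be read as the $\frak{a}$-component, which is killed because $\frak{a}\subset\frak{m}_{x}^{\perp}$ rather than because it fails to lie in $\Sigma^{+}_{x}$; neither affects the validity of the proof.
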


\begin{proof}
We observe that $\alpha_{1} + \gamma, \sigma(\gamma) + \alpha \not\in \tilde{\Sigma}$ and $\alpha_{1} - \gamma, \alpha_{1} - \sigma(\gamma) \in \tilde{\Sigma}$.
By Lemma \ref{2-3-2}, we obtain $N_{\gamma - \alpha_{1}, -\gamma} = cN_{-\gamma, \alpha_{1}} = c\bar{N}_{\gamma, -\alpha_{1}}$ and $\epsilon_{\gamma - \alpha_{1}, -\gamma} = \epsilon_{-\gamma, \alpha_{1}} = \epsilon_{\gamma, -\alpha_{1}}$.
Assume $\epsilon_{\gamma, -\alpha_{1}} = 1$.
If $\gamma \in \tilde{\Sigma}_{(1)}$, then either $(\gamma - \alpha_{1}) + \gamma \not \in \tilde{\Sigma}$ or $(\gamma - \alpha_{1}) + \gamma = \alpha_{k}$ for some $2 \leq k \leq n$.
Thus,
\[
\begin{split}
\Big[ [ T^{(1)}_{\gamma}, T^{(1)}_{\alpha_{1}} ], T^{(1)}_{\gamma} \Big]_{\frak{m}_{x}}
&= 
\Big[ [W_{\gamma}, W_{\alpha_{1}}], W_{\gamma} \Big]_{\frak{m}_{x}}
= N_{\gamma, -\alpha_{1}} \Big[ U_{\gamma - \alpha_{1}}, W_{\gamma} \Big]_{\frak{m}_{x}} \\
& =  N_{\gamma, -\alpha_{1}}(-N_{\gamma - \alpha_{1}, -\gamma})W_{-\alpha_{1}} 
= c N_{\gamma, -\alpha_{1}}\bar{N}_{\gamma, -\alpha_{1}}W_{\alpha_{1}} =
c\:T^{(1)}_{\alpha_{1}}. \\
\end{split}
\]
If $\gamma \in \tilde{\Sigma}_{(2)}$, then either $(\gamma - \alpha_{1}) + \gamma \not \in \tilde{\Sigma}$ or $(\gamma - \alpha_{1}) + \gamma = \alpha_{k}$ for some $2 \leq k \leq n$.
Similarly, either $(\gamma - \alpha_{1}) + \sigma(\gamma) \not\in \tilde{\Sigma}$ or $(\gamma - \alpha_{1}) + \sigma(\gamma) = \alpha_{k}$ for some $2 \leq k \leq n$.
Moreover, $\gamma - \alpha_{1} - \sigma(\gamma) \not\in \tilde{\Sigma}$.
Assume for contradiction that $\gamma - \alpha_{1} - \sigma(\gamma) \in \tilde{\Sigma}$.
Then, $\overline{\gamma - \alpha_{1} - \sigma(\gamma)} = -\overline{\alpha_{1}}$, and $\gamma - \alpha_{1} - \sigma(\gamma) = -\alpha_{1}$ since $m(2e_{1}) = 1$.
Thus, $\gamma = \sigma(\gamma)$, but this contradicts to the assumption that $\gamma \in \tilde{\Sigma}_{(2)}$.
Hence,
\[
\begin{split}
\Big[ [ T^{(1)}_{\gamma}, T^{(1)}_{\alpha_{1}} ], T^{(1)}_{\gamma} \Big]_{\frak{m}_{x}}
&= \Big[ [W_{\gamma} + W_{\sigma(\gamma)}, \:W_{\alpha_{1}}], \: W_{\gamma} + W_{\sigma(\gamma)} \Big]_{\frak{m}_{x}} \\
&= \Big[ N_{\gamma, -\alpha_{1}}U_{\gamma - \alpha_{1}} + N_{\sigma(\gamma) - \alpha_{1}}U_{\sigma(\gamma) - \alpha_{1}}, \: W_{\gamma} + W_{\sigma(\gamma)} \Big]_{\frak{m}_{x}} \\
&= N_{\gamma , -\alpha_{1}} \Big[ U_{\gamma - \alpha_{1}}, W_{\gamma} \Big]_{\frak{m}_{x}} + N_{\sigma(\gamma), -\alpha_{1}} \Big[ U_{\sigma(\gamma) - \alpha_{1}}, W_{\sigma(\gamma)} \Big] _{\frak{m}_{x}} \\
&= N_{\gamma, -\alpha_{1}}(-N_{\gamma - \alpha_{1}, -\gamma})W_{-\alpha_{1}} + N_{\sigma(\gamma) - \alpha_{1}}(-N_{\sigma(\gamma) - \alpha_{1}, -\sigma(\gamma)})W_{-\alpha_{1}} \\
&= cN_{\gamma, -\alpha_{1}} \bar{N}_{\gamma, -\alpha_{1}}W_{\alpha_{1}} + cN_{\sigma(\gamma), -\alpha_{1}} \bar{N}_{\sigma(\gamma), -\alpha_{1}}W_{\alpha_{1}} \\
&= 2cW_{\alpha_{1}} = 2c\:T^{(1)}_{\alpha_{1}}, \\
\end{split}
\]
\[
\begin{split}
\Big[ [T^{(2)}_{\gamma}, T^{(1)}_{\alpha_{1}}], T^{(2)}_{\gamma} \Big]_{\frak{m}_{x}}
&= \Big[ [-U_{\gamma} + U_{\sigma(\gamma)}, \: W_{\alpha_{1}}], \: -U_{\gamma} + U_{\sigma(\gamma)} \Big]_{\frak{m}_{x}} \\
&= \Big[ N_{\gamma, -\alpha_{1}}W_{\gamma - \alpha_{1}} - N_{\sigma(\gamma), - \alpha_{1}}W_{\sigma(\gamma) - \alpha_{1}}, \: - U_{\gamma} + U_{\sigma(\gamma)} \Big]_{\frak{m}_{x}} \\ 
&= -N_{\gamma, \alpha_{1}} \Big[ W_{\gamma - \alpha_{1}}, U_{\gamma} \Big]_{\frak{m}_{x}} - N_{\sigma(\gamma), -\alpha_{1}} \Big[ W_{\sigma(\gamma) - \alpha_{1}}, U_{\sigma(\gamma)} \Big]_{\frak{m}_{x}} \\
&= (-N_{\gamma, -\alpha_{1}})\bar{N}_{\gamma - \alpha_{1}, -\alpha_{1}} W_{-\alpha_{1}} - N_{\sigma(\gamma), -\alpha_{1}}N_{\sigma(\gamma) -\alpha_{1}, -\sigma(\alpha_{1})}W_{-\alpha_{1}} \\
&= cN_{\gamma, -\alpha_{1}}\bar{N}_{\gamma, -\alpha_{1}}W_{\alpha_{1}} + cN_{\sigma(\gamma), -\alpha_{1}}\bar{N}_{\sigma(\gamma), -\alpha_{1}}W_{\alpha_{1}} \\
&= 2cW_{\alpha_{1}} = 2c\:T^{(1)}_{\alpha_{1}}, \\
\end{split}
\]
\[
\begin{split}
\Big[ [T^{(1)}_{\gamma}, T^{(1)}_{\alpha_{1}}], T^{(2)}_{\gamma} \Big]_{\frak{m}_{x}}
&= \Big[ [ W_{\gamma} + W_{\sigma(\gamma)}, W_{\alpha_{1}} ], -U_{\gamma} + U_{\sigma(\gamma)} \Big]_{\frak{m}_{x}} \\ 
&= \Big[ N_{\gamma, -\alpha_{1}}U_{\gamma - \alpha_{1}} + N_{\sigma(\gamma), -\alpha_{1}}U_{\sigma(\gamma) - \alpha_{1}}, \: -U_{\gamma} + U_{\sigma(\gamma)} \Big]_{\frak{m}_{x}} \\ 
&= -N_{\gamma -\alpha_{1}} \Big[ U_{\gamma - \alpha_{1}}, U_{\gamma} \Big]_{\frak{m}_{x}} + N_{\sigma(\gamma), \sigma(\gamma) - \alpha_{1}} \Big[ U_{\sigma(\gamma) - \alpha_{1}}, U_{\sigma(\gamma)} \Big]_{\frak{m}_{x}} \\
&= -N_{\gamma, -\alpha_{1}}N_{\gamma - \alpha_{1}, -\gamma}U_{-\alpha_{1}} + N_{\sigma(\gamma), -\alpha_{1}}N_{\sigma(\gamma) - \alpha_{1}, -\sigma(\gamma)}U_{-\alpha_{1}} \\
&= -cN_{\gamma, -\alpha_{1}}\bar{N}_{\gamma, -\alpha_{1}}U_{\alpha_{1}} + cN_{\sigma(\gamma), -\alpha_{1}}\bar{N}_{\sigma(\gamma), -\alpha_{1}}U_{\alpha_{1}} \\
&= -cU_{\alpha_{1}} + cU_{\alpha_{1}} = 0, \\
\end{split}
\]
\[
\begin{split}
\Big[ [T^{(2)}_{\gamma}, T^{(1)}_{\alpha_{1}}], T^{(1)}_{\gamma} \Big]_{\frak{m}_{x}}
& = \Big[ [-U_{\gamma} + U_{\sigma(\gamma)}, W_{\alpha_{1}}], W_{\gamma} + W_{\sigma(\gamma)} \Big]_{\frak{m}_{x}} \\
&= \Big[ N_{\gamma, -\alpha_{1}}W_{\gamma - \alpha_{1}} - N_{\sigma(\gamma), -\alpha_{1}}W_{\sigma(\gamma) - \alpha_{1}}, \: W_{\gamma} + W_{\sigma(\gamma)} \Big]_{\frak{m}_{x}} \\
&= N_{\gamma, -\alpha_{1}} \Big[ W_{\gamma - \alpha_{1}}, W_{\gamma} \Big]_{\frak{m}_{x}} - N_{\sigma(\gamma), -\alpha_{1}} \Big[ W_{\sigma(\gamma) - \alpha_{1}}, W_{\sigma(\gamma)} \Big]_{\frak{m}_{x}} \\
&= N_{\gamma, -\alpha_{1}}N_{\gamma - \alpha_{1}, -\gamma}U_{-\alpha_{1}} - N_{\sigma(\gamma), -\alpha_{1}}N_{\sigma(\gamma) - \alpha_{1}, -\sigma(\gamma)}U_{-\alpha_{1}} \\
&= c N_{\gamma, -\alpha_{1}}\bar{N}_{\gamma, -\alpha_{1}}U_{\alpha_{1}} - c N_{\sigma(\gamma), -\alpha_{1}}\bar{N}_{\sigma(\gamma), -\alpha_{1}}U_{\alpha_{1}} \\
&= cU_{\alpha_{1}} - cU_{\alpha_{1}} = 0.
\end{split}
\]
If $\gamma \in \tilde{\Sigma}_{(3)}$, then either $(\gamma - \alpha_{1}) + \gamma \not\in \tilde{\Sigma}$ or $(\gamma - \alpha_{1}) + \gamma \in \tilde{\Sigma}_{0}$.
Similarly, either $(\gamma - \alpha_{1}) + \sigma(\gamma) \not\in \tilde{\Sigma}$ or $(\gamma - \alpha_{1}) + \sigma(\gamma) \in \tilde{\Sigma}_{0}$.
By using a similar argument as in the case of $\gamma \in \tilde{\Sigma}_{(2)}$, we conclude that $\gamma - \alpha_{1} - \sigma(\gamma) \not\in \tilde{\Sigma}$.
Hence, 
\[
\begin{array}{lllll}
\pi_{1} \Big( \Big[ [ T^{(1)}_{\gamma}, T^{(1)}_{\alpha_{1}} ], T^{(1)}_{\gamma} \Big] \Big) 
= 4c\:T_{\alpha_{1}}, &
& \pi_{1} \Big( \Big[ [ T^{(2)}_{\gamma}, T^{(1)}_{\alpha_{1}} ], T^{(2)}_{\gamma} \Big] \Big) \vspace{2mm}
= 4c\:T^{(1)}_{\alpha_{1}}, \\
\pi_{1} \Big( \Big[ [T^{(1)}_{\gamma}, T^{(1)}_{\alpha_{1}}], T^{(2)}_{\gamma} \Big] \Big)
= 0,  &
& \pi_{1} \Big( \Big[ [T^{(2)}_{\lambda}, T^{(1)}_{\alpha_{1}}], T^{(1)}_{\lambda} \Big] \Big)
= 0 \\
\end{array}
\]
by similar calculations.
In the case of $\epsilon_{\gamma, -\alpha_{1}} = -1$, we can verify that the statements are also true.
\end{proof}

Let $\gamma, \delta \in \tilde{\Sigma}$.
If $\bar{\gamma}, \bar{\delta} \in (\Sigma^{+}_{x})_{C}$ and $\bar{\gamma} \not= \bar{\delta}$, then by Lemma \ref{2-3-4}, for any $i,j = 1,2$,
\[
\Big[ [ T^{(i)}_{\gamma}, T^{(1)}_{\alpha_{1}}], T^{(j)}_{\delta} \Big]_{\frak{m}_{x}} = 0.
\]

\begin{lemm} \label{4-2-3}
Let $\gamma, \delta \in \tilde{\Sigma}$.
If $\bar{\gamma} = \bar{\delta} = e_{1}$, then $[[ T^{(i)}_{\gamma}, T^{(1)}_{\alpha_{1}} ], T^{(j)}_{\delta} ] \in \frak{a}$ for $i,j = 1,2$.
If $\bar{\gamma} = \bar{\delta} = e_{1} \pm e_{k} \ (2 \leq k \leq n)$, then $\Big[ [ T^{(i)}_{\gamma}, T^{(1)}_{\alpha_{1}} ], T^{(j)}_{\delta} \Big] \in \frak{m}_{2e_{k}}$ for $i,j = 1,2$.
In particular, $[ [ T^{(i)}_{\gamma}, T^{(1)}_{\alpha_{1}} ], T^{(j)}_{\delta}]_{\frak{m}_{x}} = 0$ in both cases.
\end{lemm}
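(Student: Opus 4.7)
The plan is to combine a containment step via Lemma~\ref{2-3-4} with a root-sum projection argument that eliminates the unwanted summands. I will use that $\bar{\alpha_1} = 2e_1$ is the unique longest restricted root and $m(2e_1) = 1$, so $T^{(1)}_{\alpha_1} \in \frak{m}_{2e_1}$ and $\tilde{\frak{g}}_{\alpha_1}$ is one-dimensional.

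First, I establish containment via two applications of Lemma~\ref{2-3-4}. In Case~1 ($\bar\gamma = \bar\delta = e_1$), together with $3e_1 \notin \Sigma$, this gives $[T^{(i)}_\gamma, T^{(1)}_{\alpha_1}] \in [\frak{m}_{e_1}, \frak{m}_{2e_1}] \subset \frak{k}_{e_1}$, and then $[[T^{(i)}_\gamma, T^{(1)}_{\alpha_1}], T^{(j)}_\delta] \in [\frak{k}_{e_1}, \frak{m}_{e_1}] \subset \frak{a} + \frak{m}_{2e_1}$. In Case~2 ($\bar\gamma = \bar\delta = e_1 \pm e_k$, $2 \leq k \leq n$), the parallel argument using $3e_1 \pm e_k \notin \Sigma$ gives $[T^{(i)}_\gamma, T^{(1)}_{\alpha_1}] \in \frak{k}_{e_1 \mp e_k}$, and then, since the two restricted roots now differ, the nested bracket lies in $\frak{m}_{2e_1} + \frak{m}_{2e_k}$ with no $\frak{a}$-summand.

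Next, I rule out the $\frak{m}_{2e_1}$-component in each case. Expanding $T^{(i)}_\gamma$ and $T^{(j)}_\delta$ into their constituents $X_{\pm\gamma}, X_{\pm\sigma(\gamma)}$ and $X_{\pm\delta}, X_{\pm\sigma(\delta)}$ via the definitions in Subsection~\ref{s2-3}, and using that $\bar\alpha + 2e_1 \notin \Sigma$ for $\bar\alpha \in \{e_1, e_1 \pm e_k\}$, I confine $[T^{(i)}_\gamma, T^{(1)}_{\alpha_1}]$ to the root spaces $\tilde{\frak{g}}_{\pm(\alpha - \alpha_1)}$ with $\alpha \in \{\gamma, \sigma(\gamma)\}$. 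A further bracket with $X_{\epsilon'\beta}$ for $\beta \in \{\delta, \sigma(\delta)\}$ and $\epsilon' \in \{\pm 1\}$ contributes to $\tilde{\frak{g}}_{\pm\alpha_1}$ only when the identity $\epsilon''(\alpha - \alpha_1) + \epsilon'\beta = \pm\alpha_1$ holds. Projecting this identity onto $\frak{a}_0$ and using $\bar\alpha = \bar\beta$ forces $\alpha = \beta$. Under the implicit hypothesis $\gamma \neq \delta$ and $\gamma \neq \sigma(\delta)$ (paralleling Lemma~\ref{3-2-7} and required for consistency with Lemma~\ref{4-2-2} in the case $\gamma = \delta$), this is impossible, so the $\frak{m}_{2e_1}$-component vanishes.

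For the ``in particular'' statement, recall $\Sigma^+_x = \{2e_1\} \cup \{e_1 \pm e_k : 2 \leq k \leq n\} \cup \{e_1\}$, so $\frak{a} \subset \frak{m}^\perp_x$ always and $\frak{m}_{2e_k} \subset \frak{m}^\perp_x$ for every $k \geq 2$ because $2e_k \in \Sigma^+ - \Sigma^+_x$. Hence the $\frak{m}_x$-projection of the bracket vanishes in both cases. The main obstacle I anticipate is handling the several combinations of classes $\tilde{\Sigma}_{(2)}$ and $\tilde{\Sigma}_{(3)}$ that $\gamma$ and $\delta$ may inhabit; however, the projection-based argument above sidesteps the detailed bookkeeping of structure constants $N_{\alpha,\beta}$ that the coincident case $\gamma = \delta$ (treated in Lemma~\ref{4-2-2}) requires.
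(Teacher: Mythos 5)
Your containment step coincides with the paper's: both apply Lemma \ref{2-3-4} twice to place the double bracket in $\frak{a}+\frak{m}_{2e_{1}}$ when $\bar{\gamma}=\bar{\delta}=e_{1}$, respectively in $\frak{m}_{2e_{1}}+\frak{m}_{2e_{k}}$ when $\bar{\gamma}=\bar{\delta}=e_{1}\pm e_{k}$. Where you genuinely diverge is in killing the $\frak{m}_{2e_{1}}$-component. The paper does this with a short duality trick: since $m(2e_{1})=1$ it suffices to pair against $T^{(1)}_{\alpha_{1}}$, and ad-invariance of $\langle\ ,\ \rangle$ together with Lemma \ref{4-2-1} yields $\langle[[T^{(i)}_{\gamma},T^{(1)}_{\alpha_{1}}],T^{(j)}_{\delta}],T^{(1)}_{\alpha_{1}}\rangle=\langle T^{(i)}_{\gamma},T^{(j)}_{\delta}\rangle$, which vanishes whenever $T^{(i)}_{\gamma}$ and $T^{(j)}_{\delta}$ are distinct members of the orthogonal basis. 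You instead expand in root vectors, note that $[T^{(i)}_{\gamma},T^{(1)}_{\alpha_{1}}]$ lies in $\tilde{\frak{g}}_{\pm(\alpha-\alpha_{1})}$ with $\alpha\in\{\gamma,\sigma(\gamma)\}$ (because $\bar{\alpha}+2e_{1}\not\in\Sigma$), and observe that a contribution to $\tilde{\frak{g}}_{\pm\alpha_{1}}$ forces an exact root identity and hence $\beta=\alpha$; this is sound, with the small imprecision that the projection to $\frak{a}_{0}$ only pins down the signs, and it is the full identity in $\frak{h}_{0}^{*}$ (which is the actual condition, since brackets of root vectors land in the root space of the exact sum) that forces $\alpha=\beta$. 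The trade-off: the paper's argument is shorter, needs no root bookkeeping, and covers slightly more, namely the same-root, mixed-index case $\delta\in\{\gamma,\sigma(\gamma)\}$, $i\neq j$, where $\langle T^{(i)}_{\gamma},T^{(j)}_{\delta}\rangle=0$ still holds, whereas your extra hypothesis $\gamma\neq\delta,\sigma(\delta)$ excludes it (there your criterion would allow contributions and one would have to verify the cancellation of structure constants, as in Lemma \ref{4-2-2}). Your reading of the implicit hypothesis is defensible: the statement cannot hold for $\gamma=\delta$, $i=j$, since Lemma \ref{4-2-2} exhibits a nonzero $\frak{m}_{2e_{1}}$-part there; and the mixed-index case you drop is supplied by Lemma \ref{4-2-2} in the only place the lemma is used (Lemma \ref{4-2-6}), so nothing downstream is affected. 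The ``in particular'' clause is handled identically in both arguments, via $\frak{a},\frak{m}_{2e_{k}}\subset\frak{m}_{x}^{\perp}$.
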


\begin{proof}
We consider the former part of the statement.
By Lemma \ref{2-3-4}, we have $[[ T^{(i)}_{\gamma}, T^{(1)}_{\alpha_{1}}], T^{(j)}_{\delta}] \in \frak{m}_{2e_{1}} + \frak{a}$.
Since 
\[
\begin{split}
\Big\langle [[ T^{(i)}_{\gamma}, T^{(1)}_{\alpha_{1}}], T^{(j)}_{\delta}], T^{(1)}_{\alpha_{1}} \Big\rangle
&= \Big\langle [ T^{(i)}_{\gamma}, T^{(1)}_{\alpha_{1}}], [T^{(j)}_{\delta}, T^{(1)}_{\alpha_{1}}] \Big\rangle \\
&= - \Big\langle [[ T^{(i)}_{\gamma}, T^{(1)}_{\alpha_{1}}], T^{(1)}_{\alpha_{1}}], T^{(j)}_{\delta} \Big\rangle \\
&= \Big\langle T^{(i)}_{\gamma}, T^{(j)}_{\delta} \Big\rangle = 0
\end{split}
\]
and $m(2e_{1}) = 1$, the $\frak{m}_{2e_{1}}$-part of $[[ T^{(i)}_{\gamma}, T^{(1)}_{\alpha}], T^{(j)}_{\delta}]$ is $0$ and $[[ T^{(i)}_{\gamma}, T^{(1)}_{\alpha_{1}} ], T^{(j)}_{\delta} ] \in \frak{a}$.
By a similar argument, we can show that the latter part of the statement is also true.
\end{proof}

Let $\alpha \in \tilde{\Sigma}$ be $\bar{\alpha} \in \Sigma^{+}_{x}$.
Set $^N (S^{(i)}_{\alpha})^{*}\ (i =1,2)$ such that
\[
^N(S^{(i)}_{\alpha})^{*} = \frac{-1}{\sin (-i)\lambda(H) }(\bar{S}^{(i)}_{\alpha})_{*} \quad (i =1,2).
\]
Then, $^N (S^{(i)}_{\alpha})^{*}_{x} = (\mathrm{exp}H)_{*}\bar{T}^{(i)}_{\alpha}$.
Set an orthonormal basis $P$ of $T_{x}N$ as follows:
\[
\begin{split}
P & = \left\{ {}^{N}(S^{(1)}_{\beta})^{*}_{x}, {}^{N}(S^{(2)}_{\beta})^{*}_{x} \ ;\ \bar{\beta} \in (\Sigma^{+}_{x})_{C}, \sigma(\beta) \not= \beta \right\} \\
& \quad\quad\quad\quad\quad\quad\quad\quad \cup \left\{ {}^{N}(S^{(1)}_{\gamma})^{*}_{x} \ ;\ \bar{\gamma} \in (\Sigma^{+}_{x})_{C}, \sigma(\gamma) = \gamma \right\} \cup \{ {}^{N}(S^{(1)}_{\alpha_{1}})^{*}_{x} \}.
\end{split}
\]

\begin{lemm} \label{4-2-4}
Let $h^{N}$ be the second fundamental form of $N \subset M$.
Then,
\[
h^{N} \Big( {}^{N}(S^{(1)}_{\alpha_{1}})^{*}_{x}, {}^{N}(S^{(1)}_{\alpha_{1}})^{*}_{x} \Big) = \cot t \pi (\mathrm{exp}H)_{*} (iH_{2e_{1}}).
\]
If $\beta \in \tilde{\Sigma}$ satisfies $\bar{\beta} \in (\Sigma^{+}_{x})_{C}$, then for any $1 \leq i \not= j \leq 2$,
\[
\begin{split}
& h^{N} \Big( {}^{N}(S^{(1)}_{\alpha_{1}})^{*}_{x}, {}^{N}(S^{(i)}_{\beta})^{*}_{x} \Big) = 0, \quad h^{N} \Big( {}^{N}(S^{(i)}_{\beta})^{*}_{x}, {}^{N}(S^{(j)}_{\beta})^{*}_{x} \Big) = 0. \\
\end{split}
\]
Moreover, if $S^{(i)}_{\beta}$ is not $0$, then 
\[
h^{N} \Big( {}^{N}(S^{(i)}_{\beta})^{*}_{x}, {}^{N}(S^{(i)}_{\beta})^{*}_{x} \Big) = \cot \frac{t\pi}{2} (\mathrm{exp}H)_{*}(iH_{\bar{\beta}}).
\]
If $\gamma \in \tilde{\Sigma}$ satisfies $\bar{\gamma} = \bar{\beta}$ and $\gamma \not= \beta, \sigma(\beta)$, then for any $i,j = 1,2$, 
\[
h^{N} \Big( {}^{N}(S^{(i)}_{\beta})^{*}_{x}, {}^{N}(S^{(j)}_{\gamma})^{*}_{x} \Big) = 0.
\]
If $\delta \in \tilde{\Sigma}$ satisfies $\bar{\delta} \not= \bar{\beta}$, then for any $i,j = 1,2$,
\[
h^{N} \Big( {}^{N}(S^{(i)}_{\beta})^{*}_{x}, {}^{N}(S^{(j)}_{\delta})^{*}_{x} \Big) \in (\mathrm{exp}H)_{*} \frak{m}_{\overline{\beta - \delta}}.
\]
\end{lemm}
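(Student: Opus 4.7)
The plan is to evaluate $h^{N}(X,Y) = (\nabla^{M}_{X}Y)_{T_{x}^{\perp}N}$ directly at $x$ using Lemma \ref{2-2-1}(iii), and then sort tangent versus normal contributions via the restricted root decomposition. Since each ${}^{N}(S^{(i)}_{\alpha})^{*}$ is a scalar multiple of the fundamental vector field $(\bar{S}^{(i)}_{\alpha})^{*}$, bilinearity of $h^{N}$ reduces the problem to computing $(\nabla^{M}_{(\bar{S}^{(i)}_{\alpha})^{*}}(\bar{S}^{(j)}_{\beta})^{*})_{x}$ and rescaling. By Lemma \ref{2-3-5}(1), the $\frak{m}$-part of $\mathrm{Ad}(\mathrm{exp}(-H))\bar{S}^{(i)}_{\alpha}$ is $-\sin((-i)\bar{\alpha}(H))\bar{T}^{(i)}_{\alpha}$, and since $[\bar{T},\bar{T}]\subset\frak{k}$ the $\frak{m}$-projection in Lemma \ref{2-2-1}(iii) retains only the bracket $[\bar{T}^{(i)}_{\alpha},\bar{S}^{(j)}_{\beta}]$ with prefactor $\sin((-i)\bar{\alpha}(H))\cos((-i)\bar{\beta}(H))$. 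After rescaling this yields a $\cot((-i)\bar{\beta}(H))$ factor, equal to $\cot(t\pi)$ when $\bar{\beta}=2e_{1}$ and $\cot(t\pi/2)$ when $\bar{\beta}\in(\Sigma^{+}_{x})_{C}$, which is where the two cotangents in the statement come from.

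The next step is to decompose $[\bar{T}^{(i)}_{\alpha},\bar{S}^{(j)}_{\beta}]$ via Lemma \ref{2-3-4} into $\frak{m}_{\bar{\alpha}+\bar{\beta}}+\frak{m}_{\bar{\alpha}-\bar{\beta}}$ (or into $\frak{a}+\frak{m}_{2\bar{\beta}}$ when $\bar{\alpha}=\bar{\beta}$) and project onto $\frak{m}_{x}^{\perp}=\frak{a}+\sum_{\lambda\in\Sigma^{+}-\Sigma^{+}_{x}}\frak{m}_{\lambda}$. The crucial observation for $H=tQ_{1}$ is that a restricted root $\lambda$ lies in $\Sigma_{x}^{+}$ if and only if its $e_{1}$-coefficient is positive, so $\frak{m}_{\lambda}$ is tangent to $N$ exactly when that coefficient equals $1$ or $2$, and normal otherwise. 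Since $\bar{\alpha},\bar{\beta}\in(\Sigma^{+}_{x})_{C}\cup\{2e_{1}\}$ all have $e_{1}$-coefficient $1$ or $2$, the piece $\frak{m}_{\bar{\alpha}+\bar{\beta}}$ is always either zero or equal to $\frak{m}_{2e_{1}}\subset T_{x}N$, while $\frak{m}_{\bar{\alpha}-\bar{\beta}}$ and $\frak{a}$ sit in $\frak{m}_{x}^{\perp}$. The five assertions then drop out directly: the case $\alpha=\beta=\alpha_{1}$ gives $[\bar{T}^{(1)}_{\alpha_{1}},\bar{S}^{(1)}_{\alpha_{1}}]=-iH_{2e_{1}}\in\frak{a}$ by Lemma \ref{2-3-5}(2), producing the first formula; the mixed case $\alpha=\alpha_{1},\bar{\beta}\in(\Sigma^{+}_{x})_{C}$ has bracket inside $\frak{m}_{2e_{1}-\bar{\beta}}\subset T_{x}N$, so $h^{N}=0$; the diagonal case $\alpha=\beta,\,i=j$ with $\bar{\beta}\in(\Sigma^{+}_{x})_{C}$ uses $[\bar{T}^{(i)}_{\beta},\bar{S}^{(i)}_{\beta}]=-iH_{\bar{\beta}}\in\frak{a}$ to yield the $\cot(t\pi/2)$ formula; the off-diagonal vanishing cases ($i\neq j$ with $\alpha=\beta$, or $\alpha\neq\beta,\sigma(\beta)$ with $\bar{\alpha}=\bar{\beta}$) follow since the $\frak{a}$-component is controlled by $\langle iH,[\bar{T}^{(i)}_{\alpha},\bar{S}^{(j)}_{\beta}]\rangle \propto \langle \bar{S}^{(i)}_{\alpha},\bar{S}^{(j)}_{\beta}\rangle = 0$ by orthonormality; and for $\bar{\alpha}\neq\bar{\beta}$ only the $\frak{m}_{\bar{\alpha}-\bar{\beta}}=\frak{m}_{\overline{\beta-\delta}}$ part survives.

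The main thing to be careful about is recognizing the clean tangent/normal separation induced by the $e_{1}$-coefficient grading of restricted roots: once that is in place, no delicate vanishing argument for $\frak{m}_{2e_{1}}$-components is needed, because every such contribution automatically falls inside $T_{x}N$ (in fact into the totally real line $\mathcal{D}^{\perp}_{x}$) and therefore does not enter the second fundamental form at all. Beyond this conceptual point, only routine sign-tracking in the combined use of Lemmas \ref{2-2-1}(iii) and \ref{2-3-5}(2) remains, which fixes the signs of the cotangent prefactors in the final formulas.
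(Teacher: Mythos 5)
Your proposal is correct and follows essentially the same route as the paper's own proof: Lemma \ref{2-2-1}(iii) together with Lemma \ref{2-3-5} reduces each value of $h^{N}$ to a cotangent multiple of the normal projection of $(\mathrm{exp}H)_{*}[\bar{T}^{(i)}_{\alpha},\bar{S}^{(j)}_{\beta}]$, which is then sorted out using Lemma \ref{2-3-4}, the relation $[\bar{S}^{(i)}_{\beta},\bar{T}^{(i)}_{\beta}]=iH_{\bar{\beta}}$, the tangency of $\frak{m}_{2e_{1}}$, and the orthogonality argument of Lemma \ref{3-2-7} for the $\frak{a}$-components, exactly as in the paper. One small caution: your blanket claim that $\frak{m}_{\bar{\alpha}-\bar{\beta}}$ always lies in $\frak{m}_{x}^{\perp}$ fails when one of the two restricted roots is $2e_{1}$ (then $2e_{1}-\bar{\beta}\in(\Sigma^{+}_{x})_{C}$ is tangent), but your subsequent case-by-case treatment uses the correct placement $\frak{m}_{2e_{1}-\bar{\beta}}\subset T_{x}N$, so the argument is unaffected.
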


\begin{proof}
By Lemma \ref{2-3-5}, 
\[
\begin{split}
h^{N}\Big( {}^{N}(S^{(1)}_{\alpha_{1}})^{*}_{x}, {}^{N}(S^{(1)}_{\alpha_{1}})^{*}_{x} \Big) 
& = \Big( (\mathrm{exp}H)_{*} [\bar{T}^{(1)}_{\alpha_{1}}, -\frac{\cos t\pi}{\sin t\pi}\bar{S}^{(1)}_{\alpha_{1}} ] \Big)_{T^{\perp}_{x}N} \\
& = \cot t \pi (\mathrm{exp}H)_{*} (iH_{2e_{1}}), 
\\
h^{N}\Big( {}^{N}(S^{(i)}_{\beta})^{*}_{x}, {}^{N}(S^{(i)}_{\beta})^{*}_{x} \Big) 
&= \Big( (\mathrm{exp}H)_{*} [\bar{T}^{(i)}_{\beta}, -\frac{\cos t(\pi/2)}{\sin t(\pi/2)}\bar{S}^{(i)}_{\beta} ] \Big)_{T^{\perp}_{x}N} \\
& = \cot t(\pi/2) (\mathrm{exp}H_{*}) (iH_{\bar{\beta}}).
\\
\end{split}
\]
Note that if $2\bar{\beta} \in \Sigma$, then $2\bar{\beta} = 2e_{1}$.
By a similar argument to the proof of Lemma \ref{3-2-7}, 
\[
\begin{split}
h^{N}\Big( {}^{N}(S^{(i)}_{\beta})^{*}_{x}, {}^{N}(S^{(j)}_{\beta})^{*}_{x} \Big) 
&= \Big( (\mathrm{exp}H)_{*} [\bar{T}^{(i)}_{\beta}, -\frac{\cos t(\pi/2)}{\sin t(\pi/2)}\bar{S}^{(j)}_{\beta} ] \Big)_{T^{\perp}_{x}N}
= 0, \\
h^{N}\Big( {}^{N}(S^{(i)}_{\beta})^{*}_{x}, {}^{N}(S^{(j)}_{\gamma})^{*}_{x} \Big) 
&= \Big( (\mathrm{exp}H)_{*} [\bar{T}^{(i)}_{\beta}, -\frac{\cos t(\pi/2)}{\sin t(\pi/2)}\bar{S}^{(j)}_{\gamma} ] \Big)_{T^{\perp}_{x}N}
=0.
\end{split}
\]
Since $\overline{\alpha_{1} - \beta} \in (\Sigma^{+}_{x})_{C}$ and either $\beta + \delta \not\in \tilde{\Sigma}$ or $\beta + \delta = \alpha_{1}$, by Lemma \ref{2-3-4},
\[
\begin{split}
& h^{N}\Big( {}^{N}(S^{(1)}_{\alpha_{1}})^{*}_{x}, {}^{N}(S^{(i)}_{\beta})^{*}_{x} \Big) 
= \Big( (\mathrm{exp}H)_{*} [\bar{T}^{(1)}_{\alpha_{1}}, -\frac{\cos t(\pi/2)}{\sin t(\pi/2)}\bar{S}^{(i)}_{\beta} ] \Big)_{T^{\perp}_{x}N} = 0, \\
& h^{N}\Big( {}^{N}(S^{(i)}_{\beta})^{*}_{x}, {}^{N}(S^{(j)}_{\delta})^{*}_{x} \Big) 
= \Big( (\mathrm{exp}H)_{*} [\bar{T}^{(i)}_{\beta}, -\frac{\cos t(\pi/2)}{\sin t(\pi/2)}\bar{S}^{(j)}_{\delta} ] \Big)_{T^{\perp}_{x}N} 
\in (\mathrm{exp}H)_{*} \frak{m}_{\overline{\beta- \delta}}.
\end{split}
\]
\end{proof}

\begin{lemm} \label{4-2-5}
Let $\beta \in \tilde{\Sigma}$ be $\bar{\beta} \in (\Sigma^{+}_{x})_{C}$.
Denote by $R^{M}$ the curvature tensor of $M$.
If $S^{(i)}_{\beta}\ (i = 1,2)$ is not $0$, then
\[
\begin{split}
\Big( R^{M} \Big( {}^{N}(S^{(i)}_{\beta})^{*}_{x}, {}^{N}(S^{(1)}_{\alpha_{1}})^{*}_{x} \Big) {}^{N}(S^{(1)}_{\alpha_{1}})^{*}_{x} \Big)_{T_{x}N} &= \left( \frac{1}{d} \right){}^{N}(S^{(i)}_{\beta})^{*}_{x}, \\
\Big( R^{M} \Big( {}^{N}(S^{(i)}_{\beta})^{*}_{x}, {}^{N}(S^{(1)}_{\alpha_{1}})^{*}_{x} \Big) {}^{N}(S^{(i)}_{\beta})^{*}_{x} \Big)_{T_{x}N} 
& = - \left( \frac{1}{d} \right) {}^{N}(S^{(1)}_{\alpha_{1}})^{*}_{x}. \\
\end{split}
\]
\end{lemm}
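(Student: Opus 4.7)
The plan is to transport the computation from $x$ back to $o$ using that $\exp H$ acts on $M$ as an isometry, express the curvature tensor at $o$ as a double Lie bracket via Lemma \ref{2-2-2}, and then invoke the bracket identities from Lemmas \ref{4-2-1} and \ref{4-2-2}. Since ${}^{N}(S^{(j)}_{\alpha})^{*}_{x} = (\exp H)_{*}\bar{T}^{(j)}_{\alpha}$ and $G$ acts on $M$ by isometries, the two claims reduce to showing that the $\frak{m}_{x}$-parts of $-[[\bar{T}^{(i)}_{\beta}, \bar{T}^{(1)}_{\alpha_{1}}], \bar{T}^{(1)}_{\alpha_{1}}]$ and $-[[\bar{T}^{(i)}_{\beta}, \bar{T}^{(1)}_{\alpha_{1}}], \bar{T}^{(i)}_{\beta}]$ equal $(1/d)\bar{T}^{(i)}_{\beta}$ and $-(1/d)\bar{T}^{(1)}_{\alpha_{1}}$ respectively, and then pushing forward by $(\exp H)_{*}$. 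Throughout I will use $(\alpha_{1}, \alpha_{1}) = (2e_{1}, 2e_{1}) = 4/d$ together with the rescaling $\bar{T}^{(j)}_{\alpha} = (\sqrt{(\bar{\alpha}, \bar{\alpha})}/2)\,T^{(j)}_{\alpha}$.

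For the first identity, Lemma \ref{4-2-1} directly gives $[[T^{(i)}_{\beta}, T^{(1)}_{\alpha_{1}}], T^{(1)}_{\alpha_{1}}] = -T^{(i)}_{\beta}$ with no case distinction, and since $\bar{T}^{(i)}_{\beta}$ already lies in $\frak{m}_{x}$ no $\frak{m}_{x}$-projection is needed. The two $\bar{T}^{(1)}_{\alpha_{1}}$ factors then contribute $(\sqrt{(\alpha_{1},\alpha_{1})}/2)^{2} = 1/d$, so the triple bracket equals $-(1/d)\bar{T}^{(i)}_{\beta}$, which after pushforward is the claim.

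For the second identity I will apply Lemma \ref{4-2-2}, which provides $[[T^{(i)}_{\beta}, T^{(1)}_{\alpha_{1}}], T^{(i)}_{\beta}]_{\frak{m}_{x}} = kc\,T^{(1)}_{\alpha_{1}}$, with $(k,c)$ depending on whether $\beta \in \tilde{\Sigma}_{(1)}, \tilde{\Sigma}_{(2)},$ or $\tilde{\Sigma}_{(3)}$ and $c|\beta|^{2} = |\alpha_{1}|^{2}$. The key observation is the uniform identity $kc\,(\bar{\beta},\bar{\beta}) = (\alpha_{1},\alpha_{1}) = 4/d$: for $\beta \in \tilde{\Sigma}_{(1)}$, $\bar{\beta} = \beta$ and $k=1$; for $\beta \in \tilde{\Sigma}_{(2)}$, Lemmas \ref{2-4-4} and \ref{2-4-5} force $(\beta,\sigma(\beta)) = 0$, hence $(\bar{\beta},\bar{\beta}) = (1/2)(\beta,\beta)$ while $k=2$; for $\beta \in \tilde{\Sigma}_{(3)}$, the classification data of Subsection \ref{s2-4} give $(\bar{\beta},\bar{\beta}) = (1/4)(\beta,\beta)$ while $k=4$. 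Combined with the outer scaling factor $((\bar{\beta},\bar{\beta})/4)(1/\sqrt{d})$ this gives $(1/d)\bar{T}^{(1)}_{\alpha_{1}}$, so the sign from Lemma \ref{2-2-2} yields the desired right-hand side after pushforward. The main obstacle will be this case-by-case bookkeeping to verify $kc(\bar{\beta},\bar{\beta}) = (\alpha_{1},\alpha_{1})$, which amounts to tracking the interplay between root lengths in $\tilde{\Sigma}$ and restricted root lengths in $\Sigma$.
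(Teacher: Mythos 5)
Your proposal is correct and follows essentially the same route as the paper: both reduce the curvature at $x$ via the isometry $\mathrm{exp}H$ and Lemma \ref{2-2-2} to the triple brackets $-[[\bar{T}^{(i)}_{\beta},\bar{T}^{(1)}_{\alpha_{1}}],\bar{T}^{(1)}_{\alpha_{1}}]$ and $-[[\bar{T}^{(i)}_{\beta},\bar{T}^{(1)}_{\alpha_{1}}],\bar{T}^{(i)}_{\beta}]_{\frak{m}_{x}}$, apply Lemmas \ref{4-2-1} and \ref{4-2-2}, and check that the normalization constant equals $1/d$. The only (harmless) difference is bookkeeping: you verify $\phi_{\beta}c_{\beta}(\bar{\beta},\bar{\beta})=(\alpha_{1},\alpha_{1})$ uniformly from $(\beta,\sigma(\beta))=0$ (Lemmas \ref{2-4-4}, \ref{2-4-5}) and the length relation in the $\tilde{\Sigma}_{(3)}$ case, whereas the paper tabulates the triples $(\omega_{\beta},c_{\beta},\phi_{\beta})$ space by space.
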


\begin{proof}
Let $\omega_{\alpha} = (\bar{\alpha}, \bar{\alpha})^{\frac{1}{2}}/2$ for any $\alpha \in \tilde{\Sigma}$ such that $\bar{\alpha} \not= 0$.
Then, $\bar{T}^{(i)}_{\beta} = \omega_{\beta} T^{(i)}_{\beta}$.
Note that $\omega_{\alpha_{1}} = 1/d$, $\omega_{\beta} = 1/\sqrt{2d}$ if $\bar{\beta} = e_{1} \pm e_{k} \ (2 \leq k \leq n)$, and $\omega_{\beta} = 1/(2\sqrt{d})$ if $\bar{\beta} = e_{1}$.
By Lemma \ref{2-2-2} and Lemma \ref{4-2-1}, 
\[
\begin{split}
\Big( R^{M} \Big( {}^{N}(S^{(i)}_{\beta})^{*}_{x}, {}^{N}(S^{(1)}_{\alpha_{1}})^{*}_{x} \Big) {}^{N}(S^{(1)}_{\alpha_{1}})^{*}_{x} \Big)_{T_{x}N} 
&= 
- \Big( (\mathrm{exp}H)_{*} \big[ [ \bar{T}^{(i)}_{\beta}, \bar{T}^{(1)}_{\alpha_{1}} ], \bar{T}^{(1)}_{\alpha_{1}} \big] \Big)_{T_{x}N} \\
&=
- \Big( (\mathrm{exp}H)_{*} \left( \frac{1}{\sqrt{d}} \right)^{2} \omega_{\beta} \big[ [T^{(i)}_{\beta}, T^{(1)}_{\alpha_{1}} ], T^{(1)}_{\alpha_{1}} \big] \Big)_{T_{x}N} \\
&=
\left( \frac{1}{d} \right) (\mathrm{exp}H)_{*} \omega_{\beta} T^{(i)}_{\beta} 
= 
\left( \frac{1}{d} \right) {}^{N}(S^{(i)}_{\beta})^{*}_{x}.
\end{split}
\]
Moreover, by Lemma \ref{4-2-2},
\[
\begin{split}
\Big( R^{M} \Big( {}^{N}(S^{(i)}_{\beta})^{*}_{x}, {}^{N}(S^{(1)}_{\alpha_{1}})^{*}_{x} \Big) {}^{N}(S^{(i)}_{\beta})^{*}_{x} \Big)_{T_{x}N} 
&= 
- \Big( (\mathrm{exp}H)_{*} \big[ [ \bar{T}^{(i)}_{\beta}, \bar{T}^{(1)}_{\alpha_{1}} ], \bar{T}^{(i)}_{\beta} \big] \Big)_{T_{x}N} \\
&=
- \left( \frac{1}{\sqrt{d}} \right) (\mathrm{exp}H)_{*} \omega^{2}_{\beta} \Big( \big[ [T^{(i)}_{\beta}, T^{(1)}_{\alpha_{1}} ], T^{(i)}_{\beta} \big] \Big)_{\frak{m}_{x}} \\
&=
-(\mathrm{exp}H)_{*} \left( \frac{1}{\sqrt{d}} \right) ( \omega_{\beta}^{2} \phi_{\beta} c_{\beta}) T^{(1)}_{\alpha_{1}} \\
&=
-(\omega_{\beta}^{2} \phi_{\beta} c_{\beta}) {}^{N}(S^{(1)}_{\alpha_{1}})^{*}_{x},
\end{split}
\]
where $c_{\beta}, \phi_{\beta} \in \mathbb{R}$ such that $c_{\beta}|\beta|^{2} = |\alpha_{1}|^{2}$, and 
\[
\phi_{\beta} =
\left\{
\begin{array}{llllll}
1 & & (\beta \in \tilde{\Sigma}_{(1)}), \\
2 & & (\beta \in \tilde{\Sigma}_{(2)}), \\
4 & & (\beta \in \tilde{\Sigma}_{(3)}). \\
\end{array}
\right.
\]
If $M = \tilde{G}_{2}(\mathbb{R}^{n})\ (\text{$n$ is odd and $n \geq 5$})$ or $Sp(n)/U(n)\ (n \geq 3)$, then $(\omega_{\beta}, c_{\beta}, \phi_{\beta})$ is either
$( 1/\sqrt{2d}, 1, 2)$ or $(1/\sqrt{2d}, 2, 1)$.
If $M$ is one of the others, then $(\omega_{\beta}, c_{\beta}, \phi_{\beta})$ is either $(1/\sqrt{2d}, 1, 2)$ or $(1/2\sqrt{d}, 1, 4)$.
Thus, $\omega_{\beta}^{2} c_{\beta} \phi_{\beta} = 1/d$ for any $M$ and the statement follows.
\end{proof}

By Lemma \ref{2-3-4}, Lemma \ref{4-2-2}, and Lemma \ref{4-2-3}, we obtain Lemma \ref{4-2-6}.

\begin{lemm} \label{4-2-6}
Let $\beta, \gamma \in \tilde{\Sigma}$ satisfy $\bar{\beta}, \bar{\gamma} \in (\Sigma^{+}_{x})_{C}$ and $\beta \not= \gamma, \sigma(\gamma)$.
Then, for any $1 \leq i \not= j \leq 2$,
\[
\begin{split}
& \Big( R^{M} \Big( {}^{N}(S^{(i)}_{\beta})^{*}_{x}, {}^{N}(S^{(1)}_{\alpha_{1}})^{*}_{x} \Big) {}^{N}(S^{(j)}_{\beta})^{*}_{x} \Big)_{T_{x}N} = 0, \\
& \Big( R^{M} \Big( {}^{N}(S^{(i)}_{\beta})^{*}_{x}, {}^{N}(S^{(1)}_{\alpha_{1}})^{*}_{x} \Big) {}^{N}(S^{(i)}_{\gamma})^{*}_{x} \Big)_{T_{x}N} = 0, \\
& \Big( R^{M} \Big( {}^{N}(S^{(i)}_{\beta})^{*}_{x}, {}^{N}(S^{(1)}_{\alpha_{1}})^{*}_{x} \Big) {}^{N}(S^{(j)}_{\gamma})^{*}_{x} \Big)_{T_{x}N} = 0. \\
\end{split}
\]
\end{lemm}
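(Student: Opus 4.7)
The strategy is to reduce each of the three curvature terms to a triple Lie bracket at $o$ via Lemma \ref{2-2-2} and the $G$-invariance of $R^M$, and then to read off the vanishing from Lemmas \ref{2-3-4}, \ref{4-2-2}, and \ref{4-2-3}. Concretely, pushing forward by $(\exp H)_*$,
\[
R^M\bigl({}^N(S^{(i)}_\beta)^*_x,\,{}^N(S^{(1)}_{\alpha_1})^*_x\bigr)\,{}^N(S^{(*)}_\delta)^*_x = -(\exp H)_*\bigl[\,[\bar T^{(i)}_\beta,\bar T^{(1)}_{\alpha_1}],\,\bar T^{(*)}_\delta\,\bigr],
\]
where $\delta=\beta$ in the first equation and $\delta=\gamma$ in the second and third. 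Since $T_xN=(\exp H)_*\frak{m}_x$, it suffices to show that the $\frak{m}_x$-part of this triple bracket vanishes in each of the three cases.

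For the first equation ($\delta=\beta$ with $i\neq j$), the vanishing $[[T^{(i)}_\beta,T^{(1)}_{\alpha_1}],T^{(j)}_\beta]_{\frak{m}_x}=0$ for $(i,j)=(1,2),(2,1)$ is precisely the mixed-index identity stated in Lemma \ref{4-2-2} (the scalars $\omega_\beta,\omega_{\alpha_1}$ converting $T$ to $\bar T$ are harmless). For the second and third equations ($\delta=\gamma$ with $\gamma\neq\beta,\sigma(\beta)$) I split on whether $\bar\gamma=\bar\beta$. When $\bar\gamma=\bar\beta$, Lemma \ref{4-2-3} places the triple bracket in $\frak{a}$ if $\bar\beta=e_1$ and in $\frak{m}_{2e_k}$ if $\bar\beta=e_1\pm e_k$; at $x=\exp C_0(t)(o)$ one has $\Sigma^+_x\cap\{2e_k:k\ge 1\}=\{2e_1\}$, so both $\frak{a}$ and $\frak{m}_{2e_k}$ (for $k\ge 2$) sit inside $\frak{m}_x^\perp$. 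When $\bar\gamma\neq\bar\beta$, the vanishing is exactly the observation recorded immediately before Lemma \ref{4-2-3}, which follows from Lemma \ref{2-3-4} as follows: $\alpha_1+\bar\beta\notin\Sigma$ forces the inner bracket to land in $\frak{k}_{\alpha_1-\bar\beta}$, and then Lemma \ref{2-3-4} confines the outer bracket to $\frak{m}_{\alpha_1-\bar\beta+\bar\gamma}+\frak{m}_{\alpha_1-\bar\beta-\bar\gamma}$.

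The only point requiring genuine checking is that neither of these last two indices ever lies in $\Sigma^+_x$ when $\bar\beta\neq\bar\gamma$. Using $(\Sigma^+_x)_C=\{e_1\}\cup\{e_1\pm e_k:k\ge 2\}$ and $\alpha_1=2e_1$, one reads off that $\alpha_1-\bar\beta-\bar\gamma$ has $e_1$-coefficient $0$ while every root in $\Sigma^+_x$ has positive $e_1$-coefficient, and that $\alpha_1-\bar\beta+\bar\gamma$ has $e_1$-coefficient $2$ together with a nonzero remaining component (since $\bar\beta\neq\bar\gamma$), so it fails to be a restricted root at all. Hence both summands lie in $\frak{m}_x^\perp$, completing the case. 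The third equation of the lemma is obtained from the second by the symmetric role of the outer index $i\leftrightarrow j$, so no separate argument is needed. The main obstacle is really just the bookkeeping on restricted roots in this last subcase; all the substantive Lie-algebra identities have been absorbed into Lemmas \ref{4-2-2} and \ref{4-2-3}, which is why the proof reduces to a citation.
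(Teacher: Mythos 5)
Your proposal is correct and follows essentially the same route as the paper: reduce the curvature terms to triple brackets via Lemma \ref{2-2-2} and $G$-invariance, then invoke the mixed-index identities of Lemma \ref{4-2-2}, Lemma \ref{4-2-3}, and the Lemma \ref{2-3-4}-based observation preceding Lemma \ref{4-2-3} for the case $\bar{\beta} \not= \bar{\gamma}$. The only difference is that you spell out the restricted-root bookkeeping (the $e_{1}$-coefficient argument showing $\alpha_{1} - \bar{\beta} \pm \bar{\gamma}$ never lands in $\Sigma^{+}_{x}$), which the paper leaves implicit in its citation of Lemma \ref{2-3-4}.
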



\subsection{Sasaki $CR$ orbits} \label{s4-3}

In this subsection, we study the condition under which a contact $CR$ orbit becomes a Sasaki $CR$ orbit.
Let $H = C_{0}(t)\ (0 < t < 1)$ and $x = \mathrm{exp}H(o)$.
Denote by $N$ the $K$-orbit through $x$.

\begin{lemm} \label{4-3-1}
Let $k = (1/d)\left( 1 + 2 \cot t(\pi/2) \cot t\pi \right)$.
Then, for any $X,Y \in T_{x}N$,
\[
R^{N}(X, \xi_{x})Y = k \Big( \langle \xi_{x}, Y \rangle X - \langle X, Y \rangle \xi_{x} \Big).
\]
\end{lemm}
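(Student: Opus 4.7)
The plan is to invoke the Gauss equation at $x$ and reduce to the computations in Subsection 4.2. By $K$-invariance of $\xi$, $\mathcal{D}$, $\mathcal{D}^{\perp}$, and both curvature tensors, it suffices to verify the identity at the single point $x$. Combining the formula $J^{*}_{x}=(\mathrm{exp}H)_{*}(-a_{1}\sin t\pi\,\bar{T}^{(1)}_{\alpha_{1}})$ derived in proving $|\xi|=1$ with Lemma \ref{2-4-3} gives
\[
\xi_{x}=\frac{2a_{1}}{\sqrt{d}}\,(\mathrm{exp}H)_{*}\bar{T}^{(1)}_{\alpha_{1}}=\pm\,{}^{N}(S^{(1)}_{\alpha_{1}})^{*}_{x}.
\]
Since both sides of the desired identity are linear in $\xi$, I may assume $\xi_{x}={}^{N}(S^{(1)}_{\alpha_{1}})^{*}_{x}$, whereupon Lemmas \ref{4-2-4}, \ref{4-2-5}, and \ref{4-2-6} apply directly to $\xi_{x}$. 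Decomposing $Y=\langle\xi_{x},Y\rangle\xi_{x}+Y^{\perp}$ with $Y^{\perp}\in\mathcal{D}_{x}$, the antisymmetry of $R^{N}$ in its first two arguments reduces the claim to the two assertions
\[
R^{N}(X,\xi_{x})\xi_{x}=kX\quad\text{and}\quad R^{N}(X,\xi_{x})Y=-k\langle X,Y\rangle\xi_{x}\qquad(X,Y\in\mathcal{D}_{x}).
\]

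The Gauss equation combined with $h^{N}(X,\xi_{x})=0$ for $X\in\mathcal{D}_{x}$ (Lemma \ref{4-2-4}) reduces the first assertion to
\[
\langle R^{N}(X,\xi_{x})\xi_{x},W\rangle=\langle R^{M}(X,\xi_{x})\xi_{x},W\rangle+\langle h^{N}(X,W),h^{N}(\xi_{x},\xi_{x})\rangle
\]
for all $W\in T_{x}N$. Lemma \ref{4-2-5} supplies the first term as $(1/d)\langle X,W\rangle$. Since $h^{N}(\xi_{x},\xi_{x})=\cot t\pi\,(\mathrm{exp}H)_{*}(iH_{2e_{1}})$ lies in $(\mathrm{exp}H)_{*}\frak{a}$, I would evaluate the second term on the orthonormal basis $P$ of $T_{x}N$: by Lemma \ref{4-2-4} it vanishes whenever $X\neq W$ in $P$ (either $h^{N}(X,W)=0$, or $h^{N}(X,W)\in(\mathrm{exp}H)_{*}\frak{m}_{\overline{\beta-\gamma}}$ with $\overline{\beta-\gamma}\neq 0$ and hence $\frak{a}$-orthogonal), while for $X=W={}^{N}(S^{(i)}_{\beta})^{*}_{x}$ it equals $\cot t(\pi/2)\cot t\pi\,(\bar{\beta},2e_{1})$. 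The key arithmetic fact is the uniform value $(\bar{\beta},2e_{1})=2(e_{1},e_{1})=2/d$, which holds for every $\bar{\beta}\in(\Sigma^{+}_{x})_{C}=\{e_{1}\pm e_{k}:2\le k\le n\}\cup\{e_{1}\}$ since each such $\bar{\beta}$ has $e_{1}$-coefficient $1$. This yields $(2/d)\cot t(\pi/2)\cot t\pi\langle X,W\rangle$ for the second term, and the sum is precisely $k\langle X,W\rangle$.

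For the second assertion, the Gauss equation with $h^{N}(\xi_{x},\cdot)|_{\mathcal{D}_{x}}=0$ collapses to $\langle R^{N}(X,\xi_{x})Y,W\rangle=\langle R^{M}(X,\xi_{x})Y,W\rangle$ when $W\in\mathcal{D}_{x}$; by Lemmas \ref{4-2-5} and \ref{4-2-6}, the $T_{x}N$-projection of $R^{M}(X,\xi_{x})Y$ is always a scalar multiple of $\xi_{x}$, which is orthogonal to $W$, matching the vanishing of the right-hand side. For $W=\xi_{x}$ the identity follows from the first assertion together with the curvature symmetry $\langle R^{N}(X,\xi_{x})Y,\xi_{x}\rangle=-\langle R^{N}(X,\xi_{x})\xi_{x},Y\rangle$. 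The main obstacle is the uniformity $(\bar{\beta},2e_{1})=2/d$ across all of $(\Sigma^{+}_{x})_{C}$: without it the sectional curvatures of planes $\mathrm{span}(X,\xi_{x})$ would depend on $X\in\mathcal{D}_{x}$, and $\xi_{x}$ could not be a Reeb field of any Sasakian structure on $N$.
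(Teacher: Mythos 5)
Your proposal is correct and follows essentially the same route as the paper: the Gauss equation at $x$, the identification $\xi_{x}=\pm\,{}^{N}(S^{(1)}_{\alpha_{1}})^{*}_{x}$, and the curvature and second-fundamental-form computations of Lemmas \ref{4-2-4}, \ref{4-2-5}, and \ref{4-2-6} evaluated on the orthonormal basis $P$, with the same key value $\langle h^{N}(X,X),h^{N}(\xi_{x},\xi_{x})\rangle=(2/d)\cot t(\pi/2)\cot t\pi$ coming from $(\bar{\beta},2e_{1})=2/d$. Your preliminary reduction to the two assertions on $\mathcal{D}_{x}$ and the explicit sign discussion are only organizational refinements of the paper's basis-by-basis verification.
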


\begin{proof}
First, recall the Gauss equation
\[
\Big\langle R^{N}(X,\xi_{x})Y, \ Z \Big\rangle  = \Big\langle R^{M}(X,\xi_{x})Y, \ Z \Big\rangle - \Big\langle h(X, Y), h(\xi_{x}, Z) \Big\rangle + \Big\langle h(X, Z), h(\xi_{x}, Y) \Big\rangle,
\]
where $X,Y,Z \in T_{x}N$.
Note $\xi_{x} = {}^{N}(S^{(1)}_{\alpha_{1}})^{*}_{x}$.
Let $\beta \in \tilde{\Sigma}$ satisfy $\bar{\beta} \in (\Sigma^{+}_{x})_{C}$.
If $S^{(i)}_{\beta} \not= 0$, then for any $A \in P$, by Lemma \ref{4-2-4} and Lemma \ref{4-2-5},
\[
\begin{split}
& \Big\langle R^{M} \Big( {}^{N}(S^{(i)}_{\beta})^{*}_{x}, \xi_{x} \Big) \xi_{x}, \ A \Big\rangle 
=
\Big\langle \frac{1}{d} {}^{N}(S^{(i)}_{\beta})^{*}_{x}, A \Big\rangle = 
\left\{
\begin{array}{llll}
\vspace{2mm}
1/d & (A = {}^{N}(S^{(i)}_{\beta})^{*}_{x}), \\
0 & (\text{others}),
\end{array}
\right. 
\\
& \Big\langle h^{N} \Big( {}^{N}(S^{(i)}_{\beta})^{*}_{x}, \xi_{x} \Big),  h^{N} \Big( \xi_{x}, \ A \Big)  \Big\rangle
= \Big\langle 0, h^{N} \Big( \xi_{x}, A \Big)  \Big\rangle = 0, \\
& \Big\langle h^{N} \Big( {}^{N}(S^{(i)}_{\beta})^{*}_{x}, A \Big),  h^{N} \Big( \xi_{x}, \xi_{x} \Big)  \Big\rangle
= 
\left\{
\begin{array}{lllll}
\vspace{1mm}
(2/d) \cot t(\pi/2) \cot t\pi & (A = {}^{N}(S^{(i)}_{\beta})^{*}_{x}), \\
0 & (\text{others}). \\
\end{array}
\right.
\end{split}
\]
Thus, we obtain
\[
R^{N} \Big( {}^{N}(S^{(i)}_{\beta})^{*}_{x}, \xi_{x} \Big) \xi_{x} = (1/d)(1 + 2 \cot t(\pi/2) \cot t\pi ){}^{N}(S^{(i)}_{\beta})^{*}_{x} = k {}^{N}(S^{(i)}_{\beta})^{*}_{x}.
\]
Moreover, by Lemma \ref{4-2-4} and Lemma \ref{4-2-5}, for any $A \in P$,
\[
\begin{split}
& \Big\langle R^{M} \Big( {}^{N}(S^{(i)}_{\beta})^{*}_{x}, \xi_{x} \Big) {}^{N}(S^{(i)}_{\beta})^{*}_{x}, \ A \Big\rangle 
=
\Big\langle -\frac{1}{d}\xi_{x}, A \Big\rangle = 
\left\{
\begin{array}{llll}
\vspace{2mm}
-(1/d) & (A = \xi_{x}), \\
0 & (\text{others}),
\end{array}
\right. 
\\
& \Big\langle h^{N} \Big( {}^{N}(S^{(i)}_{\beta})^{*}_{x}, {}^{N}(S^{(i)}_{\beta})^{*}_{x} \Big),  h^{N} \Big( \xi_{x}, A \Big)  \Big\rangle
= 
\left\{
\begin{array}{lllll}
\vspace{1mm}
(2/d) \cot t (\pi/2) \cot t\pi & (A = \xi_{x}), \\
0 & (\text{others}), \\
\end{array}
\right. \\
& \Big\langle h^{N} \Big( {}^{N}(S^{(i)}_{\beta})^{*}_{x}, \ A \Big),  h^{N} \Big( {}^{N}(S^{(i)}_{\beta})^{*}_{x} , \xi_{x} \Big)  \Big\rangle
= 
\Big\langle h^{N} \Big( {}^{N}(S^{(i)}_{\beta})^{*}_{x}, \ A \Big),  0 \Big\rangle = 0.
\end{split}
\]
Hence,
\[
R^{N} \Big( {}^{N}(S^{(i)}_{\beta})^{*}_{x}, \xi_{x} \Big) {}^{N}(S^{(i)}_{\beta})^{*}_{x} = - (1/d)( 1 + 2 \cot t(\pi/2) \cot t \pi)\xi_{x} = -k \xi_{x}.
\]
Assume that $S^{(1)}_{\beta}, S^{(2)}_{\beta} \not = 0$.
Then, by Lemma \ref{4-2-4} and Lemma \ref{4-2-6}, for any $A \in P$ and $1 \leq i \not= j \leq 2$,
\[
\begin{split}
& \Big\langle R^{M} \Big( {}^{N}(S^{(i)}_{\beta})^{*}_{x}, \xi_{x} \Big) {}^{N}(S^{(j)}_{\beta})^{*}_{x}, \ A \Big\rangle 
= \langle 0, A \rangle = 0, \\
& \Big\langle h^{N} \Big( {}^{N}(S^{(i)}_{\beta})^{*}_{x}, {}^{N}(S^{(j)}_{\beta})^{*}_{x} \Big),  h^{N} \Big( \xi_{x}, A \Big)  \Big\rangle
= \Big\langle 0, h^{N} \Big( \xi^{*}_{x}, A \Big)  \Big\rangle = 0, \\
& \Big\langle h^{N}\Big( {}^{N}(S^{(i)}_{\beta})^{*}_{x}, A \Big),  h^{N} \Big( {}^{N}(S^{(j)}_{\beta})^{*}_{x}, \xi^{*}_{x} \Big)  \Big\rangle
= 
\Big\langle h^{N}\Big( {}^{N}(S^{(i)}_{\beta})^{*}_{x}, A \Big), 0 \Big\rangle 
=
0.
\end{split}
\]
Moreover, let $\gamma \in \tilde{\Sigma}$ be $\bar{\gamma} \in (\Sigma^{+}_{x})_{C}$ and $\gamma \not= \beta, \sigma(\beta)$.
Then, by Lemma \ref{4-2-4} and Lemma \ref{4-2-6}, for any $A \in P$ and $1 \leq i \not= j \leq 2$,
\[
\begin{split}
& \Big\langle \Big( R^{M} \Big( {}^{N}(S^{(i)}_{\beta})^{*}_{x}, \xi_{x} \Big) {}^{N}(S^{(j)}_{\gamma})^{*}_{x}, \ A \Big\rangle 
= \langle 0, A \rangle = 0, \\
& \Big\langle h^{N}\Big( {}^{N}(S^{(i)}_{\beta})^{*}_{x}, {}^{N}(S^{(j)}_{\gamma})^{*}_{x} \Big),  h^{N} \Big( \xi_{x}, \ A \Big)  \Big\rangle
=  0, \\
& \Big\langle  h^{N}\Big( {}^{N}(S^{(i)}_{\beta})^{*}_{x}, \ A \Big),  h^{N} \Big( {}^{N}(S^{(j)}_{\gamma})^{*}_{x}, \xi_{x} \Big)  \Big\rangle
= 
\Big\langle  h^{N}\Big( {}^{N}(S^{(i)}_{\beta})^{*}_{x}, \ A \Big),  0 \Big\rangle = 0.
\end{split}
\]
Therefore, for any $1 \leq i \not= j \leq 2$, 
\[
R^{N} \big( {}^{N}(S^{(i)}_{\beta})^{*}_{x}, {}^{N}(S^{(1)}_{\alpha_{1}})^{*}_{x} \big) {}^{N}(S^{(j)}_{\beta})^{*}_{x} = 0,
\]
and for any $i,j = 1,2$,
\[
R^{N} \Big( {}^{N}(S^{(i)}_{\beta})^{*}_{x}, {}^{N}(S^{(1)}_{\alpha_{1}})^{*}_{x} \Big) {}^{N}(S^{(j)}_{\gamma})^{*}_{x} = 0.
\]
Summarizing these arguments, for any $A,B \in P$,
\[
R^{N}( A, \xi_{x} )B = 
\left\{
\begin{array}{lllll}
k A & & (A \not= \xi_{x}, \ B = \xi_{x}), \\
- k \xi_{x} & & (A = B \ \text{and} \ A \not= \xi_{x}), \\
0 & & (\text{others}).
\end{array}
\right.
\]
Since $P$ is an orthonormal basis of $T_{x}N$, for any $X,Y \in T_{x}N$,
\[
R^{N}(X,\xi_{x})Y = k \Big( \langle \xi_{x}, Y \rangle X - \langle X, Y \rangle \xi_{x} \Big).
\]
\end{proof}

\begin{thm} \label{4-3-2}
Let $\langle \ ,\ \rangle$ be an invariant Riemannian metric of an irreducible Hermitian symmetric space $M$ of compact type and $\sqrt{d}\pi$ be the length of a shortest closed geodesic of $M$.
Moreover, define $0 < t < 1$ such that
\[
\tan^{2} \frac{\pi}{2}t = \frac{1}{d}.
\]
The $K$-orbit $N$ is a Sasaki $CR$ submanifold if and only if either $\Sigma$ is of type $BC_{n}\ (n \geq 1)$ and $N$ is the orbit through $\mathrm{exp}C_{0}(t)(o) \ ( 0 < t < 1)$, or $\Sigma$ is of type $C_{n}\ (n \geq 2)$ and $N$ is the orbit through $\mathrm{exp}C_{0}(t)(o)$ or $\mathrm{exp}C_{n-1}(1-t)(o) \ ( 0 < t < 1)$.
\end{thm}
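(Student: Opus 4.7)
The plan is to combine Theorem \ref{4-1-1}, which identifies the orbits carrying a contact structure, with the curvature computation of Lemma \ref{4-3-1}, and then invoke characterisation (ii) of Definition 2.3. Since every Sasaki $CR$ submanifold is in particular a contact $CR$ submanifold, Theorem \ref{4-1-1} restricts the list of candidates to $N = K(\mathrm{exp}\,C_{0}(t))(o)$ when $\Sigma$ is of type $BC_{n}$, together with $N = K(\mathrm{exp}\,C_{n-1}(1-t))(o)$ in the additional $C_{n}$ case. On each such orbit I work with the unit-length Killing vector field $\xi = (-2/\sqrt{d}\sin t\pi)\,J^{*}$ constructed in Subsection \ref{s4-1}, which lies in $\mathcal{D}^{\perp}$ and whose associated $1$-form is the contact form $\eta(\cdot) = \langle \xi, \cdot\rangle$.

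The crux is Lemma \ref{4-3-1}, which gives
\[
R^{N}(X, \xi_{x})Y = k\Bigl(\langle \xi_{x}, Y\rangle X - \langle X, Y\rangle \xi_{x}\Bigr), \qquad k = \frac{1}{d}\Bigl(1 + 2\cot\tfrac{t\pi}{2}\cot t\pi\Bigr),
\]
for every $X, Y \in T_{x}N$. Comparing with condition (ii) of Definition 2.3 shows that $\xi$ makes $N$ into a Sasakian manifold if and only if $k = 1$, so the entire analytic content of the theorem reduces to the trigonometric equation $d = 1 + 2\cot(t\pi/2)\cot(t\pi)$. Using the double-angle identity
\[
\cot t\pi = \frac{\cot^{2}(t\pi/2) - 1}{2\cot(t\pi/2)},
\]
the right-hand side collapses to $\cot^{2}(t\pi/2)$, so the condition becomes $\cot^{2}(\pi t/2) = d$, equivalently $\tan^{2}(\pi t/2) = 1/d$. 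Since $t \mapsto \tan^{2}(\pi t/2)$ is a strictly increasing bijection from $(0,1)$ onto $(0, \infty)$, a unique parameter $t \in (0, 1)$ is determined by any admissible $d > 0$, which is exactly the $t$ prescribed in the statement.

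For the remaining $C_{n}$-type possibility $H = C_{n-1}(1-t)$, I invoke the holomorphic isometry of $M$ recalled just before Theorem \ref{4-1-1} (following \cite{Chen2}) which carries $K(\mathrm{exp}\,C_{0}(t))(o)$ onto $K(\mathrm{exp}\,C_{n-1}(1-t))(o)$; the Sasaki structure, being defined entirely in terms of the Riemannian metric and the complex structure of the ambient space, transports along this isometry, so the conclusion for $C_{0}(t)$ automatically implies the analogous conclusion for $C_{n-1}(1-t)$. The main obstacle is not conceptual but bookkeeping: I must be confident that the concrete $\xi$ supplied by the construction of Subsection \ref{s4-1} is indeed the one to test against (ii), and that no other choice of Reeb vector field could produce a Sasaki structure on the remaining, non-contact, orbits — the latter being ruled out already by Theorem \ref{4-1-1}.
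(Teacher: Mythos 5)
Your proposal is correct and follows essentially the same route as the paper: the paper's proof is precisely that, by Lemma \ref{4-3-1}, the orbit is Sasakian (via condition (ii) of the definition applied to the unit Killing field $\xi$) exactly when $k=1$, which an elementary computation shows is equivalent to $\tan^{2}(t\pi/2)=1/d$, with the contact restriction from Theorem \ref{4-1-1} and the isometry from \cite{Chen2} handling the candidate list and the $C_{n-1}(1-t)$ case. Your explicit double-angle reduction $1+2\cot(t\pi/2)\cot(t\pi)=\cot^{2}(t\pi/2)$ simply spells out the "easy calculations" the paper leaves implicit.
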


\begin{proof}
By easy calculations, we see that $k=1$ if and only if $\tan^{2} t \pi/2 = 1/d$.
Therefore, by Lemma \ref{4-3-1}, the statement follows.
\end{proof}

By Theorem \ref{3-3-7}, we obtain Corollary \ref{4-3-3} immediately.

\begin{coro} \label{4-3-3}
If $d =1$, then any Sasaki $CR$ orbit is a ruled $CR$ orbit.
\end{coro}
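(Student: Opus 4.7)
The plan is to combine Theorem \ref{4-3-2} with Theorem \ref{3-3-7}: solve the Sasaki condition on $t$ explicitly when $d=1$ and verify that the resulting points of $Q$ appear on the list of ruled $CR$ orbits.

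First, I would substitute $d = 1$ into the equation $\tan^{2}(\pi t/2) = 1/d$ of Theorem \ref{4-3-2}, obtaining $\tan^{2}(\pi t/2) = 1$. Since $0 < t < 1$ places $\pi t / 2$ in $(0, \pi/2)$, the tangent is positive, so this forces $\tan(\pi t/2) = 1$ and hence $t = 1/2$ uniquely. Thus the only possible Sasaki $CR$ orbits when $d = 1$ are those passing through $\mathrm{exp}\,C_{0}(1/2)(o)$ (in both type $BC_{n}$ and $C_{n}$) and, in the $C_{n}$ case, also $\mathrm{exp}\,C_{n-1}(1/2)(o)$.

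Next, I would compute the corresponding elements of $Q$ directly from the definition $C_{i}(t) = (1-t)Q_{i} + t Q_{i+1}$, together with the convention $Q_{0} = 0$ used throughout Section \ref{s4}:
\[
C_{0}(1/2) = \tfrac{1}{2}Q_{1}, \qquad C_{n-1}(1/2) = \tfrac{1}{2}(Q_{n-1} + Q_{n}).
\]
Finally, I would invoke Theorem \ref{3-3-7}, which states that the $K$-orbit through $\mathrm{exp}(H)(o)$ is a ruled $CR$ submanifold precisely when $H \in \{ Q_{a}, (1/2)Q_{a}, (1/2)(Q_{a} + Q_{b}) \}$ for some $1 \leq a < b \leq n$. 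Both $(1/2)Q_{1}$ (taking $a = 1$) and $(1/2)(Q_{n-1} + Q_{n})$ (taking $a = n-1$, $b = n$) belong to this list, so each Sasaki $CR$ orbit is a ruled $CR$ orbit.

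There is essentially no obstacle here: the corollary is a direct consequence of the two preceding theorems once the trigonometric equation is solved. The only point requiring minor care is ensuring uniqueness of $t$ in $(0,1)$, which follows from the monotonicity of $\tan$ on $(0, \pi/2)$, and checking that both families of Sasaki orbits identified in Theorem \ref{4-3-2} indeed produce points of $Q$ of one of the three ruled forms listed in Theorem \ref{3-3-7}.
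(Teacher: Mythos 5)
Your proposal is correct and follows exactly the route the paper intends: the paper derives Corollary \ref{4-3-3} directly from Theorem \ref{4-3-2} and Theorem \ref{3-3-7}, which amounts to the same computation you carry out ($d=1$ forces $t=1/2$, so $H = C_{0}(1/2) = (1/2)Q_{1}$ or $C_{n-1}(1/2) = (1/2)(Q_{n-1}+Q_{n})$, both on the ruled list). No gaps.
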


\begin{remark}
Let $M = \mathbb{C}P^{n-1} \ (n \geq 3)$, and the holomorphic sectional curvature of $M$ is denoted by $c$.
Then, the length of any shortest closed geodesic of $M$ is $2\pi/\sqrt{c}$.
Hence, $\sqrt{d}\pi = 2\pi/\sqrt{c}$.
According to the result of \cite{Adachi-Kameda-Maeda} and \cite{Berndt}, if $|C_{0}(t)| = (2/\sqrt{c})\tan^{-1}(\sqrt{c}/2)$, then the $K$-orbit $N$ through $x = (\mathrm{exp}\:C_{0}(t))(o)$ is a Sasakian manifold by the induced metric.
We apply Theorem \ref{4-3-2} to this case.
Since $\tan^{2} (t \pi/2) = 1/d$, we see $t \pi/2 = \tan^{-1}(1/\sqrt{d})$.
Therefore, we have
\[
|C_{0}(t)| = \frac{t\pi}{2}\sqrt{d} = \sqrt{d} \tan^{-1} \frac{1}{\sqrt{d}} = \frac{2}{\sqrt{c}} \tan^{-1} \frac{\sqrt{c}}{2}.
\]
Thus, our result is consistent with the result in \cite{Adachi-Kameda-Maeda} and \cite{Berndt}.
\end{remark}


\section{Sasaki-Einstein $CR$ orbits} \label{s5}

In this section, we consider the condition under which a Sasaki $CR$ orbit is Einstein with respect to the induced metric.
Let $H = C_{0}(t)\ (0 < t < 1)$ and $x = \mathrm{exp}H(o)$.
Denote by $N$ the $K$-orbit through $x$.
Moreover, assume that $N$ is a Sasaki $CR$ orbit, that is, $\tan^{2}t\pi/2 = 1/d$.
Then,
\[
\sin^{2} \frac{t\pi}{2} = \frac{1}{d+1}, \quad \cos^{2} \frac{t\pi}{2} = \frac{d}{d+1}.
\]
We denote the induced metric on $N$ by the same symbol $\langle \ ,\ \rangle$.
By Remark \ref{3-2-6}, $N$ is an $S^{1}$-bundle over the polar $M^{+}_{1} = K(p_{1})$.
Note that $M^{+}_{1}$ is a Hermitian symmetric space of compact type.
In particular, $M^{+}_{1}$ is a leaf space of the Reeb foliation of $N$, and the projection $\pi:N = K/K_{x} \rightarrow M^{+}_{1} = K/K_{1}$ is the natural projection onto the leaf space.
Then, by the arguments in Section 2, there exists a Riemannian metric $\langle \ ,\ \rangle_{\pi}$ on $M^{+}_{1}$ such that $\pi : (N, \langle \ ,\ \rangle) \rightarrow (M^{+}_{1}, \langle \ .\ \rangle_{\pi})$ is a Riemannian submersion.
The induced metric on $M^{+}_{1}$ by the Riemannian metric on $M$ is denoted by the same symbol $\langle \ ,\ \rangle$.
By straightforward calculations, we see that $(M^{+}_{1}, \langle \ ,\ \rangle_{\pi})$ is isometric to $(M^{+}_{1}, \sin^{2} (t\pi/2)\langle \ ,\ \rangle) = (M^{+}_{1}, (1/(d+1))\langle \ ,\ \rangle)$.
According to the general theory of Sasakian manifolds, $(N, \langle\ ,\ \rangle)$ is a $\eta$-Einstein Sasakian manifold if and only if $(M^{+}_{1}, (1/(d+1))\langle \ ,\ \rangle)$ is a K\"{a}hler Einstein manifold.
Moreover, $(N, \langle\ ,\ \rangle)$ is a Sasaki-Einstein manifold if and only if $(M^{+}_{1}, (1/(d+1))\langle \ ,\ \rangle)$ is a K\"{a}hler manifold with the Einstein constant $\dim M^{+}_{1} + 2$.

For each irreducible Hermitian symmetric space of compact type, the polar $M^{+}_{1}$ is listed in Table 2 \cite{Chen-Nagano}.
In general, any irreducible symmetric space is Einstein with respect to the invariant metric \cite{Besse}.
In $G_{k}(\mathbb{C}^{n})\ (k \not= 1, \ 2k \not= n)$, we can easily verify that $M_{1}^{+} = \mathbb{C}P^{k-1} \times \mathbb{C}P^{n-k-1}$ is not Einstein with respect to the induced metric.
On the other hand, in $\tilde{G}_{2}(\mathbb{R}^{6})$, we see that $M^{+}_{1} = \tilde{G}_{2}(\mathbb{R}^{4}) = S^{2} \times S^{2}$ is Einstein with respect to the induced metric.
Moreover, in $G_{k}(\mathbb{C}^{2k})$, we see that $M_{1}^{+} = \mathbb{C}P^{k-1} \times \mathbb{C}P^{k-1}$ is Einstein with respect to the induced metric, too.

\begin{table}[htb]
\caption{The list of $M_{1}^{+}$}
\begin{tabular}{c|ccccccccccccccccccccccccccccccccccc} \hline
$M$ & $G_{k}(\mathbb{C}^{n})$ & $\tilde{G}_{2}(\mathbb{R}^{n})$ & $SO(n)/U(n)$ & $Sp(n)/U(n)$ & $EIII$ & $EVII$ \\ 
 & $(n > k)$ & $(n \geq 3)$ & $(n \geq 2)$ & $(n \geq 1)$ & & \\ \hline
$M_{1}^{+}$ & $\mathbb{C}P^{k-1} \times \mathbb{C}P^{n-k-1}$ & $\tilde{G}_{2}(\mathbb{R}^{n-2})$ & $G_{2}(\mathbb{C}^{n})$ & $\mathbb{C}P^{n-1}$ & $SO(10)/U(5)$ & $EIII$ \\ \hline
\end{tabular}   
\end{table}

\begin{thm} \label{5-1-1}
If $M$ is an irreducible symmetric space of compact type, except for $G_{k}(\mathbb{C}^{n})\ (k< n, \ k \not= 1, \ 2k \not= n)$, then any Sasaki $CR$ orbit of the isotropy group action is $\eta$-Einstein.
\end{thm}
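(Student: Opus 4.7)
The strategy is to apply the reduction established just before the theorem: $(N, \langle\,,\,\rangle)$ is $\eta$-Einstein if and only if $(M^+_1, \tfrac{1}{d+1}\langle\,,\,\rangle)$ is K\"{a}hler--Einstein. Since the Einstein condition is preserved by homothety, this is equivalent to requiring that $M^+_1$, equipped with the metric induced from $(M, \langle\,,\,\rangle)$, be K\"{a}hler--Einstein. Because $M^+_1$ is a totally geodesic complex submanifold of the K\"{a}hler manifold $M$, it is automatically K\"{a}hler with respect to the induced metric, so the only point to verify is the Einstein property. The proof then proceeds by a case-by-case inspection of Table~2.

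In the cases $M = \tilde{G}_{2}(\mathbb{R}^{n})$ $(n \geq 5,\ n \neq 6)$, $M = SO(2n)/U(n)$, $M = Sp(n)/U(n)$, $M = EIII$, and $M = EVII$, the polar $M^+_1$ is an \emph{irreducible} Hermitian symmetric space of compact type, and so is automatically Einstein with respect to the induced invariant metric (any irreducible compact symmetric space is Einstein). The same applies to $M = \mathbb{C}P^{n-1} = G_{1}(\mathbb{C}^{n})$, whose polar $M^+_1 = \mathbb{C}P^{n-2}$ is irreducible. These reductions settle the theorem for every included case except $M = G_{k}(\mathbb{C}^{2k})$ with $k \geq 2$, for which $M^+_1 = \mathbb{C}P^{k-1} \times \mathbb{C}P^{k-1}$ is reducible (this also covers $\tilde{G}_{2}(\mathbb{R}^{6}) \cong G_{2}(\mathbb{C}^{4})$, whose polar $S^{2} \times S^{2}$ matches the $k=2$ instance).

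For the remaining reducible case, the plan is to exploit the holomorphic isometry $V \mapsto V^{\perp}$ of $G_{k}(\mathbb{C}^{2k})$. This isometry preserves the isotropy orbit structure, maps $M^+_1$ to itself, and exchanges its two $\mathbb{C}P^{k-1}$ factors. Consequently the induced metric restricts to isometric copies on the two factors: both must be the \emph{same} scalar multiple of the Fubini--Study metric. Since $\mathbb{C}P^{k-1}$ with any multiple of Fubini--Study is K\"{a}hler--Einstein, and both factors now have identical dimension, scaling, and Einstein constant, the induced product metric on $M^+_1$ is K\"{a}hler--Einstein. Via the reduction in the first paragraph, this yields $\eta$-Einsteinness of $N$.

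The principal technical obstacle is the reducible case $G_{k}(\mathbb{C}^{2k})$: one needs the two Fubini--Study scalings on the factors of $M^+_1$ to agree, and this fails in general for products. The swap-isometry argument is the cleanest resolution, but the existence and action of this isometry on $M^+_1$ must be identified carefully. An alternative, more intrinsic route would work at the root-system level: the two factors of $M^+_1$ correspond to subsystems of restricted roots of equal length and equal multiplicity under the $C_{k}$-Weyl symmetry of $G_{k}(\mathbb{C}^{2k})$, so any $K$-invariant metric is forced to restrict with matching normalization on each factor. Either route will complete the proof once combined with the general fact, recalled in Subsection~\ref{s2-1}, that $\eta$-Einstein is equivalent to transverse K\"{a}hler--Einstein.
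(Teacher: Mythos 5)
Your proposal follows essentially the same route as the paper: reduce $\eta$-Einsteinness of the Sasaki orbit to the K\"{a}hler--Einstein property of $\big(M^{+}_{1}, (1/(d+1))\langle\ ,\ \rangle\big)$, note that all polars in Table 2 for the admitted spaces are irreducible symmetric spaces and hence Einstein, and verify the one reducible polar $\mathbb{C}P^{k-1}\times\mathbb{C}P^{k-1}$ of $G_{k}(\mathbb{C}^{2k})$ (which also covers $\tilde{G}_{2}(\mathbb{R}^{6})$) directly. The only difference is that where the paper simply asserts this last verification, you supply a correct justification that the two factors carry the same Fubini--Study scaling (via the orthocomplement swap isometry or the equality of restricted-root lengths and multiplicities), so the product is Einstein.
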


Let $M$ be $\mathbb{C}P^{n-1}\ (n \geq 3)$.
Then, $M^{+}_{1} = \mathbb{C}P^{n-2}$, and the length of any shortest closed geodesic of $(M, \langle\ ,\ \rangle)$ is equal to that of $(M_{1}^{+}, \langle \ ,\ \rangle)$.
Since the length of a shortest closed geodesic of $(M^{+}_{1}, \langle \ ,\ \rangle)$ is $\sqrt{d} \pi$, that of $(M^{+}_{1}, (1/(d+1))\langle \ ,\ \rangle)$ is $\pi \sqrt{d/(d+1)}$.
In general, if the length of a shortest closed geodesic of $\mathbb{C}P^{n-1}$ is $\sqrt{a}\pi$ for some $a > 0$, then the holomorphic sectional curvature is $4/a$, and the Einstein constant is $2n/a$.
Thus, the Einstein constant of $(M^{+}_{1}, (1/(d+1))\langle \ ,\ \rangle)$ is $2(n-1)/(d/(d+1))$.
However, 
\[
2(n-1) \frac{d+1}{d} = 2 + \dim M^{+}_{1} = 2(n-1)
\]
has no solution for $d >0$.
Thus, we obtain Theorem \ref{5-1-2}.

\begin{lemm} \label{5-1-2}
If $M = \mathbb{C}P^{n-1} \ (n \geq 3)$, then for any invariant metric, there are no Sasaki-Einstein $CR$ orbits.
\end{lemm}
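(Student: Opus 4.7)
The plan is to apply the general machinery already set up in the section, namely that the Sasaki $CR$ orbit $N$ is Sasaki--Einstein if and only if the leaf space $(M^{+}_{1},\frac{1}{d+1}\langle\,,\,\rangle)$ of the Reeb foliation is K\"ahler--Einstein with Einstein constant $\dim M^{+}_{1}+2$. For $M=\mathbb{C}P^{n-1}$ Table~2 gives $M^{+}_{1}=\mathbb{C}P^{n-2}$, so the K\"ahler--Einstein condition is automatic and the problem reduces to a single numerical identity in the parameter $d>0$.

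First I would record the length of a shortest closed geodesic of $(M^{+}_{1},\langle\,,\,\rangle)$. Since $M^{+}_{1}$ is a totally geodesic complex submanifold of $M$ (Subsection 2.4), any shortest closed geodesic of $M$ that lies in $M^{+}_{1}$ is still a shortest closed geodesic of $M^{+}_{1}$; equivalently, from $M^{+}_{1}=\mathbb{C}P^{n-2}\hookrightarrow\mathbb{C}P^{n-1}=M$ both spaces inherit the same constant holomorphic sectional curvature, so the shortest closed geodesic has length $\sqrt{d}\pi$ in both. Rescaling $\langle\,,\,\rangle$ by $1/(d+1)$ multiplies this length by $1/\sqrt{d+1}$, giving length $\pi\sqrt{d/(d+1)}$ on $(M^{+}_{1},\frac{1}{d+1}\langle\,,\,\rangle)$.

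Next I would invoke the standard formula for $\mathbb{C}P^{m}$: if its shortest closed geodesic has length $\sqrt{a}\pi$, then its holomorphic sectional curvature is $4/a$ and its Einstein constant is $2(m+1)/a$. Applied with $m=n-2$ and $a=d/(d+1)$, the Einstein constant of $(M^{+}_{1},\frac{1}{d+1}\langle\,,\,\rangle)$ equals
\[
2(n-1)\,\frac{d+1}{d}.
\]
The Sasaki--Einstein condition requires this to equal $\dim M^{+}_{1}+2=2(n-2)+2=2(n-1)$, leading to the equation $(d+1)/d=1$, i.e.\ $1=0$, which has no solution for any $d>0$.

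The main obstacle, such as it is, lies only in the bookkeeping: making sure that the shortest-geodesic length of the ambient space transfers unchanged to the polar $M^{+}_{1}$ (which follows from the totally geodesic embedding) and that the rescaling $\langle\,,\,\rangle\mapsto\tfrac{1}{d+1}\langle\,,\,\rangle$ is applied consistently to both the length functional and the Einstein constant. Once those scalings are tracked correctly, the conclusion is an immediate arithmetic impossibility, yielding the lemma.
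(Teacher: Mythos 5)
Your proposal is correct and follows essentially the same route as the paper: compute the shortest-geodesic length of $M^{+}_{1}=\mathbb{C}P^{n-2}$ with the rescaled metric $\tfrac{1}{d+1}\langle\,,\,\rangle$, convert it to the Einstein constant $2(n-1)(d+1)/d$, and observe that equating this with $\dim M^{+}_{1}+2=2(n-1)$ forces $(d+1)/d=1$, which is impossible for $d>0$. No differences worth noting.
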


In the following arguments, let $M$ be an irreducible Hermitian symmetric space of compact type, except for $\mathbb{C}P^{n-1}$ and $G_{k}(\mathbb{C}^{n})\ (k \not= 1, 2k \not= n)$, and we 
study the Einstein constant of $(M^{+}_{1}, (1/(d+1))\langle \ ,\ \rangle)$.
Then, the restricted root system $\Sigma$ is of type $BC_{n}$ or $C_{n} \ (n \geq 2)$.
It is well known that a polar is a totally geodesic submanifold.
The corresponding Lie triple system $\frak{m}_{1}^{+}$ of $M^{+}_{1}$ in $\frak{m}$ is given by
\[
\frak{m}_{1}^{+} = \sum_{\lambda \in (\Sigma^{+}_{x})_{C}}\frak{m}_{\lambda}.
\]
In particular, $M^{+}_{1} = (\mathrm{exp}H)(\mathrm{exp}\frak{m}_{1}^{+})(o)$.
The Ricci tensor of $(M^{+}, (1/(d+1))\langle \ ,\ \rangle)$ is denoted by $r^{+}$.
For any $X,Y \in \frak{m}_{1}^{+}$, it is known that the Ricci tensor $r^{+}$ is given by
\[
\begin{split}
r^{+} \Big( (\mathrm{exp}H)_{*}X, (\mathrm{exp}H)_{*}Y \Big) 
&= \mathrm{tr} \big( \frak{m}_{1}^{+} \ni Z \mapsto [X, [Z, Y]] \in \frak{m}_{1}^{+} \big) \\
&= \sum_{i=1}^{l} \Big\langle \Big[ X, \big[ D_{i}, Y \big] \Big], D_{i} \Big\rangle 
= \sum_{i=1}^{l} \Big\langle \Big[ D_{i}, X \Big], \Big[ D_{i}, Y \Big] \Big\rangle,
\end{split}
\]
where $D_{1}, \cdots, D_{l}$ is an orthonormal basis of $\frak{m}_{1}^{+}$ with respect to $(1/(d+1))\langle \ ,\ \rangle$ \cite{Besse}.


\subsection{Some preliminaries II} \label{s5-1}

In this subsection, we provide some preliminaries to study the Ricci tensor $r^{+}$ in the next subsection.
We assume that $\Sigma$ is of type $BC_{n}$ and $C_{n}\ (n \geq 2)$.
As stated in subsection \ref{s2-4}, if $\alpha \in \tilde{\Sigma}$ satisfies $\bar{\alpha} \in (\Sigma^{+}_{x})_{C}$, then $\alpha$ is a longest root if and only if $\sigma(\alpha) \not= \alpha$, and $\alpha$ is a shortest root if and only if $\sigma(\alpha) = \alpha$.

Let $\alpha \in \tilde{\Sigma}$ be a longest root such that $\bar{\alpha} \in (\Sigma^{+}_{x})_{C}$ and $2\bar{\alpha} \not\in \Sigma$.
Then, as in the proof of Lemma \ref{2-4-4}, we have $\alpha \pm \sigma(\alpha) \not\in \tilde{\Sigma}$.
Hence, $[W_{\alpha}, W_{\sigma(\alpha)}] = [U_{\alpha}, U_{\sigma(\alpha)}] = 0$, and for any $X \in \frak{g}$,
\[
\begin{split}
& \Big[ T_{\alpha}^{(1)}, \big[ T_{\alpha}^{(1)}, X \big] \Big] 
= \Big[ W_{\alpha}, \big[ W_{\alpha}, X \big] \Big] + 2 \Big[ W_{\sigma(\alpha)}, \big[ W_{\alpha}, X \big] \Big] + \Big[ W_{\sigma(\alpha)}, \big[ W_{\sigma(\alpha)}, X \big] \Big], \\
& \Big[ T_{\alpha}^{(2)}, \big[ T_{\alpha}^{(2)}, X \big] \Big] 
= \Big[ U_{\alpha}, \big[ U_{\alpha}, X \big] \Big] - 2 \Big[ U_{\sigma(\alpha)}, [U_{\alpha}, X \big] \Big] + \Big[ U_{\sigma(\alpha)}, \big[ U_{\sigma(\alpha)}, X \big] \Big]. \\
\end{split}
\]

\begin{lemm}\label{5-2-2}
Let $\alpha, \beta \in \tilde{\Sigma}$ be longest roots and $\bar{\alpha} = e_{1} + e_{k}, \bar{\beta} = e_{1} \pm e_{l} \ (2 \leq k \not= l \leq n)$.
Then, either $\beta$ or $\sigma(\beta)$ is orthogonal to $\alpha$, while the other is not orthogonal to $\alpha$.
Moreover, for any $i,j = 1,2$,
\[
\begin{split}
\Big\langle \big[ T^{(i)}_{\alpha}, T^{(j)}_{\beta} \big], \big[ T^{(i)}_{\alpha}, T^{(j)}_{\beta} \big] \Big\rangle = 2d.
\end{split}
\]
\end{lemm}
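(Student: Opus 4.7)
The plan is to exploit the $\sigma$-invariance of $(\cdot, \cdot)$ to reduce the orthogonality dichotomy to an arithmetic constraint, then reduce the bracket norm to a single root-vector computation on each $\sigma$-branch.

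For the first claim, I would compute $(\bar{\alpha}, \bar{\beta}) = 1/d$ directly from $(e_i, e_j) = (1/d)\delta_{ij}$ and $k \neq l$. Lemmas \ref{2-4-4} and \ref{2-4-5} give $(\alpha, \sigma(\alpha)) = 0$, hence $(\alpha, \alpha) = 2(\bar{\alpha}, \bar{\alpha}) = 4/d$, and likewise for $\beta$. Using $\bar{\alpha} = (\alpha + \sigma(\alpha))/2$ together with the $\sigma$-invariance of $(\cdot,\cdot)$, I would expand $(\bar{\alpha}, \bar{\beta}) = \tfrac{1}{2}\bigl((\alpha,\beta) + (\alpha, \sigma(\beta))\bigr)$, giving $(\alpha, \beta) + (\alpha, \sigma(\beta)) = 2/d$. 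Since $\alpha, \beta, \sigma(\beta)$ are then distinct roots of common length squared $4/d$ (comparing projections rules out coincidences, and positivity of $\bar{\alpha}, \bar{\beta}$ rules out opposites), standard root theory for same-length roots forces each of $(\alpha, \beta), (\alpha, \sigma(\beta))$ into $\{-2/d, 0, 2/d\}$, and the only decomposition of $2/d$ into two such summands is $2/d + 0$.

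For the norm, I would assume without loss of generality $(\alpha, \beta) = 2/d$ and $(\alpha, \sigma(\beta)) = 0$. Since $2(\alpha, \beta)/(\beta, \beta) = 1$, the $\beta$-string through $\alpha$ has $p = 1, q = 0$; thus $\alpha - \beta \in \tilde{\Sigma}$ with $(\alpha - \beta, \alpha - \beta) = 4/d$, while $\alpha + \beta \notin \tilde{\Sigma}$. Since $|\alpha \pm \sigma(\beta)|^{2} = 8/d$ exceeds the maximal root length squared $4/d$ in $\tilde{\Sigma}$ (attained by $\delta = 2e_{1}$), neither $\alpha \pm \sigma(\beta)$ nor, by $\sigma$-equivariance, $\sigma(\alpha) \pm \beta$ nor $\sigma(\alpha) + \sigma(\beta)$ is a root, whereas $\sigma(\alpha) - \sigma(\beta) = \sigma(\alpha - \beta) \in \tilde{\Sigma}$. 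Since $2\bar{\alpha}, 2\bar{\beta} \notin \Sigma$, both $\alpha, \beta \in \tilde{\Sigma}_{(2)}$, so the $T^{(i)}$'s are the $\sigma$-symmetric combinations of $U$'s and $W$'s from Subsection \ref{s2-3}. Expanding $[T^{(i)}_{\alpha}, T^{(j)}_{\beta}]$ into four summands and applying Lemma \ref{2-3-3}, the cross terms pairing $\alpha$ with $\sigma(\beta)$ and $\sigma(\alpha)$ with $\beta$ vanish, leaving only contributions supported in the root spaces of $\alpha - \beta$ and $\sigma(\alpha - \beta)$.

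Each surviving term will, in every $\epsilon$-subcase of Lemma \ref{2-3-3}, take the form $\pm N_{\alpha, -\beta}\, V_{\alpha - \beta}$ with $V \in \{U, W\}$. By Lemma \ref{2-3-1}(v) with $p = 0$, $N_{\alpha, -\beta}^{2} = \epsilon_{\alpha, -\beta} \in \{\pm 1\}$, so $|N_{\alpha, -\beta}|^{2} = 1$. The normalization in Lemma \ref{2-3-5}, combined with the orthogonality $\langle U_{\gamma}, U_{\sigma(\gamma)} \rangle = 0$ and the $\sigma$-isometry, will yield $\langle V_{\alpha-\beta}, V_{\alpha-\beta}\rangle = 4/(\alpha - \beta, \alpha - \beta) = d$. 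By the tabulation in Subsection \ref{s2-4} no longest root with projection $e_k \mp e_l$ is $\sigma$-fixed, so $\sigma(\alpha - \beta) \neq \pm(\alpha - \beta)$; the two surviving contributions therefore lie in orthogonal root subspaces, and their norms add to give $\langle [T^{(i)}_{\alpha}, T^{(j)}_{\beta}], [T^{(i)}_{\alpha}, T^{(j)}_{\beta}] \rangle = d + d = 2d$. The main obstacle will be the bookkeeping across the four $\epsilon$-subcases of Lemma \ref{2-3-3} and verifying $|U_\gamma|^{2} = 4/(\gamma, \gamma)$ uniformly; once the root-level picture is fixed, the arithmetic is mechanical.
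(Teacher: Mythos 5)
Your proposal is correct, and it reaches the paper's value $2d$. For the orthogonality dichotomy your argument is essentially the paper's, just packaged differently: the paper splits $\alpha,\beta$ into $\frak{a}_{0}$- and $i\frak{b}$-parts and kills the two bad cases by contradiction (both inner products zero forces $(\bar{\alpha},\bar{\beta})=0$; both nonzero forces $|\alpha-\beta|>|\alpha|$, contradicting maximality), whereas you use $(\alpha,\beta)+(\alpha,\sigma(\beta))=2(\bar{\alpha},\bar{\beta})=2/d$ together with the constraint that each summand lies in $\{0,\pm 2/d\}$; both work, and yours is arguably the cleaner bookkeeping. The genuine divergence is in the norm identity: the paper never expands $[T^{(i)}_{\alpha},T^{(j)}_{\beta}]$ in root vectors. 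Using $(\alpha,\sigma(\beta))=(\sigma(\alpha),\beta)=(\alpha,\sigma(\alpha))=0$ and Lemma \ref{5-2-1}, it shows $\big[T^{(i)}_{\alpha},[T^{(i)}_{\alpha},T^{(j)}_{\beta}]\big]=-T^{(j)}_{\beta}$ and then concludes in one line by ad-invariance of $\langle\ ,\ \rangle$: $\langle[T^{(i)}_{\alpha},T^{(j)}_{\beta}],[T^{(i)}_{\alpha},T^{(j)}_{\beta}]\rangle=-\langle[T^{(i)}_{\alpha},[T^{(i)}_{\alpha},T^{(j)}_{\beta}]],T^{(j)}_{\beta}\rangle=\langle T^{(j)}_{\beta},T^{(j)}_{\beta}\rangle=4/(\bar{\beta},\bar{\beta})=2d$, thereby avoiding all $\epsilon$-sign cases and structure constants. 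Your route — direct expansion, vanishing of the $\alpha$-$\sigma(\beta)$ and $\sigma(\alpha)$-$\beta$ cross terms, $|N_{\alpha,-\beta}|^{2}=1$, $|U_{\alpha-\beta}|^{2}=|W_{\alpha-\beta}|^{2}=d$, and orthogonality of the $\pm(\alpha-\beta)$ and $\pm\sigma(\alpha-\beta)$ root spaces giving $d+d=2d$ — is also correct and is exactly the style the paper itself uses for Lemmas \ref{5-2-5} and \ref{5-2-8}, but it buys nothing here except the explicit form of the bracket, at the cost of the four-case check you anticipate. One small point to tighten in a write-up: $\alpha,\beta\in\tilde{\Sigma}_{(2)}$ does not follow from $2\bar{\alpha},2\bar{\beta}\notin\Sigma$ alone; you also need $\sigma(\alpha)\neq\alpha$, which does follow from $(\alpha,\sigma(\alpha))=0$ (or from $|\bar{\alpha}|^{2}=2/d<4/d$), facts already in your argument.
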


\begin{proof}
The $\frak{a}_{0}$-part and $i\frak{b}$-part of $\alpha$ (resp.\ $\beta$) are denoted by $\lambda_{1}$ and $\mu_{1}$ (resp.\ $\lambda_{2}$ and $\mu_{2}$), respectively.
Then, $\lambda_{1} = e_{1} + e_{k}$ and $\lambda_{2} = e_{1} \pm e_{l}$.
Assume $(\alpha, \beta) = (\alpha, \sigma(\beta)) = 0$.
Since
\[
\begin{split}
0 &= (\alpha, \beta) = (\lambda_{1}, \lambda_{2}) + ( \mu_{1}, \mu_{2} ), \quad 
0 = (\alpha, \sigma(\beta)) = (\lambda_{1}, \lambda_{2}) - (\mu_{1}, \mu_{2}),
\end{split}
\]
we obtain $(\lambda_{1}, \lambda_{2}) = (\mu_{1}, \mu_{2}) = 0$, which is a contradiction.
Next, assume that neither $(\alpha, \beta)$ nor $(\alpha, \sigma(\beta))$ is $0$.
Then, since
\[
\begin{split}
\dfrac{2(\alpha, \beta)}{(\alpha, \alpha)} = \dfrac{2(\alpha, \sigma(\beta))}{(\alpha, \alpha)} = 1,
\end{split}
\]
we obtain $(\mu_{1}, \mu_{2}) = 0$.
By assumption, $\alpha - \beta \in \tilde{\Sigma}$.
Since $\lambda_{1} - \lambda_{2} = e_{k} \mp e_{l}$ and $(\mu_{1}, \mu_{2}) = 0$, we have $|\alpha - \beta| > |\alpha|$, which contradicts to the assumption that $\alpha$ is a longest root.
Thus, the former part of the statement is true.
Next, we will show that the latter part follows.
We assume $(\alpha, \sigma(\beta)) = 0$.
Then, 
\[
\big[ U_{\alpha}, U_{\sigma(\beta)} \big] = \big[ U_{\alpha}, W_{\sigma(\beta)} \big] = \big[ W_{\alpha}, U_{\sigma(\beta)} \big] = \big[ W_{\alpha}, W_{\sigma(\beta)} \big] = 0.
\]
Moreover, since $(\sigma(\alpha), \alpha) = 0$ by Lemma \ref{2-4-4} and $(\sigma(\alpha), \beta) = 0$, we have $(\sigma(\alpha), \alpha - \beta) = 0$.
Hence,
\[
\begin{split}
\Big[U_{\sigma(\alpha)}, \big[ U_{\alpha}, U_{\beta} \big] \Big] 
&= \Big[U_{\sigma(\alpha)}, \big[ U_{\alpha}, W_{\beta} \big] \Big] = \Big[W_{\sigma(\alpha)}, \big[ W_{\alpha}, W_{\beta} \big] \Big] \\
&= \Big[W_{\sigma(\alpha)}, \big[ W_{\alpha}, U_{\beta} \big] \Big] = 0.
\end{split}
\]
Therefore,
\[
\begin{split}
& \Big[ T^{(i)}_{\alpha}, \big[ T^{(i)}_{\alpha}, T^{(1)}_{\beta} \big] \Big] 
= 
-W_{\beta} - W_{\sigma(\beta)} = -T^{(1)}_{\beta}, \\
& \Big[ T^{(i)}_{\alpha}, \big[ T^{(i)}_{\alpha}, T^{(2)}_{\beta} \big] \Big] 
= 
U_{\beta} - U_{\sigma(\beta)} = -T^{(2)}_{\beta}. \\
\end{split}
\]
Thus,
\[
\begin{split}
\Big\langle \big[ T^{(i)}_{\alpha}, T^{(j)}_{\beta} \big], \big[ T^{(i)}_{\alpha}, T^{(j)}_{\beta} \big] \Big\rangle
& = - \Big\langle \Big[ T^{(i)}_{\alpha}, \big[ T^{(i)}_{\alpha}, T^{(j)}_{\beta} \big] \Big], T^{(j)}_{\beta} \Big\rangle \\
& = \Big\langle T^{(j)}_{\beta}, T^{(j)}_{\beta} \Big\rangle 
= \frac{4}{(\bar{\beta}, \bar{\beta})}  = 2d.
\end{split}
\]
In the case of $(\alpha, \beta) = 0$, using a similar argument we can verify that the latter part of the statement is true.
\end{proof}

\begin{lemm} \label{5-2-3}
Let $\Sigma$ be of type $C_{n}\ (n \geq 2)$.
If shortest roots $\alpha, \beta \in \tilde{\Sigma}$ satisfy $\bar{\alpha} = e_{1} \pm e_{k}, \bar{\beta} = e_{1} \pm e_{l} \ (1 \leq k \not= l \leq n)$, then 
\[
\Big\langle \big[ T^{(1)}_{\alpha}, T^{(1)}_{\beta} \big], \big[ T^{(1)}_{\alpha}, T^{(1)}_{\beta} \big] \Big\rangle = 2d.
\]
\end{lemm}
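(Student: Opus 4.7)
The plan is to run a direct analogue of the argument in Lemma \ref{5-2-2}, exploiting the fact that shortest roots $\alpha,\beta\in\tilde{\Sigma}$ with the stated restricted projections are forced to satisfy $\sigma(\alpha)=\alpha$ and $\sigma(\beta)=\beta$. Indeed, the case-by-case list in Subsection \ref{s2-4} shows that in every type $C_n$ Hermitian symmetric space where shortest roots with $\bar{\alpha}=e_1\pm e_k$ occur (namely $Sp(n)/U(n)$ and $\tilde{G}_2(\mathbb{R}^{2m+1})$), such shortest roots lie in $\tilde{\Sigma}_{(1)}$. Hence $\alpha=\bar{\alpha}$ and $\beta=\bar{\beta}$ sit in $\frak{a}_0^{*}$, and one has $T_{\alpha}^{(1)}=W_{\alpha}$, $T_{\beta}^{(1)}=W_{\beta}$.

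First I would pin down where $\alpha\pm\beta$ sits in $\tilde{\Sigma}$. In all four sign combinations, $\overline{\alpha+\beta}=2e_1\pm e_k\pm e_l$ is not a restricted root of type $C_n$, so $\alpha+\beta\notin\tilde{\Sigma}$. On the other hand, $(\alpha,\beta)=(\bar{\alpha},\bar{\beta})=(e_1,e_1)=1/d>0$ regardless of the signs (since $k\neq l$ kills the $e_k,e_l$ cross terms), and $\bar{\alpha}\neq\bar{\beta}$ gives $\alpha\neq\beta$. The standard root-string fact then guarantees $\alpha-\beta\in\tilde{\Sigma}$, and $\sigma(\alpha-\beta)=\alpha-\beta$ places it in $\tilde{\Sigma}_{(1)}$ with nonzero projection $\overline{\alpha-\beta}\in\{\pm(e_k\pm e_l)\}$.

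Next I would invoke Lemma \ref{2-3-3} with the convention $U_{\alpha+\beta}=W_{\alpha+\beta}=0$. Because $\sigma(\alpha)=\alpha$ and $\sigma(-\beta)=-\beta$, Lemma \ref{2-3-2} (ii) forces $N_{\alpha,-\beta}=\overline{N_{\alpha,-\beta}}\in\mathbb{R}$, so $\epsilon_{\alpha,-\beta}=1$. Whichever row of Lemma \ref{2-3-3} one selects, only the $U_{\alpha-\beta}$-term survives, giving
\[
[W_{\alpha},W_{\beta}]=N_{\alpha,-\beta}\,U_{\alpha-\beta}.
\]
Lemma \ref{2-3-1} (v) applied with the string $\{\alpha+p\beta:p\geq 0\}$ yields $p=0$ (since $\alpha+\beta\notin\tilde{\Sigma}$), hence $N_{\alpha,-\beta}^{2}=\epsilon_{\alpha,-\beta}(p+1)^{2}=1$.

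Finally, since $\alpha-\beta\in\tilde{\Sigma}_{(1)}$, the normalization recorded just after Lemma \ref{2-3-5} gives $\langle U_{\alpha-\beta},U_{\alpha-\beta}\rangle=4/(\overline{\alpha-\beta},\overline{\alpha-\beta})=4/(2/d)=2d$, and multiplying the two factors yields
\[
\bigl\langle [T_{\alpha}^{(1)},T_{\beta}^{(1)}],\ [T_{\alpha}^{(1)},T_{\beta}^{(1)}]\bigr\rangle=N_{\alpha,-\beta}^{2}\langle U_{\alpha-\beta},U_{\alpha-\beta}\rangle=2d.
\]
The only mildly delicate point is the sign/phase bookkeeping across the four $(\epsilon_{\alpha,\beta},\epsilon_{\alpha,-\beta})$ rows of Lemma \ref{2-3-3}; but since $\alpha+\beta\notin\tilde{\Sigma}$ annihilates the $\epsilon_{\alpha,\beta}$-branch, only $\epsilon_{\alpha,-\beta}$ influences the surviving term, and that value is pinned down by $\sigma$-invariance — so the obstacle dissolves.
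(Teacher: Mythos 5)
Your proposal is correct and takes essentially the same route as the paper's proof: reduce to $\alpha,\beta\in\tilde{\Sigma}_{(1)}$ so that $T^{(1)}_{\alpha}=W_{\alpha}$, $T^{(1)}_{\beta}=W_{\beta}$, use Lemma \ref{2-3-3} (with $\alpha+\beta\notin\tilde{\Sigma}$) to get $[W_{\alpha},W_{\beta}]=N_{\alpha,-\beta}S^{(1)}_{\alpha-\beta}$ with $N_{\alpha,-\beta}^{2}=1$, and conclude via $\langle S^{(1)}_{\alpha-\beta},S^{(1)}_{\alpha-\beta}\rangle=4/(\overline{\alpha-\beta},\overline{\alpha-\beta})=2d$. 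The only difference is cosmetic: you fix $\epsilon_{\alpha,-\beta}=1$ by the reality of $N_{\alpha,-\beta}$ coming from $\sigma$-equivariance (Lemma \ref{2-3-2}), while the paper excludes $\epsilon_{\alpha,-\beta}=-1$ because the bracket would then be a nonzero multiple of $W_{\alpha-\beta}\in\frak{m}$, contradicting $[\frak{m},\frak{m}]\subset\frak{k}$; your explicit checks that $\alpha+\beta\notin\tilde{\Sigma}$, $\alpha-\beta\in\tilde{\Sigma}$ and $p=0$ are points the paper leaves implicit.
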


\begin{proof}
By assumption, $\alpha, \beta \in \tilde{\Sigma}_{(1)}$.
We assume $\epsilon_{\alpha, -\beta} = -1$.
By definition,
\[
[T^{(1)}_{\alpha}, T^{(1)}_{\beta}] = [W_{\alpha}, W_{\beta}] = -iN_{\alpha, -\beta}W_{\alpha - \beta} = -iN_{\alpha, -\beta}T^{(1)}_{\alpha - \beta},
\]
and $N_{\alpha, -\beta} = \pm i$, which contradicts to $[T^{(1)}_{\alpha}, T^{(1)}_{\beta}] \in \frak{k}$.
Thus, $\epsilon_{\alpha, -\beta} = 1$.
Therefore, $N_{\alpha, -\beta} = \pm 1$, and 
\[
[T^{(1)}_{\alpha}, T^{(1)}_{\beta}] = N_{\alpha, -\beta}U_{\alpha - \beta} = N_{\alpha, -\beta}S^{(1)}_{\alpha - \beta}.
\]
Hence,
\[
\Big\langle [T^{(1)}_{\alpha}, T^{(1)}_{\beta}], [T^{(1)}_{\alpha}, T^{(1)}_{\beta}] \Big\rangle = \Big\langle S^{(1)}_{\alpha - \beta}, S^{(1)}_{\alpha - \beta} \Big\rangle = \frac{4}{(\overline{\alpha - \beta}, \overline{\alpha - \beta})} = 2d
\]
and the statement follows.
\end{proof}

\begin{lemm} \label{5-2-4}
Let $\alpha \in \tilde{\Sigma}$ be a longest root and $\bar{\alpha} = e_{1} + e_{k} \ (2 \leq k \leq n)$.
If $\beta \in \tilde{\Sigma}$ satisfies $\bar{\beta} = e_{1} - e_{k}$, then $(\alpha, \beta) = 0$, except for $\beta = \alpha_{1} - \alpha$ and $\beta =  \alpha - \alpha_{k}$.
\end{lemm}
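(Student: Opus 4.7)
The plan is to reduce the computation of $(\alpha,\beta)$ to an inner product on $i\frak{b}$. Decompose each $H_\gamma \in \frak{h}_0$ orthogonally as $H_\gamma = H_\gamma^b + H_{\bar\gamma}$ with $H_\gamma^b \in i\frak{b}$ and $H_{\bar\gamma} \in \frak{a}_0$ (so in particular $H_{\alpha_1}^b = H_{\alpha_k}^b = 0$ because $\alpha_1, \alpha_k$ are $\sigma$-fixed). Since $(e_1+e_k,\, e_1-e_k) = (e_1,e_1)-(e_k,e_k) = 0$ in $\frak{a}_0^*$, the $\frak{a}_0$-contribution to $(\alpha,\beta) = (H_\alpha^b,H_\beta^b) + (H_{\bar\alpha},H_{\bar\beta})$ vanishes and
\[
(\alpha,\beta) \;=\; (H_\alpha^b,\; H_\beta^b).
\]
Everything hinges on this one scalar.

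I would first dispose of the case that $\beta$ is a shortest root of $\tilde\Sigma$, which can occur when $\tilde\Sigma$ has two root lengths: by the case-by-case list in Subsection \ref{s2-4} such a $\beta$ is $\sigma$-fixed, so $H_\beta^b = 0$ and $(\alpha,\beta) = 0$. A direct length computation using Lemma \ref{2-4-4} shows $|\alpha_1-\alpha|^2 = |\alpha-\alpha_k|^2 = 4/d$, so both exceptional roots are longest and are not reached in this case. In the main case where $\beta$ is longest, Lemma \ref{2-4-4} applied to $\alpha$ and Lemma \ref{2-4-5} applied to $\beta$ yield $(\alpha,\sigma(\alpha)) = (\beta,\sigma(\beta)) = 0$, which together with $(\bar\alpha,\bar\alpha) = (\bar\beta,\bar\beta) = 2/d$ forces $|H_\alpha^b|^2 = |H_\beta^b|^2 = 2/d$ and $|\alpha|^2 = |\beta|^2 = 4/d$. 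Because $\bar\alpha \ne \pm\bar\beta$ we have $\beta \ne \pm\alpha$, so the standard bound for non-proportional roots of equal length gives $2(\alpha,\beta)/(\alpha,\alpha) \in \{-1,0,1\}$, i.e.\ $(\alpha,\beta) \in \{-2/d,\;0,\;2/d\}$.

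The last step is to identify $\beta$ in the extremal cases. If $(\alpha,\beta) = \pm 2/d$ then the Cauchy-Schwarz equality applied to $(H_\alpha^b, H_\beta^b)$, with both vectors of norm $\sqrt{2/d}$, forces $H_\beta^b = \pm H_\alpha^b$ with matching sign. Combining this with $H_{\bar\beta} = H_{e_1} - H_{e_k}$ and substituting $H_\alpha^b = H_\alpha - H_{e_1+e_k}$ yields
\[
H_\beta \;=\; \pm H_\alpha^b + H_{e_1} - H_{e_k},
\]
which simplifies to $H_{\alpha-\alpha_k}$ in the $+$ case and to $H_{\alpha_1-\alpha}$ in the $-$ case; since $H_\gamma$ determines $\gamma$, we conclude $\beta = \alpha - \alpha_k$ or $\beta = \alpha_1 - \alpha$ respectively. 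The main obstacle I anticipate is purely bookkeeping, namely keeping the $i\frak{b}$- and $\frak{a}_0$-components of each root separate; the argument is uniform across all irreducible Hermitian symmetric spaces and requires no type-by-type analysis of $\tilde\Sigma$.
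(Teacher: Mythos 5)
Your argument is correct, but it is genuinely different from the paper's. The paper disposes of the lemma in three lines of pure root-system combinatorics: if $(\alpha,\beta)\neq 0$, then, $\alpha$ being a longest root, $2(\alpha,\beta)/(\alpha,\alpha)=\pm 1$; in the case $+1$ the element $\alpha-\beta$ is a root with $\overline{\alpha-\beta}=2e_{k}$, and in the case $-1$ the element $\alpha+\beta$ is a root with $\overline{\alpha+\beta}=2e_{1}$; since $m(2e_{k})=m(2e_{1})=1$ this forces $\alpha-\beta=\alpha_{k}$ or $\alpha+\beta=\alpha_{1}$. You instead work with the orthogonal splitting $H_{\gamma}=H^{b}_{\gamma}+H_{\bar\gamma}$ along $i\frak{b}\oplus\frak{a}_{0}$, kill the $\frak{a}_{0}$-contribution by $(e_{1}+e_{k},e_{1}-e_{k})=0$, use Lemmas \ref{2-4-4} and \ref{2-4-5} to get $|H^{b}_{\alpha}|^{2}=|H^{b}_{\beta}|^{2}=2/d$, and then identify $\beta$ in the extremal cases by Cauchy--Schwarz equality and the explicit formulas $H_{\alpha-\alpha_{k}}=H^{b}_{\alpha}+H_{e_{1}}-H_{e_{k}}$, $H_{\alpha_{1}-\alpha}=-H^{b}_{\alpha}+H_{e_{1}}-H_{e_{k}}$; I checked these computations and they are right, as is your separate treatment of $\sigma$-fixed (short) $\beta$, where $H^{b}_{\beta}=0$ gives $(\alpha,\beta)=0$ outright. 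What the paper's route buys is brevity and uniformity: the bound $|2(\alpha,\beta)/(\alpha,\alpha)|\leq 1$ needs only that $\alpha$ is longest and $\beta\neq\pm\alpha$, not that $\beta$ has the same length, so no case split on the length of $\beta$ is needed, and the multiplicity-one fact replaces your norm and Cauchy--Schwarz bookkeeping. What your route buys is that it avoids the root-string fact (Cartan integer $1$ implies $\alpha-\beta\in\tilde{\Sigma}$) and the multiplicity-one argument, replacing them with the $\sigma$-fixedness of $\alpha_{1},\alpha_{k}$ and elementary inner-product geometry in $\frak{h}_{0}$; it also yields as a by-product that $H^{b}_{\beta}=\pm H^{b}_{\alpha}$ in the exceptional cases, which is in the spirit of the computations the paper performs later in Subsection \ref{s5-1}. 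One small streamlining: if you invoke the bound for $\alpha$ longest and arbitrary $\beta\neq\pm\alpha$, your separate shortest-root case becomes unnecessary.
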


\begin{proof}
Let $(\alpha, \beta) \not= 0$.
Since $\alpha$ is a longest root, $2(\alpha, \beta)/(\alpha, \alpha)$ is either $1$ or $-1$.
If $2(\alpha, \beta) = (\alpha, \alpha)$, then $\alpha - \beta \in \tilde{\Sigma}$.
Since $\overline{\alpha - \beta} = 2e_{k}$ and $m(2e_{k}) = 1$, we have $\alpha - \beta = \alpha_{k}$ and $\beta = \alpha - \alpha_{k}$.
If $2(\alpha, \beta) = -(\alpha, \alpha)$, then $\alpha + \beta \in \tilde{\Sigma}$.
Since $\overline{\alpha + \beta} = 2e_{1}$ and $m(2e_{1}) = 1$, we have $\alpha + \beta = \alpha_{1}$ and $\beta = \alpha_{1} - \alpha$.
\end{proof}

Note that $\sigma(\alpha - \alpha_{k}) = \alpha_{1} - \alpha$ and $\alpha - \alpha_{k}$ is a longest root.

\begin{lemm} \label{5-2-5}
Let $\alpha \in \tilde{\Sigma}$ be a longest root and $\bar{\alpha} = e_{1} + e_{k} \ (2 \leq k \leq n)$.
Then, for any $i = 1, 2$,
\[
\Big\langle \big[ T^{(i)}_{\alpha}, T^{(1)}_{\alpha_{1} - \alpha} \big], \big[ T^{(i)}_{\alpha}, T^{(1)}_{\alpha_{1} - \alpha} \big] \Big\rangle 
+ \Big\langle \big[ T^{(i)}_{\alpha}, T^{(2)}_{\alpha_{1} - \alpha} \big], \big[ T^{(i)}_{\alpha}, T^{(2)}_{\alpha_{1} - \alpha} \big] \Big\rangle = 8d.
\]
\end{lemm}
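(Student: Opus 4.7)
The plan is to compute all four brackets $[T^{(i)}_\alpha,T^{(j)}_\beta]$ for $i,j\in\{1,2\}$ directly from the $U/W$-expansion, then show that the off-diagonal pair ($i\neq j$) vanishes while each diagonal bracket has squared norm exactly $8d$. Set $\gamma:=\sigma(\alpha)$ and $\delta:=\sigma(\beta)$ where $\beta:=\alpha_1-\alpha$. Since $\alpha,\beta$ are longest roots with restricted images $e_1+e_k,e_1-e_k\in(\Sigma^+_x)_C$ and $2\bar\alpha,2\bar\beta\notin\Sigma$, we have $\alpha,\beta\in\tilde\Sigma_{(2)}$; in particular $T^{(1)}_\alpha=W_\alpha+W_\gamma,\ T^{(2)}_\alpha=-U_\alpha+U_\gamma$, and similarly for $\beta,\delta$.

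The first substantive step is the root combinatorics. Applying Lemma \ref{2-4-4} to $\alpha$ yields $(\alpha,\gamma)=0$ and $\alpha_1-\alpha=\gamma-\alpha_k$, which forces $\delta=\alpha-\alpha_k$; the analogous statement from Lemma \ref{2-4-5} applied to $\beta$ gives $(\beta,\delta)=0$. Together with a short root-string check (the only possible restricted sums are $\pm 2e_1,\pm 2e_k$, of multiplicity $1$, and the triple combinations have restricted parts like $3e_1\pm e_k$ which are not in $\Sigma$) this pins down
\[
\alpha+\beta=\gamma+\delta=\alpha_1,\qquad \alpha-\delta=\gamma-\beta=\alpha_k,
\]
while every other sum or difference drawn from $\{\pm\alpha,\pm\gamma\}\times\{\pm\beta,\pm\delta\}$ fails to lie in $\tilde\Sigma$.

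Next, I expand each bracket $[X_\mu,X_\nu]=N_{\mu,\nu}X_{\mu+\nu}$ whenever $\mu+\nu\in\{\pm\alpha_1,\pm\alpha_k\}$ and $0$ otherwise, using Lemma \ref{2-3-2}(ii) to propagate $N_{\sigma(\mu),\sigma(\nu)}=\overline{N_{\mu,\nu}}$ and $N_{-\mu,-\nu}=\overline{N_{\mu,\nu}}$. All eight ``living'' brackets are thereby expressed in terms of the two complex scalars $N_{\alpha,\beta}$ and $N_{\alpha,-\delta}$, each of modulus $1$ by Lemma \ref{2-3-1}(v) since the root strings are trivial ($p=0$). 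Because $[T^{(i)}_\alpha,T^{(i)}_\beta]\in\frak{k}$ (spanned by $U_{\alpha_1},U_{\alpha_k}$) while the mixed brackets $[T^{(i)}_\alpha,T^{(j)}_\beta]\ (i\neq j)$ lie in $\frak{m}$ (spanned by $W_{\alpha_1},W_{\alpha_k}$), the reality $\epsilon_{\alpha,\beta}=\epsilon_{\alpha,-\delta}=1$ is forced, so both $N$'s are real with $N^2=1$. Straightforward assembly yields
\[
[T^{(1)}_\alpha,T^{(1)}_\beta]=-2N_{\alpha,\beta}U_{\alpha_1}+2N_{\alpha,-\delta}U_{\alpha_k},\qquad
[T^{(2)}_\alpha,T^{(2)}_\beta]=2N_{\alpha,\beta}U_{\alpha_1}-2N_{\alpha,-\delta}U_{\alpha_k},
\]
while in $[T^{(1)}_\alpha,T^{(2)}_\beta]$ and $[T^{(2)}_\alpha,T^{(1)}_\beta]$ the contributions from the four pairs cancel in matched pairs (the two $W_{\alpha_1}$ terms cancel, as do the two $W_{\alpha_k}$ terms). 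Combined with $\|U_\mu\|^2=4/(\mu,\mu)=d$ for longest $\mu$ and the orthogonality $U_{\alpha_1}\perp U_{\alpha_k}$, this gives $\|[T^{(i)}_\alpha,T^{(i)}_\beta]\|^2=4d+4d=8d$ and $\|[T^{(i)}_\alpha,T^{(j)}_\beta]\|^2=0$ for $i\neq j$, completing the identity.

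The main obstacle is the mixed-bracket cancellation. It looks delicate because each of its four summands has magnitude $\sqrt{d}$, yet together they vanish; the cleanest way I would present it is to observe the pairing symmetry $[T^{(1)}_\alpha,T^{(2)}_\beta]=\sigma\text{-image of its own reflection under }W\leftrightarrow U$, which forces the $\alpha_1$ pair $([W_\alpha,U_\delta],[W_\gamma,U_\beta])$ and the $\alpha_k$ pair $([W_\alpha,U_\beta],[W_\gamma,U_\delta])$ to carry opposite signs and thus annihilate. Everything else is bookkeeping with the structure constants $N_{\alpha,\beta},N_{\alpha,-\delta}$ of unit modulus.
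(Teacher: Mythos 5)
Your root combinatorics are fine (with $\beta=\alpha_{1}-\alpha$, $\delta=\sigma(\beta)=\alpha-\alpha_{k}$ one indeed has $\alpha+\beta=\sigma(\alpha)+\delta=\alpha_{1}$, $\alpha-\delta=\sigma(\alpha)-\beta=\alpha_{k}$, and no other sums or differences are roots), and in the all-plus sign case your formulas agree with the paper's. The gap is in the step where you ``force'' $\epsilon_{\alpha,\beta}=\epsilon_{\alpha,-\delta}=1$. Your justification is that the mixed brackets $[T^{(i)}_{\alpha},T^{(j)}_{\beta}]$ ($i\neq j$) ``lie in $\frak{m}$ (spanned by $W_{\alpha_{1}},W_{\alpha_{k}}$)'' while the diagonal ones lie in $\frak{k}$. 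That premise is false: all four brackets are brackets of two elements of $\frak{m}$, hence all lie in $\frak{k}$, more precisely in $\frak{k}_{2e_{1}}+\frak{k}_{2e_{k}}=\mathbb{R}U_{\alpha_{1}}+\mathbb{R}U_{\alpha_{k}}$ by Lemma \ref{2-3-4}. When $\epsilon_{\alpha,\alpha_{1}-\alpha}=-1$, the factors of $i$ in Lemma \ref{2-3-3} turn the purely imaginary $N_{\alpha,\alpha_{1}-\alpha}$ into a real coefficient of $U_{\alpha_{1}}$, so a nonzero mixed bracket such as $2iN_{\alpha,\alpha_{1}-\alpha}U_{\alpha_{1}}$ is perfectly consistent with $[\frak{m},\frak{m}]\subset\frak{k}$; no contradiction arises and the sign is not forced. (Contrast this with the paper's Lemma \ref{5-2-3}, where the roots are $\sigma$-fixed and a sign really is forced by exactly this kind of parity argument -- that mechanism does not carry over to the $\tilde{\Sigma}_{(2)}$ situation here.)

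Consequently your claimed dichotomy -- mixed brackets vanish, each diagonal bracket has squared norm $8d$ -- is unproven and, in the sign cases $(\epsilon_{\alpha,-(\alpha-\alpha_{k})},\epsilon_{\alpha,\alpha_{1}-\alpha})=(1,-1)$ and $(-1,1)$ treated in the paper, false: there each of $[T^{(i)}_{\alpha},T^{(1)}_{\beta}]$ and $[T^{(i)}_{\alpha},T^{(2)}_{\beta}]$ is a single $U_{\alpha_{1}}$- or $U_{\alpha_{k}}$-term of squared norm $4d$, and only their sum equals $8d$. This is precisely why the lemma is stated as a sum over $j=1,2$ rather than term by term. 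Your final identity is therefore true, but your route to it collapses at the ``mixed-bracket cancellation'' you yourself flag as the delicate point: to repair the argument you must either run the four-case computation in the $\epsilon$-signs as the paper does, or produce a genuinely sign-independent evaluation of the sum $\sum_{j}\big\|[T^{(i)}_{\alpha},T^{(j)}_{\beta}]\big\|^{2}$, rather than asserting the individual values.
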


\begin{proof}
We can show the statement by direct calculations.
If $(\epsilon_{\alpha, -(\alpha - \alpha_{k})}, \epsilon_{\alpha, \alpha_{1} - \alpha}) = (1,1)$, then $N^{2}_{\alpha, -(\alpha - \alpha_{k})} = N^{2}_{\alpha, \alpha_{1} - \alpha} = 1$ by definition.
Hence,
\[
\begin{split}
& [T^{(1)}_{\alpha}, T^{(1)}_{\alpha - \alpha_{k}}] \\
&= 
[W_{\alpha}, W_{\alpha - \alpha_{k}}] + [W_{\sigma(\alpha)}, W_{\sigma(\alpha) - \alpha_{k}}] + [W_{\alpha}, W_{\alpha_{1} - \alpha}] + [W_{\sigma(\alpha)}, W_{\alpha_{1} - \sigma(\alpha)}]  \\
&=
N_{\alpha, -(\alpha - \alpha_{k})}U_{\alpha_{k}} + N_{\sigma(\alpha), -(\sigma(\alpha) - \alpha_{k})}U_{\alpha_{k}} + N_{\alpha, -(\alpha - \alpha_{k})}U_{\alpha_{k}} + N_{\sigma(\alpha), -(\sigma(\alpha) - \alpha_{k})}U_{\alpha_{k}} \\
&=
2N_{\alpha, -(\alpha - \alpha_{k})}S^{(1)}_{\alpha_{k}} -2N_{\alpha, \alpha_{1} - \alpha}S^{(1)}_{\alpha_{1}}, \\
& [T^{(1)}_{\alpha}, T^{(2)}_{\alpha - \alpha_{k}}] \\
&=
-[W_{\alpha}, U_{\alpha - \alpha_{k}}] + [W_{\sigma(\alpha)}, U_{\sigma(\alpha) - \alpha_{k}}] + [W_{\alpha}, U_{\alpha_{1} - \alpha}] - [W_{\sigma(\alpha)}, W_{\alpha_{1} - \sigma(\alpha)}] \\
&=
-N_{\alpha, -(\alpha - \alpha_{k})}W_{\alpha_{k}} + N_{\sigma(\alpha), -(\sigma(\alpha) - \alpha_{k})}W_{\alpha_{k}} + N_{\alpha, \alpha_{1} - \alpha}W_{\alpha_{1}} - N_{\sigma(\alpha), \alpha_{1} - \sigma(\alpha)}W_{\alpha_{1}} = 0, \\
& [T^{(2)}_{\alpha}, T^{(1)}_{\alpha - \alpha_{k}}] \\
&=
-[U_{\alpha}, W_{\alpha - \alpha_{k}}] + [U_{\sigma(\alpha)}, W_{\sigma(\alpha) - \alpha_{k}}] - [U_{\alpha}, W_{\alpha_{1} - \alpha}] + [U_{\sigma(\alpha)}, W_{\alpha_{1} - \sigma(\alpha)}] \\
&=
N_{\alpha, -(\alpha - \alpha_{k})}W_{\alpha_{k}} - N_{\sigma(\alpha), -(\sigma(\alpha) - \alpha_{k})}W_{\alpha_{k}} -N_{\alpha, \alpha_{1} - \alpha}W_{\alpha_{1}} + N_{\sigma(\alpha), \alpha_{1} - \sigma(\alpha)}W_{\alpha_{1}} = 0, \\
& [T^{(2)}_{\alpha}, T^{(2)}_{\alpha - \alpha_{k}}] \\
&=
[U_{\alpha}, U_{\alpha - \alpha_{k}}] + [U_{\sigma(\alpha)}, U_{\sigma(\alpha) - \alpha_{k}}] - [U_{\alpha}, U_{\alpha_{1} - \alpha}] + [U_{\sigma(\alpha)}, U_{\alpha_{1} - \sigma(\alpha)}] \\
&=
N_{\alpha, -(\alpha - \alpha_{k})}U_{\alpha_{k}} + N_{\sigma(\alpha), -(\sigma(\alpha) - \alpha_{k})}U_{\alpha_{k}} - N_{\alpha, \alpha_{1} - \alpha}U_{\alpha_{1}} - N_{\sigma(\alpha), \alpha_{1} - \sigma(\alpha)}U_{\alpha_{1}} \\
&=
2N_{\alpha, -(\alpha - \alpha_{k})}S^{(1)}_{\alpha_{k}} - 2N_{\alpha, \alpha_{1} - \alpha}S^{(1)}_{\alpha_{1}}. \\
\end{split}
\]
If $(\epsilon_{\alpha, -(\alpha - \alpha_{k})}, \epsilon_{\alpha, \alpha_{1} - \alpha}) = (1,-1)$, then $N^{2}_{\alpha, -(\alpha - \alpha_{k})} = 1$ and $N^{2}_{\alpha, \alpha_{1} - \alpha} = -1$.
Hence,
\[
\begin{split}
& [T^{(1)}_{\alpha}, T^{(1)}_{\alpha - \alpha_{k}}] \\
&= 
N_{\alpha, -(\alpha - \alpha_{k})}U_{\alpha_{k}} + N_{\sigma(\alpha), -(\sigma(\alpha) - \alpha_{k})}U_{\alpha_{k}} + iN_{\alpha, \alpha_{1} - \alpha}W_{\alpha_{1}} + iN_{\sigma(\alpha), \alpha_{1} - \sigma(\alpha)}W_{\alpha_{1}} \\
&=
2N_{\alpha, -(\alpha - \alpha_{k})}S^{(1)}_{\alpha_{k}}, \\
& [T^{(1)}_{\alpha}, T^{(2)}_{\alpha - \alpha_{k}}] \\
&=
-N_{\alpha, -(\alpha - \alpha_{k})}W_{\alpha_{k}} + N_{\sigma(\alpha), -(\sigma(\alpha) - \alpha_{k})}W_{\alpha_{k}} + iN_{\alpha, \alpha_{1} - \alpha}U_{\alpha_{1}} + iN_{\sigma(\alpha), \alpha_{1} - \sigma(\alpha)}U_{\alpha_{1}} \\
&=
2iN_{\alpha, \alpha_{1} - \alpha}S^{(1)}_{\alpha_{1}}, \\
& [T^{(2)}_{\alpha}, T^{(1)}_{\alpha - \alpha_{k}}] \\
&= 
N_{\alpha, -(\alpha - \alpha_{k})}W_{\alpha_{k}} - N_{\sigma(\alpha), -(\sigma(\alpha) - \alpha_{k})}W_{\alpha_{k}} -iN_{\alpha, \alpha_{1} - \alpha}U_{\alpha_{1}} + iN_{\sigma(\alpha), \alpha_{1} - \sigma(\alpha)}U_{\alpha_{1}} \\
&=
-2iN_{\alpha, \alpha_{1} - \alpha}S^{(1)}_{\alpha_{1}}, \\
& [T^{(2)}_{\alpha}, T^{(2)}_{\alpha - \alpha_{k}}] \\
&=
N_{\alpha, -(\alpha - \alpha_{k})}U_{\alpha_{k}} + N_{\sigma(\alpha), -(\sigma(\alpha) - \alpha_{k})}U_{\alpha_{k}} + iN_{\alpha, \alpha_{1} - \alpha}W_{\alpha_{1}} + iN_{\sigma(\alpha), \alpha_{1} - \sigma(\alpha)}W_{\alpha_{1}} \\
&=
2N_{\alpha, -(\alpha - \alpha_{k})}S^{(1)}_{\alpha_{k}}. \\
\end{split}
\]
If $(\epsilon_{\alpha, -(\alpha - \alpha_{k})}, \epsilon_{\alpha, \alpha_{1} - \alpha}) = (-1,1)$, then $N^{2}_{\alpha, -(\alpha - \alpha_{k})} = -1$ and $N^{2}_{\alpha, -\sigma(\alpha - \alpha_{k})} = 1$.
Hence,
\[
\begin{split}
& [T^{(1)}_{\alpha}, T^{(1)}_{\alpha - \alpha_{k}}] \\
&=
-iN_{\alpha, -(\alpha - \alpha_{k})}W_{\alpha_{k}} - iN_{\sigma(\alpha), -(\sigma(\alpha) - \alpha_{k})}W_{\alpha_{k}} -N_{\alpha, \alpha_{1} - \alpha}U_{\alpha_{1}} - N_{\sigma(\alpha), \alpha_{1} - \sigma(\alpha)}U_{\alpha_{1}} \\
&=
-2N_{\alpha, \alpha_{1} - \alpha}S_{\alpha_{1}}, \\
& [T^{(1)}_{\alpha}, T^{(2)}_{\alpha - \alpha_{k}}] \\
&=
-iN_{\alpha, -(\alpha - \alpha_{k})}U_{\alpha_{k}} + iN_{\sigma(\alpha), -(\sigma(\alpha) - \alpha_{k})}U_{\alpha_{k}} + N_{\alpha, \alpha_{1} - \alpha}W_{\alpha_{1}} - N_{\sigma(\alpha), \alpha_{1} - \sigma(\alpha)}W_{\alpha_{1}} \\
&=
-2iN_{\alpha, -(\alpha - \alpha_{k})}S^{(1)}_{\alpha_{k}}, \\
& [T^{(2)}_{\alpha}, T^{(1)}_{\alpha - \alpha_{k}}] \\
&=
iN_{\alpha, -(\alpha - \alpha_{k})}U_{\alpha_{k}} - iN_{\sigma(\alpha), -(\sigma(\alpha) - \alpha_{k})}U_{\alpha_{k}} -N_{\alpha, \alpha_{1} - \alpha}W_{\alpha_{1}} + N_{\sigma(\alpha), \alpha_{1} - \sigma(\alpha)}W_{\alpha_{1}} \\
&=
2iN_{\alpha, -(\alpha - \alpha_{k})}S^{(1)}_{\alpha_{k}}, \\
& [T^{(2)}_{\alpha}, T^{(2)}_{\alpha - \alpha_{k}}] \\
&=
-iN_{\alpha, -(\alpha - \alpha_{k})}W_{\alpha_{k}} - iN_{\sigma(\alpha), -(\sigma(\alpha) - \alpha_{k})}W_{\alpha_{k}} -N_{\alpha, \alpha_{1} - \alpha}U_{\alpha_{1}} - N_{\sigma(\alpha), \alpha_{1} - \sigma(\alpha)}U_{\alpha_{1}} \\
&=
-2N_{\alpha, \alpha_{1} - \alpha}S^{(1)}_{\alpha_{1}}.
\end{split}
\]
If $(\epsilon_{\alpha, -(\alpha - \alpha_{k})}, \epsilon_{\alpha, \alpha_{1} - \alpha}) = (-1,-1)$, then $N^{2}_{\alpha, -(\alpha - \alpha_{k})} = N^{2}_{\alpha, -\sigma(\alpha - \alpha_{k})} = -1$.
Hence,
\[
\begin{split}
& [T^{(1)}_{\alpha}, T^{(1)}_{\alpha - \alpha_{k}}] \\
&=
-iN_{\alpha, -(\alpha - \alpha_{k})}W_{\alpha_{k}} - iN_{\sigma(\alpha), -(\sigma(\alpha) - \alpha_{k})}W_{\alpha_{k}} + iN_{\alpha, \alpha_{1} - \alpha}W_{\alpha_{1}} + iN_{\sigma(\alpha), \alpha_{1} - \sigma(\alpha)}W_{\alpha_{1}} = 0, \\
& [T^{(1)}_{\alpha}, T^{(2)}_{\alpha - \alpha_{k}}] \\
&=
-iN_{\alpha, -(\alpha - \alpha_{k})}U_{\alpha_{k}} + iN_{\sigma(\alpha), -(\sigma(\alpha) - \alpha_{k})}U_{\alpha_{k}} + iN_{\alpha, \alpha_{1} - \alpha}U_{\alpha_{1}} + iN_{\sigma(\alpha), \alpha_{1} - \sigma(\alpha)}U_{\alpha_{1}} \\
&=
-2iN_{\alpha, -(\alpha - \alpha_{k})}S^{(1)}_{\alpha_{k}} + 2iN_{\alpha, \alpha_{1} - \alpha}S^{(1)}_{\alpha_{1}}, \\
& [T^{(2)}_{\alpha}, T^{(1)}_{\alpha - \alpha_{k}}] \\
&=
iN_{\alpha, -(\alpha - \alpha_{k})}U_{\alpha_{k}} - iN_{\sigma(\alpha), -(\sigma(\alpha) - \alpha_{k})}U_{\alpha_{k}} -iN_{\alpha, \alpha_{1} - \alpha}U_{\alpha_{1}} + iN_{\sigma(\alpha), \alpha_{1} - \sigma(\alpha)}U_{\alpha_{1}} \\
&=
2iN_{\alpha, -(\alpha - \alpha_{k})}S^{(1)}_{\alpha_{k}} -2iN_{\alpha, \alpha_{1} - \alpha}S^{(1)}_{\alpha_{1}}, \\
& [T^{(2)}_{\alpha}, T^{(2)}_{\alpha - \alpha_{k}}] \\
&=
-iN_{\alpha, -(\alpha - \alpha_{k})}W_{\alpha_{k}} - iN_{\sigma(\alpha), -(\sigma(\alpha) - \alpha_{k})}W_{\alpha_{k}} -iN_{\alpha, \alpha_{1} - \alpha}U_{\alpha_{1}} + iN_{\sigma(\alpha), \alpha_{1} - \sigma(\alpha)}U_{\alpha_{1}} = 0. \\
\end{split}
\]
Since $|S^{(1)}_{\alpha_{1}}|^{2} = |S^{(1)}_{\alpha_{k}}|^{2} = d$, the statement is true.
\end{proof}

\begin{lemm} \label{5-2-6}
Let $\Sigma$ be of type $C_{n}\ (n \geq 2)$ and $\alpha, \beta \in \tilde{\Sigma}$ be shortest roots.
If $\bar{\alpha} = e_{1} + e_{k}, \bar{\beta} = e_{1} - e_{k}\ (2 \leq k \leq n)$, then
\[
\Big\langle [T^{(1)}_{\alpha}, T^{(1)}_{\beta}] , [T^{(1)}_{\alpha}, T^{(1)}_{\beta}]  \Big\rangle = 8d.
\]
\end{lemm}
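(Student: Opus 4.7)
The plan is to reduce to an explicit bracket computation via Lemma \ref{2-3-3}, ruling out three of the four sign configurations by a parity argument.

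First I would use the hypothesis that $\alpha,\beta\in\tilde\Sigma$ are shortest roots with $\bar\alpha=e_1+e_k$, $\bar\beta=e_1-e_k$ to conclude, from the classification summarized in Subsection \ref{s2-4}, that $\sigma(\alpha)=\alpha$ and $\sigma(\beta)=\beta$, so $\alpha,\beta\in\tilde\Sigma_{(1)}$. Hence $T^{(1)}_\alpha=W_\alpha$ and $T^{(1)}_\beta=W_\beta$, and the bracket $[T^{(1)}_\alpha,T^{(1)}_\beta]=[W_\alpha,W_\beta]$ lies in $[\mathfrak{m},\mathfrak{m}]\subset\mathfrak{k}$. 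Next, I would observe that $\alpha+\beta=2e_1$ and $\alpha-\beta=2e_k$ are both longest roots in $\tilde\Sigma$, again in $\tilde\Sigma_{(1)}$, so that $U_{2e_1},U_{2e_k}\in\mathfrak{k}$ while $W_{2e_1},W_{2e_k}\in\mathfrak{m}$.

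Now I would apply Lemma \ref{2-3-3} to expand $[W_\alpha,W_\beta]$ in each of the four cases for the pair $(\epsilon_{\alpha,\beta},\epsilon_{\alpha,-\beta})$. The key point, exactly in the spirit of the proof of Lemma \ref{2-3-5}, is that for any $\epsilon_{\alpha,\pm\beta}=-1$ the corresponding constant $N_{\alpha,\pm\beta}$ is purely imaginary (namely $\pm 2i$, using the string length $p=1$ read off from $\alpha\mp2\beta\notin\tilde\Sigma$), while the matching term in Lemma \ref{2-3-3} sits in $\mathfrak{m}$. Since $[W_\alpha,W_\beta]\in\mathfrak{k}$, each of the configurations $(1,-1)$, $(-1,1)$, $(-1,-1)$ forces a nonzero $\mathfrak{m}$-component, a contradiction. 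Hence only $(\epsilon_{\alpha,\beta},\epsilon_{\alpha,-\beta})=(1,1)$ is compatible with the constraint, and in that case
\[
[W_\alpha,W_\beta]=-N_{\alpha,\beta}U_{2e_1}+N_{\alpha,-\beta}U_{2e_k},
\]
with $N_{\alpha,\beta}^2=N_{\alpha,-\beta}^2=4$.

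Finally, because $U_{2e_1}$ and $U_{2e_k}$ lie in distinct (hence orthogonal) root spaces, and $\langle U_{2e_i},U_{2e_i}\rangle=4/(2e_i,2e_i)=d$, the norm computation collapses to
\[
\bigl\langle[W_\alpha,W_\beta],[W_\alpha,W_\beta]\bigr\rangle=N_{\alpha,\beta}^2\,d+N_{\alpha,-\beta}^2\,d=4d+4d=8d,
\]
as claimed. The only genuine obstacle is the sign bookkeeping in the case analysis, but this already has a template in Lemma \ref{5-2-3} (parity argument identical to ours) and Lemma \ref{5-2-5} (full four-case expansion), so nothing new is required.
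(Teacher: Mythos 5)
Your proof is correct and essentially identical to the paper's: the paper likewise notes $\alpha+\beta=\alpha_{1}$, $\alpha-\beta=\alpha_{k}$, invokes the parity argument of Lemma \ref{5-2-3} to force $(\epsilon_{\alpha,\beta},\epsilon_{\alpha,-\beta})=(1,1)$, expands $[W_{\alpha},W_{\beta}]=-N_{\alpha,\beta}S^{(1)}_{\alpha_{1}}+N_{\alpha,-\beta}S^{(1)}_{\alpha_{k}}$ with $N^{2}_{\alpha,\pm\beta}=4$ from $\alpha\mp 2\beta\not\in\tilde{\Sigma}$, and concludes $4d+4d=8d$ by orthogonality of the root spaces. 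The only difference is that you spell out the four-case elimination that the paper compresses into a reference to Lemma \ref{5-2-3}.
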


\begin{proof}
In this case, $\alpha + \beta = \alpha_{1}$ and $\alpha - \beta = \alpha_{k}$ since $\sigma(\alpha) = \alpha$ and $\sigma(\beta) = \beta$.
As in the proof of Lemma \ref{5-2-3}, we immediately obtain $(\epsilon_{\alpha, \beta}, \epsilon_{\alpha, -\beta}) = (1,1)$.
Hence,
\[
[T^{(1)}_{\alpha}, T^{(1)}_{\beta}] = [W_{\alpha}, W_{\beta}] = -N_{\alpha, \beta}S^{(1)}_{\alpha_{1}} + N_{\alpha, -\beta}S^{(1)}_{\alpha_{k}},
\]
where $N^2_{\alpha, \beta} = N^2_{\alpha, -\beta} = 4$ since $\alpha + 2\beta, \alpha - 2\beta \not\in \tilde{\Sigma}$.
Therefore, 
\[
\begin{split}
\Big\langle [T^{(1)}_{\alpha}, T^{(1)}_{\beta}], [T^{(1)}_{\alpha}, T^{(1)}_{\beta}] \Big\rangle 
& = 4\Big\langle S^{(1)}_{\alpha_{1}}, S^{(1)}_{\alpha_{1}} \Big\rangle + 4 \Big\langle S^{(1)}_{\alpha_{k}}, S^{(1)}_{\alpha_{k}} \Big\rangle \\
& = 4 \frac{4}{(2e_{1}, 2e_{1})} + 4\frac{4}{(2e_{k}, 2e_{k})} = 4d + 4d = 8d.
\end{split}
\]
\end{proof}

\begin{lemm} \label{5-2-7}
Let $\Sigma$ be of type $C_{2}$, and $\alpha \in \tilde{\Sigma}$ be a longest root and $\beta \in \tilde{\Sigma}$ be a shortest root.
If $\bar{\alpha} = e_{1} + e_{2}$ and $\bar{\beta} = e_{1} - e_{2}$, then for any $i = 1,2$,
\[
\Big\langle \big[T^{(i)}_{\alpha}, T^{(1)}_{\beta} \big], \big[T^{(i)}_{\alpha}, T^{(1)}_{\beta} \big] \Big\rangle = 0.
\]
\end{lemm}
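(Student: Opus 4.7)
The plan is to establish the stronger statement $[T^{(i)}_{\alpha}, T^{(1)}_{\beta}] = 0$ for $i = 1, 2$, from which the vanishing of the inner products in the statement is automatic. By the case-by-case classification recorded in Subsection~\ref{s2-4}, a longest root $\alpha$ of $\tilde\Sigma$ with $\bar\alpha = e_1 + e_2$ satisfies $\sigma(\alpha) \neq \alpha$ (so $\alpha \in \tilde\Sigma_{(2)}$), while a shortest root $\beta$ of $\tilde\Sigma$ with $\bar\beta = e_1 - e_2$ satisfies $\sigma(\beta) = \beta$ (so $\beta \in \tilde\Sigma_{(1)}$). Accordingly
\[
T^{(1)}_\alpha = W_\alpha + W_{\sigma(\alpha)}, \quad T^{(2)}_\alpha = -U_\alpha + U_{\sigma(\alpha)}, \quad T^{(1)}_\beta = W_\beta,
\]
so that each bracket in question expands into the elementary pieces $[W_{\alpha'}, W_\beta]$ and $[U_{\alpha'}, W_\beta]$ with $\alpha' \in \{\alpha, \sigma(\alpha)\}$, and it suffices to show that every such elementary bracket vanishes.

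The core step is to prove that $\alpha' \pm \beta \notin \tilde\Sigma$ for every $\alpha' \in \{\alpha, \sigma(\alpha)\}$ and every choice of sign. The projections are $\overline{\alpha' + \beta} = 2e_1$ and $\overline{\alpha' - \beta} = 2e_2$, and both restricted roots $2e_1, 2e_2$ have multiplicity $1$ in the $C_2$ case. The unique positive preimages are therefore $\alpha_1$ and $\alpha_2$; since $\sigma|_{\frak{a}_0} = \mathrm{id}$ and these preimages are unique, we have $\sigma(\alpha_1) = \alpha_1$ and $\sigma(\alpha_2) = \alpha_2$. If, say, $\alpha + \beta = \alpha_1$, then applying $\sigma$ yields $\sigma(\alpha) + \beta = \alpha_1 = \alpha + \beta$, forcing $\sigma(\alpha) = \alpha$ and contradicting $\alpha \in \tilde\Sigma_{(2)}$. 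The other three cases for $\alpha - \beta$, $\sigma(\alpha) + \beta$, and $\sigma(\alpha) - \beta$, together with their negatives, are handled by the same argument.

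With $\alpha' \pm \beta \notin \tilde\Sigma$ established, the relations $[\tilde{\frak g}_{\pm\alpha'}, \tilde{\frak g}_{\pm\beta}] \subset \tilde{\frak g}_{\pm\alpha' \pm \beta} = \{0\}$ force $[W_{\alpha'}, W_\beta] = [U_{\alpha'}, W_\beta] = 0$ for $\alpha' \in \{\alpha, \sigma(\alpha)\}$. Summing the elementary contributions yields $[T^{(i)}_\alpha, T^{(1)}_\beta] = 0$ for $i = 1, 2$, and the statement follows. I do not anticipate a genuine obstacle: the only subtlety is recognizing that the combinatorial constraint $m(2e_1) = m(2e_2) = 1$ together with the $\sigma$-fixedness of $\alpha_1, \alpha_2$ rules out every potential sum root simultaneously, so in contrast to Lemma~\ref{5-2-5} there is no casework in $(\epsilon_{\alpha,\beta}, \epsilon_{\alpha,-\beta})$ to grind through.
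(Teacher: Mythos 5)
Your proposal is correct and follows essentially the same route as the paper: both show $\alpha\pm\beta\notin\tilde{\Sigma}$ (and hence $\sigma(\alpha)\pm\beta\notin\tilde{\Sigma}$) by noting that the unique preimages $\alpha_{1},\alpha_{2}$ of $2e_{1},2e_{2}$ are $\sigma$-fixed while $\sigma(\alpha\pm\beta)=\sigma(\alpha)\pm\beta\neq\alpha\pm\beta$, and then conclude $[T^{(i)}_{\alpha},T^{(1)}_{\beta}]=0$. Your write-up merely makes explicit the reduction to the elementary brackets $[U_{\alpha'},W_{\beta}]$, $[W_{\alpha'},W_{\beta}]$, which the paper leaves implicit.
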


\begin{proof}
Note that $\sigma(\alpha) \not= \alpha$ and $\sigma(\beta) = \beta$, so $\sigma(\alpha + \beta) \not= \alpha + \beta$.
Moreover, $\overline{\alpha + \beta} = 2e_{1}$.
Hence, $\alpha + \beta \not\in \tilde{\Sigma}$ since $\sigma(\alpha_{1}) = \alpha_{1}$.
By a similar argument, we have $\alpha - \beta \not\in \tilde{\Sigma}$.
Therefore, $[ T^{(i)}_{\alpha}, T^{(1)}_{\beta} ] = 0$ for any $i =1,2$, and the statement follows.
\end{proof}

\begin{lemm} \label{5-2-8}
Let $\alpha, \beta \in \tilde{\Sigma}$ be longest roots.
If $\alpha \not= \beta, \sigma(\beta)$ and $\bar{\alpha} = \bar{\beta} = e_{1} + e_{k}\ (2 \leq k \leq n)$, then $\alpha - \beta, \alpha - \sigma(\beta) \in \tilde{\Sigma}$ and for any $i,j = 1,2$,
\[
\Big\langle \big[ T^{(i)}_{\alpha}, T^{(j)}_{\beta} \big], \big[ T^{(i)}_{\alpha}, T^{(j)}_{\beta} \big] \Big\rangle = 8d.
\]
\end{lemm}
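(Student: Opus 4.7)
The plan is to establish $\alpha-\beta,\alpha-\sigma(\beta)\in\tilde\Sigma$ via a Cartan-integer argument, then compute $[T^{(i)}_{\alpha},T^{(j)}_{\beta}]$ directly by the same method as Lemma~\ref{5-2-5}, exploiting the fact that $\sigma$ acts as $-\mathrm{id}$ on the kernel of $\overline{(\,\cdot\,)}\colon\frak{h}_{0}\to\frak{a}_{0}$ to collapse the four summands in pairs.

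Since $\alpha,\beta$ are longest roots projecting to $\lambda:=e_{1}+e_{k}$, decompose $\alpha=\lambda+\mu_{\alpha}$, $\beta=\lambda+\mu_{\beta}$ with $\mu_{\alpha},\mu_{\beta}$ in the $i\frak{b}$-component. Lemma~\ref{2-4-4} applied to each of $\alpha,\beta$ yields $(\alpha,\sigma(\alpha))=(\beta,\sigma(\beta))=0$, equivalently $|\mu_{\alpha}|^{2}=|\mu_{\beta}|^{2}=|\lambda|^{2}$. A short computation gives
\[
(\alpha,\beta)+(\alpha,\sigma(\beta))=2|\lambda|^{2}=(\alpha,\alpha).
\]
Because $\alpha$ has the same length as $\beta$ and $\sigma(\beta)$ and $\alpha\neq\pm\beta,\pm\sigma(\beta)$, each of the Cartan integers $2(\alpha,\beta)/(\alpha,\alpha)$ and $2(\alpha,\sigma(\beta))/(\alpha,\alpha)$ lies in $\{0,\pm 1\}$. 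The only way they can sum to $2$ is that both equal $1$, forcing $\alpha-\beta,\alpha-\sigma(\beta)\in\tilde\Sigma$ and $\alpha+\beta,\alpha+\sigma(\beta)\notin\tilde\Sigma$. Furthermore $\alpha\pm\sigma(\alpha)\notin\tilde\Sigma$: the $+$ case projects to $2\lambda\notin\Sigma$, and the $-$ case would have length $\sqrt{2}|\alpha|$, exceeding the longest-root length.

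Since $\overline{\alpha-\beta}=0$, the root $\alpha-\beta$ lies in the $(-1)$-eigenspace of $\sigma|_{\frak{h}_{0}}$, hence $\sigma(\alpha)-\sigma(\beta)=-(\alpha-\beta)$ and $\sigma(\alpha)-\beta=-(\alpha-\sigma(\beta))$. Consequently $U_{\sigma(\alpha)-\sigma(\beta)}=U_{\alpha-\beta}$ and $U_{\sigma(\alpha)-\beta}=U_{\alpha-\sigma(\beta)}$, with the analogous formulas (up to sign) for the $W$'s. Expanding
\[
[T^{(1)}_{\alpha},T^{(1)}_{\beta}]=\sum_{\alpha'\in\{\alpha,\sigma(\alpha)\},\,\beta'\in\{\beta,\sigma(\beta)\}}[W_{\alpha'},W_{\beta'}],
\]
every $\alpha'+\beta'$ lies outside $\tilde\Sigma$, so the ``$+$'' side of Lemma~\ref{2-3-3} vanishes; Lemma~\ref{2-3-2}(ii) gives $N_{\sigma(\alpha),-\sigma(\beta)}=\overline{N_{\alpha,-\beta}}$ and $N_{\sigma(\alpha),-\beta}=\overline{N_{\alpha,-\sigma(\beta)}}$. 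In the regime $(\epsilon_{\alpha,-\beta},\epsilon_{\alpha,-\sigma(\beta)})=(1,1)$ the four summands collapse pairwise to
\[
[T^{(1)}_{\alpha},T^{(1)}_{\beta}]=2N_{\alpha,-\beta}U_{\alpha-\beta}+2N_{\alpha,-\sigma(\beta)}U_{\alpha-\sigma(\beta)},
\]
two orthogonal terms supported on the distinct roots $\alpha-\beta$ and $\alpha-\sigma(\beta)$. Using $|N_{\alpha,-\beta}|^{2}=|N_{\alpha,-\sigma(\beta)}|^{2}=1$ (the $\alpha+p\beta$-string for $p\geq 0$ contains only $\alpha$) and $\langle U_{\gamma},U_{\gamma}\rangle=4/(\gamma,\gamma)=4/|\alpha|^{2}=d$ for $\gamma\in\{\alpha-\beta,\alpha-\sigma(\beta)\}$, each term contributes $4d$, totaling $8d$. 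The remaining three sign regimes for $(\epsilon_{\alpha,-\beta},\epsilon_{\alpha,-\sigma(\beta)})$ and the three other choices of $(i,j)$ are handled by the same template, always producing two orthogonal terms of squared length $4d$. The main obstacle is the sign bookkeeping, entirely parallel to the case-by-case analysis in the proof of Lemma~\ref{5-2-5}; the structural heart of the argument is the identity $\sigma(\alpha)-\sigma(\beta)=-(\alpha-\beta)$, which is responsible for the factor-of-two enhancement from $4d$ to $8d$ relative to the naive four-term count.
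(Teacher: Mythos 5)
Your proposal is correct and follows essentially the same route as the paper: the bracket $[T^{(i)}_{\alpha},T^{(j)}_{\beta}]$ is expanded into the four $U/W$-brackets, collapsed in pairs using $\sigma(\alpha)-\sigma(\beta)=-(\alpha-\beta)$, $\sigma(\alpha)-\beta=-(\alpha-\sigma(\beta))$ and Lemma \ref{2-3-2}, and the norm is computed from $|N_{\alpha,-\beta}|^{2}=|N_{\alpha,-\sigma(\beta)}|^{2}=1$ together with $\langle U_{\gamma},U_{\gamma}\rangle=4/(\gamma,\gamma)=d$, the paper merely writing out all four sign regimes and all $(i,j)$ explicitly where you defer to ``the same template.'' Your first step (both Cartan integers equal $1$ because they are each at most $1$ and sum to $2$) is a slightly cleaner packaging of the paper's case-by-case contradiction argument, resting on the same ingredients, namely $(\alpha,\sigma(\alpha))=(\beta,\sigma(\beta))=0$ and $\alpha+\beta,\ \alpha+\sigma(\beta)\notin\tilde{\Sigma}$.
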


\begin{proof}
The $\frak{a}_{0}$-part and $i\frak{b}$-part of $\alpha$ (resp.\ $\beta$) are denoted by $\lambda$ and $\mu_{1}$ (resp.\ $\lambda$ and $\mu_{2}$), respectively.
Assume that neither $\alpha - \beta$ nor $\alpha - \sigma(\beta)$ is a root.
Since $\alpha + \beta, \alpha + \sigma(\beta) \not\in \tilde{\Sigma}$, we obtain $(\alpha, \beta) = (\alpha, \sigma(\beta)) = 0$.
Hence,
\[
\begin{split}
0 &= (\alpha, \beta) = (\lambda + \mu_{1}, \lambda + \mu_{2}) = (\lambda, \lambda) + (\mu_{1}, \mu_{2}), \\
0 &= (\alpha, \sigma(\beta)) = (\lambda + \mu_{1}, \lambda - \mu_{2}) = (\lambda, \lambda) - (\mu_{1}, \mu_{2}).
\end{split}
\]
Thus, $(\lambda, \lambda) = 0$, which is a contradiction.
Next, assume $\alpha - \beta \in \tilde{\Sigma}$ and $\alpha - \sigma(\beta) \not\in \tilde{\Sigma}$.
Then,
\[
\frac{2(\alpha, \beta)}{(\alpha, \alpha)} = 1, \quad \frac{2(\alpha, \sigma(\beta))}{(\alpha, \alpha)} = 0.
\]
Therefore,
\[
\begin{split}
& (\lambda, \lambda) + (\mu_{1}, \mu_{1}) = 2(\lambda, \lambda) + 2(\mu_{1}, \mu_{2}), \quad (\lambda, \lambda) - (\mu_{1}, \mu_{2}) = 0,
\end{split}
\]
and $3(\lambda, \lambda) = (\mu_{1}, \mu_{1})$.
On the other hand, since $(\alpha, \sigma(\alpha)) = 0$ by $\alpha \pm \sigma(\alpha) \not\in \tilde{\Sigma}$, 
\[
0 = (\alpha, \sigma(\alpha)) = (\lambda, \lambda) - (\mu_{1}, \mu_{1})
\]
and $(\lambda, \lambda) = (\mu_{1}, \mu_{1})$.
Note that $\lambda \not = 0$ and $\mu_{1} \not= 0$ by $\sigma(\alpha) \not= \alpha$, and this is a contradiction.
In the case of $\alpha - \beta \not\in \tilde{\Sigma}$ and $\alpha - \sigma(\beta) \in \tilde{\Sigma}$, a similar argument shows that a contradiction arises.
Thus, the former part of the statement is true.
Next, we consider the latter part.
If $(\epsilon_{\alpha, - \beta}, \epsilon_{\alpha, -\sigma(\beta)}) = (1,1)$, then $N_{\alpha, -\beta}^{2} = N^{2}_{\alpha, -\sigma(\beta)} = 1$, and
\[
\begin{split}
&[T^{(1)}_{\alpha}, T^{(1)}_{\beta}] \\
&=
[W_{\alpha}, W_{\beta}] + [W_{\sigma(\alpha)}, W_{\sigma(\beta)}] + [W_{\alpha}, W_{\sigma(\beta)}]  + [W_{\sigma(\alpha)}, W_{\beta}] \\
&=
N_{\alpha, -\beta}U_{\alpha - \beta} + N_{\sigma(\alpha), -\sigma(\beta)}U_{\sigma(\alpha - \beta)} + N_{\alpha, -\sigma(\beta)}U_{\alpha - \sigma(\beta)} + N_{\sigma(\alpha), - \beta}U_{\sigma(\alpha) -\beta} \\
&=
2N_{\alpha, -\beta}S^{(1)}_{\alpha - \beta} + 2N_{\alpha, -\sigma(\beta)}S^{(1)}_{\alpha - \sigma(\beta)}, \\
&[T^{(1)}_{\alpha}, T^{(2)}_{\beta}] \\
&=
-[W_{\alpha}, U_{\beta}] + [W_{\sigma(\alpha)}, U_{\sigma(\beta)}] + [W_{\alpha}, U_{\sigma(\beta)}] - [W_{\sigma(\alpha)}, U_{\beta}] \\
&=
-N_{\alpha, -\beta}W_{\alpha - \beta} + N_{\sigma(\alpha), -\sigma(\beta)}W_{\sigma(\alpha - \beta)} + N_{\alpha, -\sigma(\beta)}W_{\alpha - \sigma(\beta)} - N_{\sigma(\alpha), - \beta}W_{\sigma(\alpha) -\beta} \\
&=
-2N_{\alpha, -\beta}S^{(2)}_{\alpha - \beta} + 2N_{\alpha, -\sigma(\beta)}S^{(2)}_{\alpha - \sigma(\beta)}, \\
&[T^{(2)}_{\alpha}, T^{(1)}_{\beta}] \\
&=
-[U_{\alpha}, W_{\beta}] + [U_{\sigma(\alpha)}, W_{\sigma(\beta)}] - [U_{\alpha}, W_{\sigma(\beta)}] + [U_{\sigma(\alpha)}, W_{\beta}] \\
&=
N_{\alpha, -\beta}W_{\alpha - \beta} - N_{\sigma(\alpha), -\sigma(\beta)}W_{\sigma(\alpha - \beta)} + N_{\alpha, -\sigma(\beta)}W_{\alpha - \sigma(\beta)} - N_{\sigma(\alpha), - \beta}W_{\sigma(\alpha) -\beta} \\
&=
2N_{\alpha, -\beta}S^{(2)}_{\alpha - \beta} + 2N_{\alpha, -\sigma(\beta)}S^{(2)}_{\alpha - \sigma(\beta)}, \\
& [T^{(2)}_{\alpha}, T^{(2)}_{\beta}] \\
&=
[U_{\alpha}, U_{\beta}] + [U_{\sigma(\alpha)}, U_{\sigma(\beta)}] - [U_{\alpha}, U_{\sigma(\beta)}] - [U_{\sigma(\alpha)}, U_{\beta}] \\
&=
N_{\alpha, -\beta}U_{\alpha - \beta} + N_{\sigma(\alpha), -\sigma(\beta)}U_{\sigma(\alpha - \beta)} -N_{\alpha, -\sigma(\beta)}U_{\alpha - \sigma(\beta)} - N_{\sigma(\alpha), - \beta}U_{\sigma(\alpha) -\beta} \\
&=
2N_{\alpha, -\beta}S^{(1)}_{\alpha - \beta} -2N_{\alpha, -\sigma(\beta)}S^{(1)}_{\alpha - \sigma(\beta)}.
\end{split}
\]
If $(\epsilon_{\alpha, - \beta}, \epsilon_{\alpha, -\sigma(\beta)}) = (1,-1)$, then $N_{\alpha, -\beta}^{2} = 1$ and $N^{2}_{\alpha, -\sigma(\beta)} = -1$, and 
\[
\begin{split}
& [T^{(1)}_{\alpha}, T^{(1)}_{\beta}] \\
&=
N_{\alpha, -\beta}U_{\alpha - \beta} + N_{\sigma(\alpha), -\sigma(\beta)}U_{\sigma(\alpha - \beta)} -iN_{\alpha, -\sigma(\beta)}W_{\alpha - \sigma(\beta)} - iN_{\sigma(\alpha), -\beta}W_{\sigma(\alpha) - \beta} \\
&=
2N_{\alpha, -\beta}S^{(1)}_{\alpha - \beta} -2iN_{\alpha, -\sigma(\beta)}S^{(2)}_{\alpha - \sigma(\beta)}, \\
& [T^{(1)}_{\alpha}, T^{(2)}_{\beta}] \\
&=
-N_{\alpha, -\beta}W_{\alpha - \beta} + N_{\sigma(\alpha), -\sigma(\beta)}W_{\sigma(\alpha - \beta)} -iN_{\alpha, -\sigma(\beta)}U_{\alpha - \sigma(\beta)} + iN_{\sigma(\alpha), -\beta}U_{\sigma(\alpha) - \beta} \\
&=
-2N_{\alpha, -\beta}S'_{\alpha - \beta} -2iN_{\alpha, -\sigma(\beta)}S^{(1)}_{\alpha - \sigma(\beta)}, \\
& [T^{(2)}_{\alpha}, T^{(1)}_{\beta}] \\
&=
N_{\alpha, -\beta}W_{\alpha - \beta} - N_{\sigma(\alpha), -\sigma(\beta)}W_{\sigma(\alpha - \beta)} + iN_{\alpha, -\sigma(\beta)}U_{\alpha - \sigma(\beta)} - iN_{\sigma(\alpha), -\beta}U_{\sigma(\alpha) - \beta} \\
&= 
2N_{\alpha, -\beta}S^{(2)}_{\alpha - \beta} + 2iN_{\alpha, -\sigma(\beta)}S^{(1)}_{\alpha - \sigma(\beta)}, \\
& [T^{(2)}_{\alpha}, T^{(2)}_{\beta}] \\
&=
N_{\alpha, -\beta}U_{\alpha - \beta} + N_{\sigma(\alpha), -\sigma(\beta)}U_{\sigma(\alpha - \beta)} + iN_{\alpha, -\sigma(\beta)}W_{\alpha - \sigma(\beta)} + iN_{\sigma(\alpha), -\beta}W_{\sigma(\alpha) - \beta} \\
&=
2N_{\alpha, -\beta}S^{(1)}_{\alpha - \beta} + 2iN_{\alpha, -\sigma(\beta)}S^{(2)}_{\alpha - \sigma(\beta)}. \\
\end{split}
\]
If $(\epsilon_{\alpha, - \beta}, \epsilon_{\alpha, -\sigma(\beta)}) = (-1,1)$, then $N_{\alpha, -\beta}^{2} = -1$ and $N^{2}_{\alpha, -\sigma(\beta)} = 1$, and 
\[
\begin{split}
& [T^{(1)}_{\alpha}, T^{(1)}_{\beta}] \\
&=
-iN_{\alpha, -\beta}W_{\alpha - \beta} - iN_{\sigma(\alpha), -\sigma(\beta)}W_{\sigma(\alpha - \beta)} + N_{\alpha, -\sigma(\beta)}U_{\alpha - \sigma(\beta)} + N_{\sigma(\alpha), - \beta}U_{\sigma(\alpha) -\beta} \\
&=
-2iN_{\alpha, -\beta}S'_{\alpha - \beta} + 2N_{\alpha, -\sigma(\beta)}S^{(1)}_{\alpha - \sigma(\beta)}, \\
& [T^{(1)}_{\alpha}, T^{(2)}_{\beta}] \\
& =
iN_{\alpha, -\beta}U_{\alpha - \beta} - iN_{\sigma(\alpha), -\sigma(\beta)}U_{\sigma(\alpha - \beta)} + N_{\alpha, -\sigma(\beta)}W_{\alpha - \sigma(\beta)} - N_{\sigma(\alpha), - \beta}W_{\sigma(\alpha) -\beta} \\
&=
2iN_{\alpha, -\beta}S^{(1)}_{\alpha - \beta} + 2N_{\alpha, -\sigma(\beta)}S'_{\alpha - \sigma(\beta)}, \\
& [T^{(2)}_{\alpha}, T^{(1)}_{\beta}] \\
&=
iN_{\alpha, -\beta}U_{\alpha - \beta} - iN_{\sigma(\alpha), -\sigma(\beta)}U_{\sigma(\alpha - \beta)} + N_{\alpha, -\sigma(\beta)}W_{\alpha - \sigma(\beta)} - N_{\sigma(\alpha), - \beta}W_{\sigma(\alpha) -\beta} \\
&=
2iN_{\alpha, -\beta}S^{(1)}_{\alpha - \beta} + 2N_{\alpha, -\sigma(\beta)}S'_{\alpha - \sigma(\beta)} , \\
& [T^{(2)}_{\alpha}, T^{(2)}_{\beta}] \\
&=
-iN_{\alpha, -\beta}W_{\alpha - \beta} - iN_{\sigma(\alpha), -\sigma(\beta)}W_{\sigma(\alpha - \beta)} -N_{\alpha, -\sigma(\beta)}U_{\alpha - \sigma(\beta)} - N_{\sigma(\alpha), - \beta}U_{\sigma(\alpha) -\beta} \\
&=
-2iN_{\alpha, -\beta}S'_{\alpha - \beta} + -2N_{\alpha, -\sigma(\beta)}S^{(1)}_{\alpha - \sigma(\beta)}. \\
\end{split}
\]
If $(\epsilon_{\alpha, - \beta}, \epsilon_{\alpha, -\sigma(\beta)}) = (-1,-1)$, then $N_{\alpha, -\beta}^{2} = N^{2}_{\alpha, -\sigma(\beta)} = -1$, and 
\[
\begin{split}
& [T^{(1)}_{\alpha}, T^{(1)}_{\beta}] \\
&=
-iN_{\alpha, -\beta}W_{\alpha - \beta} - iN_{\sigma(\alpha), -\sigma(\beta)}W_{\sigma(\alpha - \beta)} -iN_{\alpha, -\sigma(\beta)}W_{\alpha - \sigma(\beta)} - iN_{\sigma(\alpha), -\beta}W_{\sigma(\alpha) - \beta} \\
&=
-2iN_{\alpha, -\beta}S^{(2)}_{\alpha - \beta} -2iN_{\alpha, -\sigma(\beta)}S^{(2)}_{\alpha - \sigma(\beta)}, \\
& [T^{(1)}_{\alpha}, T^{(2)}_{\beta}]] \\
&=
iN_{\alpha, -\beta}U_{\alpha - \beta} - iN_{\sigma(\alpha), -\sigma(\beta)}U_{\sigma(\alpha - \beta)} -iN_{\alpha, -\sigma(\beta)}U_{\alpha - \sigma(\beta)} + iN_{\sigma(\alpha), -\beta}U_{\sigma(\alpha) - \beta} \\
&=
2iN_{\alpha, -\beta}S^{(1)}_{\alpha - \beta} -2iN_{\alpha, -\sigma(\beta)}S^{(1)}_{\alpha - \sigma(\beta)}, \\
& [T^{(2)}_{\alpha}, T^{(1)}_{\beta}] \\
&=
iN_{\alpha, -\beta}U_{\alpha - \beta} - iN_{\sigma(\alpha), -\sigma(\beta)}U_{\sigma(\alpha - \beta)} + iN_{\alpha, -\sigma(\beta)}U_{\alpha - \sigma(\beta)} - iN_{\sigma(\alpha), -\beta}U_{\sigma(\alpha) - \beta} \\
&=
2iN_{\alpha, -\beta}S^{(1)}_{\alpha - \beta} + 2iN_{\alpha, -\sigma(\beta)}S^{(1)}_{\alpha - \sigma(\beta)}, \\
& [T^{(2)}_{\alpha}, T^{(2)}_{\beta}] \\
&=
-iN_{\alpha, -\beta}W_{\alpha - \beta} - iN_{\sigma(\alpha), -\sigma(\beta)}W_{\sigma(\alpha - \beta)} + iN_{\alpha, -\sigma(\beta)}W_{\alpha - \sigma(\beta)} + iN_{\sigma(\alpha), -\beta}W_{\sigma(\alpha) - \beta} \\
&=
-2iN_{\alpha, -\beta}S^{(2)}_{\alpha - \beta} + 2iN_{\alpha, -\sigma(\beta)}S^{(2)}_{\alpha - \sigma(\beta)}. \\
\end{split}
\]
On the other hand,
\[
\Big\langle S^{(1)}_{\alpha - \beta}, S^{(1)}_{\alpha - \beta} \Big\rangle = \Big\langle U_{\alpha - \beta}, U_{\alpha - \beta} \Big\rangle = \Big\langle iA_{\alpha - \beta}, iA_{\alpha - \beta} \Big\rangle = \Big\langle iA_{\alpha_{1}}, iA_{\alpha_{1}} \Big\rangle = (2e_{1}, 2e_{1}) = d.
\]
Since $\langle S^{(i)}_{\alpha -\beta}, S^{(i)}_{\alpha -\beta} \rangle = \langle S^{(j)}_{\alpha - \sigma(\beta)}, S^{(j)}_{\alpha - \sigma(\beta)} \rangle$ for any $i,j = 1,2$, we have completed the proof.
\end{proof}

\begin{lemm} \label{5-2-9}
Let  $\Sigma$ be of type $C_{2}$, and $\alpha \in \tilde{\Sigma}$ be a longest root and $\beta \in \tilde{\Sigma}$ be a shortest root.
If  $\bar{\alpha} = \bar{\beta} = e_{1} + e_{2}$, then $\alpha - \beta \in \tilde{\Sigma}$, and for any $i = 1,2$,
\[
\Big\langle \big[ T^{(i)}_{\alpha}, T^{(1)}_{\beta} \big], \big[ T^{(i)}_{\alpha}, T^{(1)}_{\beta} \big] \Big\rangle = 8d.
\]
\end{lemm}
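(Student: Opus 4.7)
The plan is to follow the template of Lemmas \ref{5-2-2}--\ref{5-2-8}: first establish $\alpha - \beta \in \tilde{\Sigma}$, then expand $[T^{(i)}_{\alpha}, T^{(1)}_{\beta}]$ via Lemma \ref{2-3-3}, and finally apply the normalization $\langle U_{\gamma}, U_{\gamma} \rangle = \langle W_{\gamma}, W_{\gamma} \rangle = 4/(\gamma,\gamma)$ together with $|N_{\alpha,-\beta}|^{2} = 1$ to read off the squared length.

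First I would decompose $\alpha = \lambda + \mu_{1}$ and $\beta = \lambda + \mu_{2}$ into their $\frak{a}_{0}$- and $i\frak{b}$-parts, with $\lambda = e_{1}+e_{2}$. Since $\sigma(\beta) = \beta$ forces $\mu_{2} = 0$, and the ratio $(\alpha,\alpha) = 2(\beta,\beta)$ between long and short roots gives $(\mu_{1},\mu_{1}) = (\lambda,\lambda) = 2/d$, one obtains $(\alpha,\beta) = (\lambda,\lambda) = 2/d$ and hence $2(\alpha,\beta)/(\beta,\beta) = 2$. Combined with $\alpha + \beta \notin \tilde{\Sigma}$ (because $\overline{\alpha+\beta} = 2(e_{1}+e_{2}) \notin \Sigma$), the standard root string argument forces $\{\alpha, \alpha-\beta, \alpha-2\beta\}$ to be the $\beta$-string through $\alpha$, so $\alpha - \beta \in \tilde{\Sigma}$. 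Since $\overline{\alpha-\beta} = 0$, this root lies in $\tilde{\Sigma}_{(0)}$, and therefore $\sigma(\alpha) - \beta = \sigma(\alpha - \beta) = -(\alpha - \beta)$; in particular $\sigma(\alpha) + \beta$ also fails to lie in $\tilde{\Sigma}$.

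Next I would expand the bracket, setting $\gamma = \alpha - \beta$. Here $\alpha \in \tilde{\Sigma}_{(2)}$ gives $T^{(1)}_{\alpha} = W_{\alpha} + W_{\sigma(\alpha)}$ and $T^{(2)}_{\alpha} = -U_{\alpha} + U_{\sigma(\alpha)}$, while $\beta \in \tilde{\Sigma}_{(1)}$ gives $T^{(1)}_{\beta} = W_{\beta}$. I apply Lemma \ref{2-3-3} knowing that both $\alpha + \beta$ and $\sigma(\alpha) + \beta$ lie outside $\tilde{\Sigma}$, so only the $\alpha - \beta$ and $\sigma(\alpha) - \beta$ summands survive. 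Using $U_{-\gamma} = U_{\gamma}$, $W_{-\gamma} = -W_{\gamma}$, together with $N_{\sigma(\alpha),-\beta} = \overline{N_{\alpha,-\beta}}$ and $\epsilon_{\sigma(\alpha),-\beta} = \epsilon_{\alpha,-\beta}$ from Lemma \ref{2-3-2}(ii), the $\alpha$- and $\sigma(\alpha)$-contributions combine constructively: in the case $\epsilon_{\alpha,-\beta} = 1$ one finds $[T^{(1)}_{\alpha}, T^{(1)}_{\beta}] = 2N_{\alpha,-\beta}\,U_{\gamma}$ and $[T^{(2)}_{\alpha}, T^{(1)}_{\beta}] = 2N_{\alpha,-\beta}\,W_{\gamma}$, while for $\epsilon_{\alpha,-\beta} = -1$ one obtains $-2iN_{\alpha,-\beta}\,W_{\gamma}$ and $2iN_{\alpha,-\beta}\,U_{\gamma}$ respectively.

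Finally, Lemma \ref{2-3-1}(v) applied with $p = 0$ (again because $\alpha + \beta \notin \tilde{\Sigma}$) yields $N_{\alpha,-\beta}^{2} = \epsilon_{\alpha,-\beta}$, so in either case $|N_{\alpha,-\beta}|^{2} = 1$ and the scalar prefactor has squared modulus $4$. Since $(\gamma,\gamma) = (\mu_{1},\mu_{1}) = 2/d$, we compute $\langle U_{\gamma}, U_{\gamma}\rangle = \langle W_{\gamma}, W_{\gamma}\rangle = 4/(\gamma,\gamma) = 2d$, hence $\langle [T^{(i)}_{\alpha}, T^{(1)}_{\beta}], [T^{(i)}_{\alpha}, T^{(1)}_{\beta}]\rangle = 4 \cdot 2d = 8d$, as required. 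The main obstacle is the case-by-case bookkeeping through Lemma \ref{2-3-3}: one must confirm in each sign pattern of $\epsilon_{\alpha,-\beta}$ that the coefficient attached to the $\sigma(\alpha)$-term is the exact complex conjugate needed for constructive interference with the $\alpha$-term, which is precisely the content of $N_{\sigma(\alpha),-\beta} = \overline{N_{\alpha,-\beta}}$.
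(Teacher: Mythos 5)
Your proposal is correct and follows essentially the same route as the paper: establish $\alpha-\beta\in\tilde{\Sigma}$ from $(\alpha,\beta)\neq 0$ (forced by $\sigma(\beta)=\beta$) together with $\alpha+\beta\notin\tilde{\Sigma}$, expand $[T^{(i)}_{\alpha},T^{(1)}_{\beta}]$ case by case in $\epsilon_{\alpha,-\beta}=\pm 1$ via Lemma \ref{2-3-3} using $N_{\sigma(\alpha),-\beta}=\overline{N_{\alpha,-\beta}}$, and conclude from $|N_{\alpha,-\beta}|^{2}=1$ and $\langle U_{\alpha-\beta},U_{\alpha-\beta}\rangle=\langle W_{\alpha-\beta},W_{\alpha-\beta}\rangle=2d$. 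The only deviation is an immaterial overall sign in the $\epsilon_{\alpha,-\beta}=-1$ bracket for $i=2$ (your $+2iN_{\alpha,-\beta}U_{\alpha-\beta}$ versus the paper's $-2iN_{\alpha,-\beta}S^{(1)}_{\alpha-\beta}$), which does not affect the squared norm.
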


\begin{proof}
Since $(\alpha, \beta) \not= 0$ by $\sigma(\beta) = \beta$ and $\alpha + \beta \not\in \tilde{\Sigma}$, we have $\alpha - \beta \in \tilde{\Sigma}$.
If $\epsilon_{\alpha, -\beta} = 1$, then $N^{2}_{\alpha, -\beta} = 1$ and 
\[
\begin{split}
[T_{\alpha}^{(1)}, T^{(1)}_{\beta}] 
&= [W_{\alpha}, W_{\beta}] + [W_{\sigma(\alpha)}, W_{\beta}] 
= N_{\alpha, -\beta}U_{\alpha - \beta} + N_{\sigma(\alpha), -\beta}U_{\sigma(\alpha - \beta)} \\
&= 2N_{\alpha, -\beta}S^{(1)}_{\alpha - \beta}, \\
[T_{\alpha}^{(2)}, T^{(1)}_{\beta}]
&= -[U_{\alpha}, W_{\beta}] + [U_{\sigma(\alpha)}, W_{\beta}] 
= N_{\alpha, -\beta}W_{\alpha - \beta} - N_{\sigma(\alpha), -\beta}W_{\sigma(\alpha - \beta)} \\
&= 2N_{\alpha, -\beta}S^{(2)}_{\alpha - \beta}.
\end{split}
\]
If $\epsilon_{\alpha, -\beta} = -1$, then $N^{2}_{\alpha, -\beta} = -1$ and
\[
\begin{split}
[T_{\alpha}^{(1)}, T^{(1)}_{\beta}] 
&= -iN_{\alpha, -\beta}W_{\alpha - \beta} -iN_{\sigma(\alpha), -\beta}W_{\sigma(\alpha - \beta)}
= -2iN_{\alpha, -\beta}S^{(2)}_{\alpha - \beta}, \\
[T_{\alpha}^{(2)}, T^{(1)}_{\beta}]
&= -iN_{\alpha, -\beta}U_{\alpha - \beta} + iN_{\sigma(\alpha), -\beta}U_{\sigma(\alpha - \beta)} 
= -2iN_{\alpha, -\beta}S^{(1)}_{\alpha - \beta}.
\end{split}
\]
Since $|\alpha - \beta| = |\beta|$ and $\sqrt{2}|\alpha - \beta| = |\alpha|$,
\[
\Big\langle S^{(1)}_{\alpha - \beta}, S^{(1)}_{\alpha - \beta} \Big\rangle = \Big\langle U_{\alpha - \beta}, U_{\alpha - \beta} \Big\rangle = \Big\langle iA_{\alpha - \beta}, iA_{\alpha - \beta} \Big\rangle = 2 \Big\langle iA_{\alpha_{1}}, iA_{\alpha_{1}} \Big\rangle = 2d.
\]
Similarly, $\langle S^{(2)}_{\alpha - \beta}, S^{(2)}_{\alpha - \beta} \rangle = 2d$ and the statement follows.
\end{proof}

\begin{remark} \label{5-2-10}
Let $\alpha \in \tilde{\Sigma}$ be a longest root such that $\bar{\alpha} = e_{1} + e_{k} \ (2 \leq k \leq n)$.
Since $(\alpha, \sigma(\alpha)) = 0$,
\[
\begin{split}
[T^{(1)}_{\alpha}, T^{(2)}_{\alpha}] 
&= [W_{\alpha} + W_{\sigma(\alpha)}, -U_{\alpha} + U_{\sigma(\alpha)}] \\
&= -[W_{\alpha}, U_{\alpha}] + [W_{\sigma(\alpha)}, U_{\sigma(\alpha)}] = -2iA_{\alpha} + 2iA_{\sigma(\alpha)}.
\end{split}
\]
Therefore,
\[
\Big\langle [T^{(1)}_{\alpha}, T^{(2)}_{\alpha}], [T^{(1)}_{\alpha}, T^{(2)}_{\alpha}] \Big\rangle 
= \Big\langle 2iA_{\alpha}, 2iA_{\alpha} \Big\rangle + \Big\langle 2iA_{\sigma(\alpha)}, 2iA_{\sigma(\alpha)} \Big\rangle
= 4d + 4d = 8d.
\]
\end{remark}

\begin{lemm} \label{5-2-11}
Let $\Sigma$ be of type $BC_{n}\ (n \geq 2)$.
Moreover, let $\alpha, \beta \in \tilde{\Sigma}$ be longest roots such that $\bar{\alpha} = e_{1} + e_{k}\ ( 2 \leq k \leq n)$ and  $\bar{\beta} = e_{1}$.
Then, either $\beta$ or $\sigma(\beta)$ is orthogonal to $\alpha$, while the other is not orthogonal to $\alpha$.
Moreover, for any $i,j = 1,2$,
\[
\Big\langle [T^{(i)}_{\alpha}, T^{(j)}_{\beta}], [T^{(i)}_{\alpha}, T^{(j)}_{\beta}] \Big\rangle = 4d.
\]
\end{lemm}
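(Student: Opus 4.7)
The plan is to follow the two-step template from Lemma \ref{5-2-2} and Lemma \ref{5-2-8}: first establish the orthogonality dichotomy for $\{\beta, \sigma(\beta)\}$ against $\alpha$, then compute $\langle [T^{(i)}_{\alpha}, T^{(j)}_{\beta}], [T^{(i)}_{\alpha}, T^{(j)}_{\beta}] \rangle$ via Lemma \ref{2-3-3}. Write $\alpha = \lambda_{1} + \mu_{1}$ and $\beta = \lambda_{2} + \mu_{2}$ as the decomposition into $\frak{a}_{0}$-part and $i\frak{b}$-part, so $\lambda_{1} = e_{1} + e_{k}$ and $\lambda_{2} = e_{1}$. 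Since $\sigma$ acts as $+\mathrm{id}$ on $\frak{a}_{0}$ and $-\mathrm{id}$ on $i\frak{b}$, we have $(\alpha, \beta) = 1/d + (\mu_{1}, \mu_{2})$ and $(\alpha, \sigma(\beta)) = 1/d - (\mu_{1}, \mu_{2})$. Both vanishing forces $1/d = 0$, which is absurd. On the other hand, both non-vanishing gives $\alpha - \beta, \alpha - \sigma(\beta) \in \tilde{\Sigma}$, since $\overline{\alpha + \beta} = \overline{\alpha + \sigma(\beta)} = 2e_{1} + e_{k} \notin \Sigma$ rules out the plus-side options. In each $BC_{n}$ case with $n \geq 2$, namely $G_{k}(\mathbb{C}^{n})$ with $n > 2k$, $SO(2n)/U(n)$ with $n$ odd, and $EIII$, the ambient root system $\tilde{\Sigma}$ is simply laced (of type $A$, $D$, or $E_{6}$), so $2(\alpha, \beta)/(\alpha, \alpha) = 2(\alpha, \sigma(\beta))/(\alpha, \alpha) = 1$. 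Summing then yields $|\alpha|^{2} = 2(\lambda_{1}, \lambda_{2}) = 2/d$, contradicting $|\alpha|^{2} = |\alpha_{1}|^{2} = 4/d$. Hence exactly one of $(\alpha, \beta), (\alpha, \sigma(\beta))$ vanishes.

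For the norm identity, assume $(\alpha, \sigma(\beta)) = 0$ (the other case is symmetric). Using $\sigma$-invariance of $(\ ,\ )$ on $\frak{h}_{0}$, one obtains $(\sigma(\alpha), \beta) = 0$ as well, while $(\sigma(\alpha), \sigma(\beta)) = (\alpha, \beta) \neq 0$. Note that $\alpha \in \tilde{\Sigma}_{(2)}$ since $2\bar{\alpha} = 2e_{1} + 2e_{k} \notin \Sigma$, whereas $\beta \in \tilde{\Sigma}_{(3)}$ since $2\bar{\beta} = 2e_{1} \in \Sigma$, so $T^{(j)}_{\beta}$ carries a $\sqrt{2}$ factor while $T^{(i)}_{\alpha}$ does not. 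Expanding $[T^{(i)}_{\alpha}, T^{(j)}_{\beta}]$ as four brackets between the summands, the cross terms coming from the orthogonal pairs $(\alpha, \sigma(\beta))$ and $(\sigma(\alpha), \beta)$ vanish, and only brackets supported on the single root string $\pm(\alpha - \beta)$ survive, since $\alpha + \beta, \alpha + \sigma(\beta) \notin \tilde{\Sigma}$.

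Applying Lemma \ref{2-3-3} in the appropriate subcase $\epsilon_{\alpha, -\beta} = \pm 1$, the two surviving terms combine into a scalar multiple of either $U_{\alpha - \beta} + U_{\sigma(\alpha - \beta)}$ or $W_{\alpha - \beta} - W_{\sigma(\alpha - \beta)}$, the precise form depending on $\epsilon_{\alpha, -\beta}$ and on the choice of $(i, j)$. Because $\overline{\alpha - \beta} = e_{k}$ with $2e_{k} \in \Sigma$ and $\sigma(\alpha - \beta) \neq \alpha - \beta$, we have $\alpha - \beta \in \tilde{\Sigma}_{(3)}$, so those combinations are precisely $(1/\sqrt{2})\,S^{(1)}_{\alpha - \beta}$ or $(1/\sqrt{2})\,S^{(2)}_{\alpha - \beta}$. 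The $\sqrt{2}$ from $T^{(j)}_{\beta}$ cancels the $1/\sqrt{2}$, leaving $[T^{(i)}_{\alpha}, T^{(j)}_{\beta}] = \pm N_{\alpha, -\beta} S^{(\ast)}_{\alpha - \beta}$ or $\pm i N_{\alpha, -\beta} S^{(\ast)}_{\alpha - \beta}$. Since $\alpha + \beta \notin \tilde{\Sigma}$ forces $N_{\alpha, -\beta}^{2} = \pm 1$ via Lemma \ref{2-3-1}(v), and $|S^{(\ast)}_{\alpha - \beta}|^{2} = 4/(e_{k}, e_{k}) = 4d$, the resulting norm squared is $4d$ in every case.

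The main obstacle is the careful bookkeeping across the eight subcases indexed by $(i, j) \in \{1, 2\}^{2}$ and $\epsilon_{\alpha, -\beta} = \pm 1$, parallel to the lengthy case split in Lemma \ref{5-2-8}. Once the $\sqrt{2}$-reconciliation between $T^{(j)}_{\beta}$ and the $\tilde{\Sigma}_{(3)}$-definition of $S^{(\ast)}_{\alpha - \beta}$ is handled in one representative subcase, the remaining seven cases are essentially direct transcriptions of the same pattern.
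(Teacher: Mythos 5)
Your proof is correct, and the dichotomy half is essentially the paper's own argument (same decomposition into $\frak{a}_{0}$- and $i\frak{b}$-parts; the paper ends the ``both nonzero'' case by deducing $(\mu_{1},\mu_{2})=0$ and then $|\alpha-\beta|>|\alpha|$, contradicting maximality, whereas you sum the two Cartan integers to get $(\alpha,\alpha)=2/d\neq 4/d$ --- an equally valid variant). Where you genuinely diverge is the norm computation: the paper disposes of it by ``a similar argument to Lemma \ref{5-2-2}'', i.e.\ the invariance trick $\langle [T,T'],[T,T']\rangle=-\langle [T,[T,T']],T'\rangle$ together with Lemma \ref{5-2-1}, which gives $[T^{(i)}_{\alpha},[T^{(i)}_{\alpha},T^{(j)}_{\beta}]]=-T^{(j)}_{\beta}$ and hence the answer $\langle T^{(j)}_{\beta},T^{(j)}_{\beta}\rangle=4/(e_{1},e_{1})=4d$ with no structure constants at all; you instead expand $[T^{(i)}_{\alpha},T^{(j)}_{\beta}]$ explicitly via Lemma \ref{2-3-3} and identify it as a real unit multiple of $S^{(1)}_{\alpha-\beta}$ or $S^{(2)}_{\alpha-\beta}$, in the style of Lemmas \ref{5-2-5} and \ref{5-2-8}. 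Both routes give $4d$; the paper's is shorter and sign-free, yours costs the eight-case bookkeeping but produces the explicit form of the bracket. Two small points of hygiene in your write-up: vanishing of the cross terms needs not just $(\alpha,\sigma(\beta))=0$ but also $\alpha\pm\sigma(\beta)\notin\tilde{\Sigma}$ (immediate here since both are longest roots, or from your simply-laced remark together with the root-string argument, but worth saying), and the surviving terms live on the two distinct roots $\alpha-\beta$ and $\sigma(\alpha-\beta)$ rather than on ``the single root string $\pm(\alpha-\beta)$'' --- your next sentence treats this correctly, so it is only a matter of phrasing.
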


\begin{proof}
The $\frak{a}_{0}$-part and $i\frak{b}$-part of $\alpha$ (resp.\ $\beta$) are denoted by $\lambda_{1}$ and $\mu_{1}$ (resp.\ $\lambda_{2}$ and $\mu_{2}$), respectively.
Assume $(\alpha, \beta) = (\alpha, \sigma(\beta)) = 0$.
Then,
\[
\begin{split}
0 &= (\alpha, \beta) = (\lambda_{1}, \lambda_{2}) + ( \mu_{1}, \mu_{2} ), \quad 
0 = (\alpha, \sigma(\beta)) = (\lambda_{1}, \lambda_{2}) - (\mu_{1}, \mu_{2}).
\end{split}
\]
Hence, $(\lambda_{1}, \lambda_{2}) = 0$, which is a contradiction.
If neither $(\alpha, \beta)$ nor $(\alpha, \sigma(\beta))$ is $0$, then 
\[
\begin{split}
\frac{2(\alpha, \beta)}{(\alpha, \alpha)} = \frac{2(\alpha, \sigma(\beta))}{(\alpha, \alpha)} = 1.
\end{split}
\]
Since
\[
(\alpha, \beta) = (\lambda_{1}, \lambda_{2}) + ( \mu_{1}, \mu_{2} ), \quad (\alpha, \sigma(\beta)) = (\lambda_{1}, \lambda_{2}) - (\mu_{1}, \mu_{2}),
\]
we obtain $(\mu_{1}, \mu_{2}) = 0$.
On the other hand, $\alpha - \beta \in \tilde{\Sigma}$ and 
\[
\alpha - \beta = (\lambda_{1} - \lambda_{2}) + (\mu_{1} - \mu_{2}).
\]
Thus, $|\alpha - \beta| > |\beta| = |\alpha|$, which is a contradiction.
Hence, the former part of the statement follows.
The latter part can be proven by a similar argument to the proof of Lemma \ref{5-2-2}.
\end{proof}


\subsection{Sasaki-Einstein $CR$ orbits} \label{s5-2}

In this subsection, we study the Einstein constant of $(M^{+}, (1/(d+1)) \langle \ ,\ \rangle)$ for each irreducible Hermitian symmetric space $M$ of compact type, except for $G_{k}(\mathbb{C}^{n})\ (k<n, \ k \not= 1, \ n \not= 2k)$.
In $\frak{m}^{+}_{1}$, the set
\[
\begin{split}
&\left\{
\frac{1}{\sqrt{d+1}}\bar{T}^{(i)}_{\alpha} \ ;\ \alpha \in \tilde{\Sigma}^{\sigma}_{\lambda}, \ \lambda \in (\Sigma^{+}_{x})_{C}, \ \sigma(\alpha) \not= \alpha, \ i = 1,2 
\right\} \\
& \quad\quad\quad\quad\quad\quad\quad\quad\quad\quad \cup  
\left\{
\frac{1}{\sqrt{d+1}}\bar{T}^{(1)}_{\beta} \ ;\ \beta \in \tilde{\Sigma}^{\sigma}_{\lambda}, \ \lambda \in (\Sigma^{+}_{x})_{C}, \ \sigma(\beta) = \beta 
\right\}
\end{split}
\]
forms an orthonormal basis with respect to $(1/(d+1))\langle \ ,\ \rangle$.
Denote $\mathrm{exp}(C_{0}(1))$ by $g$.

\begin{itemize}

\item[(i)]
$M = G_{n}(\mathbb{C}^{2n})\ (n \geq 2)$

$\tilde{\Sigma}$ is of type $A_{2n-1}$ and $\Sigma$ is of type $C_{n}$.
Then, $M^{+}_{1} = \mathbb{C}P^{n-1} \times \mathbb{C}P^{n-1}$.
Let $\alpha \in \tilde{\Sigma}$ be a longest root such that $\bar{\alpha} = e_{1} + e_{k}\ (2 \leq k \leq n)$.
By Lemma \ref{5-2-2}, Lemma \ref{5-2-5}, Lemma \ref{5-2-8}, and Remark \ref{5-2-10},
\[
\begin{split}
& r^{+}(g_{*}T^{(1)}_{\alpha}, g_{*}T^{(1)}_{\alpha}) \\
&=
\sum_{\lambda \in (\Sigma^{+}_{x})_{C}}\sum_{\beta \in \tilde{\Sigma}^{\sigma}_{\lambda}, i=1,2} \Big\langle \Big[ \bar{T}^{(i)}_{\beta}, T^{(1)}_{\alpha} \Big], \Big[ \bar{T}^{(i)}_{\beta}, T^{(1)}_{\alpha} \Big] \Big\rangle \\
&=
\sum_{\lambda \in (\Sigma^{+}_{x})_{C}}\sum_{\beta \in \tilde{\Sigma}^{\sigma}_{\lambda}, i=1,2} \frac{(\lambda, \lambda)}{4} \Big\langle \Big[ T^{(i)}_{\beta}, T^{(1)}_{\alpha} \Big], \Big[ T^{(i)}_{\beta}, T^{(1)}_{\alpha} \Big] \Big\rangle \\
&=
\frac{1}{4} \Big( \sum_{l = 2, l \not= k}^{n} \Big( \big( e_{1} + e_{l}, e_{1} + e_{l} \big)m(e_{1} + e_{l}) 2d + \big( e_{1} - e_{l}, e_{1} - e_{l} \big)m(e_{1} - e_{l})2d \Big) \\
& \quad\quad\quad + \big( e_{1} - e_{k}, e_{1} - e_{k} \big)8d + \big( e_{1} + e_{k}, e_{1} + e_{k} \big) 8d \Big)  = 4n, \\
& \frac{1}{d+1} \langle g_{*}T^{(1)}_{\alpha}, g_{*}T^{(1)}_{\alpha} \rangle 
=
\frac{1}{d+1} \frac{4}{(e_{1} + e_{k}, e_{1} + e_{k})} 
=
\frac{2d}{d+1}
\end{split}
\]
Therefore, the Einstein constant of $(M^{+}, (1/(d+1)) \langle \ ,\ \rangle)$ is $2n(d+1)/d$.
Since $\dim M^{+}_{1} = 4(n-1)$, a Sasaki $CR$ orbit is Einstein if and only if $d = n/(n-1)$.

\ 

\item[(ii)]
$M = \tilde{G}_{2}(\mathbb{R}^{n})\ (n =2m, m \geq 3)$

$\tilde{\Sigma}$ is of type $D_{m}$ and $\Sigma$ is of type $C_{2}$.
Then, $M^{+}_{1} = \tilde{G}_{2}(\mathbb{R}^{n-2})$.
Let $\alpha \in \tilde{\Sigma}$ be a longest root such that $\bar{\alpha} = e_{1} + e_{2}$.
By Lemma \ref{5-2-5}, Lemma \ref{5-2-8}, and Remark \ref{5-2-10},
\[
\begin{split}
r^{+}(g_{*}T^{1}_{\alpha}, g_{*}T^{(1)}_{\alpha})
&=
\frac{1}{4} \Big( (e_{1} + e_{2}, e_{1} + e_{2})(m(e_{1} + e_{2}) - 1)8d + (e_{1} - e_{2}, e_{1} - e_{2})8d \Big) \\
&=
\frac{1}{4} \Big( \frac{2}{d} 8d (2(m-2) -1 ) + \frac{2}{d} 8d \Big)  = 4(n-4). \\
\end{split}
\]
Therefore, the Einstein constant of $(M^{+}, (1/(d+1)) \langle \ ,\ \rangle)$ is $2(n-4)(d+1)/d$.
Since $\dim M^{+}_{1} = 2(n-4)$, a Sasaki $CR$ orbit is Einstein if and only if $d = n-4$.

\ 

\item[(iii)]
$M = \tilde{G}_{2}(\mathbb{R}^{n})\ (n = 2m+1, m \geq 2)$

$\tilde{\Sigma}$ is of type $B_{m}$ and $\Sigma$ is of type $C_{2}$.
Then, $M^{+}_{1} = \tilde{G}_{2}(\mathbb{R}^{n-2})$.
Let $\alpha \in \tilde{\Sigma}$ be a longest root such that $\bar{\alpha} = e_{1} + e_{2}$.
By Lemma \ref{5-2-5}, Lemma \ref{5-2-7}, Lemma \ref{5-2-9}, and Remark \ref{5-2-10},  
\[
\begin{split}
r(g_{*}T_{\alpha}^{(1)}, g_{*}T_{\alpha}^{(1)})
&=
\frac{1}{4} \Big( (e_{1} + e_{2}, e_{1} + e_{2})(m(e_{1} + e_{2}) - 1)8d + (e_{1} - e_{2}, e_{1} - e_{2})8d \Big) \\
&=
\frac{1}{4} \Big( \frac{2}{d} ( 8d (2(m-2)) ) + \frac{2}{d} 8d \Big) 
= 4 (2m - 3) = 4(n -4). \\
\end{split}
\]
Therefore, the Einstein constant of $(M^{+}, (1/(d+1)) \langle \ ,\ \rangle)$ is $2(n-4)(d+1)/d$.
Since $\dim M^{+}_{1} = 2(n-4)$, a Sasaki $CR$ orbit is Einstein if and only if $d = n-4$.

\ 

\item[(iv)]
$M = SO(2n)/U(n)\ (n =2m, m \geq 2)$

$\tilde{\Sigma}$ is of type $D_{n}$ and $\Sigma$ is of type $C_{m}$.
Then, $M^{+}_{1} = G_{2}(\mathbb{C}^{n})$.
Let $\alpha \in \tilde{\Sigma}$ be a longest root such that $\bar{\alpha} = e_{1} + e_{k}\ ( 2 \leq k \leq m)$.
By Lemma \ref{5-2-2}, Lemma \ref{5-2-5}, Lemma \ref{5-2-8}, and Remark \ref{5-2-10}, 
\[
\begin{split}
& r(g_{*}T_{\alpha}^{(1)}, g_{*}T_{\alpha}^{(1)}) \\
&=
\frac{1}{4} \Big( \sum_{l = 2, l \not= k}^{m}\Big( (e_{1} + e_{l}, e_{1} + e_{l})m(e_{1} + e_{l})2d + (e_{1} - e_{l}, e_{1} - e_{l})m(e_{1} - e_{l})2d \Big) \\
& \quad\quad\quad + (e_{1} + e_{k}, e_{1} + e_{k})(m(e_{1} + e_{k}) - 1)8d + (e_{1} - e_{k}, e_{1} - e_{k})8d \Big) \\
&=
\frac{1}{4} \Big( \Big( (m-2)\frac{2}{d}8d + (m-2)\frac{2}{d}8d \Big) + \frac{2}{d} 24 d + \frac{2}{d} 8d \Big) = 8m = 4n. \\
\end{split}
\]
Therefore, the Einstein constant of $(M^{+}, (1/(d+1)) \langle \ ,\ \rangle)$ is $2n(d+1)/d$.
Since $\dim M^{+}_{1} = 4(n-2)$, a Sasaki $CR$ orbit is Einstein if and only if $d = n/(n-3)$.

\ 

\item[(v)]
$M = SO(2n)/U(n)\ (n =2m+1, m \geq 1)$

$\tilde{\Sigma}$ is of type $D_{n}$ and $\Sigma$ is of type $BC_{m}$.
Then, $M^{+}_{1} = G_{2}(\mathbb{C}^{n})$.
Let $\alpha \in \tilde{\Sigma}$ be a longest root such that $\bar{\alpha} = e_{1} + e_{k}\ ( 2 \leq k \leq m)$.
By Lemma \ref{5-2-2}, Lemma \ref{5-2-5}, Lemma \ref{5-2-8}, Remark \ref{5-2-10}, and Lemma \ref{5-2-11},
\[
\begin{split}
& r(g_{*}T_{\alpha}^{(1)}, g_{*}T_{\alpha}^{(1)}) \\
&=
\frac{1}{4} \Big( \sum_{l = 2, l \not= k}^{m}\Big( (e_{1} + e_{l}, e_{1} + e_{l})m(e_{1} + e_{l})2d + (e_{1} - e_{l}, e_{1} - e_{l})m(e_{1} - e_{l})2d \Big) \\
& \quad + (e_{1} + e_{k}, e_{1} + e_{k})(m(e_{1} + e_{k}) - 1)8d + (e_{1} - e_{k}, e_{1} - e_{k})8d + (e_{1}, e_{1})m(e_{1})4d \Big) \\
&=
\frac{1}{4} \Big( \Big( (m-2)\frac{2}{d}8d + (m-2)\frac{2}{d}8d \Big) + \frac{2}{d} 24 d + \frac{2}{d} 8d + \frac{1}{d}16d \Big)  = 8m + 4 = 4n. \\
\end{split}
\]
Therefore, the Einstein constant of $(M^{+}, (1/(d+1)) \langle \ ,\ \rangle)$ is $2n(d+1)/d$.
Since $\dim M^{+}_{1} = 4(n-2)$, if $n \geq 4$, then a Sasaki $CR$ orbit is Einstein if and only if $d = n/(n-3)$.
If $n = 3$, there are no Sasaki-Einstein $CR$ orbits for any invariant metrics.
Note that $SO(6)/U(3)$ is diffeomorphic to $\mathbb{C}P^{2}$, and there are no Sasaki-Einstein $CR$ orbits in $\mathbb{C}P^{2}$ by Lemma \ref{5-1-2}.

\ 

\item[(vi)]
$M = Sp(n)/U(n)\ (n \geq 2)$

Both of $\tilde{\Sigma}$ and $\Sigma$ are of type $C_{n}$.
Then, $M^{+}_{1} = \mathbb{C}P^{n-1}$.
Let $\alpha \in \tilde{\Sigma}$ be the shortest root such that $\bar{\alpha} = e_{1} + e_{k}\ ( 2 \leq k \leq m)$.
By Lemma \ref{5-2-3} and Lemma \ref{5-2-6},
\[
\begin{split}
& r(g_{*}T_{\alpha}^{(1)}, g_{*}T_{\alpha}^{(1)}) \\
&=
\frac{1}{4} \Big( \sum_{l=2, l \not= k}^{n} \Big( (e_{1} + e_{l}, e_{1} + e_{l})m(e_{1} + e_{l})2d + (e_{1} - e_{l}, e_{1} - e_{l})m(e_{1} - e_{l})2d \Big) \\
& \quad\quad\quad + (e_{1} -  e_{k}, e_{1} - e_{k})m(e_{1} - e_{k})16d \Big) 
=
\frac{1}{4} \Big( (n-2)\Big( \frac{2}{d}2d + \frac{2}{d}2d \Big) + \frac{2}{d} 8d \Big) = 2n. \\
\end{split}
\]
Therefore, the Einstein constant of $(M^{+}, (1/(d+1)) \langle \ ,\ \rangle)$ is $n(d+1)/d$.
Since $\dim M^{+}_{1} = 2(n-1)$, a Sasaki $CR$ orbit is Einstein if and only if $d = 1$.

\ 

\item[(vii)]
$M = EIII$

$\tilde{\Sigma}$ is of type $E_{6}$ and $\Sigma$ is of type $BC_{2}$.
Then, $M^{+} = SO(10)/U(5)$.
Let $\alpha \in \tilde{\Sigma}$ be a longest root such that $\bar{\alpha} = e_{1} + e_{2}$.
By Lemma \ref{5-2-5}, Lemma \ref{5-2-8}, Remark \ref{5-2-10}, and Lemma \ref{5-2-11}, 
\[
\begin{split}
& r(g_{*}T_{\alpha}^{(1)}, g_{*}T_{\alpha}^{(1)}) \\
&=
\frac{1}{4} \Big( (e_{1} + e_{2}, e_{1} + e_{2})(m(e_{1} + e_{2}) - 1)8d + (e_{1} - e_{2}, e_{1} - e_{2})8d \Big) + (e_{1}, e_{1})m(e_{1})4d \Big) \\
&=
\frac{1}{4} \Big( \frac{2}{d} 8d \cdot 5  + \frac{2}{d} 8d + \frac{1}{d} 8 \cdot 4d \Big) 
= 32. \\
\end{split}
\]
Therefore, the Einstein constant of $(M^{+}, (1/(d+1)) \langle \ ,\ \rangle)$ is $16(d+1)/d$.
Since $\dim M^{+}_{1} = 20$, a Sasaki $CR$ orbit is Einstein if and only if $d = 8/3$.

\

\item[(viii)]
$M = EVII$

$\tilde{\Sigma}$ is of type $E_{7}$ and $\Sigma$ is of type $C_{3}$.
Then, $M^{+} = EIII$.
Let $\alpha \in \tilde{\Sigma}$ be a longest root such that $\bar{\alpha} = e_{1} + e_{2}$.
By Lemma \ref{5-2-2}, Lemma \ref{5-2-5}, Lemma \ref{5-2-8}, and Remark \ref{5-2-10},
\[
\begin{split}
&r(g_{*}T_{\alpha}^{(1)}, g_{*}T_{\alpha}^{(1)}) \\
&=
\frac{1}{4} \Big( (e_{1} + e_{2}, e_{1} + e_{2})(m(e_{1} + e_{2}) - 1)8d + (e_{1} - e_{2}, e_{1} - e_{2})8d \\
& \quad + (e_{1} + e_{3}, e_{1} + e_{3})m(e_{1} + e_{3})2d + (e_{1} - e_{3}, e_{1} - e_{3})m(e_{1} - e_{3})2d \Big) \\
&=
\frac{1}{4} \Big( \frac{2}{d} 7 \cdot 8d + \frac{2}{d} 8d + \frac{2}{d} 8 \cdot 2d + \frac{2}{d} 8 \cdot 2d \Big) = 48. \\
\end{split}
\]
Therefore, the Einstein constant of $(M^{+}, (1/(d+1)) \langle \ ,\ \rangle)$ is $24(d+1)/d$.
Since $\dim M^{+}_{1} = 32$, a Sasaki $CR$ orbit is Einstein if and only if $d = 12/5$.

\end{itemize}

Summarizing these arguments, we obtain Theorem \ref{5-3-1}.

\begin{thm} \label{5-3-1}
For the following irreducible Hermitian symmetric space $M$ of compact type, we define $d > 0$ as follows:
\[
\begin{array}{c|ccccccccccccccccccccccccccccc} \hline
M & G_{n}(\mathbb{C}^{2n}) & \tilde{G}_{2}(\mathbb{R}^{n}) & SO(2n)/U(n) & Sp(n)/U(n) & EIII & EVII \\ 
& (n \geq 2) & (n \geq 5) & (n \geq 4) & (n \geq 2) & & \\  \hline
d & n/(n-1) & n-4 & n/(n-3) & 1 & 8/3 & 12/5 \\ \hline
\end{array}
\]
For each $M$ and an invariant metric $\langle \ ,\ \rangle$ on $M$, the Sasaki $CR$ orbit is Einstein with respect to the induced metric if and only if the length of any shortest closed geodesic of $M$ with respect to $\langle \ ,\ \rangle$ is $\sqrt{d} \pi$.
For any irreducible Hermitian symmetric space $M$ of compact type not mentioned above and any invariant metric on $M$, there are no Sasaki-Einstein $CR$ orbits.
\end{thm}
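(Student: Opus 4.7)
The plan is to reduce the question to a Kähler--Einstein question on the polar $M^{+}_{1}$ via the Reeb fibration, and then use the root-system machinery of Subsection~\ref{s5-1} to compute the Ricci tensor of $M^{+}_{1}$ case by case. The structural input, already spelled out at the start of Section~\ref{s5}, is that the Sasaki $CR$ orbit $N = K/K_{x}$ is a Riemannian $S^{1}$-bundle over $M^{+}_{1} = K/K_{1}$ via the projection $\pi$, and that the submersion metric on the base is isometric to $(M^{+}_{1}, \tfrac{1}{d+1}\langle\,,\,\rangle)$. By the standard transverse K\"ahler correspondence for regular Sasakian manifolds recalled in Subsection~\ref{s2-1}, $(N, \langle\,,\,\rangle)$ is Sasaki--Einstein if and only if $(M^{+}_{1}, \tfrac{1}{d+1}\langle\,,\,\rangle)$ is K\"ahler--Einstein with Einstein constant $\dim M^{+}_{1} + 2$.

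Next I would settle which irreducible $M$ admit $M^{+}_{1}$ even being Einstein for the invariant metric. By Table~2, the polar $M^{+}_{1}$ is an irreducible Hermitian symmetric space of compact type in every case except $M = G_{k}(\mathbb{C}^{n})$ with $1 < k < n-k$, where $M^{+}_{1} = \mathbb{C}P^{k-1} \times \mathbb{C}P^{n-k-1}$ is a product of two projective spaces of distinct dimensions and hence not Einstein for the induced metric. This immediately rules out those cases. The case $M = \mathbb{C}P^{n-1}$ is handled by Lemma~\ref{5-1-2}: scaling a K\"ahler--Einstein metric on $\mathbb{C}P^{n-2}$ by $1/(d+1)$ scales the Einstein constant by $d+1$, and the resulting equation $2(n-1)(d+1)/d = 2(n-1)$ has no positive solution. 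For $M = G_{n}(\mathbb{C}^{2n})$ with $n \geq 2$ one has $M^{+}_{1} = \mathbb{C}P^{n-1} \times \mathbb{C}P^{n-1}$, a product of two projective spaces of the same dimension, which is Einstein; likewise $\tilde G_{2}(\mathbb{R}^{6}) \cong G_{2}(\mathbb{C}^{4})$ is subsumed there. In all remaining cases $M^{+}_{1}$ is irreducible Hermitian symmetric, hence automatically Einstein.

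For each of these remaining cases I would evaluate the Ricci tensor $r^{+}$ on a single tangent vector, using the formula recalled at the end of Subsection~\ref{s5} and the Lie triple system $\frak{m}^{+}_{1} = \sum_{\lambda \in (\Sigma^{+}_{x})_{C}} \frak{m}_{\lambda}$. Concretely, I would fix a longest root $\alpha \in \tilde\Sigma$ with $\bar\alpha = e_{1} + e_{k}$ (or the analogous shortest root in type $C$ for $Sp(n)/U(n)$) and compute
\[
r^{+}(g_{*}T^{(1)}_{\alpha}, g_{*}T^{(1)}_{\alpha}) = \sum_{\lambda \in (\Sigma^{+}_{x})_{C}} \sum_{\beta \in \tilde\Sigma^{\sigma}_{\lambda},\, i}\frac{(\lambda,\lambda)}{4} \Big\langle \big[ T^{(i)}_{\beta}, T^{(1)}_{\alpha}\big], \big[ T^{(i)}_{\beta}, T^{(1)}_{\alpha}\big]\Big\rangle .
\]
The brackets splitting with $\alpha$ fall into a handful of combinatorial types --- $\bar\beta$ orthogonal or parallel to $\bar\alpha$, equal to $\bar\alpha$ itself, equal to the reflected root, or of the form $e_{1}$ in type $BC$ --- and each type contributes $2d$, $8d$ or $4d$ by Lemmas~\ref{5-2-2}, \ref{5-2-3}, \ref{5-2-5}, \ref{5-2-6}, \ref{5-2-7}, \ref{5-2-8}, \ref{5-2-9}, \ref{5-2-11} together with Remark~\ref{5-2-10}. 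Multiplying each contribution by the corresponding multiplicity read off from the list in Subsection~\ref{s2-4} and summing gives the stated Einstein constant of $(M^{+}_{1}, \tfrac{1}{d+1}\langle\,,\,\rangle)$ as $\kappa(d+1)/d$ for an explicit constant $\kappa$ depending only on $M$; comparing with $\dim M^{+}_{1} + 2$ and solving for $d$ yields the table.

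The main obstacle is the bookkeeping in the Ricci sum: one has to split off the finitely many ``resonant'' terms where the bracket formulas are not simply $2d$ (namely $\beta = \alpha$, $\beta = \sigma(\alpha)$, $\beta = \alpha - \alpha_{k}$, $\beta = \alpha_{1} - \alpha$, and in $BC$ the shorter roots $\bar\beta = e_{1}$) and treat each by the appropriate lemma from Subsection~\ref{s5-1}, while correctly accounting for the multiplicities $m(\lambda)$ that differ case by case. Once this ledger is tallied correctly, the algebraic equation for $d$ is immediate, and the case $Sp(n)/U(n)$ neatly degenerates to $d = 1$ since only the shortest roots appear, explaining the corollary that Sasaki--Einstein forces the ruled condition $d = 1$ there.
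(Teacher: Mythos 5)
Your proposal is correct and follows essentially the same route as the paper: reduction via the Riemannian submersion onto $(M^{+}_{1}, \tfrac{1}{d+1}\langle\ ,\ \rangle)$ and the transverse K\"ahler--Einstein criterion, elimination of $\mathbb{C}P^{n-1}$ and the non-Einstein polars $\mathbb{C}P^{k-1}\times\mathbb{C}P^{n-k-1}$, and then the case-by-case evaluation of $r^{+}$ on $g_{*}T^{(1)}_{\alpha}$ using exactly the bracket-norm lemmas of Subsection~\ref{s5-1} together with the multiplicities from Subsection~\ref{s2-4}, followed by solving $\kappa(d+1)/d = \dim M^{+}_{1}+2$ for $d$. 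This matches the paper's argument in structure and in all essential details.
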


By Corollary \ref{4-3-3}, we immediately obtain Corollary \ref{5-3-2}.

\begin{coro} \label{5-3-2}
If $M = Sp(n)/U(n)\ (n \geq 2)$ and $d = 1$, then the Sasaki-Einstein $CR$ orbit is a ruled $CR$ submanifold.
\end{coro}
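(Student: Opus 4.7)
The plan is to combine two earlier results: Theorem \ref{5-3-1}, which identifies the invariant metric for which the Sasaki $CR$ orbit in $Sp(n)/U(n)$ is Einstein, and Corollary \ref{4-3-3}, which says that every Sasaki $CR$ orbit becomes ruled when $d=1$. Since both of these are already established in the paper, the proof essentially reduces to pointing out the coincidence of the two conditions.

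First I would recall from Theorem \ref{4-3-2} that the Sasaki $CR$ orbit in $Sp(n)/U(n)$ corresponds to the point $x = \mathrm{exp}\, C_{0}(t)(o)$ with $\tan^{2}(t\pi/2) = 1/d$. Setting $d=1$ gives $\tan^{2}(t\pi/2)=1$, hence $t=1/2$, so the relevant element is
\[
H = C_{0}(1/2) = \tfrac{1}{2} Q_{1}.
\]
This is of the form $(1/2)Q_{a}$ with $a=1$, which is exactly one of the three cases singled out in Theorem \ref{3-3-7} for a $K$-orbit to be a ruled $CR$ submanifold.

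Having placed the Sasaki-Einstein orbit at $H=(1/2)Q_{1}$, I would then invoke either Theorem \ref{3-3-7} directly, or Corollary \ref{4-3-3}, to conclude that $N = K(x)$ is a ruled $CR$ submanifold of $M$. Since no further computation is needed, this is essentially the entire argument; there is no genuine obstacle to overcome, only the verification that the numerical value $d=1$ selected by the Einstein condition in Theorem \ref{5-3-1} is precisely the value for which the general ruling criterion of Corollary \ref{4-3-3} applies. I would conclude by remarking that, among the cases listed in Theorem \ref{5-3-1}, $Sp(n)/U(n)$ is the unique family in which this coincidence $d=1$ occurs, explaining why the corollary singles out $Sp(n)/U(n)$ rather than the other Sasaki-Einstein cases.
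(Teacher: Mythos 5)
Your proposal is correct and follows essentially the same route as the paper: Theorem \ref{5-3-1} forces $d=1$ for $Sp(n)/U(n)$, and then Corollary \ref{4-3-3} (itself a consequence of Theorem \ref{3-3-7} via $\tan^{2}(t\pi/2)=1/d$ giving $H=(1/2)Q_{1}$) yields the ruled property, which is exactly the paper's one-line deduction. Your unpacking of the intermediate step is accurate but not a different method.
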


\end{document}